\definecolor{link}{RGB}{0,100,80}
\definecolor{cite}{RGB}{169,40,55}
\newtheorem{theorem}{Theorem}[section]
\newtheorem{conjecture}[theorem]{Conjecture}
\newtheorem{proposition}[theorem]{Proposition}
\newtheorem{problem}[theorem]{Problem}
\newtheorem{claim}[theorem]{Claim}
\newtheorem*{claim*}{Claim}
\newtheorem{lemma}[theorem]{Lemma}
\newtheorem{corollary}[theorem]{Corollary}
\newtheorem{definition}[theorem]{Definition}
\newtheorem*{definition*}{Definition}
\newtheorem{remark}[theorem]{Remark}
\theoremstyle{definition}\newtheorem*{acknowledgments}{Acknowledgments}
\numberwithin{equation}{section}
\newcommand{\al}{\alpha}
\newcommand{\be}{\beta}
\newcommand{\Ga}{\Gamma}
\newcommand{\del}{\delta}
\newcommand{\Del}{\Delta}
\newcommand{\lam}{\lambda}
\newcommand{\Lam}{\Lambda}
\newcommand{\eps}{\epsilon}
\newcommand{\ka}{\kappa}
\newcommand{\sig}{\sigma}
\newcommand{\Sig}{\Sigma}
\newcommand{\om}{\omega}
\newcommand{\vphi}{\varphi}
\newcommand{\cA}{\mathcal{A}}
\newcommand{\cB}{\mathcal{B}}
\newcommand{\cC}{\mathcal{C}}
\newcommand{\cD}{\mathcal{D}}
\newcommand{\cF}{\mathcal{F}}
\newcommand{\cH}{\mathcal{H}}
\newcommand{\cL}{\mathcal{L}}
\newcommand{\cM}{\mathcal{M}}
\newcommand{\cN}{\mathcal{N}}
\newcommand{\cO}{\mathcal{O}}
\newcommand{\cP}{\mathcal{P}}
\newcommand{\cQ}{\mathcal{Q}}
\newcommand{\cU}{\mathcal{U}}
\newcommand{\cV}{\mathcal{V}}
\newcommand{\cY}{\mathcal{Y}}
\newcommand{\bE}{\mathbb{E}}
\newcommand{\bP}{\mathbb{P}}
\newcommand{\bR}{\mathbb{R}}
\newcommand{\bZ}{\mathbb{Z}}
\newcommand{\bQ}{\mathbb{Q}}
\newcommand{\bF}{\mathbb{F}}
\newcommand{\bK}{\mathbb{K}}
\newcommand{\bN}{\mathbb{N}}
\newcommand{\bH}{\mathbb{H}}
\newcommand{\bT}{\mathbb{T}}
\newcommand{\gog}{\mathfrak{g}}
\newcommand{\gol}{\mathfrak{l}}
\newcommand{\gom}{\mathfrak{m}}
\newcommand{\gos}{\mathfrak{s}}
\newcommand{\gou}{\mathfrak{u}}
\newcommand{\gor}{\mathfrak{r}}
\newcommand{\goz}{\mathfrak{z}}
\newcommand{\goD}{\mathfrak{D}}
\newcommand{\gow}{\mathfrak{w}}
\newcommand{\OO}{\operatorname{O}}
\newcommand{\SL}{\operatorname{SL}}
\newcommand{\SO}{\operatorname{SO}}
\newcommand{\PGL}{\operatorname{PGL}}
\newcommand{\GL}{\operatorname{GL}}
\newcommand{\PSL}{\operatorname{PSL}}
\newcommand{\Gr}{\on{Gr}}
\newcommand{\Stab}{\on{Stab}}
\DeclarePairedDelimiter\av{\lvert}{\rvert}
\DeclarePairedDelimiter\norm{\lVert}{\rVert}
\DeclarePairedDelimiter\set{\{}{\}}
\DeclarePairedDelimiter\mset{(\{}{\})}
\DeclarePairedDelimiter\pa{(}{)}
\DeclarePairedDelimiter\idist{\langle}{\rangle}
\DeclarePairedDelimiter\br{[}{]}
\newcommand{\defi}{:=}
\newcommand{\comp}{\textrm{{\tiny $\circ$}}}
\newcommand\wt[1]{\widetilde{#1}}
\newcommand\wh[1]{\widehat{#1}}
\newcommand\on[1]{\operatorname{#1}}
\newcommand\diag[1]{\operatorname{diag}\left(#1\right)}
\newcommand\mb[1]{\mathbf{#1}}
\newcommand{\qfa}{\quad\textrm{for all}\ }
\newcommand\tb[1]{\textbf{#1}}
\newcommand\mat[1]{\pa*{\begin{matrix}#1\end{matrix}}}
\newcommand\crly[1]{\mathscr{#1}}
\newcommand{\Mat}{\operatorname{Mat}}
\newcommand{\spa}{\on{span}}
\newcommand{\supp}{\on{supp}}
\newcommand{\X}{X}
\newcommand{\Xbar}{\Gr_2(\bR^3)}
\newcommand{\Ad}{\on{Ad}}
\newcommand{\hT}{\hat{T}}
\newcommand{\hsig}{\hat{\sig}}
\newcommand{\hB}{B^{X,Z}}
\newcommand{\hBU}{B^{X,U}}
\newcommand{\hphi}{\hat{\Phi}}
\newcommand{\hp}{\mb{s}}
\newcommand{\hs}{\mb{s}}
\newcommand\bd[1]{\mb{#1}}
\newcommand{\hbeta}{\beta^{X,Z}}
\newcommand{\hE}{\hat{E}}
\newcommand{\mbe}{\on{E}}
\newcommand{\hmu}{\mu^{\otimes n}_{\bd s,U}}
\newcommand{\omrt}{\om_{\bR^3}}
\newcommand{\omrd}{\om_{\wedge^2\bR^3}}
\newcommand{\oml}{\om_{\gol_0}}
\newcommand{\omr}{\om_{\gor_0}}
\newcommand{\dx}{\on{d}_X}
\newcommand{\oxy}{o_{x,y}}
\newcommand{\ogy}{o_{gx,gy}}
\newcommand{\hbe}{\be^X}
\global\long\def\dv#1#2{\mathrm{d}{#1}\left({#2}\right)}
\newcommand\lwm[1]{(\be^{X,U})_{#1}^\Phi}
\newcommand\lwmz[1]{(\be^{X,Z})_{#1}^\Phi}
\global\long\def\Vang#1{{#1}_{\sphericalangle \al}[\om_{\gol_0}]}
\newcommand\spm[2]{\cP_{#1}({#2})}
\newcommand{\ph}{-}
\newcommand{\bxz}{B^{X,Z}}
\newcommand{\bexz}{\be^{X,Z}}
\newcommand{\bcxz}{\cB^{X,Z}}
\newcommand{\bxu}{B^{X,U}}
\newcommand{\bexu}{\be^{X,U}}
\newcommand{\wT}{\widehat{T}}
\newcommand{\mz}{\smallsetminus\set 0}
\newcommand{\onto}{\xymatrix{\ar@{>>}[r]&}}
\newcommand{\usadd}[1]{{\color{cyan}{\tiny [US]} #1}}
\begin{document}
\title[]{Dynamics on the space of 2-lattices in 3-space}
\author[]{Oliver Sargent}
\author[]{Uri Shapira}
\dedicatory{Dedicated to Ponyo}
\begin{abstract}
    We study the dynamics of $\SL_3(\bR)$ and its subgroups
    on the homogeneous space $\X$ consisting of
    homothety classes of rank-2 discrete subgroups of $\bR^3$. We focus on the case where the acting group is
    Zariski dense in either $\SL_3(\bR)$ or $\SO(2,1)(\bR)$.
    Using techniques of Benoist and Quint we prove that for a compactly
    supported probability measure $\mu$ 
    on $\SL_3(\bR)$ whose support generates a group which is Zariski dense in $\SL_3(\bR)$,
    there exists a unique $\mu$-stationary probability measure on $\X$. When the Zariski closure is $\SO(2,1)(\bR)$
    we establish a certain dichotomy regarding stationary measures and discover a surprising phenomenon:
    The Poisson boundary can be embedded in $X$. The embedding is of algebraic nature and
    raises many natural open problems.  Furthermore, motivating
    applications to questions in the geometry of numbers are discussed.
\end{abstract}
\address{Department of Mathematics\\
    Technion \\
    Haifa \\
Israel }
\email{ushapira@tx.technion.ac.il}
\email{o.g.sargent@gmail.com}
\thanks{}
\maketitle

\section{Introduction}\label{sec:introduction}
\subsection{A motivating conjecture}
We begin by stating the conjecture which motivated this paper and remains unsolved.
Let $X_2$ be the space of lattices in $\bR^2$ identified up to scaling. The quotient $\OO_2(\bR)\backslash X_2$ of $X_2$
by the action of the orthogonal group is thought of
as the space of \textit{shapes of 2-lattices}.
Given a rank-2 discrete subgroup $\Lam\subset\bR^3$ -- hereafter known as a 2-lattice -- we define its shape $\mb{s}(\Lam)$ to be the
point of $\OO_2(\bR)\backslash X_2$ corresponding to
an image of $\Lam$ in $X_2$
obtained by choosing an arbitrary isometry between the plane spanned by $\Lam$ and $\bR^2$.
\begin{conjecture}\label{conj:1}
    Consider the signature (2,1) quadratic form $Q(v_1,v_2,v_3)\defi 2v_1v_3-v_2^2$  and the variety
    $V_Q^1 \defi \set{v\in \bR^3: Q(v) = 1}$.
    Let $V_Q^1(\bZ)\defi V_Q^1\cap \bZ^3$ denote the collection of integer points on $V_Q^1$.
    Then, the collection of \emph{orthogonal shapes}
    $$\set{\mb{s}(\bZ^3\cap v^\perp): v\in V_Q^1(\bZ)}$$
    is dense in $\OO_2(\bR)\backslash X_2$.
\end{conjecture}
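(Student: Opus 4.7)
The plan is to translate Conjecture~\ref{conj:1} into an orbit-closure problem on $\X$ and then attack it using the stationary-measure dichotomy developed in the paper for actions with Zariski closure $\SO(2,1)(\bR)$. Set $\Gamma := \SO(Q)(\bZ)$, a lattice in $H := \SO(Q)(\bR)\cong \SO(2,1)(\bR)$, fix a base point $v_0\in V_Q^1(\bZ)$ (e.g.\ $v_0 = (1,1,1)$), and put $\Lam_0 := \bZ^3\cap v_0^\perp$. For $\gamma\in\Gamma$ the identity $\langle \gamma v_0, u\rangle = \langle v_0, \gamma^T u\rangle$ together with $\gamma^{-T} = A\gamma A$ --- where $A$ is the Gram matrix of $Q$, an integer involution contained in $\Gamma$ --- gives $\Lam_{\gamma v_0} = \gamma^{-T}\Lam_0$ inside $\X$; since $A\Gamma A = \Gamma$, the map $\gamma\mapsto \Lam_{\gamma v_0}$ sweeps out exactly the orbit $\Gamma\cdot \Lam_0$. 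By Borel--Harish-Chandra, $V_Q^1(\bZ)$ decomposes into finitely many $\Gamma$-orbits, so Conjecture~\ref{conj:1} reduces to showing that $\mb{s}(\Gamma\cdot \Lam_0)$ is dense in $\OO_2(\bR)\backslash X_2$ for each orbit representative.

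Next, I would choose a compactly supported probability measure $\mu$ on $H$ whose support lies in $\Gamma$ and generates a Zariski-dense subgroup of $H$, and study the Ces\`aro averages $\nu_N := \frac{1}{N}\sum_{n=0}^{N-1}\mu^{*n}*\delta_{\Lam_0}$ on $\X$. Every weak-$*$ accumulation point $\nu_\infty$ is a $\mu$-stationary probability measure on $\X$ supported in the closure of $\Gamma\cdot \Lam_0$. According to the dichotomy proved in the paper, every such $\nu_\infty$ is either $H$-invariant --- hence of algebraic type, in which case one can verify directly that its shape-projection charges every open subset of $\OO_2(\bR)\backslash X_2$ --- or is the pushforward of a measure on the Furstenberg--Poisson boundary by an algebraic embedding into $\X$. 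The conjecture would then follow provided one could exclude that every weak-$*$ limit $\nu_\infty$ issuing from the starting point $\Lam_0$ is of the latter, Poisson-boundary type.

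The main obstacle is precisely this exclusion. The Poisson-boundary embeddings classified in the paper form a countable family, each supported on a proper algebraic subset of $\X$ whose shape-projection lies in a proper closed subset of $\OO_2(\bR)\backslash X_2$. Showing that the arithmetic starting point $\Lam_0 = \bZ^3\cap v_0^\perp$ is not trapped in, and does not have its $\mu$-walk concentrate on, any of these countably many algebraic loci requires an arithmetic/transcendence non-concentration input at $\Lam_0$ that goes beyond the qualitative dichotomy. Absent a quantitative, drift-style enhancement --- in the spirit of the Eskin--Mirzakhani or Benoist--Quint non-divergence estimates --- the strategy stalls, which is consistent with the conjecture remaining open.
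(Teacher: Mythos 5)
The statement you are addressing is not proved in the paper at all: Conjecture~\ref{conj:1} is explicitly stated as open (``the conjecture which motivated this paper and remains unsolved''), so there is no proof of it to compare yours against. Your proposal is, in essence, the authors' own intended strategy: reduce by duality and Borel--Harish-Chandra to the density of $\set{\mb{s}(\gamma\Lam_{v_1}):\gamma\in\SO(Q)(\bZ)}$, run a random walk on $\X$ driven by a measure $\mu$ with $\Ga_\mu$ Zariski dense in $\SO(Q)(\bR)$, use the non-escape of mass (Theorem~\ref{thm:case2}(1)) to get stationary weak-$*$ limits, and invoke the classification of ergodic $\mu$-stationary measures. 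And the obstruction you concede at the end is exactly the paper's obstruction: Theorem~\ref{thm:cover} produces a $\Ga_0$-equivariant section $\zeta:\cC\to\X$ and hence genuine stationary $1$-extensions, so the classification in Theorem~\ref{thm:case2} is only a dichotomy and does not by itself force the orbit of $\Lam_0$ to equidistribute. Since you do not supply the missing exclusion step, your text is a programme, not a proof --- consistent with the conjecture remaining open.

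Two of your intermediate assertions also misstate what is actually established. First, the dichotomy is not ``$H$-invariant versus Poisson-boundary embedded'': there is no $H$-invariant probability measure over $\cC\cong H/P$, and the first branch is the \emph{natural lift} of the Furstenberg measure (fibre measures $m_p$), which is merely $\mu$-stationary; its shape-projection does have full support, so your conclusion there survives, but the justification does not. Second, the paper does \emph{not} classify the exotic extensions: it exhibits one $1$-extension and finite-index variants, and explicitly records (Remark~\ref{rem:Uri Bader's insight}, Problem~\ref{prob:unique lifts}) that their abundance and structure are not understood, so your claim that they ``form a countable family'' of classified algebraic loci, each with shape-projection in a proper closed subset, is unsupported and cannot be used even heuristically to argue that $\Lam_0$ avoids them. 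Any genuine progress would have to either rule out finite extensions for the specific arithmetic data $(\SO(Q)(\bZ),\mu)$ (cf.\ Problem~\ref{prob:unique lifts}(1)) or show that their shape-projections cannot absorb the Ces\`aro limits from the starting point $\br{\Lam_0}$; neither is done here.
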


Currently it is not  even known that the above set is unbounded.
Conjecture~\ref{conj:1} was motivated by a conjecture of Furstenberg which is related to 
a conjecture about cubic irrationals discussed in Appendix~\ref{appendix} (Conjecture~\ref{conj:2}). 
Using duality it is easy to see that Conjecture~\ref{conj:1} would follow from the
density of the collection $\set{\mb{s}(g\Lam_{v_1}):g\in \SO(Q)(\bZ)}$, where $\Lam_{v_1} = \bZ^3 \cap v_1^\perp$ for
$v_1 = (1,1,1)\in V_Q^1(\bZ)$. See Figure~\ref{fig:1} for compelling evidence towards Conjecture~\ref{conj:1}.
In our figures we plot some numerical experiments. Since the more familiar space $\on{PSO}_2(\bR)\backslash X_2$
is a double cover of $\OO_2\backslash X_2$, we lift the plots to this space.

\begin{figure}[h]
    \begin{center}
        \includegraphics[width=5in]{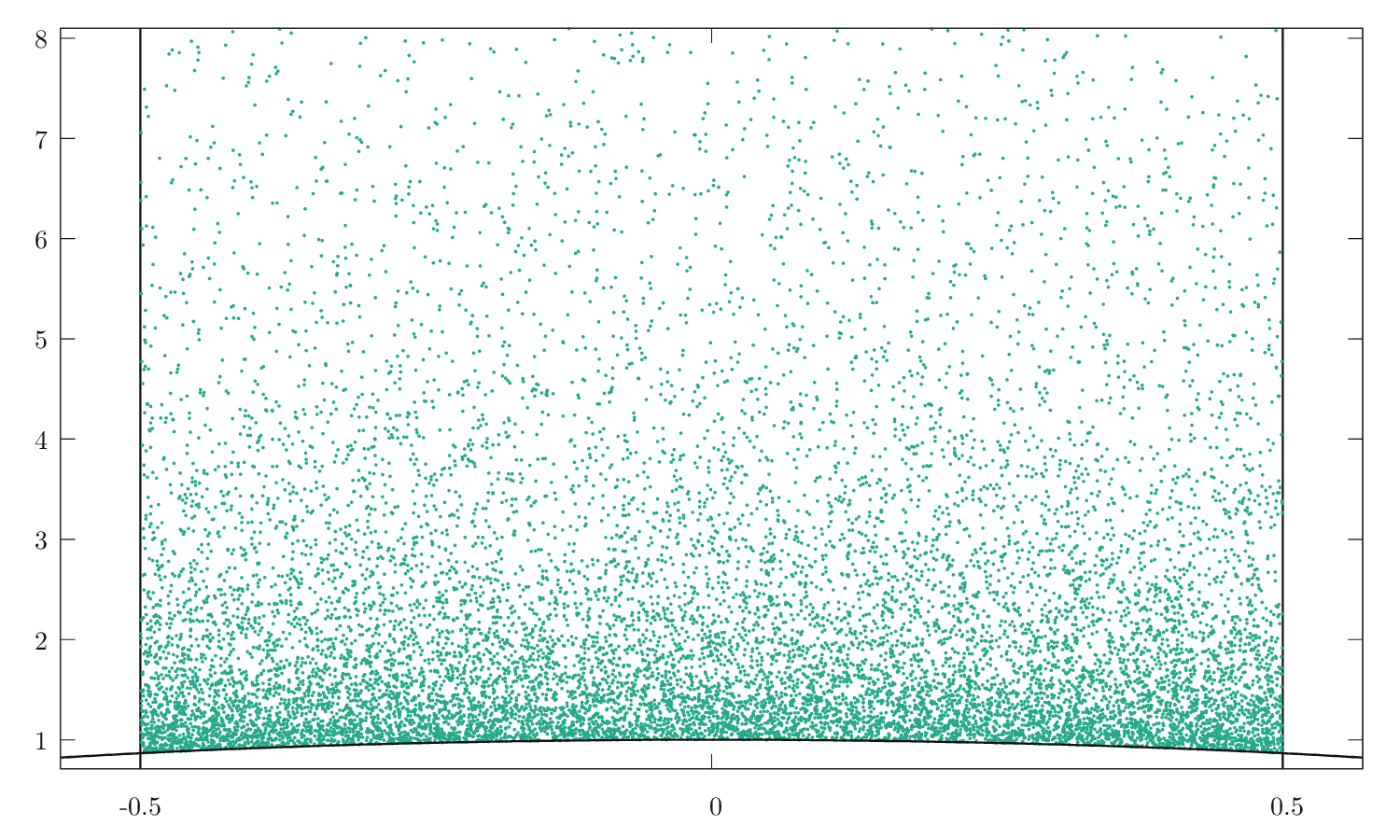}
        \caption{Plot of $\approx 15,000$ points in 
            $\PSL_2(\bZ)\backslash \bH\simeq \on{PSO}_2(\bR)\backslash X_2$
            corresponding to the
            shapes $\mb{s}(g\Lam_{v_1})$
        where $g\in \SO(Q)(\bZ)$ is chosen `randomly'.}\label{fig:1}
    \end{center}
\end{figure}

Motivated by the above discussion, we can now present a corollary of one of our main results. We consider the case where $\SO(Q)(\bZ)$ is replaced by a Zariski dense subgroup of $\SL_3(\bR)$.
\begin{theorem}\label{thm:example}
    Let $\Ga<\SL_3(\bR)$ be a compactly generated Zariski dense
    subgroup and let $\Lam<\bR^3$ be a rank-2 discrete subgroup. Then the collection of shapes
    $\set{\mb{s}(g\Lam):g\in \Ga}$ is dense in $\OO_2(\bR)\backslash X_2$.
\end{theorem}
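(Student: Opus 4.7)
The plan is to deduce the theorem from the main uniqueness result of this paper for $\mu$-stationary probability measures on $X$. Since $\Gamma$ is compactly generated, fix a symmetric, compactly supported, absolutely continuous probability measure $\mu$ on $\SL_3(\bR)$ whose support generates $\Gamma$ and is therefore Zariski dense. By the main theorem there is a unique $\mu$-stationary probability measure $\nu$ on $X$. A preliminary step is to show that $\supp(\nu)=X$. Since $\mu$ is symmetric, $\supp(\nu)$ is a closed $\Gamma$-invariant subset of $X$; uniqueness, combined with minimality of the $\Gamma$-action on $X$ (a consequence of Zariski density, presumably handled elsewhere in the paper), forces full support.

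Given $\Lam\in X$, consider the Ces\`aro averages
\[
\nu_N^\Lam \defi \frac{1}{N}\sum_{n=1}^{N}\mu^{*n}\ast\delta_\Lam.
\]
Each $\nu_N^\Lam$ is a probability measure on $X$, and any weak-$*$ subsequential limit is $\mu$-stationary. If one can establish tightness of $\set{\nu_N^\Lam}_{N\ge 1}$, i.e. no mass escapes to infinity in the non-compact space $X$, then every such limit is a $\mu$-stationary probability measure and hence equals $\nu$ by uniqueness; in particular $\nu_N^\Lam \to \nu$ weakly. This forces $\ol{\Ga\Lam}\supseteq\supp(\nu)=X$, and applying the continuous surjection $\hs\colon X\to\OO_2(\bR)\backslash X_2$ we conclude that
\[
\ol{\set{\hs(g\Lam):g\in\Ga}}=\OO_2(\bR)\backslash X_2,
\]
as required.

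The main obstacle is establishing tightness (non-escape of mass) of the empirical averages $\nu_N^\Lam$ starting from an \emph{arbitrary} point $\Lam\in X$. This non-escape statement is the standard technical bridge in Benoist--Quint type arguments, upgrading $\nu$-a.e.\ orbit density --- which follows immediately from Breiman's law of large numbers together with uniqueness --- to density from every initial point. One expects such an estimate to require a quantitative drift or recurrence analysis for the random walk on $X$, exploiting the Zariski density of $\Gamma$ together with the fibred structure of $X$ over the Grassmannian $\Gr_2(\bR^3)$. A secondary subtlety is that, while the surjection $\hs$ is continuous, the $\SL_3(\bR)$-action does not descend to $\OO_2(\bR)\backslash X_2$, so the argument must genuinely take place upstairs on $X$ and only be pushed down at the final step.
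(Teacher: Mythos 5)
There is a genuine gap, and it is the step you treat as a routine preliminary. The claim that $\supp(\nu)=X$, justified by ``minimality of the $\Ga$-action on $X$ (a consequence of Zariski density)'', is false: the $\Ga$-action on $X$ is not minimal. For a discrete Zariski dense $\Ga$ which is not a lattice (a thin group, which the theorem allows), the Furstenberg measure $\bar{\nu}_{\Xbar}$ is supported on the limit set of $\Ga$ in $\Gr_2(\bR^3)$, a proper closed subset, and the unique $\mu$-stationary measure is the natural lift, whose support is $\pi^{-1}(\supp\bar{\nu}_{\Xbar})\subsetneq X$; indeed the corollary following Theorem~\ref{thm:case1} identifies $\pi^{-1}(\supp\bar{\nu}_{\Xbar})$ as the unique $\Ga$-minimal subset. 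So your chain $\ol{\Ga\Lam}\supseteq\supp(\nu)=X$ breaks down, and with it the final deduction. The repair requires what your argument never uses: the \emph{identification} of the unique stationary measure as the natural lift (not merely its uniqueness). Because the natural lift disintegrates over $\bar{\nu}_{\Xbar}$ with fibre measures $m_p$, which have full support in each fibre $\pi^{-1}(p)\cong\SL_2(\bR)/\SL_2(\bZ)$, and because $\mb{s}$ maps each single fibre onto all of $\OO_2(\bR)\backslash X_2$, the pushforward $\mb{s}_*\nu$ has full support in the shape space; hence $\ol{\Ga\Lam}\supseteq\supp(\nu)\supseteq\pi^{-1}(p)$ for any $p\in\supp\bar{\nu}_{\Xbar}$ already gives density of shapes, with no need for $\ol{\Ga\Lam}=X$.

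Two further points. First, the tightness/non-escape of mass that you flag as ``the main obstacle'' cannot be left open: it is precisely part (1) of Theorem~\ref{thm:case1} (equivalently Theorem~\ref{thm:full}\eqref{main4}), proved in \S\ref{sec:Non-escape-of-mass/contracted} via the contraction-hypothesis (height-function) argument; if you are invoking the paper's main theorem you should invoke this part too, at which point every weak-* limit of your Ces\`aro averages is automatically the natural lift and the argument closes. Second, your choice of $\mu$ is inconsistent: a measure absolutely continuous with respect to Haar measure on $\SL_3(\bR)$ cannot have support generating a discrete group $\Ga$ (e.g.\ a lattice, which the theorem covers). Absolute continuity and symmetry are both unnecessary; any compactly supported $\mu$ with $\idist{\supp\mu}=\Ga$ (available since $\Ga$ is compactly generated) is what Theorem~\ref{thm:case1} requires, and the support of any weak-* limit of $\frac1N\sum_{n\le N}\mu^{*n}*\del_{\br\Lam}$ is contained in $\ol{\Ga\br\Lam}$ without any symmetrisation.
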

\begin{remark}
    A much stronger statement holds in the setting of Theorem~\ref{thm:example}. Let $\mu$ be a compactly supported
    probability measure on
    $\SL_3(\bR)$ such that the group generated by the support of $\mu$ is $\Ga$.  Then for $\mu^{\otimes\bN}$-almost every $(g_1,g_2,\dots)\in \SL_3(\bR)^{\bN}$ the sequence $\mb{s}(g_n\cdots g_1\Lam)$ is equidistributed in $\OO_2(\bR)
    \backslash X_2$ with respect to the uniform measure on $\OO_2(\bR)\backslash X_2$.
\end{remark}

Our attempt towards proving Conjecture~\ref{conj:1} involves studying random walks on the space of 2-lattices.
We build heavily on results and ideas from the seminal series of papers of Benoist and  Quint~\cite{BQAnnals, BQInventiones, BQAnnals2, BQJams} and
prove
two classification results regarding stationary measures on this space under assumptions on the acting group. Theorem~\ref{thm:example} is an immediate consequence of Theorem~\ref{thm:case1} which is a strong classification theorem stating the uniqueness of a
stationary probability measure  -- which we refer to as the \textit{natural lift} -- under the assumption that the acting measure generates a group which is Zariski dense in $\SL_3(\bR)$. The analogous
classification for the case when the Zariski closure is $\SO(Q)(\bR)$ is weaker in the sense that sometimes there are
stationary probability measures other than the natural lift. This is the reason we could not establish Conjecture~\ref{conj:1},
but it is not unlikely that further investigations of the structure of the space of ergodic $\mu$-stationary probability measures
will lead to the resolution of Conjecture~\ref{conj:1}. See Problem~\ref{prob:unique lifts}.


\subsection{Statements of results}\label{ssec:statements}
For a topological space $Y$ we let $\cP(Y)$ denote the space of Borel probability measures on $Y$.
For $G\curvearrowright Y$ a continuous action of a topological group $G$ and $\mu\in\cP(G)$
we let $\cP_\mu(Y)$ be the subset of $\cP(Y)$ consisting of $\mu$-stationary measures.

Henceforth we set $$G\defi \SL_3(\bR)$$
and for $\mu\in \cP(G)$
$$\Ga_\mu \defi \idist{\supp\mu}$$ will be the group generated by the support of $\mu$.
A measure $\nu\in\spm\mu X$ is said to be $\mu$-ergodic if the action of $\Ga_\mu$ on $(X,\nu)$ is ergodic.
It is a classical result of Furstenberg~\cite{MR0146298} (see~\cite[Chapter 4]{MR3560700} for a modern exposition) that if $\Ga_\mu$ acts
strongly irreducibly and proximally on $\bR^3$, then $\cP_\mu(\Xbar)$ consists of a single element.
We will refer to it as the \textit{Furstenberg measure} of $\mu$ on $\Xbar$ and denote it by $\bar{\nu}_{\Xbar}$.
\begin{remark}\label{rem:non-atomic}
    It will be important for us that the Furstenberg measure is atom free. This is ensured by the strong irreducibility assumption, since if there was
    an atom of $\bar{\nu}_{\Xbar}$ then the set of atoms with maximal weight is a finite $\Ga_\mu$-invariant set.
\end{remark}

We fix $\set{e_1,e_2,e_3}$ the standard orthonormal basis of unit vectors in $\bR^3$.
For $v\in\bR^3$ and $1\leq i \leq 3$ we will write $v_i\defi\idist{v,e_i}$.
As before we consider the indefinite quadratic form $Q:\bR^3\to\bR$ defined by
\begin{equation}\label{eq:0923}
    Q(v)\defi 2v_1v_3 - v_2^2.
\end{equation}

Let $H_\mu$ denote the Zariski closure of $\Ga_\mu$.
In what follows we will concentrate on two cases which will be referred to as~\ref{case1} and~\ref{case2} as follows:
\begin{align}
    H_\mu &= \SL_3(\bR) \tag{Case I}\label{case1}\\
    H_\mu &=\SO(Q)(\bR)\tag{Case II}\label{case2}.
\end{align}
In both of these cases it follows from a theorem of Gol'dsheid and Margulis (see~\cite[Theorem 5.1]{MR2405998} or~\cite{MR1040268}) that $\Ga_\mu$ acts strongly irreducibly and proximally on $\bR^3$.
For the rest of the paper $\X$ will be the space of rank-2 discrete subgroups in $\bR^3$ identified up to scaling.
The linear $G$-action on $\bR^3$ induces a transitive $G$-action on $X$ endowing it with the structure of a homogeneous space.
There is a natural projection $$\pi:X\to\Xbar$$ which sends an equivalence class of a 2-lattice to the plane it spans. We note that $\pi$ is $G$-equivariant.

Given a rank-2 discrete subgroup $\Lam\subset\bR^3$ we denote its equivalence class modulo scaling by $\br{\Lam}$. Abusing terminology we refer to both $\Lam$ and $\br{\Lam}$ as a 2-lattice and to $\X$ as the space of 2-lattices in $\bR^3$.
For each plane $p\in \Xbar$ the fibre $\pi^{-1}(p)\cong \SL_2(\bR)/\SL_2(\bZ)$.
This identification is not canonical and depends on choosing a linear isomorphism between $p$ and $\bR^2$. Still,
the unique $\SL_2(\bR)$-invariant measure on $\SL_2(\bR)/\SL_2(\bZ)$ translates to a well defined probability measure $m_p\in \cP(\pi^{-1}(p))$.
\begin{definition}\label{def:naturallift}
    Given a measure $\nu\in\cP(X)$ we can disintegrate $\nu$ with respect to the map $\pi$.
    The result is a collection of measures $\set{\nu_p}_{p\in\Xbar}\subset\cP(X)$ and a measure
    $\bar\nu\defi\pi_*\nu\in\cP(\Xbar)$ such that for $\bar{\nu}$-almost any $p\in\Xbar$, $\nu_p\in\cP(\pi^{-1}(p))$ and 
    $$\nu =\int_{\Xbar} \nu_p \dv{\bar{\nu}}p\in\cP(X).$$
    \begin{enumerate}[$\bullet$]
        \item{When $\nu_p=m_p$ for $\bar{\nu}$ almost any $p\in\Xbar$ we say that $\nu$ is the \textit{natural lift} of $\bar{\nu}$.}

        \item{In contrast, if there exists $k\in \bN$ such that $\nu_p$ is a uniform measure
                supported on a set of size
            $k$ for all $p\in \Xbar$, then we say that $\nu$ is a \textit{$k$-extension} of $\bar{\nu}$.}

        \item{We will also say that $\nu$ is a \textit{finite extension} of $\bar\nu$ if it is a $k$-extension of $\bar\nu$
            for some $k\in\bN$ which we do not specify.}
        \item{We also recall that given $\mu\in\cP(G)$,  $\nu$ is said to be a \textit{measure preserving extension} of $\bar\nu$ if $g\nu_p=\nu_{gp}$ for $\mu$-almost
            every $g\in G$ and $\bar\nu$-almost every $p\in\Xbar$.}
    \end{enumerate}
\end{definition}
Since $\pi$ is $G$-equivariant, given $\mu\in\cP(G)$ and $\nu\in\cP_\mu(\X)$, the push-forward $\pi_*\nu$ belongs to $\cP_\mu(\Xbar)$.
As noted earlier, we will only consider cases when $\Ga_\mu$ acts strongly irreducibly and proximally on $\bR^3$ so we can conclude that
$\pi_*\nu = \bar{\nu}_{\Xbar}$ is the Furstenberg measure. Our main result regarding~\ref{case1} is the following.
\begin{theorem}\label{thm:case1}
    Let $\mu$ be a compactly supported measure whose support generates a Zariski dense subgroup of $G$. Then
    the natural lift of the Furstenberg measure on $\Xbar$ is the unique
    $\mu$-stationary measure on $\X$.
    Furthermore, for any $x\in X$ we have that:
    \begin{enumerate}[(1)]
        \item The sequence $\frac{1}{n}\sum_{k=1}^n \mu^{*k}*\del_x$ converges to the natural lift.
        \item For $\mu^{\otimes\bN}$-almost every $(g_1,g_2,\dots)\in G^\bN$ the sequence $\frac{1}{n}\sum_{k=1}^n\del_{g_k\cdots g_1 x}$ converges to the natural lift.
    \end{enumerate}
\end{theorem}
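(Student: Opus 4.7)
The plan is to adapt the Benoist--Quint scheme for classifying stationary measures. I would first check that the natural lift $\nu^*\defi\int_{\Xbar} m_p\,\dv{\bar\nu_{\Xbar}}{p}$ is $\mu$-stationary: since Zariski density of $\Ga_\mu$ in $\SL_3(\bR)$ forces strong irreducibility and proximality on $\bR^3$, Furstenberg's theorem provides stationarity of $\bar\nu_{\Xbar}$ on $\Xbar$, and the identity $g_*m_p=m_{gp}$ -- a reflection of the fact that $g$ intertwines the two fibers in a Haar-preserving way -- lifts stationarity to $\nu^*$. For uniqueness, let $\nu\in\cP_\mu(\X)$ be ergodic. $G$-equivariance of $\pi$ yields $\pi_*\nu\in\cP_\mu(\Xbar)=\set{\bar\nu_{\Xbar}}$, so disintegration gives $\nu=\int_{\Xbar}\nu_p\,\dv{\bar\nu_{\Xbar}}{p}$ and the task reduces to proving $\nu_p=m_p$ for $\bar\nu_{\Xbar}$-almost every $p$.

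I would handle this in two steps. \emph{Step A (tightness):} since $X$ is non-compact, one must rule out escape of fiber mass to the cusps of $\SL_2(\bR)/\SL_2(\bZ)$. The standard device is a Margulis function $\varphi\colon X\to[1,\infty)$, essentially the reciprocal of the length of the shortest nonzero vector in a 2-lattice representing a class, satisfying a drift inequality $\int\varphi(gx)\,d\mu(g)\le a\varphi(x)+b$ for some $a<1$, $b>0$. This forces $\int\varphi\,d\nu<\infty$ and hence tightness of the fiber measures $\nu_p$. The Zariski density hypothesis enters through contraction properties of the cocycle on $\bR^3$ and its dual, which supply the constants $a,b$. \emph{Step B (fiber invariance):} apply the Benoist--Quint exponential drift argument to pairs of trajectories that shadow each other in $\Xbar$ under the Furstenberg cocycle but drift apart inside a typical fiber by an explicit unipotent element of the stabilizer of $p$. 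The outcome is that $\nu_p$ is invariant under a one-parameter unipotent subgroup of $\SL(p)\cong\SL_2(\bR)$. Ratner's measure classification then forces $\nu_p$ to be either $m_p$ or supported on a finite set; the latter possibility is excluded because the full Zariski density of $\Ga_\mu$ in $\SL_3(\bR)$ rules out any measurable $\Ga_\mu$-equivariant assignment $p\mapsto$ (finite subset of $\pi^{-1}(p)$), as such data would correspond to a proper $\Ga_\mu$-invariant algebraic subvariety of $X$ over $\Xbar$.

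The main obstacle is Step B: implementing exponential drift in this fibered setting demands simultaneous control of transverse divergence within the non-compact fiber and non-escape to its cusps, and requires coupling two random walks via the Furstenberg cocycle on $\Xbar$ so that their relative action on a typical fiber converges to a nontrivial unipotent element of the parabolic stabilizing the corresponding plane; extracting precisely the Levi-unipotent component while maintaining the drift estimate is the delicate step. Once uniqueness is established, the convergence statements follow from general principles: part (1) is obtained from tightness of the Ces\`aro averages $\frac{1}{n}\sum_{k=1}^n\mu^{*k}*\del_x$ (guaranteed by Step A), since every weak-$*$ accumulation point is $\mu$-stationary and therefore equals $\nu^*$; part (2) then follows from (1) by Breiman's law applied to the Markov chain on $X$, using ergodicity of the natural lift (which itself is implied by its being the unique $\mu$-stationary probability measure).
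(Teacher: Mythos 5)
Your outline reproduces the broad Benoist--Quint scheme of the paper (Margulis function for non-escape, exponential drift for fiberwise unipotent invariance, Dani/Ratner on the fiber $\SL_2(\bR)/\SL_2(\bZ)$, and soft arguments for the Ces\`aro and Breiman statements), but it has a genuine gap exactly where the paper does its hardest and most specific work. The exponential drift argument cannot even be started without knowing that the limit (equivalently, fiber) measures are non-atomic: the drift needs, for almost every trajectory, pairs of distinct nearby generic points in the same fiber whose displacement can be expanded to a bounded nontrivial unipotent element. In the paper this non-atomicity is a separate theorem (part~\eqref{main2} of Theorem~\ref{thm:full}), proved in \S\ref{sec:non atomicity} by building a contracted function on $(X\times X)_M\smallsetminus\Del_X$ for the first-return Markov operator, and it is precisely here that Zariski density in $\SL_3(\bR)$ (as opposed to $\SO(Q)(\bR)$) enters, through part~\eqref{p:06111} of Lemma~\ref{lem:max weight} and inequality~\eqref{eq:22064}. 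Your proposal silently assumes the drift can be run and then tries to dispose of the degenerate alternative afterwards.

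Moreover, the way you dispose of it is incorrect on two counts. First, the non-Haar alternative in the classification of measures invariant under a one-parameter unipotent on $\SL_2(\bR)/\SL_2(\bZ)$ is not ``supported on a finite set'' but supported on periodic horocycle orbits; the paper kills this branch by a random-walk growth argument (Lemma~\ref{lem:firstgrowth} applied to the function $\rho(b,\br{\Lam})=\av{\Lam\cap\ell_b}/\av{\Lam}^{1/2}$ in Proposition~\ref{prop:unipinv}), not by algebra. Second, your claim that Zariski density of $\Ga_\mu$ rules out any measurable $\Ga_\mu$-equivariant assignment of finite subsets of fibers ``since it would give a proper invariant algebraic subvariety'' is unjustified (measurable equivariant data need not be algebraic) and is in fact contradicted in spirit by the paper itself: Theorem~\ref{thm:cover} exhibits a continuous equivariant section over the isotropic circle for a discrete Zariski-dense subgroup of $\SO(Q)(\bR)$, showing that Zariski density in a semisimple group alone cannot exclude such equivariant finite data. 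The exclusion in~\ref{case1} genuinely requires the representation-theoretic input that fails in~\ref{case2}, i.e.\ the non-atomicity machinery of \S\ref{sec:Non-escape-of-mass/contracted}--\S\ref{sec:non atomicity}, which your proposal does not supply.
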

The second part of Theorem~\ref{thm:case1} has the following immediate corollary.
\begin{corollary}
    Let $\Ga$ be a finitely generated discrete Zariski dense subgroup of $G$. Then the pre-image
    $\pi^{-1}(\supp\bar{\nu}_{\Xbar})$ is the unique $\Ga$-minimal subset in $X$.
\end{corollary}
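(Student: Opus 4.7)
The plan is to extract the corollary from part (2) of Theorem~\ref{thm:case1} applied to a well-chosen measure on $G$.

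First I would pick a finite symmetric generating set $S\subset\Ga$, available because $\Ga$ is finitely generated, and set $\mu\defi |S|^{-1}\sum_{g\in S}\del_g$. Then $\mu$ is compactly supported with $\Ga_\mu=\Ga$, and $\Ga$ is Zariski dense by assumption, so Theorem~\ref{thm:case1} applies. Write $\nu$ for the natural lift of the Furstenberg measure $\bar\nu_{\Xbar}$ and set $Y\defi\pi^{-1}(\supp\bar\nu_{\Xbar})$. I would first record that $\supp\nu=Y$: Definition~\ref{def:naturallift} presents $\nu=\int_{\Xbar}m_p\,\dv{\bar\nu_{\Xbar}}p$ with fibre components equal to the Haar measure on $\pi^{-1}(p)\simeq\SL_2(\bR)/\SL_2(\bZ)$, which has full support in each fibre.

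Next I would prove that $Y$ is $\Ga$-minimal. The set $Y$ is closed by continuity of $\pi$, and $\Ga$-invariant because $\supp\bar\nu_{\Xbar}$ is (a standard fact: under strong irreducibility and proximality the support of the Furstenberg measure on $\Xbar$ coincides with the unique $\Ga$-minimal set). Fix $x\in X$. Theorem~\ref{thm:case1}(2) produces a sequence $(g_1,g_2,\dots)\in \Ga^\bN$ for which $\frac{1}{n}\sum_{k=1}^n\del_{g_k\cdots g_1 x}$ converges weak-$*$ to $\nu$. The Portmanteau theorem, applied to open neighbourhoods of any $y\in\supp\nu=Y$, then forces $y$ to be an accumulation point of $\set{g_k\cdots g_1 x:k\geq 1}$, so $y\in\overline{\Ga x}$; hence $Y\subseteq\overline{\Ga x}$. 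When $x\in Y$ the reverse inclusion $\overline{\Ga x}\subseteq Y$ is automatic from $\Ga$-invariance and closedness of $Y$, giving $\overline{\Ga x}=Y$ and hence minimality of $Y$.

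Uniqueness is now formal: if $Z\subseteq X$ is any $\Ga$-minimal set and $z\in Z$, the previous paragraph applied with $x=z$ yields $Y\subseteq\overline{\Ga z}=Z$, and since $Y$ is non-empty, closed, and $\Ga$-invariant, minimality of $Z$ forces $Y=Z$. I do not anticipate a genuine obstacle: the corollary is essentially a repackaging of the equidistribution statement in Theorem~\ref{thm:case1}(2); the only substantive verification is $\supp\nu=Y$, which is a standard fibre-integration computation, together with the well-known $\Ga$-invariance of $\supp\bar\nu_{\Xbar}$ in the strongly irreducible proximal regime.
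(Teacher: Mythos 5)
Your argument is correct and is exactly the route the paper intends: the corollary is stated there as an immediate consequence of Theorem~\ref{thm:case1}(2), and your proof simply fleshes out that argument (choice of a finitely supported $\mu$ with $\Ga_\mu=\Ga$, the identification $\supp\nu=\pi^{-1}(\supp\bar{\nu}_{\Xbar})$, and the Portmanteau/orbit-closure bookkeeping). No gaps; the unused discreteness hypothesis is harmless, since, as the paper remarks, the non-discrete case is trivial anyway.
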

Note that a non-discrete Zariski dense subgroup of $G$ is automatically dense in $G$ and thus the corollary is trivial for
such groups because $G$ acts transitively on $X$.

Theorem~\ref{thm:case1} should be compared with the main result of~\cite{BQAnnals} which is an analogous  
statement. The reason that the results of Benoist and
Quint fall short of being applicable to our discussion is that $X$ is not obtained as a quotient of a Lie group by a lattice but rather by
a closed group with non-trivial connected component.

In~\ref{case2} we also have the following result.
\begin{theorem}\label{thm:case2}
    Let $\mu$ be a compactly supported probability measure on $\SO(Q)(\bR)$ satisfying either one of the following:
    \begin{enumerate}[(a)]
        \item\label{case:zd}{The group generated by the support of $\mu$ is discrete and Zariski dense in $\SO(Q)(\bR)$.}
        \item\label{case:ac}{The measure $\mu$ is absolutely continuous with respect to the 
                Haar measure on $\SO(Q)(\bR)$ and contains the
            identity in the interior of its support.}
    \end{enumerate}
    Then if $\nu$ is a
    $\mu$-ergodic $\mu$-stationary measure on $\X$ then either it is the natural lift or it is a measure preserving
    finite extension of the Furstenberg
    measure on $\Xbar$.

    \noindent Furthermore, for any $x\in X$ we have that:
    \begin{enumerate}[(1)]
        \item Any weak-* accumulation point of the sequence $\frac{1}{n}\sum_{k=1}^n \mu^{*k}*\del_x$ is a $\mu$-stationary probability measure
            on $X$.
        \item For $\mu^{\otimes\bN}$-almost every $(g_1,g_2,\dots)\in G^\bN$ any weak-* accumulation point of
            the sequence $\frac{1}{n}\sum_{k=1}^n\del_{g_k\cdots g_1 x}$ is a $\mu$-stationary probability measure.
    \end{enumerate}
\end{theorem}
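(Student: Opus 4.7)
The plan is to adapt the exponential drift technique of Benoist--Quint to our fibered setting. In both subcases \ref{case:zd} and \ref{case:ac} the theorem of Gol'dsheid--Margulis ensures that $\Ga_\mu$ acts strongly irreducibly and proximally on $\bR^3$, so any $\mu$-stationary $\nu\in\cP_\mu(X)$ projects under $\pi$ to the unique Furstenberg measure $\bar\nu_{\Xbar}$. Disintegrating along $\pi$ gives $\nu=\int_{\Xbar}\nu_p\,d\bar\nu_{\Xbar}(p)$ with each $\nu_p\in\cP(\pi^{-1}(p))$ and $\pi^{-1}(p)\cong\SL_2(\bR)/\SL_2(\bZ)$, and $\mu$-stationarity yields the averaged cocycle identity $\nu_p=\int_G g_*\nu_{g^{-1}p}\,d\mu(g)$ for $\bar\nu_{\Xbar}$-a.e.\ $p$.

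The heart of the argument, and its expected main obstacle, is a fiberwise dichotomy: for $\bar\nu_{\Xbar}$-a.e.\ $p$ the measure $\nu_p$ is either the Haar measure $m_p$ or a purely atomic measure with finite support. I would prove this by an exponential drift argument in the style of \cite{BQAnnals,BQInventiones}: assume some $\nu_p$ has a non-atomic component yet is not equal to $m_p$, and couple two independent trajectories of the random walk that share the same projection to $\Xbar$ but differ by a small relative displacement within the fiber. Passing to a limit along the walk produces additional invariance of $\nu_p$ under a one-parameter unipotent subgroup of the fiberwise $\SL_2(\bR)$-action, and Ratner's measure classification on $\SL_2(\bR)/\SL_2(\bZ)$ then forces $\nu_p=m_p$, a contradiction. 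Implementing this step requires careful analysis of the cocycle $c:\SO(Q)(\bR)\times\Xbar\to\SL_2(\bR)$ arising from a measurable choice of isometry between each plane and $\bR^2$, together with a Lyapunov spectrum computation for $c$ relative to $\mu$. The analysis diverges a little between \ref{case:zd} and \ref{case:ac} (the absolutely continuous case admits a more direct smoothing alternative), but the end-statement is the same.

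Granted the dichotomy, $\mu$-ergodicity of $\nu$ applied to the $\Ga_\mu$-invariant partition of $\Xbar$ into ``Haar'' and ``atomic'' fibers shows that one type prevails $\bar\nu_{\Xbar}$-a.s. In the Haar case, $\nu$ is the natural lift of $\bar\nu_{\Xbar}$. In the atomic case, for each $p$ let $W_p>0$ be the maximum atomic weight of $\nu_p$ and $A_p\subset\pi^{-1}(p)$ the (finite) set attaining it; the assignment $p\mapsto(W_p,|A_p|)$ is $\Ga_\mu$-invariant, hence a.s.\ constant by ergodicity of $\bar\nu_{\Xbar}$, and an elementary restriction-and-renormalisation argument (iterating from the top weight down) reduces to the case where each $\nu_p$ is uniform on a $k$-element set $A_p$. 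The measure-preserving identity $g_*\nu_p=\nu_{gp}$ for $\mu$-a.e.\ $g$ then drops out of the cocycle identity, since the average over $g$ of $k$-atomic uniform measures with varying supports cannot recover a $k$-atomic uniform measure unless the supports agree $\mu$-a.s.

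Finally, for the Cesaro statements, the only additional input needed is non-divergence: weak-$*$ limit points of $\frac1n\sum_{k=1}^n\mu^{*k}*\del_x$ must be probability measures rather than sub-probabilities. Since projections to the compact space $\Xbar$ are automatically tight, only escape into the cusp of the fibers $\SL_2(\bR)/\SL_2(\bZ)$ needs to be ruled out, and this is supplied by an Eskin--Margulis-type non-divergence estimate for random walks on $\SL_2(\bR)/\SL_2(\bZ)$, applicable because of the strong irreducibility granted by Gol'dsheid--Margulis. The almost-sure statement (2) then follows from (1) via Breiman's law of large numbers applied to continuous compactly supported test functions.
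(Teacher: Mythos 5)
There is a genuine gap, and it sits exactly where the hypotheses \eqref{case:zd} and \eqref{case:ac} are supposed to do their work. Your argument conflates the fibre measures $\nu_p$ of the disintegration of $\nu$ over $\pi$ with the limit measures $\nu_b=\lim_n (b_1^n)_*\nu$ along the backward walk. The exponential drift machinery produces extra unipotent invariance for the \emph{limit} measures $\nu_b$ (which by Proposition~\ref{prop:firstprop} live on the single fibre $\pi^{-1}(p_b)$), not for the $\nu_p$; so the ``fibrewise dichotomy'' you assert --- each $\nu_p$ is either $m_p$ or uniformly $k$-atomic --- is not what drift gives, and it is not known under Zariski density alone. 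In the atomic regime what one actually obtains is that $\be$-almost every $\nu_b$ is uniform on $k$ points; but $\nu_p$ is an average of $\nu_b$ over all $b$ with $p_b=p$, and nothing forces the assignment $b\mapsto\nu_b$ to factor through $b\mapsto p_b$. Bridging this is precisely where the paper invokes that $(\Xbar,\bar\nu_{\Xbar})$ is the Poisson boundary of $(\Ga_\mu,\mu)$ (Ledrappier, Kaimanovich, Furman), which is guaranteed by discreteness or by absolute continuity of $\mu$, and which yields that any stationary extension of $\bar\nu_{\Xbar}$ is automatically a measure-preserving extension, whence $\nu_b=\nu_{p_b}$ and the $k$-extension structure. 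Your proposed shortcut --- ``an average of $k$-atomic uniform measures cannot be a $k$-atomic uniform measure unless the supports agree'' --- presupposes that each $\nu_p$ is already uniformly $k$-atomic, which is the very point that cannot be reached by your route; likewise the claim that $p\mapsto(W_p,|A_p|)$ is $\Ga_\mu$-invariant does not follow from the averaged stationarity identity for the $\nu_p$ (exact equivariance holds only for the $\nu_b$ along the shift). As written, your proof never uses \eqref{case:zd} or \eqref{case:ac}, which should be a warning sign: the statement is not claimed, and is not expected, without them.

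Two further points, more minor but still gaps. First, unipotent invariance of a measure on $\SL_2(\bR)/\SL_2(\bZ)$ does not by itself force Haar: Dani's classification leaves the alternative of measures carried by periodic horocycle orbits, and the paper must exclude this by a growth-rate/recurrence argument (Lemma~\ref{lem:firstgrowth} fed into Proposition~\ref{prop:unipinv}); your sketch skips this case. Second, the non-divergence step cannot be quoted as an Eskin--Margulis estimate ``for random walks on $\SL_2(\bR)/\SL_2(\bZ)$'', because the walk does not preserve fibres and the induced $\SL_2(\bR)$-cocycle over $\Xbar$ is not a random walk driven by a fixed measure on $\SL_2(\bR)$. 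One needs a proper function on $X$ itself contracted by the $\mu$-averaging operator, built from the normalised shortest vector $N_\Lam(v)=\norm{v}/\av{\Lam}^{1/2}$ and the Lyapunov inequality $\omrt(\sig_\mu)-\tfrac12\omrd(\sig_\mu)>0$, which is what \S\ref{sec:Non-escape-of-mass/contracted} provides before citing Benoist--Quint for the Ces\`aro and almost-sure statements. Your Breiman-type conclusion is fine once such a height function is in place, but not before.
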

\begin{remark}\label{rem:amplification}
    In fact,  in the proof of Theorem~\ref{thm:case2} 
    we will see that in the case $\mu$ satisfies assumption~\eqref{case:ac} the existence of 
    a finite extension is excluded and the natural lift is the unique $\mu$-stationary measure. See the last paragraph
    of \S\ref{sec:cover case}. 
    This implies that the second part of Theorem~\ref{thm:case2} yields a statement similar to Theorem~\ref{thm:case1}. 
\end{remark}
In Theorem~\ref{thm:case2} the assumptions about the measure~\eqref{case:zd} and~\eqref{case:ac} are there to ensure
that
$(\Xbar,\bar{\nu}_{\Xbar})$ is the Poisson boundary of $(\Ga_\mu,\mu)$ which is the actual assumption needed for the
part of the proof  appearing in \S\ref{sec:cover case}. See~\cite[Theorem 2.17, Theorem 2.21]{FurmanHandbook} and 
also~\cite[Theorem 5.3]{MR0146298}.


The existence of finite extensions is analogous to the existence of atomic stationary measures in the work of Benoist and Quint.
It seems to us that in many cases the existence of finite extensions can be excluded due to algebraic reasons.

The lack of uniqueness in the classification part of Theorem~\ref{thm:case2} is what makes the conclusion regarding distributional
properties of individual orbits weaker than that in Theorem~\ref{thm:case1}. It is not clear to us if one should expect individual orbits
to equidistribute with respect to a single ergodic stationary measure or not (see Problem~\ref{prob:unique lifts}).
\subsection{Embedding of the Poisson boundary in $X$}\label{ssec:pboundary}
In this subsection we work under the assumption that we are in~\ref{case2} the quadratic form $Q$ is as in equation~\eqref{eq:0923} and we set $H=\SO(Q)(\bR)$.
For a long time we thought we could prove that the natural lift of the Furstenberg measure is the unique $\mu$-stationary measure in the setting of Theorem~\ref{thm:case2}. As Conjecture~\ref{conj:1} follows from such a statement, we announced
Conjecture~\ref{conj:1} as a theorem
in several talks and research proposals. A gap in the proof was pointed out to us by  Lindenstrauss and
after several failed attempts to close it we ran a computer experiment and immediately found an example of a
1-extension (see Theorem~\ref{thm:cover}). All the other examples that we can find are obtained from this example by means of finite index and we do not understand to what extent these objects are rare and what kind of structure they possess. See Problem~\ref{prob:unique lifts} and Remark~\ref{rem:Uri Bader's insight}. We now describe this simple example and urge the reader to ponder it as we find
it mind boggling.

In the following discussion and in Figure~\ref{fig:3} we will use the notation:
$$u^+(t)\defi \mat{1&t&t^2/2\\0&1&t\\0&0&1},\;\;
u^-(t) \defi \mat{1&0&0\\t&1&0\\t^2/2&t&1}\;\textrm{and }\;
k \defi \mat{0&0&1\\ 0 &-1&0\\ 1&0&0}.$$
Also, let $u_\pm\defi u^\pm(2)$ and
note that $\Ga_0\defi\idist{u_+,u_-}$ is a finite index subgroup in the arithmetic group  $\SO(Q)(\bZ)$.
Hence it follows from the Borel Harish-Chandra Theorem~\cite{MR0147566} and the fact that $Q$ is defined over $\bQ$ 
that $\Ga_0$ is a lattice in $H$.
Let us denote also by $\cC\subset \Xbar$ the \textit{circle of isotropic planes}; that is, the set of planes $p\in\Xbar$ such that the 
restriction of $Q$ to $p$ is degenerate.
Note that $\cC$ is the unique $H$-invariant closed 
minimal subset of $\Xbar$
and $\cC = Hp_0$ where $p_0\defi\spa_\bR(\set{e_1,e_2})$. Since $\Stab_H(p_0)\defi P$ is a minimal parabolic subgroup of $H$, one can also think of $\cC$ as the full flag variety of $H$.
If $\mu\in\cP(H)$ is such that $\Ga_\mu$ is Zariski dense in $H$ then its Furstenberg measure is supported on the circle of isotropic 
planes~\cite[\S4]{MR3560700}.
\begin{theorem}\label{thm:cover}
    There exists a continuous $\Ga_0$-equivariant section
    $\zeta:\cC\to \X$ (i.e.\ $\pi(\zeta(p))=p$ for all $p\in\cC$).
    In particular, if $\mu\in \cP(G)$ satisfies $\Ga_\mu = \Ga_0$, then $\zeta_*\bar{\nu}_{\Xbar}$
    is a $\mu$-stationary
    1-extension of the Furstenberg measure of $\mu$ on $\Xbar$.
\end{theorem}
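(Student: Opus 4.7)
The plan is to build $\zeta$ by leveraging the symmetric-square covering $\rho:\SL_2(\bR)\to H$. Setting $V=\bR^2$ with dual basis $(x,y)$ of $V^*$, I would identify $\bR^3$ with $\mathrm{Sym}^2 V^*$ via $e_1=x^2/2,\;e_2=xy,\;e_3=y^2$; in these coordinates $Q$ becomes the negative discriminant of a binary quadratic form, namely $Q(v)=2v_1v_3-v_2^2$. The null cone is then $\set{\phi^2:\phi\in V^*\smallsetminus\set 0}$, and projectivizing gives $\cC\cong \bP V^*$ via $[\phi]\leftrightarrow \phi V^*$, where $\phi V^*\defi\set{\phi\psi:\psi\in V^*}$ is an isotropic $2$-plane with radical $\bR\phi^2$. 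A direct computation shows that $u_+=u^+(2)$ and $u_-=u^-(2)$ lift through $\rho$ to $\begin{pmatrix}1&0\\-1&1\end{pmatrix}$ and $\begin{pmatrix}1&-2\\0&1\end{pmatrix}$ in $\SL_2(\bZ)$ respectively, so $\Ga_0\subset \rho(\SL_2(\bZ))$.

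With this setup I would define
\[ \zeta(\phi V^*)\defi \br{\phi\cdot(\bZ x+\bZ y)}, \]
regarded as a rank-$2$ discrete subgroup of $\phi V^*\subset \bR^3$. This is well-defined on $[\phi]\in\bP V^*$ since rescaling $\phi$ rescales the lattice uniformly, and $\pi\circ\zeta=\on{id}$ holds by construction. On the affine chart $\phi=\phi_z\defi x-zy$ with $z\in\bR$, the generators $\phi x=x^2-zxy$ and $\phi y=xy-zy^2$ depend polynomially on $z$, so $\zeta$ is continuous there; the remaining point $[y]$ at ``$z=\infty$'' is handled in the chart $\phi=wx-y$ (with $w=1/z$) and yields $\zeta([y])=\br{\bZ e_2+\bZ e_3}$, a 2-lattice in $\spa(e_2,e_3)$.

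The $\Ga_0$-equivariance then follows from two elementary identities: the lattice $\bZ x+\bZ y\subset V^*$ is preserved by the contragredient action of every $g\in\SL_2(\bZ)$, and for $g=\begin{pmatrix}a&b\\c&d\end{pmatrix}\in\SL_2$ one has the modular transformation rule $g\cdot\phi_z=(cz+d)\phi_{gz}$ where $gz=(az+b)/(cz+d)$. Combining these in $\mathrm{Sym}^2 V^*$ yields
\[ \rho(g)\br{\phi_z\cdot(\bZ x+\bZ y)} = \br{(g\phi_z)(g\cdot(\bZ x+\bZ y))} = \br{(cz+d)\phi_{gz}\cdot(\bZ x+\bZ y)} = \zeta(g[\phi_z]), \]
the middle equality using $\SL_2(\bZ)$-invariance of $\bZ x+\bZ y$, and the last using that the scalar $(cz+d)$ is absorbed by the homothety class. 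The key point of fragility is this integrality step: it is specifically the choice $t=2$ in $u^\pm(2)$ that makes the $\SL_2$-lifts land in $\SL_2(\bZ)$, and for smaller integer values the lifts would only lie in $\SL_2(\bQ)$ and the argument would not go through.

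The ``in particular'' statement is then formal. $\Ga_0$-equivariance of $\zeta$ combined with $\mu$-stationarity of $\bar{\nu}_{\Xbar}$ gives $\mu*(\zeta_*\bar{\nu}_{\Xbar})=\zeta_*(\mu*\bar{\nu}_{\Xbar})=\zeta_*\bar{\nu}_{\Xbar}$, so $\zeta_*\bar{\nu}_{\Xbar}$ is $\mu$-stationary. Since $\pi\circ\zeta=\on{id}$, the push-forward is supported on the graph of $\zeta$, so its disintegration over $\Xbar$ consists of Dirac masses; hence $\zeta_*\bar{\nu}_{\Xbar}$ is a $1$-extension of $\bar{\nu}_{\Xbar}$ in the sense of Definition~\ref{def:naturallift}.
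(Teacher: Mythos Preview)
Your proof is correct and takes a genuinely different, more conceptual route than the paper. The paper defines explicit lattices $\Lambda_t=\spa_\bZ(e_1+te_2,\,e_2+2te_3)$, verifies by direct matrix computation that $u_\pm$ intertwine $\psi(t)=[\Lambda_t]$ with the action of two specific elements of $\SL_2(\bZ)$ on $\bR\cup\{\infty\}$, and then invokes minimality of a lattice action on $\bP\bR^2$ to identify the image with $\cC$. Your argument replaces all of this by the single structural observation that $\bR^3\cong\mathrm{Sym}^2V^*$ realises $H$ as the image of $\SL_2(\bR)$, identifies $\cC$ with $\bP V^*$ from the outset, and gets equivariance for free from $\SL_2(\bZ)$-invariance of the standard lattice. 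This is essentially the conceptual explanation the paper attributes (without details) to Uri Bader in Remark~\ref{rem:Uri Bader's insight}. What your approach buys: it explains \emph{why} the section exists, shows immediately that $\zeta$ is equivariant for all of $\rho(\SL_2(\bZ))\supset\Ga_0$, and makes transparent your fragility remark about $t=2$ (the lift of $u^+(t)$ involves $t/2$). What the paper's approach buys: it is entirely self-contained and requires no representation-theoretic setup.

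One minor remark: your section is not the same as the paper's. With $z=-2t$ the planes agree, but your lattice $\bZ(2e_1-ze_2)+\bZ(e_2-ze_3)$ is an index-$2$ sublattice of the paper's $\Lambda_t$, hence not homothetic to it. This does not affect correctness (the theorem only asserts existence), and in fact demonstrates that the $\Ga_0$-equivariant section is not unique.
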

\begin{proof}
    For $t\in\bR$ we define
    $$\Lam_t\defi\spa_\bZ(\set{e_1+te_2,e_2+2te_3})\quad\textrm{and}\quad\Lam_\infty\defi\spa_\bZ(\set{e_2,2e_3}).$$
    Note that $\lim_{t\to\pm\infty}\br{\Lam_t}=\br{\Lam_\infty}\in X$.
    Consider the map $\psi:\bR\cup\set{\infty}\to X$ given by $\psi(t) = \br{\Lam_t}$.
    Let $$g_1\defi\mat{1&0\\2&1}\quad\textrm{and}\quad g_2\defi\mat{1&1\\0&1}.$$
    There is an action $\SL_2(\bR)\curvearrowright\bR\cup\set\infty$ by fractional linear transformations, obtained by identifying
    $\bP\bR^2$ with $\bR\cup\set\infty$.
    We claim that
    \begin{equation}\label{eq:1invcirc}
        \psi\comp g_1=u_+\comp\psi\quad\textrm{and}\quad\psi\comp g_2=u_-\comp\psi.
    \end{equation}
    To show this we compute
    \begin{align*}
        u_+\br{\Lam_t}&= \br{\spa_\bZ(\set{(1+2t)e_1+te_2,(2+4t)e_1+(1+4t)e_2+2te_3})}\\
                      &= \br{\spa_\bZ(\set{(1+2t)e_1+te_2,(1+2t)e_2+2te_3})}\\
                      &= \br{\Lam_{g_1t}}
    \end{align*}
    and
    \begin{align*}
        u_-\br{\Lam_t}&= \br{\spa_\bZ(\set{e_1+(2+t)e_2+(2+2t)e_3,e_2+(2+2t)e_3})}\\
                      &= \br{\spa_\bZ(\set{e_1+(1+t)e_2,e_2+(2+2t)e_3})}\\
                      &= \br{\Lam_{g_2t}}
    \end{align*}
    as required. Similar calculations also show that the above equalities hold true when $t=\infty$ and so~\eqref{eq:1invcirc} is verified.

    Since $\idist{g_1,g_2}$ is a lattice in $\SL_2(\bR)$ its action on $\bP\bR^2$ is
    minimal\footnote{This follows (for instance) from that fact that any parabolic subgroup of $\SL_2(\bR)$ acts
        minimally on $\SL_2(\bR)/\Gamma$, for every lattice $\Gamma$ in $\SL_2(\bR)$. 
    See for example~\cite[Proposition 1.5]{DaniRaghavan}.}
    It follows from the equivariance of $\pi$ and~\eqref{eq:1invcirc} that $\pi\comp\psi(\bP\bR^2)$ is a closed minimal $\Ga_0$-invariant set in $\Xbar$.
    It is thus equal to $\cC$ since the latter is the unique such set.
    Moreover, it is straightforward to check that $\pi\comp\psi$ is 1-1 which shows that there exists a continuous inverse
    $(\pi\comp \psi)^{-1}:\cC\to \bP\bR^2$.
    We then define
    $\zeta \defi \psi \comp (\pi\comp \psi)^{-1} : \cC\to X$ and note that from what we established so far it is clear that $\zeta$ is a
    $\Ga_0$-equivariant.
\end{proof}
\begin{remark}
    The remarkable feature of the section $\zeta$ from Theorem~\ref{thm:cover} is that it is $\Ga_0$-equivariant and not $H$-equivariant.
    Its image $\wt{\cC}\defi \zeta(\cC)$ is a $\Ga_0$-invariant circle  which intersects each fibre above the circle of isotropic planes in a single 2-lattice.
    See Figure~\ref{fig:2} for an illustration of the (lift of the) projection of $\wt{\cC}$ to $\on{PSO}_2(\bR)\backslash X_2$.
    Since $H$ acts minimally on $\pi^{-1}(\cC)$, $\wt{\cC}$ is not $H$-invariant.
    This minimality is one of the reasons we did not expect the existence of the section $\zeta$.
\end{remark}
\begin{remark}\label{rem:Uri Bader's insight} 
After presenting the above example to Uri Bader, he managed to explain it in a conceptual manner. It seems likely that
his insights could be used to resolve some of the problems presented in this paper. We expect this to be the subject of 
future work.
\end{remark}

\begin{figure}[h]
    \begin{center}
        \includegraphics[width=2.in]{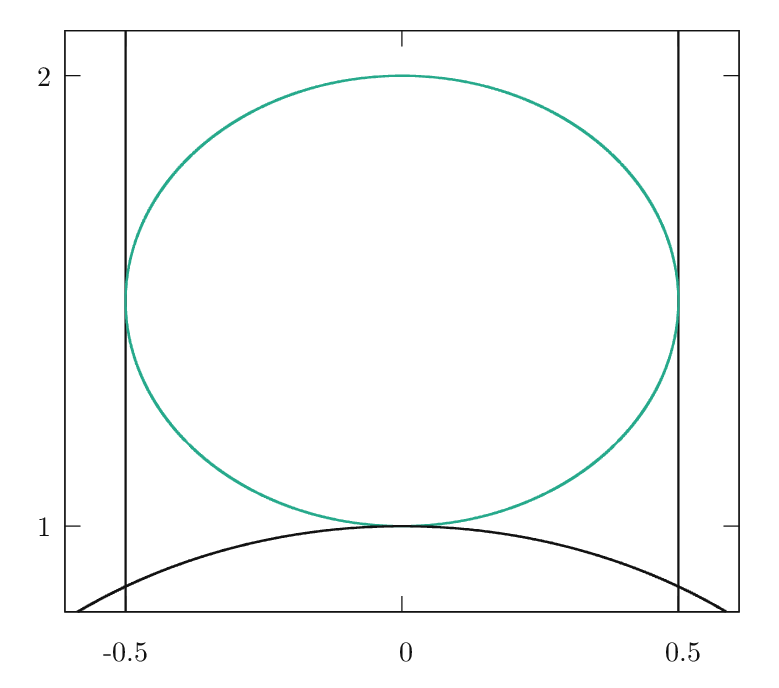}
        \caption{Plot of the (lift of the) projection of the $\Ga_0$-invariant set $\wt{\cC}=\zeta(\cC)\subset X$
        in $\PSL_2(\bZ)\backslash \bH\simeq \on{PSO}_2(\bR)\backslash X_2$.}\label{fig:2}
    \end{center}
\end{figure}
As this introduction is quite long, we do not dwell on the comparison between the results here and similar
classification results of stationary measures on homogeneous spaces.
Nevertheless, this comparison is essential if one
wishes to shape a reasonable set of expectations regarding stationary measures and closed invariant sets of 
semisimple groups acting on spaces such as $X$.
In particular, in the case where the acting measure generates a Zariski dense subgroup in a semisimple group, 
one should compare our results with the seminal works of
Benoist and Quint~\cite{BQAnnals, BQJams, BQInventiones, BQAnnals2}, Eskin and Margulis~\cite{MR2087794},
Bourgain-Furman-Lindenstrauss-Mozes~\cite{BFLM} and~\cite{EskinMirzahani}. 
The recent preprint~\cite{EskinLindenstrauss} is also highly relevant and as it provides an alternative proof for some of the
results of Benoist and Quint, potentially, the techniques 
introduced there can simplify the analysis carried out in this paper.
See also 
Simmons and Weiss~\cite{2016arXiv161105899S} for results pertaining to non-semisimple Zariski closures.
Compare also, the more classical results
regarding measures on projective spaces originating from the seminal work of Furstenberg~\cite{MR0163345,MR0284569}, Furstenberg-Kesten
~\cite{MR0121828} and Furstenberg-Kifer~\cite{MR727020}.
For potential applications of such classification results see~\cite{2016arXiv161105899S}.
In the opposite case when the acting measure has certain smoothness properties one can juxtapose our results with those of 
Nevo and Zimmer~\cite{MR1933077,MR1919409}. 

We wish to stress, as this cannot be stressed enough,
that we follow closely the exposition and methods developed in~\cite{BQJams}. Our main work was
to overcome technical difficulties arising from the fact that $X$ is obtained as a quotient by a group
with a non-trivial connected component. Other than that we mainly needed
to downgrade the generality of their discussion and hopefully maintain the quality of presentation.

In future work we plan to generalise the results of this paper and analyse actions of 
discrete groups on spaces 
with features similar to $X$. 
These include the space of homothety classes of lattices in $k$-planes in $\bR^n$ but more
generally bundles over projective spaces with fibres obtained as quotients of a Lie group by a lattice.

We conclude this introduction by stating some natural open problems and presenting
figures pertaining to~\ref{case2}.
\begin{problem}\label{prob:unique lifts}
    Let $\mu\in\cP(H)$ be a finitely supported measure
    such that $\Ga_\mu$ is Zariski dense in $H = \SO(Q)(\bR)$.
    \begin{enumerate}[(1)]
        \item Is it true that if $\Ga_\mu$ is dense in $H$, or if $\Ga_\mu$ is cocompact in $H$, then the natural lift
            of the Furstenberg measure is
            the unique $\mu$-stationary measure on $\X$?
        \item If $k_i$ is a sequence of natural numbers such that $k_i\to\infty$ and $\nu_i\in \cP(\X)$ is a $\mu$-stationary $k_i$-extension of the Furstenberg measure on $\Xbar$ is it true that $\nu_i$ converges
            to the natural lift of the Furstenberg measure?
        \item For $x\in X$, does the set of accumulation points  $\overline{\Ga_\mu x}\smallsetminus \Ga_\mu x$ of the orbit $\Ga_\mu x$ support a $\mu$-ergodic $\mu$-stationary probability measure?
        \item Is it true that for any $x\in X$ the sequence $\frac{1}{n}\sum_{k=1}^n\mu^{*k}*\del_x$ converges to a $\mu$-ergodic $\mu$-stationary probability measure?
        \item Is it true that for any $x\in X$ and $\mu^{\otimes\bN}$-almost every $(g_1,g_2,\dots)\in G^\bN$ that the sequence
            $\frac{1}{n}\sum_{k=1}^n \del_{g_k\cdots g_1x}$ converges to a $\mu$-ergodic $\mu$-stationary probability measure?
        \item Is it true that if $\nu$ is a $\mu$-ergodic $\mu$-stationary probability measure on $X$ which is a $k$-extension of
            $\bar{\nu}_{\Xbar}$, then there exists a copy of the circle $\wt{\cC}\subset  X$ such that $\pi:\wt{\cC}\to \cC$ is
            a covering map of degree $k$ and $\nu(\wt{\cC}) = 1$?
    \end{enumerate}
\end{problem}

\begin{figure}[h]
    \begin{subfigure}{0.45\textwidth}
        \centering
        \includegraphics[width=2.1in]{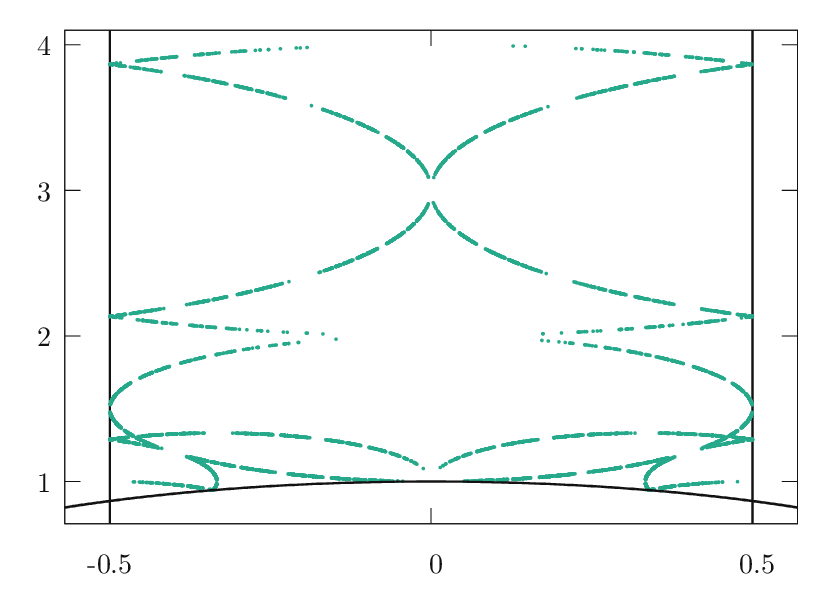}
        \caption{$\Ga=\idist{u^+(2), u^{-}(1)}$}
        \label{fig:subim1}
    \end{subfigure}
    \begin{subfigure}{0.45\textwidth}
        \centering
        \includegraphics[width=2.1in]{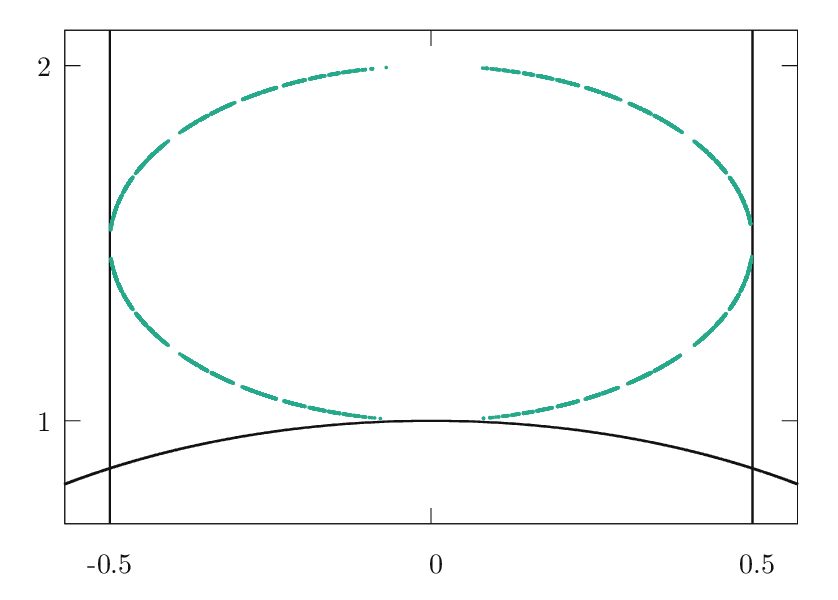}
        \caption{$\Ga=\idist{u^+(1), u^-(2)}$}
        \label{fig:subim4}
    \end{subfigure}
    \begin{subfigure}{0.45\textwidth}
        \centering
        \includegraphics[width=2.1in]{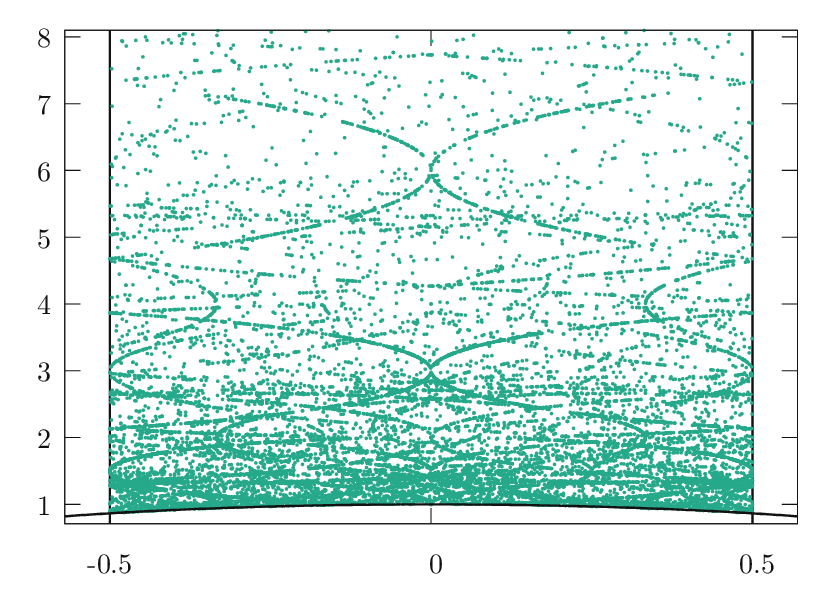}
        \caption{$\Ga=\idist{u^{+}(2), u^{-}(1), k}$}
        \label{fig:subim2}
    \end{subfigure}
    \begin{subfigure}{0.45\textwidth}
        \centering
        \includegraphics[width=2.1in]{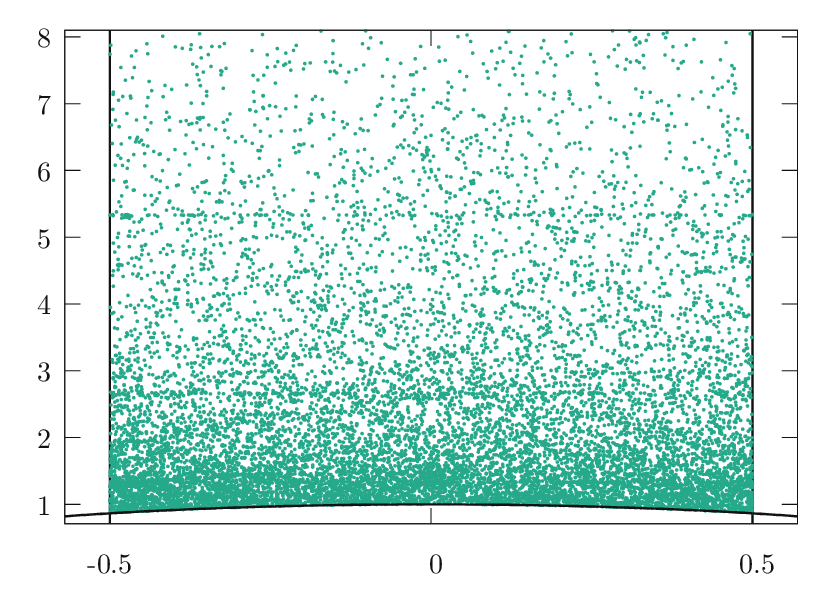}
        \caption{$\Ga=\idist{u^+(1), u^-(1)}$}
        \label{fig:subim3}
    \end{subfigure}
    \caption{Plots in $\PSL_2(\bZ)\backslash \bH$ of the projections of random points in the $\Ga$-orbit
        of $\br{\Lam_0}$ for various choices of $\Ga$.
    }
    \label{fig:3}
\end{figure}

\begin{acknowledgments}
    We would like to express our gratitude to Elon Lindenstrauss for correcting a mistake in an earlier draft.
    We would also like to thank Uri Bader, Yves Benoist, Alex Eskin, Alex Furman,  Elon Lindenstrauss,
    Amos Nevo, Jean-Fran\c cois Quint, Ron Rosenthal, Nicolas de Saxc\'e, Barak Weiss and Cheng Zheng 
    for their support encouragement and  
    assistance. We acknowledge the support of ISF grant 357/13 and the warm hospitality and splendid environment provided
    by MSRI where some of the research was conducted during the special semester Geometric and Arithmetic Aspects of 
    Homogeneous Dynamics held on 2015.
\end{acknowledgments}
\section{Generalities}\label{sec:generalities}
Throughout the paper $\mu\in \cP(G)$ is compactly suppoered, $\Ga\defi\idist{\supp \mu}$ is the group generated by its support
and $H$ is the Zariski closure of $\Ga$. Furthermore, we assume we are either in~\ref{case1} or~\ref{case2}.
Given $k,l\in\bN$ with $k<l$ and elements $b_k,\dots, b_l \in G$ we use
the following notation to denote products 
$$b_k^l\defi b_k\cdots b_l\quad\textrm{and}\quad b_l^k\defi b_l\cdots b_k.$$

\subsection{A restatement and the structure of the paper}
For convenience of reference we aim to state a unified theorem whose statement captures both Theorem~\ref{thm:case1} and Theorem~\ref{thm:case2}. In order to do so we need to define some more objects.
%
For details regarding the following facts we refer the reader to~\cite[\S2.5]{MR3560700}.
Let $A\defi \supp \mu$ and $B \defi A^\bN$ be the space of infinite sequences indexed by the positive integers. Let
$\be\defi \mu^{\otimes\bN}$ be the Bernoulli measure and $S:B\to B$ be the shift map $Sb = (b_2,b_3,\dots)$, where 
$b=(b_1,b_2,\dots)$.
Given $\nu\in \spm\mu X$ it is well known that for $\be$-almost every $b\in B$
the sequence $b_1^n\nu$ converges to a probability measure denoted $\nu_b$ known as the \textit{limit measure} of $\nu$ with respect to $b$.
Hence, the map $b\mapsto \nu_b$ is almost surely well defined and equivariant in the sense that
$\nu_b = b_1\nu_{Sb}$ for $\be$-almost every $b\in B$. Moreover, one can recover the measure $\nu$ by integrating $$\nu = \int_B \nu_b \dv\be b.$$
The following theorem is our unified statement and the reader can readily check that Theorems~\ref{thm:case1} and~\ref{thm:case2} follow from it.
\begin{theorem}\label{thm:full}
    Let $\mu\in\cP(G)$ be a compactly supported measure and suppose we are in~\ref{case1} or~\ref{case2}.
    Let $\nu\in \spm\mu X$ be $\mu$-ergodic.
    \begin{enumerate}[(a)]
        \item\label{main1} If for $\be$-almost every $b\in B$ the limit measure
            $\nu_b$ is non-atomic, then $\nu$ is the natural lift of the Furstenberg measure of $\mu$ on $\Xbar$.
        \item\label{main2} In~\ref{case1} it holds that for $\be$-almost every $b\in B$ the limit measure $\nu_b$ is non-atomic.
        \item\label{main3} In~\ref{case2}, if it does not hold that  
            for $\be$-almost every $b\in B$ the measure $\nu_b$ is non-atomic and if
            $\Ga$ is discrete or if $\mu$ is absolutely continuous with respect to the Haar measure on $H$ and contains 
            the identity in the interior of its support, then
            $\nu$ is a measure preserving finite extension of the Furstenberg measure of $\mu$ on $\Xbar$.
        \item\label{main4} In both~\ref{case1} and~\ref{case2} for any $x\in X$, any weak-* limit point of the sequence 
            $\frac{1}{n}\sum_{k=1}^n \mu^{*k}*\del_x$ is an element of $\spm \mu X$.
            Moreover, for $\be$-almost every $b\in B$ any weak-* accumulation point of
            the sequence $\frac{1}{n}\sum_{k=1}^n\del_{b_k^1x}$ is an element of $\spm \mu X$.
    \end{enumerate}
\end{theorem}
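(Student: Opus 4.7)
The plan is to follow the general strategy of Benoist--Quint~\cite{BQJams}, adapted to the fibred setting $\pi: X \to \Xbar$ in which each fibre is a copy of $\SL_2(\bR)/\SL_2(\bZ)$. Throughout, I let $\nu\in\spm{\mu}{X}$ be $\mu$-ergodic and disintegrate $\nu = \int_{\Xbar} \nu_p\,\dv{\bar{\nu}}{p}$. By strong irreducibility and proximality (satisfied in both \ref{case1} and \ref{case2} by Gol'dsheid--Margulis), the push-forward $\pi_*\nu$ coincides with the Furstenberg measure $\bar{\nu}_{\Xbar}$, which is atom-free by Remark~\ref{rem:non-atomic}. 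The task is then to classify the fibre conditional measures $\nu_p$.

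For part~\eqref{main1}, I would show that $\nu_p = m_p$ for $\bar\nu_{\Xbar}$-almost every $p$ by producing extra invariance on fibres: it suffices to exhibit a non-trivial one-parameter unipotent subgroup $U \subset \SL_2(\bR)$ (acting through the identification $\pi^{-1}(p)\cong \SL_2(\bR)/\SL_2(\bZ)$) which preserves $\nu_p$. Indeed, Dani's classification of $U$-invariant probability measures on $\SL_2(\bR)/\SL_2(\bZ)$ together with the $\mu$-ergodicity of $\nu$ then forces $\nu_p = m_p$. This extra invariance is produced by a Benoist--Quint type exponential-drift argument: couple two points $x, x'$ lying in the same fibre via a common Cartan direction of the random walk, and exploit the $\be$-almost-sure non-atomicity of the limit measures $\nu_b$ to ensure $x \neq x'$ can be chosen as distinct $\nu_b$-generic points. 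The transverse drift of $b_1^n x$ versus $b_1^n x'$ accumulates along the unipotent direction inside the fibre, and passing to the limit yields an element of the stabiliser of $\nu_p$ which generates the desired $U$.

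For parts~\eqref{main2} and~\eqref{main3}, assume for contradiction that $\nu_b$ has atoms on a positive $\be$-measure set of $b$. By ergodicity the set of atoms of maximal weight has constant finite cardinality $k$, and its support produces a $\Ga$-equivariant measurable $k$-to-one finite extension $\phi$ of a full-measure subset of $(\Xbar,\bar\nu_{\Xbar})$. In \ref{case1}, Zariski density in $\SL_3(\bR)$ makes the Lyapunov spectrum of the cocycle induced on the fibre strictly separated, which is incompatible with such a $\Ga$-equivariant finite extension — any atomic mass would have to escape every compact set, contradicting the tightness established in part~\eqref{main4}. In \ref{case2}, this rigidity fails, but by the hypotheses~\eqref{case:zd} or~\eqref{case:ac}, the pair $(\Xbar,\bar\nu_{\Xbar})$ is the Poisson boundary of $(\Ga,\mu)$; its universal property forces $\phi$ to arise from a genuine $\Ga$-equivariant finite cover, giving the $k$-extension structure of Definition~\ref{def:naturallift}, with the measure-preserving property following from the equivariance $\nu_b = b_1\nu_{Sb}$ and ergodicity. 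For the amplification of Remark~\ref{rem:amplification}, I would observe that under~\eqref{case:ac} the absolute continuity of $\mu$ together with the identity being an interior point of $\supp\mu$ allows one to disconnect sheets of any non-trivial $\Ga$-equivariant cover of $\Xbar$, forcing $k=1$ and identifying the unique stationary measure with the natural lift.

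For part~\eqref{main4}, I would establish non-escape of mass by constructing a proper function $u: X \to [0,\infty)$ satisfying a drift inequality
\begin{equation*}
\int_G u(gx)\,\dv{\mu}{g} \leq a u(x) + b,
\end{equation*}
for some $0<a<1$ and $b>0$, outside a compact set. The function $u$ is obtained by combining a Margulis-type height function on each $\SL_2(\bR)/\SL_2(\bZ)$ fibre with a control function in $\Xbar$ (trivial in the present case since $\Xbar$ is compact). Iterating the drift inequality yields uniform moment bounds on $u(b_1^n x)$ along the random walk, giving tightness for $\frac1n\sum_{k=1}^n \mu^{*k}\ast\del_x$ and, via the maximal ergodic theorem, also for $\frac1n\sum_{k=1}^n \del_{b_k^1 x}$; stationarity of any weak-$*$ limit point follows formally.

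The main obstacle, as flagged in the introduction, is that $X$ is not a quotient of a Lie group by a lattice but by a group with a non-trivial connected component. The exponential-drift argument of Benoist--Quint must therefore be re-derived transverse to $\pi$, carefully matching the Cartan projection on $G$ with the internal $\SL_2(\bR)$-geometry of the fibre; the construction of the height function $u$ confronts the same hybrid difficulty. I expect the bulk of the technical work of the paper to lie in executing these two ingredients.
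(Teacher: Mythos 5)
Your overall framework (stationary measure projects to the Furstenberg measure, extra unipotent invariance on fibres via exponential drift, Poisson-boundary rigidity in \ref{case2}, and a contracted proper height function for non-escape of mass) matches the paper for parts~\eqref{main1}, \eqref{main3} and~\eqref{main4}. But your argument for part~\eqref{main2} has a genuine gap, and the mechanism you invoke cannot work. You claim that in \ref{case1} the "strictly separated Lyapunov spectrum" of the fibre cocycle is incompatible with an equivariant finite extension because "any atomic mass would have to escape every compact set, contradicting tightness". No reason is given why atoms should escape, and in fact the Lyapunov data alone cannot distinguish the two cases: in both \ref{case1} and \ref{case2} the Lyapunov vector lies in the open Weyl chamber (Theorem~\ref{thm:poslyap}\eqref{simplelyap}), yet in \ref{case2} atomic limit measures genuinely occur without any escape of mass (the $\Ga_0$-equivariant section of Theorem~\ref{thm:cover} gives a stationary $1$-extension supported on a compact circle). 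The actual dichotomy is representation-theoretic, not spectral: what fails in \ref{case2} and holds in \ref{case1} is Lemma~\ref{lem:max weight}\eqref{p:06111}, namely that a displacement $v\wedge u$ with $v\in\gog$ arbitrary (not just $v\in\gog_p$, i.e.\ for pairs of nearby points in \emph{different} planes) has a component of positive proportion in the top isotypic piece $(\wedge^4\gog)[\om_{\gol_0}]$. The paper uses this, via the first-return Markov operator and the contracted function $v_M(x,y)=\dx(x,y)^{-\del}$ on $(X\times X)_M\smallsetminus\Del_X$ (Propositions~\ref{prop:cf2} and~\ref{prop:Criterion}), to show the diagonal is unstable and hence that the limit measures of the (automatically atom-free) measure $\nu$ are atom-free. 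Without some substitute for this two-point (diagonal instability) argument, your part~\eqref{main2} does not go through, and with it part~\eqref{main1} loses its hypothesis in \ref{case1}.

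Two smaller points. In part~\eqref{main1}, Dani's theorem plus ergodicity does not immediately force $\nu_p=m_p$: the alternative is that almost every $\nu_b$ is carried by periodic $U_b$-orbits, and ruling this out requires the growth comparison $\norm{b_1^n v}/\norm{b_1^n w}^{1/2}\to\infty$ (Lemma~\ref{lem:firstgrowth} and the averaging-operator argument in Proposition~\ref{prop:unipinv}); your sketch omits this step. And in your remark on the absolutely continuous case, "forcing $k=1$" would still leave a $1$-extension, which is not the natural lift; the correct statement is that \emph{all} finite $k$ (including $k=1$) are excluded, because when $\Ga_\mu=\SO(Q)(\bR)$ every orbit meets a fibre over the isotropic circle in infinitely many points.
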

The rest of the paper is devoted to the proof of Theorem~\ref{thm:full}.
In the rest of \S\ref{sec:generalities} we collect notation and results
needed for the rest of the paper.

We establish part~\eqref{main1} of Theorem~\ref{thm:full}
in \S\ref{sec:drift} by means of the so called \textit{exponential drift} argument of Benoist and Quint.

We establish part~\eqref{main3}
of Theorem~\ref{thm:full} in \S\ref{sec:cover case}.
To do this we will use a result of Ledrappier~\cite{LedrappierIMJ}
that in this case the measure space $( \Xbar,\bar\nu_{\Xbar} ) $ is the Poisson boundary of $(\Ga,\mu)$.
We note that part~\eqref{main3} of the theorem must be taken into account in conjunction with Theorem~\ref{thm:cover}
which says that this possibility is not vacuous.

We establish part~\eqref{main4} of Theorem~\ref{thm:full} at the end of
\S\ref{sec:Non-escape-of-mass/contracted}.
Given the analysis of Benoist and Quint~\cite{BQAnnals2} the proof boils down to
a non-escape of mass result which is proved in
\S\ref{sec:Non-escape-of-mass/contracted}.
The aim is to show that a certain function on $X$, which can be thought of as a height function,
tends to be contracted by the random walk. This will enable us to prove that
the `cusp' in $X$ is `unstable' with respect to the action induced by $\mu$.

Finally we establish part~\eqref{main2} of Theorem~\ref{thm:full} in \S\ref{sec:non atomicity} using an argument which was developed by Benoist and Quint in~\cite{BQJams}.
The main point is to show that the diagonal in $\X \times \X$ is `unstable'.
\subsection{The boundary map and other equivariant maps}\label{ssec:boundary maps}
When studying $\mu$-stationary probability measures one is naturally led to consider
equivariant maps $\zeta:B\to Y$ for various spaces $Y$ on which $\Ga$ acts. Here equivariant
means that for $\be$-almost every $b\in B$ one has $\zeta(b) = b_1\zeta(Sb)$. The reason for this
is that given such an equivariant map, the measure $\nu = \zeta_*\be$ belongs to $\spm \mu Y$ and the limit
measures $\nu_b$ are equal to $\del_{\zeta(b)}$ for $\be$-almost every $b\in B$.

In order to proceed we must choose a minimal parabolic subgroup of $H$.
In both~\ref{case1} and~\ref{case2} the subgroup of $H$ consisting of upper triangular elements is a minimal parabolic subgroup of $H$.
We will denote this subgroup by $P$. 
$$\textrm{Explicitly, in \ref{case2}: } P=\left\{\mat{e^s&0&0\\ 0&1&0\\0&0&e^{-s}}\mat{1&t&t^2/2\\0&1&t\\0&0&1}:t,s\in \bR\right\}.$$
By~\cite[Proposition 10.1]{MR3560700} the set $\spm \mu {H/P}$ consists of a single measure and it is $\mu$-proximal. 
This implies that there is a unique measurable equivariant map $$\xi:B\to H/P$$ which is referred to as the \textit{boundary map}.

%
%
The mechanism giving rise to the equivariant maps we will consider is as follows:
Let $V$ be
a representation of $H$.
If $W_0\subseteq V$ is a subspace of dimension $d$ which is
$P$-invariant then there is a well defined $H$-equivariant map $H/P\to \on{Gr}_d(V)$ defined by $$hP \mapsto hW_0.$$
For $\eta\in H/P$ we then denote the image of $\eta$ by $W_\eta$.
In turn,  the composition of this map with $\xi$ gives rise to an equivariant map $B\to \on{Gr}_d(V)$ given by $$b\mapsto W_{\xi(b)}$$
for $\be$-almost every $b\in B$.
Hence, for $\be$-almost every $b\in B$ we define $W_b\defi W_{\xi(b)}$.

%
For example, consider the representation of $H$ on $\bR^3$.
As in \S\ref{ssec:pboundary}, let $p_0\coloneqq\spa_\bR(\set{e_1,e_2})\in\Xbar$.
In both~\ref{case1} and~\ref{case2} $p_0$ is $P$-invariant and therefore one obtains
an equivariant map $H/P\to \Xbar$ given by $$\eta \mapsto \eta p_0 \eqqcolon p_\eta.$$
Using this map in conjunction with $\xi$ as described above gives rise to the equivariant map
$B\to\Xbar$ given by $$b\mapsto p_{\xi(b)}\eqqcolon p_b.$$
Hence, the push-forward of $\be$ under $b\mapsto p_b$ is a $\mu$-stationary
probability measure on $\Xbar$. Since the Furstenberg measure $\bar{\nu}_{\Xbar}$ is the unique such measure, we
deduce that $(\bar{\nu}_{\Xbar})_b=\del_{p_b}$ for all $\be$-almost every $b\in B$.
This implies the following proposition which constitutes the first step towards classifying the $\mu$-stationary
measures on $X$.
\begin{proposition}\label{prop:firstprop}
    Let $\nu\in \spm \mu X$. Then, for $\be$-almost every $b\in B$ we have $\nu_b(\pi^{-1}(p_b))=1$.
\end{proposition}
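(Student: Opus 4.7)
The plan is to reduce the statement to the known uniqueness of the $\mu$-stationary measure on $\Xbar$ by pushing $\nu$ forward via $\pi$. First I would observe that since $\pi:X\to\Xbar$ is continuous and $G$-equivariant, the pushforward $\pi_*\nu$ lies in $\spm\mu\Xbar$. In both~\ref{case1} and~\ref{case2} the Gol'dsheid--Margulis theorem (cited in the excerpt) guarantees that $\Ga_\mu$ acts strongly irreducibly and proximally on $\bR^3$, so by Furstenberg's theorem the set $\spm\mu\Xbar$ is a singleton containing only $\bar\nu_{\Xbar}$. Hence $\pi_*\nu=\bar\nu_{\Xbar}$.

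Next I would commute the pushforward with the passage to limit measures. The crucial point is that $\Xbar=\Gr_2(\bR^3)$ is compact, so for any bounded continuous $f\colon\Xbar\to\bR$ the function $f\comp\pi$ is bounded and continuous on $X$. Consequently, if $b_1^n\nu\to\nu_b$ weakly (on $\be$-typical $b\in B$), then applying weak convergence to test functions of the form $f\comp\pi$ yields $\pi_*(b_1^n\nu)\to\pi_*\nu_b$ weakly on $\Xbar$. On the other hand, by $G$-equivariance of $\pi$ we have $\pi_*(b_1^n\nu)=b_1^n\pi_*\nu=b_1^n\bar\nu_{\Xbar}$, and this sequence converges to $(\bar\nu_{\Xbar})_b$ for $\be$-almost every $b$. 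By uniqueness of weak limits this gives
\[
    \pi_*\nu_b=(\bar\nu_{\Xbar})_b
\]
for $\be$-almost every $b\in B$.

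To finish I would identify $(\bar\nu_{\Xbar})_b$ with $\del_{p_b}$. This is precisely the output of the construction recalled in \S\ref{ssec:boundary maps}: the boundary map $\xi:B\to H/P$ composed with $\eta\mapsto p_\eta$ produces the equivariant map $b\mapsto p_b$, and the pushforward of $\be$ under this map is a $\mu$-stationary measure on $\Xbar$, which by uniqueness equals $\bar\nu_{\Xbar}$. Because $\bar\nu_{\Xbar}$ arises from this equivariant map, its limit measures are Dirac masses at the map's values, i.e.\ $(\bar\nu_{\Xbar})_b=\del_{p_b}$. Combining this with the previous display gives $\pi_*\nu_b=\del_{p_b}$, which is equivalent to $\nu_b(\pi^{-1}(p_b))=1$, as required.

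I do not anticipate a genuine obstacle: the argument is essentially a soft consequence of the compactness of $\Xbar$, $G$-equivariance of $\pi$, and the uniqueness of the $\mu$-stationary measure on $\Xbar$. The only point requiring mild care is the interchange of $\pi_*$ with the weak-$*$ limit, which is why it matters that the target $\Xbar$ is compact (so that no mass escapes when applying $\pi_*$ to the sequence $b_1^n\nu$, even though $X$ itself is not compact).
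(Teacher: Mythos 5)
Your proposal is correct and follows essentially the same route as the paper: push $\nu$ forward by $\pi$, use uniqueness of the Furstenberg measure to get $\pi_*\nu=\bar\nu_{\Xbar}$, pass to limit measures to obtain $\pi_*\nu_b=(\bar\nu_{\Xbar})_b$, and identify $(\bar\nu_{\Xbar})_b=\del_{p_b}$ via the boundary map. The only difference is that you spell out the interchange of $\pi_*$ with the weak-$*$ limit (using compactness of $\Xbar$), which the paper states without elaboration.
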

\begin{proof}
    Since $\pi$ is $H$-equivariant we have that $\pi_*\nu\in \cP_\mu(\Xbar)$. Since the Furstenberg measure 
    $\bar{\nu}_{\Xbar}$ is the unique measure in $\cP_\mu(\Xbar)$ we deduce that
    $\pi_*\nu= \bar{\nu}_{\Xbar}$. Furthermore, it follows that 
    $\pi_*\nu_b=(\bar{\nu}_{\Xbar})_b$ for $\be$-almost every $b\in B$. As we observed above,
    $(\bar{\nu}_{\Xbar})_b=\del_{p_b}$ for $\be$-almost every $b\in B$ and the statement follows.
\end{proof}
Next we define some subgroups of $G$.
Let 
\begin{align*}
G_0&\defi \left\{\mat{*&*&*\\ *&*&*\\0&0&*}\in G\right\} =\Stab_G(p_0) \\
R_0 &\defi \left\{\mat{\al&0&*\\0&\al&*\\0&0&\al^{-2}}:\al\in \bR^\times\right\}
\end{align*}
and note that $R_0$ is the solvable radical of $G_0$. 
Since $P<G_0$ and $R_0$ is a normal subgroup of $G_0$ it is clear that $R_0$ is invariant under conjugation by $P$.
Moreover, 
$$L_0\defi \left\{\mat{\al&*&*\\0&\al&*\\0&0&\al^{-2}}:\al\in\bR^\times\right\}\supset R_0$$ 
is also easily seen to be invariant under conjugation by $P$.
The $P$-invariance of these subgroups allows us to define equivariant maps from $H/P$ to the set of subgroups of $G$.
In other words, the maps
$$hP\mapsto hR_0h^{-1},\ hP\mapsto hL_0h^{-1}\ \textrm{and}\ hP\mapsto hG_0h^{-1}$$ are well defined.
As before, for $\eta\in H/P$ we let $R_\eta$, $L_\eta$ and $G_\eta$ denote the images of these maps.
Combined with the map $\xi$, these maps allow us to define equivariant maps from $B$ to the set of subgroups of $G$.
These maps are given explicitly as
$$b\mapsto R_b\defi R_{\xi(b)},\ b\mapsto L_b\defi L_{\xi(b)}\ \textrm{and}\ b\mapsto G_b\defi G_{\xi(b)}.$$
Later on we will use corresponding lower case Gothic letters to denote the Lie algebras.
As $R_0$ is normal in $L_0$ we may define the 1-parameter unipotent quotient group 
$U_0\defi L_0/R_0$. Similarly for any $\eta\in H/P$
we define the 1-parameter unipotent quotient group
$$U_\eta\defi L_\eta/R_\eta.$$
This assignment is clearly equivariant and again for $b\in B$ we will use the notation
\begin{equation}\label{eq:Ub}
    U_b\defi U_{\xi(b)}.
\end{equation}
It is straightforward to check that $G_b=\Stab_G(p_b)$.
Moreover, $R_b$ acts trivially on $p_b$. It follows that the action of $U_b$ on $p_b$ is well defined and nontrivial for $\be$-almost every $b\in B$.
The crucial point for us is that this action descends to a nontrivial action of $U_b$ on $\pi^{-1}(p_b)$.
The main step in the proof of Theorem~\ref{thm:full}\eqref{main1}
is to show that if $\nu\in\spm \mu X$ is $\mu$-ergodic
and the limit measures $\nu_b$ are non-atomic almost surely, then for $\be$-almost every $b$, $\nu_b$ is $U_b$-invariant.
In the following subsection we show that this unipotent invariance implies that $\nu$ is the natural lift of $\bar{\nu}_{\Xbar}$.
\subsection{Reduction of the proof of Theorem~\ref{thm:full}\eqref{main1}}\label{ssec:reduction1}
The core of the proof of Theorem~\ref{thm:full}\eqref{main1} is an application of the exponential drift argument
of Benoist and Quint. This is an elaborate argument which takes quite a lot of apparatus. In this section we isolate the following lemma
whose statement
does not require any preparation
and relying on this lemma we prove a proposition which reduces the proof of Theorem~\ref{thm:full}\eqref{main1}
to establishing the $\be$-almost sure $U_b$-invariance of the limit measures $\nu_b$.
\begin{lemma}\label{lem:firstgrowth}
    Let $\mu\in \cP(G)$ be compactly supported and suppose~\ref{case1} or~\ref{case2} holds. Then for any $\del>0$ and $R>0$ there exists $n_0>0$ such that for any
    $v\in \bR^3\smallsetminus\set 0$, $w \in \wedge^2 \bR^3\smallsetminus\set 0$ and $n>n_0$ one has
    $$\be \pa[\bigg]{\set[\bigg]{b\in B:
            \frac{\norm{b_1^n v}^{\phantom{1/2}}}{\norm{b_1^n w}^{1/2}} < \frac{\norm{v}^{\phantom{1/2}}}{\norm{w}^{1/2}}R
    }}<\del.$$
\end{lemma}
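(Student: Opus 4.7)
The plan is to deduce the bound from a Lyapunov exponent gap. Let $\lambda_1 > \lambda_2 > \lambda_3$ denote the Lyapunov exponents of $\mu$ for its action on $\bR^3$; by the theorem of Gol'dsheid and Margulis invoked earlier in the text, these exponents are distinct in both \ref{case1} and \ref{case2}, and $\Ga_\mu$ acts strongly irreducibly and proximally on $\bR^3$. The induced action on $\wedge^2\bR^3$ is likewise strongly irreducible and proximal: in \ref{case1} because $\wedge^2\bR^3$ is dual to $\bR^3$ as an $\SL_3(\bR)$-module (via $\det$) and these properties pass to the dual of a Zariski-dense action; in \ref{case2} because $\wedge^2\bR^3 \cong \bR^3$ as $\SO(Q)$-modules via the form $Q$. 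The top Lyapunov exponent on $\wedge^2\bR^3$ is therefore $\lambda_1+\lambda_2$, and the crucial positivity is
$$\lambda_1 - \tfrac{1}{2}(\lambda_1+\lambda_2) \;=\; \tfrac{1}{2}(\lambda_1 - \lambda_2) \;>\; 0.$$

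The main tool will be the uniform large deviation estimate for strongly irreducible and proximal representations of reductive groups. Concretely, for every $\eps > 0$,
$$\sup_{\|v\|=1}\; \be\Bigl(\bigl\{b\in B : \bigl|\tfrac{1}{n}\log\|b_1^n v\| - \lambda_1\bigr| > \eps\bigr\}\Bigr) \longrightarrow 0 \quad\text{as } n\to\infty,$$
and analogously, with $\lambda_1+\lambda_2$ in place of $\lambda_1$, for unit $w\in\wedge^2\bR^3$. The uniformity in the direction of the initial vector is a standard consequence of the uniqueness of the $\mu$-stationary probability measure on $\bP(\bR^3)$ (respectively on $\bP(\wedge^2\bR^3)$) together with a Furstenberg-type large-deviation estimate; a convenient reference is Benoist--Quint's monograph on random walks on reductive groups.

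Given these two uniform estimates, the lemma follows by a union bound. After reducing to $\|v\|=\|w\|=1$ (rescaling shifts $\log R$ by a constant that we absorb into the constant $R$), set $\eps = \tfrac{1}{6}(\lambda_1-\lambda_2)$ and choose $n_0$ large enough that each of the two deviation events has $\be$-probability at most $\del/2$ for all $n\geq n_0$. On the complementary good set,
$$\log\|b_1^n v\| \;\geq\; n(\lambda_1-\eps) \qquad\text{and}\qquad \log\|b_1^n w\| \;\leq\; n(\lambda_1+\lambda_2+\eps),$$
so
$$\log\frac{\|b_1^n v\|}{\|b_1^n w\|^{1/2}} \;\geq\; \tfrac{n}{2}(\lambda_1-\lambda_2) - \tfrac{3}{2}n\eps \;=\; \tfrac{n}{4}(\lambda_1-\lambda_2),$$
which exceeds $\log R$ once $n$ is sufficiently large. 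The only genuinely nontrivial ingredient is the uniform-in-$v$ form of the law of large numbers; the rest of the argument is elementary, resting on the positive Lyapunov gap provided by Gol'dsheid--Margulis and a union bound.
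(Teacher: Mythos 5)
Your proposal is correct and takes essentially the same route as the paper: both arguments combine the uniform-in-vector law of large numbers for norm growth in the two irreducible representations $\bR^3$ and $\wedge^2\bR^3$ (part~\eqref{cor:poslyap} of Theorem~\ref{thm:poslyap}, i.e.\ Benoist--Quint) with the positivity $\lambda_1-\tfrac12(\lambda_1+\lambda_2)=\tfrac12(\lambda_1-\lambda_2)>0$, which the paper expresses as $\omrt(\sig_\mu)-\tfrac12\omrd(\sig_\mu)>0$ using $\sig_\mu\in\goz^{++}$, and then conclude by intersecting the two high-probability events. The only cosmetic difference is that the paper works with the relative growth rates $\norm{gv}/\norm{v}$ and $\norm{gw}/\norm{w}$ instead of normalising to unit vectors, so your reduction step and your union bound are just a repackaging of the same estimate.
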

The statement of Lemma~\ref{lem:firstgrowth} and its use in the proof of Proposition~\ref{prop:unipinv} illustrates
in a simple fashion the role played by comparison of growth rates of vectors under random products in various representations,
which is a recurring theme in the paper.
We note that the proof of Lemma~\ref{lem:firstgrowth} will only be given in~\S\ref{ssec:Iwasawa and lyapunov} after the necessary notation and tools
regarding Lyapunov exponents will be presented.
During the proof of Proposition~\ref{prop:unipinv} we will need to use the following construction.
Let  
\begin{equation}\label{eq:bds}
    B^X \defi  B\times X, \; \be^X \defi \int_B \del_b\otimes \nu_b\dv\be b
\end{equation} and $T:B^X\to B^X$ be defined by $T(b,x)\defi (Sb,b_1^{-1}x)$.
If $\nu\in \spm \mu X$ then $T$ preserves $\hbe$ and if $\nu$ is assumed to be $\mu$-ergodic then $T$ is ergodic.
Following Benoist and Quint, we will call the system $(B^X, T, \hbe)$ the \textit{backwards dynamical system}, see~\cite[\S2.5]{MR3560700}.
\begin{proposition}\label{prop:unipinv}
    Let $\nu\in\spm\mu X$ be $\mu$-ergodic.
    Suppose that for $\be$-almost every $b\in B$ the limit measure $\nu_b$ is $U_b$-invariant, where $U_b$ is as in~\eqref{eq:Ub}.
    Then $\nu$ is the natural lift of $\bar{\nu}_{\Xbar}$.
\end{proposition}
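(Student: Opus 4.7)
The plan is to combine Dani's classification of horocycle-invariant probability measures on $\SL_2(\bR)/\SL_2(\bZ)$ with the equivariance $\nu_b=b_1\nu_{Sb}$, ergodicity of the Bernoulli shift, $\mu$-ergodicity of $\nu$, and the growth estimate of Lemma~\ref{lem:firstgrowth}.

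By Proposition~\ref{prop:firstprop}, $\nu_b$ is a probability measure on the fibre $\pi^{-1}(p_b)\cong\SL_2(\bR)/\SL_2(\bZ)$, and by hypothesis it is invariant under the nontrivial one-parameter unipotent $U_b$, which in this identification acts as a horocycle flow. Dani's theorem therefore yields a unique decomposition
\[
\nu_b=\alpha(b)\,m_{p_b}+(1-\alpha(b))\,\sigma_b,
\]
where $\sigma_b$ is supported on periodic $U_b$-orbits, i.e.\ on $[\Lambda]\in\pi^{-1}(p_b)$ for which $\Lambda$ contains a nonzero vector in the $U_b$-fixed line $\ell_b\subset p_b$.

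Next I would use the equivariance to show $\alpha$ is $\be$-a.s.\ constant. The element $b_1$ acts on fibres through $\SL_2(\bR)$, so $b_1 m_{p_{Sb}}=m_{p_b}$; and the relation $U_b=b_1U_{Sb}b_1^{-1}$ ensures that $b_1$ sends periodic $U_{Sb}$-orbits to periodic $U_b$-orbits. Combined with $\nu_b=b_1\nu_{Sb}$ and uniqueness of the Dani decomposition this forces $\alpha(b)=\alpha(Sb)$, whence ergodicity of the Bernoulli shift on $(B,\be)$ gives $\alpha\equiv\alpha_0$ constant. Writing
\[
\nu=\alpha_0\,\nu_{\mathrm{nat}}+(1-\alpha_0)\,\nu_{\mathrm{per}},
\]
with $\nu_{\mathrm{nat}}:=\int_B m_{p_b}\,d\be(b)$ the natural lift of $\bar{\nu}_{\Xbar}$ and $\nu_{\mathrm{per}}$ built analogously from the $\sigma_b$, one checks directly from the $\mu$-stationarity of $\bar{\nu}_{\Xbar}$ that both summands are $\mu$-stationary probability measures. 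Since $\nu$ is $\mu$-ergodic it is extreme in $\cP_\mu(X)$, forcing $\alpha_0\in\{0,1\}$; the case $\alpha_0=1$ gives precisely the natural lift of $\bar{\nu}_{\Xbar}$.

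The hard part is excluding $\alpha_0=0$, and this is where Lemma~\ref{lem:firstgrowth} enters. By Mahler's criterion, for every $\epsilon>0$ one can choose $R>0$ together with a compact set $K^b_R\subset\pi^{-1}(p_b)$ consisting of periodic orbits whose normalized $\ell_b$-vector has length in $[R^{-1},R]$, such that $\sigma_b(K^b_R)\geq 1-\epsilon$ for $\be$-typical $b$. Weak-$*$ convergence $b_1^n\nu\to\sigma_b$ would then make $b_1^n\nu$ place mass close to $1-\epsilon$ on any open neighbourhood of $K^b_R$. On the other hand, $b_1^nx$ lying in such a neighbourhood requires the existence of some $v\in\Lambda_x$ (with $w$ the bivector of $\Lambda_x$) satisfying $\|b_1^nv\|/\|b_1^nw\|^{1/2}\leq R$, an event whose $\be$-probability Lemma~\ref{lem:firstgrowth} bounds arbitrarily small uniformly in $v$ and $w$ for $n$ large. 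Using that $b_1^nv$ almost surely aligns with the top Lyapunov direction $\ell_b$ (so only finitely many primitive $v\in\Lambda_x$ can produce the required configuration) the countable union over $v$ can be tamed, yielding the contradiction that rules out $\alpha_0=0$ and completes the proof.
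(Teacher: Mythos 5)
The first half of your argument (Dani's theorem, equivariance of the Dani decomposition, constancy of the weight via ergodicity of the shift, and extremality of $\nu$ forcing the weight into $\{0,1\}$) is exactly the paper's route. The problem is in the exclusion of the purely periodic case, and there your argument has a genuine gap. Lemma~\ref{lem:firstgrowth} gives, for each \emph{fixed} pair $(v,w)$, a bound $\be\bigl(\{b:\norm{b_1^nv}/\norm{b_1^nw}^{1/2}<R\}\bigr)<\del$ for $n>n_0$, uniformly in $(v,w)$. But the event you need to control — ``$b_1^nx$ lies near the compact set $K^b_R$'' — is the event that \emph{some} nonzero $v\in\Lam_x$ satisfies this inequality, i.e.\ a union over infinitely many vectors of the lattice $\Lam_x$. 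A uniform per-vector bound does not bound the probability of such a union, and the set of vectors realizing the short normalized length at time $n$ depends on $b$ and on $n$, so you cannot reduce to a fixed finite family either. Your proposed repair (``$b_1^nv$ almost surely aligns with the top Lyapunov direction, so only finitely many primitive $v$ can produce the required configuration'') is an almost-sure asymptotic statement about each fixed $v$; it does not yield a bound, uniform in $n$, on the measure of the union, and as written ``the countable union over $v$ can be tamed'' is an assertion, not an argument. This is precisely the step that needs a new idea.

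The paper sidesteps the union entirely by not arguing from weak-$*$ convergence at a fixed starting point $x$. Instead it works in the backwards dynamical system $(B^X,\be^X,T)$ with $\be^X=\int_B\del_b\otimes\nu_b\,\dv\be b$: on the set $\Sig$ of pairs $(b,[\Lam])$ with $U_b[\Lam]$ periodic, the periodicity singles out a \emph{canonical} primitive vector $v\in\Lam\cap\ell_b$ (up to sign), so the quantity $\rho(b,[\Lam])=\av{\Lam\cap\ell_b}/\av{\Lam}^{1/2}$ is a single well-defined function, and equivariance gives the exact cocycle identity~\eqref{eq:2125} for that one vector. Then Lemma~\ref{lem:firstgrowth} applies to this single vector, and the contradiction comes from the fact that the averaging operator $\on A$ preserves $\be^X$ while $\on A^n\mb{1}_{\Sig_R}\to 0$ pointwise (dominated convergence). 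If you want to salvage your route, you would have to either quantify the number of primitive vectors of $b_1^n\Lam_x$ of normalized length $\le R$ and control the union by a Chebyshev/counting argument, or — much simpler — switch, as the paper does, to the measure $\be^X$ where the periodic-orbit structure hands you one vector per point.
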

\begin{proof}
    Below we will show that the almost sure $U_b$-invariance of the $\nu_b$'s together with
    Lemma~\ref{lem:firstgrowth} imply
    that $\nu_b = m_{p_b}$ for $\be$-almost every $b\in B$.
    This will finish the proof because
    $$\nu=\int_B\nu_b\dv\be b =\int_B m_{p_b}\dv \be b =\int_{\Xbar} m_p\dv{\bar{\nu}_{\Xbar}}p,$$
    where the last equality follows because
    the Furstenberg measure $\bar{\nu}_{\Xbar}$ is the pushforward of $\be$ under $b\mapsto p_b$.

    Assume that $\nu_b$ is $U_b$-invariant $\be$-almost surely. By Proposition~\ref{prop:firstprop},
    $\nu_b$ is supported on $\pi^{-1}(p_b)$ for $\be$-almost every $b\in B$. The classification of $U_b$-invariant measures on
    $\pi^{-1}(p_b)$ due to Dani~\cite[Theorem A]{MR0578655} (see also~\cite{MR1135878}) implies that $\nu_b = t_bm_b+(1-t_b)\eta_b$ for some $0\le t_b\le1$, where $\eta_b$ is
    a $U_b$-invariant measure supported on
    the collection of periodic $U_b$-orbits in $\pi^{-1}(p_b)$.
    The equivariance of the $\nu_b$'s and the $m_b$'s imply the equivariance of the $\eta_b$'s
    which in turn implies that $t_b =t_{Sb}$ for $\be$-almost every $b\in B$.
    The ergodicity of the shift map implies that $t_b = t$ is $\be$-almost surely constant and then the ergodicity of $\nu$ implies that either $t=0$ or $t=1$.
    We assume that $t=0$; that is that $\be$-almost surely $\nu_b$ is supported purely on periodic $U_b$-orbits and derive a contradiction. This assumption may be restated in the backwards dynamical system as follows: Let
    $$\Sig \defi\set{(b,\br{\Lam})\in B^X:\br{\Lam}\in \pi^{-1}(p_b), \textrm{ and }U_b\br{\Lam} \textrm{ is periodic}}.$$
    Our assumption that $t=0$ implies that $\be^{\X}(\Sig) = 1$.

    For a 2-lattice $\Lambda$ we let $\av{\Lam}$ denote the covolume of $\Lam$.
    Recall that a $2$-lattice $\br{\Lam}\in\pi^{-1}(p_b)$ has a periodic $U_b$-orbit if and only if $\Lam$ intersects the eigenline
    $\ell_b$ of $U_b$ in the plane $p_b$ non-trivially.
    Therefore, the function
    $$\rho:\Sig\to (0,\infty),\ \textrm{given by}\ \rho(b,\br{\Lam})\defi\frac{\av{\Lam\cap \ell_b}}{\av{\Lam}^{1/2}}$$
    is well defined $\be^{\X}$-almost surely.
    Choose $R>0$ so that the pre-image $\Sig_{R}\defi \rho^{-1}((0,R))$ satisfies
    $\be^{\X}(\Sig_{R})>1/2.$

    Note that, if $(b,\br{\Lam})\in \Sig$, then choosing a primitive vector $v\in \Lam\cap \ell_b$
    and a basis $u_1,u_2$ of $\Lam$, if we set $w = u_1\wedge u_2 \in\wedge^2\bR^3$ then
    we have that $$\rho(b,\br{\Lam}) =  \frac{\norm{v}^{\phantom{1/2}}}{\norm{w}^{1/2}}.$$
    It follows from the equivariance that for $\mu^{\otimes n}$-almost every $a\in G^n$ and $\be^X$-almost every 
    $(b,\br\Lam)\in\Sig$ we have $\ell_{ab}=a_1^n\ell_b$ and so $a_1^nv$ is a primitive vector in
    $\ell_{ab}\cap a_1^n\Lam$ and $a_1^nu_i, i=1,2$ is a basis for $a_1^n\Lam$. We conclude that
    \begin{equation}\label{eq:2125}
        \rho(ab,a_1^n\br{\Lam}) = \frac{\norm{a_1^nv}^{\phantom{1/2}}}{\norm{a_1^n w}^{1/2}}.
    \end{equation}
    Consider the operator $\on A$ defined by $$\on Af(b,\br\Lam)\defi\int_G f(gb,g\br\Lam)\dv\mu g\quad\textrm{for}\quad f:B^X\to\left[0,\infty\right)\ \textrm{measurable}.$$
    We take $f\defi \mb{1}_{\Sig_R}$. Lemma~\ref{lem:firstgrowth} and~\eqref{eq:2125} imply that for $\be^X$-almost every $(b,\br\Lam)\in\Sig$ we have
    \begin{equation}\label{eq:2126}
        \lim_{n\to\infty}\on A^nf(b,\br\Lam)=\lim_{n\to\infty}\int_{G^n} \mb{1}_{\Sig_R}(ab,a_1^n\br\Lam) \dv{\mu^{\otimes n}} a = 0.
    \end{equation}
    It is easy to check that the operator $\on A$ preserves the measure $\be^X$. Hence for any $n\in\bN$ we have
    $$1/2\le \int_{B\times \X}  \mb{1}_{\Sig_R}\mathrm{d}{\be^{\X}}  = \int_{B\times \X} \on A^n\mb{1}_{\Sig_R} \mathrm{d}\be^{\X}.$$
    But on the other hand using~\eqref{eq:2126} and Lebesgue's dominated convergence theorem we see that the right hand side of the above equation tends to 0, which is a contradiction as required.
\end{proof}
\subsection{The Iwasawa cocycle}\label{ssec:Iwasawa and lyapunov}
Let $H$ be as in~\ref{case1} or~\ref{case2} and let $H=KP$ be an Iwasawa decomposition of $H$ where $P$ is as in~\S\ref{ssec:boundary maps} and $K$ is
the maximal compact subgroup of $H$ corresponding to the inner product coming from the standard basis $\set{e_1,e_2,e_3}$.
Let $\goz$ be the maximal abelian
subspace of the Lie algebra of $P$ and define $Z=\set{\exp\left(z\right):z\in\mathfrak{z}}$ to be the corresponding
Cartan subgroup of $H$. We denote by $\log:Z\to \goz$ the inverse of $\exp$.
Moreover, we set $N$ to be the unipotent radical of $P$ so that
$P=ZN$. See~\cite{MR1920389} for details.

Let $s:H/P\to H/N$
be a measurable section with image in $KN$.
For $h\in H$ and $\eta\in H/P$ let $\alpha:H\times H/P\to Z$
be defined so that $\alpha\left(h,\eta\right)$ is the unique element
of $Z$ such that
\begin{equation}
    hs\left(\eta\right)=s\left(h\eta\right)\alpha\left(h,\eta\right).\label{eq:section eqn}
\end{equation}
Note that since $Z$ normalises $N$ it acts on $H/N$ from the
right and moreover this action is transitive with trivial stabilisers, so equation~\eqref{eq:section eqn} makes sense
and defines $\al(h,\eta)$ uniquely. The \emph{Iwasawa cocycle}
is the map
$$\sig(h,\eta) \defi \log \al(h,\eta).$$ Indeed, it is not hard to see from~\eqref{eq:section eqn} that the
cocycle relations $\al(gh,\eta) = \al(g,h\eta)\al(h,\eta)$, $\sig(gh,\eta) = \sig(g,h\eta)+\sig(h,\eta)$ hold. See~\cite[\S8.2]{MR3560700}
for details.

Let $\on E:B\to Z$ be given by
$$\on{E}(b)\defi\alpha(b_1,\xi(Sb))$$
and for $n\in\bN$ let $\on E_n(b):B\to Z$ be
$$\on E_n(b)\defi\prod_{i=1}^n\on E(S^{i-1}b)=\alpha(b_1^n,\xi(S^nb)).$$
Additionally, we define the corresponding logarithmic versions $\on L:B\to\goz$ and $\on L_n:B\to\goz$ as
$$\on L(b)\defi\log\on E(b)\quad\textrm{and}\quad\on L_n(b)\defi\log\on E_n(b).$$
Let $V$ be a finite dimensional representation of $H$ and let $\mathcal{W}_{\mathfrak{z}}\left(V\right)$
be the set of weights of $V$ relative to $\mathfrak{z}$.
For $\omega\in\mathcal{W}_{\mathfrak{z}}\left(V\right)$ we will use
$V^{\omega}$ to denote the corresponding weight space.
Let $\mathcal{H}_{\mathfrak{z}}\left(V\right)$
be the set of highest weights of the representation $V$.
For $\omega\in\mathcal{H}_{\mathfrak{z}}\left(V\right)$
we write $V\left[\omega\right]$ for the corresponding isotypic component.
Note that $(V[\om])^\om$ is $P$-invariant and pointwise fixed by $N$.

By the discussion in~\S\ref{ssec:boundary maps}, the $P$-invariant subspace $(V[\om])^\om$ gives rise to an equivariant
map from $H/P$ to the set of subspaces of $V$ given by $$gP\mapsto g(V[\om])^\om.$$ To reduce the notational clutter, for $\eta=gP\in H/P$ we
simplify the notation to
$$    V_{\eta}\left[\omega\right]\defi g\left(V\left[\omega\right]\right)^{\omega}.$$
For $b\in B$ we will also use the notation
$$    V_{b}\left[\omega\right]\defi V_{\xi\left(b\right)}\left[\omega\right].$$
For $\omega\in\mathcal{W}_{\mathfrak{z}}\left(V\right)$, let $\chi^{\omega}:Z\to\mathbb{R}^\times$ be the
character corresponding to the weight $\om$, that is $$\chi^\om \defi \exp\comp \om \comp \log.$$
We will use the following lemma on multiple occasions.
It is the same as~\cite[Lemma 5.4]{BQAnnals} except that we replaced ``irreducible representation" by ``isotypic component".
\begin{lemma}
    \label{lem:expansion of leafs-1}
    Suppose $V$ is a representation
    of $G$. Then,
    for $\be$-almost every $b\in B$ and for all $n\in \bN$,
    $\omega\in\mathcal{H}_{\mathfrak{z}}\left(V\right)$ and $v\in V_{S^nb}\left[\omega\right]$
    we have
    \begin{equation*}
        \chi^\om(\on E_n(b))=\norm{b_1^n v}/\norm{v}.
    \end{equation*}
\end{lemma}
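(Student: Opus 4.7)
The plan is to compute the operator norm of $h \in H$ restricted to the highest weight space $V_\eta[\om]$ by unpacking the defining relation of the Iwasawa cocycle in \eqref{eq:section eqn}, and then specialising to the random product $h = b_1^n$. The key structural input is that $(V[\om])^\om$ is $P$-invariant, with $N$ acting trivially and $Z$ acting by the character $\chi^\om$; this is precisely what allows the Benoist--Quint proof to go through verbatim when "irreducible representation" is relaxed to "isotypic component".

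First I would fix a $K$-invariant inner product on $V$ (available since $V$ is a finite dimensional representation of a reductive group). Then, since $N$ fixes $(V[\om])^\om$ pointwise and $s(\eta) \in KN$, writing $s(\eta) = k_\eta n_\eta$ shows that the action of $s(\eta)$ on $(V[\om])^\om$ agrees with the action of $k_\eta$. In particular, the map $v_0 \mapsto s(\eta) v_0$ is a $K$-isometry from $(V[\om])^\om$ onto $V_\eta[\om]$, and likewise with $\eta$ replaced by $h\eta$.

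Next, for $v \in V_\eta[\om]$ write $v = s(\eta) v_0$ with $v_0 \in (V[\om])^\om$. Applying \eqref{eq:section eqn} and the fact that $\al(h,\eta) \in Z$ acts on $(V[\om])^\om$ by the scalar $\chi^\om(\al(h,\eta))$, I obtain
\begin{equation*}
h v \;=\; h s(\eta) v_0 \;=\; s(h\eta) \al(h,\eta) v_0 \;=\; \chi^\om(\al(h,\eta)) \, s(h\eta) v_0.
\end{equation*}
Taking norms, using that $s(h\eta)$ is an isometry on $(V[\om])^\om$, and noting that $\chi^\om(Z) \subset \bR_{>0}$ (because $Z = \exp(\goz)$ is connected and $\chi^\om$ is a continuous character into $\bR^\times$), this yields $\norm{hv}/\norm{v} = \chi^\om(\al(h,\eta))$.

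Finally, I would specialise to $h = b_1^n$ and $\eta = \xi(S^n b)$. The $\be$-almost sure equivariance $\xi(b) = b_1 \xi(Sb)$ iterated $n$ times gives $\xi(b) = b_1^n \xi(S^n b)$, so $h \eta = \xi(b)$ and both sides of the identity live in $V_b[\om]$, as the statement requires. The cocycle relation $\al(gh,\cdot) = \al(g, h\cdot)\al(h,\cdot)$ iterated $n$ times, combined with the equivariance of $\xi$, yields
\begin{equation*}
\al(b_1^n, \xi(S^n b)) \;=\; \prod_{i=1}^n \al(b_i, \xi(S^i b)) \;=\; \prod_{i=1}^n \on E(S^{i-1}b) \;=\; \on E_n(b),
\end{equation*}
completing the proof. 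I do not anticipate a real obstacle here; the whole argument is a bookkeeping exercise on top of \eqref{eq:section eqn}, and the only mildly nontrivial point is the $K$-isometry observation, which requires the $N$-invariance of the highest weight space within its isotypic component.
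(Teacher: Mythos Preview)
Your proof is correct and is precisely the Benoist--Quint argument that the paper invokes (the paper does not give its own proof but cites \cite[Lemma 5.4]{BQAnnals}, noting only that ``irreducible representation'' is replaced by ``isotypic component''). The one remark worth tightening is that $s(\eta)$ is a coset in $H/N$, not an element of $H$; but since $N$ fixes $(V[\om])^\om$ pointwise, the expression $s(\eta)v_0$ is well defined, and your ``write $s(\eta)=k_\eta n_\eta$'' should be read as choosing a representative in $KN$ for this coset --- after which everything goes through exactly as you wrote.
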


\subsection{The Lyapunov vector and pairs of highest weights}
Our choice of $P$ in~\S\ref{ssec:boundary maps} implies that $\goz$ consists of diagonal traceless matrices.
Let $\goz^+$ be the Weyl chamber associated with $P$ and $\goz^{++}$ denote its interior.
\begin{enumerate}[$\bullet$]
    \item In~\ref{case1} we have $\goz^{++} = \set{\diag{t_1,t_2,t_3}\in\goz: t_1>t_2>t_3}$.
    \item In~\ref{case2} we have $\goz^{++} = \set{\diag{t,0,-t}\in\goz: t>0}$.
\end{enumerate}
To keep a unified treatment we will denote elements in $\goz^{++}$ by $\diag{t_1,t_2,t_3}$ and use
the inequalities $t_1>t_2>t_3$ which are valid in both cases.
The following theorem is a collection of relevant statements regarding the Lyapunov vector of $\mu$.
In~\cite{MR3560700} this collection of statements is referred to as ``the law of large numbers on $H$".
\begin{theorem}\label{thm:poslyap}
    Let
    $$\sig_\mu \defi \int_{H\times H/P}\sig\mathrm{d}\mu \mathrm{d}\bar{\nu}_{H/P},
    $$ where $\bar{\nu}_{H/P}\in\spm\mu {H/P}$ is the unique $\mu$-stationary probability measure on $H/P$.
    Then:
    \begin{enumerate}[(1)]
        \item~\cite[Theorem 10.9(a)]{MR3560700} The Iwasawa cocycle $\sigma$ is integrable and hence $\sig_\mu\in\goz$ is well defined.
        \item\label{asconvergence}~\cite[Theorem 10.9(a)]{MR3560700}
            For $\be$-almost every $b\in B$ we have $\lim_{n\to\infty}\frac{1}{n}\on{L}_{n}(b) = \sig_\mu.$
        \item\label{cor:poslyap}~\cite[Theorem 4.28(b) + Corollary 10.12]{MR3560700}
            If $V$ is an irreducible representation of $H$ with highest weight $\om$, then for
            all $v\in V\smallsetminus \set 0$ and for $\be$-almost every $b\in B$ one has
            $$\lim_{n\to\infty}\frac{1}{n}\log\frac{\norm{b_n^1v}}{\norm{v}}=\om(\sig_\mu).$$
            This sequence also converges in $L^1(B,\beta)$ uniformly over $v\in V\smallsetminus \set 0$.
        \item\label{simplelyap}~\cite[Theorem 10.9(f)]{MR3560700} One has $\sig_\mu\in \goz^{++}$. In particular, if $\om$ is a positive weight, then $\om(\sig_\mu)>0$.
    \end{enumerate}
\end{theorem}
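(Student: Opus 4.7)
The theorem collects four facts that together form the ``law of large numbers on $H$'' in the Benoist--Quint framework. My plan is to address them in turn, with most of the effort concentrated on parts (2)--(4). For part (1), since $\mu$ is compactly supported and the Iwasawa cocycle $\sig: H \times H/P \to \goz$ is continuous (a consequence of continuity of the section $s$ via the defining relation $hs(\eta) = s(h\eta)\al(h,\eta)$), I would observe that $\sig$ is bounded on $\supp\mu \times H/P$, which is compact. Hence $\sig$ is integrable against $\mu \otimes \bar{\nu}_{H/P}$, and $\sig_\mu \in \goz$ is well-defined.

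For part (2), I would exploit the cocycle identity $\sig(gh,\eta) = \sig(g,h\eta) + \sig(h,\eta)$ together with the equivariance $\xi(S^i b) = b_{i+1}\xi(S^{i+1}b)$ to rewrite $\on{L}_n$ as a Birkhoff sum:
$$\on{L}_n(b) \;=\; \sig(b_1^n,\xi(S^n b)) \;=\; \sum_{i=0}^{n-1}\sig(b_{i+1},\xi(S^{i+1}b)) \;=\; \sum_{i=0}^{n-1}\on{L}(S^i b).$$
The shift $S$ is ergodic on $(B,\be)$ and $\on{L}$ is integrable by part (1), so Birkhoff's ergodic theorem gives $\tfrac{1}{n}\on{L}_n(b) \to \int_B \on{L}\,\mathrm{d}\be$ almost surely. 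The identity $\xi_*\be = \bar{\nu}_{H/P}$, which follows from $\bar{\nu}_{H/P}$ being the unique $\mu$-proximal stationary measure on $H/P$, combined with Fubini yields
$$\int_B \on{L}\,\mathrm{d}\be \;=\; \int_H\!\int_{H/P}\sig(g,\eta)\,\mathrm{d}\bar{\nu}_{H/P}(\eta)\,\mathrm{d}\mu(g) \;=\; \sig_\mu.$$

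For part (3), the crucial bridge is Lemma~\ref{lem:expansion of leafs-1}: for a highest-weight vector $v \in V_{S^n b}[\om]$ one has $\norm{b_1^n v}/\norm{v} = \chi^\om(\on{E}_n(b))$, whence $\tfrac{1}{n}\log(\norm{b_1^n v}/\norm{v}) \to \om(\sig_\mu)$ by part (2). To upgrade to arbitrary $v\in V\smallsetminus\set 0$ and to the \emph{backwards} product order $b_n^1$, I would invoke a dual boundary map for the reversed random walk, whose associated flag has its top component spanned by vectors of highest weight $\om$; strong irreducibility and proximality (guaranteed by Goldsheid--Margulis in both~\ref{case1} and~\ref{case2}) ensure that $\be$-almost every starting vector $v$ has nonzero projection onto this top component. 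Since all other weight contributions decay exponentially faster, the leading growth rate of $\norm{b_n^1 v}$ is dictated by this projection and equals $\om(\sig_\mu)$. The $L^1(B,\be)$ convergence uniformly over $v\in V\smallsetminus\set 0$ follows from compactness of the projective space of $V$ together with uniform Lipschitz bounds on $\log(\norm{gv}/\norm{v})$ when $g$ ranges over $\supp\mu$.

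Part (4) is the main obstacle, as it requires strict positivity on the walls of the Weyl chamber. In~\ref{case2} the chamber $\goz^{++}$ is one-dimensional, so one only needs strict positivity of the top Lyapunov exponent on $\bR^3$, which is standard for unbounded strongly irreducible proximal actions. In~\ref{case1}, $\sig_\mu\in\goz^{++}$ is equivalent to simplicity of the full spectrum $t_1>t_2>t_3$ on $\bR^3$: the inequality $t_1>t_2$ is proximality on $\bR^3$, while $t_2>t_3$ is proximality on $\wedge^2\bR^3$. Both are provided by Goldsheid--Margulis, since Zariski density in $\SL_3(\bR)$ propagates to strong irreducibility and proximality on both exterior powers. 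The hard step here is the Guivarc'h--Raugi / Goldsheid--Margulis theorem on simplicity of Lyapunov exponents for Zariski dense random walks, which I would invoke as a black box rather than reprove, and which is where the full strength of the Zariski density hypothesis is used.
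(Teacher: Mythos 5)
There is no internal proof to compare against here: in the paper this theorem is a compendium of quoted facts, with each part attributed directly to Benoist--Quint's book \cite{MR3560700} (Theorem 10.9(a), Theorem 4.28(b), Corollary 10.12, Theorem 10.9(f)), so your sketch is necessarily a different route in that it reconstructs the cited arguments rather than citing them. Your treatment of parts (1) and (2) is correct and is essentially the standard argument behind the reference: continuity and compact support give integrability, the cocycle identity and the equivariance $\xi(b)=b_1\xi(Sb)$ turn $\on{L}_n$ into a Birkhoff sum of $\on{L}(b)=\sig(b_1,\xi(Sb))$, and the identification of the limit uses that $b_1$ and $Sb$ are independent under $\be$ together with $\xi_*\be=\bar{\nu}_{H/P}$. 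Part (4) is also correctly diagnosed: the strict inequalities $t_1>t_2>t_3$ in~\ref{case1} (respectively positivity of the top exponent in~\ref{case2}) are exactly simplicity of the Lyapunov spectrum, and the Guivarc'h--Raugi/Gol'dsheid--Margulis theorem is the genuine input; invoking it as a black box is what the paper does as well.

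Part (3) is where your sketch is soft, in two places. First, the quantifier: the statement is ``for all $v\in V\smallsetminus\set 0$, for $\be$-almost every $b$'', whereas you argue that ``$\be$-almost every starting vector $v$ has nonzero projection onto this top component''. The correct formulation is that for each \emph{fixed} $v\ne 0$ the random repelling hyperplane (distributed according to a stationary measure on the dual projective space) almost surely does not contain $v$, because strong irreducibility forces that stationary measure to give zero mass to the set of hyperplanes through any fixed nonzero vector; an ``almost every $v$'' statement would not yield the theorem as stated. Second, the uniform $L^1$ convergence over $v\in V\smallsetminus\set 0$ does not follow merely from compactness of $\bP V$ and Lipschitz bounds on $g\mapsto\log\pa*{\norm{gv}/\norm{v}}$; pointwise a.s.\ convergence for each direction plus compactness does not self-improve to uniformity in $L^1$, and this uniformity is precisely the quantitative content of \cite{MR3560700} (Theorem 4.28(b)), proved there by controlling $\int_B\log\pa*{\norm{b_n^1v}/(\norm{b_n^1}\norm{v})}\dv\be b$ uniformly in $v$. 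If you intend to reconstruct (3) rather than cite it, these are the two points that need genuine arguments; otherwise your outline matches the inputs behind the paper's citations.
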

The vector $\sig_\mu$ defined in Theorem~\ref{thm:poslyap} is called the \textit{Lyapunov vector} of $\mu$.

There are two pairs of highest weights which play a prominent role in our discussion.
The first pair consists of the highest weights of the irreducible representations of $H$ on
$\bR^3$ and on $\wedge^2\bR^3$ which we denote by $\omrt$ and $\omrd$ respectively.
The important fact regarding this pair is that for $t=\diag{t_1,t_2,t_3}\in\goz$ we have
$\omrt(t) = t_1$ and $\omrd(t) = t_1+t_2$ so that $(\omrt-\frac{1}{2}\omrd)(t) = \frac{1}{2}(t_1-t_2)$ and so
$\omrt-\frac{1}{2}\omrd$ is positive.
Thus, by part~\eqref{simplelyap} of Theorem~\ref{thm:poslyap} we have the following
fundamental inequality: In both of~\ref{case1} and~\ref{case2} one has
\begin{equation}\label{eq:1128}
    \omrt(\sig_\mu)-\frac{1}{2}\omrd(\sig_\mu)>0.
\end{equation}
Equipped with this inequality and with Theorem~\ref{thm:poslyap} we can now easily deduce Lemma~\ref{lem:firstgrowth} which
played an important role in the proof of Proposition~\ref{prop:unipinv}.

\begin{proof}[Proof of Lemma~\ref{lem:firstgrowth}]
    Given $\eps>0$, it follows from part~\eqref{cor:poslyap} of Theorem~\ref{thm:poslyap} that there exists
    $n_0>0$ such that for all $v\in \bR^3\smallsetminus\set 0$, $w\in\wedge^2\bR^3\smallsetminus\set 0$ and $n>n_0$ one has

    $$\mu^{*n}\pa*{\left\{g\in G :
            \begin{array}{ll}
                \norm{gv} > \norm{v}\exp(n(\omrt(\sig_\mu)-\eps))\\
                \norm{gw} < \norm{w}\exp(n(\omrd(\sig_\mu)+\eps))
            \end{array}
    \right\}}>1-\eps.$$
    By~\eqref{eq:1128}, on choosing $\eps$ so that
    $\omrt(\sig_\mu) - \frac{1}{2}\omrd(\sig_\mu) - \frac{3}{2}\eps = \eps'>0$
    we get that for all for all $v\in\bR^3\smallsetminus\set 0$, $w\in\wedge^2\bR^3\smallsetminus\set 0$ and $n>n_0$,
    $$\mu^{*n}\pa[\bigg]{\set[\bigg]{g\in G :
    \frac{\norm{gv}^{\phantom{1/2}}}{\norm{gw}^{1/2}}>\frac{\norm{v}^{\phantom{1/2}}}{\norm{w}^{1/2}}\exp(n\eps')}}
    >1-\eps.$$
    The statement of the lemma now readily follows.
\end{proof}

We now discuss the second pair of highest weights that will concern us.
Let $\gor_0$ and $\gol_0$ be the Lie algebras of the Lie groups $R_0$ and $L_0$ we defined in \S\ref{ssec:boundary maps}.
The Lie algebras $\gor_0$ and $\gol_0$ correspond to $P$-invariant lines in the representations $\wedge^3\gog$ and $\wedge^4\gog$ respectively.
We denote the corresponding weights by $\om_{\gor_0}\in\cH_\goz(\wedge^3\gog)$ and $\om_{\gol_0}\in\cH_\goz(\wedge^4\gog)$.
Given $t=\diag{t_1,t_2,t_3}\in \goz$ we have $\oml(t) = 2(t_1-t_3)$ and $\omr(t) = t_1+t_2-2t_3$ so that
$(\oml-\omr)(t) = t_1-t_2$ and so $\oml-\omr$ is positive.
By part~\eqref{simplelyap} of Theorem~\ref{thm:poslyap} we arrive at the following fundamental inequality: In both~\ref{case1} and~\ref{case2}
\begin{equation}\label{eq:posweight}
    \oml(\sig_\mu)-\omr(\sig_\mu)>0.
\end{equation}
We will often work with
the difference $\om_{\gol_0} - \om_{\gor_0}$ and use the notation
\begin{equation}\label{eq:0632}
    \om_{\gol_0/\gor_0}\defi\om_{\gol_0}-\om_{\gor_0}\quad\textrm{and}\quad\chi_{\gol_0/\gor_0}\defi\exp\comp\om_{\gol_0/\gor_0}\comp\log.
\end{equation}
\subsection{Two lemmas about representations}
In this subsection we collect some representation theoretic results which are specific to the representations we are interested in.
Let $V$ be a representation of $H$ and let $\om\in\cH_\goz(V)$. We denote by $\tau_\om:V\to V[\om]$ the natural projection and note that it is $H$-equivariant.
We always assume that the norm we choose on a vector space $V$ is induced by an inner product with respect to which the isotypic components are orthogonal and such that the maximal compact $K<H$ from the Iwasawa decomposition acts by isometries.
For $\al>0$ we denote
\begin{equation}\label{eq:2300}
    V_{\sphericalangle \al}[\om] \defi \set{v\in V: \norm{\tau_\om(v)}\ge \al\norm{v}}.
\end{equation}
This is the complement of a projective neighbourhood of $\ker \tau_\om$.


For $p\in\Xbar$ we define $G_p\defi\Stab_G(p)$ and $R_p$ to be the radical of $G_p$.
Since $G_{p_b}=G_b$ for $\be$-almost every $b\in B$ these definitions are compatible with our previous definitions 
from~\S\ref{ssec:boundary maps}.
As usual, we denote the corresponding Lie algebras by lower-case Gothic letters.
Moreover in~\ref{case2}, also recall the notation $\cC$ for the circle of isotropic planes in $\Xbar$ that we introduced in \S\ref{ssec:pboundary}.
\begin{lemma}\label{lem:max weight}
    The following hold:
    \begin{enumerate}[(1)]
        \item\label{p:0610} The weight $\om_{\gol_0}$
            is a maximal weight in $\cH_{\goz}(\wedge^4\gog)$.
        \item\label{p:06111} In~\ref{case1} there exists $\al>0$ such that for all $p\in \Xbar$, $u\in\wedge^3\gor_p$ and $v\in\gog$,
            $$v\wedge u \in\Vang{(\wedge^4\gog)}.$$
        \item\label{p:0611} In~\ref{case2} there exists $\al>0$ such that for all $p\in \cC$, $u\in\wedge^3\gor_p$ and $v\in\gog_p$,
            $$v\wedge u\in \Vang{(\wedge^4\gog)}.$$
        \item\label{p:06101} For all $\eta\in H/P$ one has 
            \begin{equation}\label{eq:1731}
                \set{v\wedge u:v\in\gog,u\in\wedge^3\gor_\eta}\cap(\wedge^4\gog)_\eta\br{\oml}=\wedge^4\gol_\eta.
            \end{equation}
    \end{enumerate}
\end{lemma}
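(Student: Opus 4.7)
The plan is to prove the four parts via weight-space computations in $\wedge^k\gog$, anchored at the base point $p_0 = \spa_\bR(\set{e_1,e_2})$.

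\emph{Part (1).} A basis of $\gol_0$ is given by $\diag{1,1,-2}$ together with the three root vectors associated with the positive roots $t_1-t_2,\ t_1-t_3,\ t_2-t_3$; their $\goz$-weights sum to $\oml(t)=2(t_1-t_3)$. Hence $\wedge^4\gol_0$ is a $P$-invariant line in $\wedge^4\gog$ of weight $\oml$, placing $\oml\in\cH_\goz(\wedge^4\gog)$. For maximality: every weight of $\wedge^4\gog$ is a sum of four distinct $\goz$-weights of $\gog$; a direct arithmetic check shows the maximum such sum is $\oml$, attained precisely by the weights of a basis of $\gol_0$. Since every weight of an irreducible representation is dominated by its highest weight, $\oml$ is maximal in $\cH_\goz(\wedge^4\gog)$.

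\emph{Parts (2) and (3).} Fix $u_0$ spanning the one-dimensional $\wedge^3\gor_0$ and consider the linear map $\phi_{u_0}:\gog\to\wedge^4\gog$ defined by $v\mapsto v\wedge u_0$, whose kernel is $\gor_0$ and whose image is five-dimensional. The strategy is to show that the isotypic projection $\tau_\oml$ restricts to an injection on $\on{Im}\phi_{u_0}$, thereby yielding some $\al>0$ at the base point. Then, using that $\Xbar=Kp_0$ in Case~I and $\cC=Kp_0$ in Case~II (where $\cC$ is the unique closed $H$-minimal subset of $\Xbar$ and $p_0\in\cC$), and that $K$ acts isometrically on $\wedge^4\gog$ while $\tau_\oml$ is $H$-equivariant, the base-point bound extends to a uniform bound over $\Xbar$ (resp.~$\cC$). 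Injectivity at the base point is verified by inspection: the image $\on{Im}\phi_{u_0}$ decomposes as a direct sum of weight spaces of five distinct weights, namely $\oml$, $\omr$, $\theta=t_1-t_3$, $2(t_2-t_3)$, and $t_2-t_3$; one checks that for each, the corresponding weight vector in the image has non-zero projection onto the $\oml$-isotypic component. In Case~II the restriction $v\in\gog_p$ shrinks the image but the same analysis applied along $\cC$ suffices.

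\emph{Part (4).} At the base point, $v\wedge u_0$ has weight equal to $\omr$ plus the weight of $v$, so it lies in $(\wedge^4\gog)^\oml$ iff the weight of $v$ equals $\oml-\omr=t_1-t_2$, forcing $v$ to be a scalar multiple of the root vector of weight $t_1-t_2$. The resulting product spans $\wedge^4\gol_0$, establishing the base-point case. Extension to arbitrary $\eta=gP\in H/P$ follows from applying $\Ad(g)$ and using the $H$-equivariance of the assignments $p\mapsto\gor_p,\gol_p$ and of the wedge product.

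The main obstacle is the injectivity of $\tau_\oml$ on the five-dimensional image in part~(2): for the weights $\theta$ and $t_2-t_3$, which are shared with the adjoint representation $V_\theta$ that also appears as an isotypic component of $\wedge^4\gog$, the restrictions of $\tau_\oml$ to the corresponding weight spaces are not identity maps, so one must verify by explicit computation that the specific image vectors are not killed by $\tau_\oml$ at these weights.
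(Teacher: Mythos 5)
Your parts (1) and (4) are essentially the paper's own arguments, and your reduction of (2)--(3) is also the paper's: pass to the base point $p_0$ by $K$-transitivity and $K$-invariance of $\Vang{(\wedge^4\gog)}$, and reduce to showing that $\tau_{\oml}$ is injective on the five-dimensional space $\gog\wedge u_0$ (resp.\ on $\gog_{p_0}\wedge u_0$). The gap is exactly where you flag it, and it is not a removable formality. Your weight bookkeeping does give the non-vanishing for free for the weights $\oml$, $\omr$ and $2(t_2-t_3)$ (one can check that in~\ref{case1} the full weight spaces of $\wedge^4\gog$ for these weights lie inside the isotypic component of $\oml$), but for $\theta=t_1-t_3$ and $t_2-t_3$ those weight spaces genuinely meet other isotypic components, so the statements $\tau_{\oml}(e_{32}\wedge u_0)\neq 0$ and $\tau_{\oml}(e_{31}\wedge u_0)\neq 0$ are exactly the content of part (2); asserting ``one checks'' and then listing this as the main obstacle leaves (2) unproved. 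Part (3) is treated even more loosely: there the weights and isotypic components must be taken with respect to the rank-one Cartan of $\SO(Q)$, so your five-weight analysis does not transfer, and ``the same analysis applied along $\cC$ suffices'' cannot stand as written --- indeed in~\ref{case2} the unrestricted claim of (2) is \emph{false} (by Remark~\ref{rem:important2}, $(e_{21}+2e_{32})\wedge u_0\in\ker\tau_{\oml}$), so the restriction $v\in\gog_p$ must enter the verification in an essential way, and your proposal never explains how it does.

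The paper closes precisely this hole with a device that avoids computing any isotypic projections. Suppose $v_0\perp\gor_0$, $v_0\neq 0$, and $v_0\wedge u_0\in\ker\tau_{\oml}$. Since $\ker\tau_{\oml}$ is $H$-invariant and $\bR u_0=\wedge^3\gor_0$ is a $P$-invariant line, $(pv_0)\wedge u_0\in\ker\tau_{\oml}$ for every $p\in P$. On the other hand, if $\idist{w,e_{12}}\neq 0$ then $w\wedge u_0\notin\ker\tau_{\oml}$, because its weight-$\oml$ component is a nonzero multiple of the highest-weight vector $e_{12}\wedge u_0$. Applying this to $p(t)=\exp(t_1e_{12}+t_2e_{13})$ in~\ref{case1} (resp.\ $p'(t)=\exp(t(e_{12}+e_{23}))$ in~\ref{case2}, with $v_0\in\gog_{p_0}$) yields a polynomial identity in $t$, namely $\idist{p(t)v_0,e_{12}}\equiv 0$, whose coefficients are the coordinates of $v_0$, forcing $v_0=0$; the Case~II computation is where the restriction $v\in\gog_p$ is used and where the exceptional direction $e_{21}+2e_{32}$ of Remark~\ref{rem:important2} is excluded. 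If you want to keep your route you must actually carry out the two projection computations in~\ref{case1} and redo the whole weight/isotypic analysis for $H=\SO(Q)$ in~\ref{case2}; the unipotent-translation argument above is the missing idea that makes the lemma a short computation.
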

\begin{proof}
    First we prove~\eqref{p:0610}.
    Given $t=\diag{t_1,t_2,t_3}\in \goz$ the eigenvalues of $\on{ad}_t$ on $\gog$ determine its eigenvalues on $\wedge^4\gog$.
    Namely, they are all possible sums of 4 eigenvalues of $\on{ad}_t$ on $\gog$ corresponding to different eigenlines.
    It is then clear from the ordering of the weights of the adjoint representation that the maximal weight of the fourth
    wedge is $\om_{\gol_0}(t) = 2(t_1 - t_3)$.

    Next we prove~\eqref{p:06111}.
    In~\ref{case1} we have $K\cong\SO_3(\bR)$ and hence it acts transitively on $\Xbar$.
    Write $p = kp_0$ for some $k\in K$ and then $\gor_p = k\gor_0$.
    Since the set $\Vang{(\wedge^4\gog)}$ is $K$-invariant, it is enough to prove~\eqref{p:06111} for $p=p_0$.
    Let $\set{e_{ij}}_{1\leq i,j\leq 3}$ be the basis
    of unit matrices in $\Mat_3(\bR)$ and let $d_1\defi e_{11}+e_{22}-2e_{33}$ and $d_2\defi e_{11}-e_{22}$ so that
    $\set{d_1,d_2}\cup\set{e_{ij}}_{1\leq i,j\leq 3,i\neq j}$ forms a basis of $\gog$.
    Since $\wedge^3\gor_0$ is one dimensional the collection of pure wedges $\gow_0\defi\set{v\wedge u:v\in\gog,u\in\wedge^3\gor_0}$ forms a subspace
    of $\wedge^4\gog$. It follows that $\gow_0\subset\Vang{(\wedge^4\gog)}$ for some positive $\al$ provided that $\gow_0\cap\ker \tau_{\om_{\gol_0}}=0$.
    To this end,
    let $u_0 =d_1 \wedge e_{23}\wedge e_{13}\in\wedge^3\gor_0$
    and assume by way of contradiction
    that there exists $v_0 \in \gog\smallsetminus\set 0$ such that $v_0\wedge u_0\in\ker\tau_{\oml}$.
    We may assume without loss of generality that $v_0$ is orthogonal to $\gor_0$ or in other words
    $$v_0\defi v_{21}e_{21}+v_{31}e_{31}+v_{32}e_{32}+v_{22}d_2+v_{12}e_{12}.$$
    Since the condition $v_0\wedge u_0\in\ker\tau_{\oml}$ is $H$-invariant and
    $u_0$ is $P$-invariant we get that
\begin{equation}\label{posang2} {p}v_0\wedge u_0\in\ker\tau_{\oml}\ \textrm{for all}\ p\in P.\end{equation}
    For $v\in\gog$ it is easy to check that
\begin{equation}\label{posang3}v\wedge u_0\in\ker\tau_{\oml}\ \textrm{if and only if}\ \idist{v,e_{12}}=0.\end{equation}
    For $t\in\bR^2$ let $p(t)\coloneqq\exp(t_1e_{12}+t_2e_{13})\in P$.
    It is easy (but tedious) to compute
    $$\idist{ {p(t)}v_0,e_{12}}=v_{12}-t_1v_{22}-t_1^2v_{21}+t_2v_{32}-t_1t_2v_{31}.$$
    Combining this computation with~\eqref{posang2} and~\eqref{posang3} gives that $v_0=0$ which is a contradiction as required.
    %
    %
    %
    %
    %

    The proof of~\eqref{p:0611} is very similar to the proof of part~\eqref{p:06111}.
    Since we only consider $p\in\cC$ and in this case $K\cong\SO_2(\bR)$ acts transitively on $\cC$, we can reduce to the case when $p=p_0$.
    As before this will follow provided that $\set{v\wedge u:v\in\gog_0,u\in\wedge^3\gor_0}$ intersects $\ker\tau_{\oml}$ trivially.
    Suppose there exists $v_0'\in\gog_0\smallsetminus\gor_0$ such that $v_0'\wedge u_0\in\ker\tau_{\oml}$.
    Without loss of generality we may suppose that $v_0'$ is orthogonal to $\gor_0$ and hence we may write
    $$v_0'\defi v_{12}e_{12}+v_{22}d_2+v_{21}e_{21}.$$
    For $t\in\bR$, let $p'(t)\defi\exp(t(e_{12}+e_{23}))\in P$.
    Another tedious computation reveals that
    $$\idist{ {p'(t)}v_0',e_{12}}=v_{12}-tv_{22}-t^2v_{21}.$$
    Again using~\eqref{posang2} and~\eqref{posang3} we see that this implies that $v_0'=0$ which is a contradiction.

    Finally we prove~\eqref{p:06101}.
    Since $K$ acts transitively on $H/P$ we see that it is enough to prove~\eqref{eq:1731} for $\eta_0\defi P$.
    Since $\oml$ is maximal in $\cH_\goz(\wedge^4\gog)$ by part~\eqref{p:0610} we deduce that 
    $(\wedge^4\gog)_{\eta_0}[\oml] = (\wedge^4\gog)^{\oml}$. Thus, fixing $u_0\in \wedge^3\gor_0\smallsetminus\set{0}$,
    \eqref{eq:1731} becomes 
    $$\set{v\wedge u_0:v\in\gog,\; \textrm{for all}\ t\in \goz,\, \on{ad}_t(v\wedge u_0) = \oml(t)(v\wedge u_0)} = \wedge^4\gol_0.$$
    The inclusion $\supseteq$ is clear as $\oml$ was defined to be the weight by which $\goz$ acts on $\wedge^4\gol_0$. 
    We now establish the inclusion $\subseteq$. Without loss of generality $v$ is orthogonal to $\gor_0$, which means that 
    $v$ is a linear combination of $\set{d_2,e_{ij}:(i,j)\notin\set{(1,3),(2,3)}}$. In turn, $v\wedge u_0$ is a linear combination
    of $\set{d_2\wedge u_0,e_{ij}\wedge u_0:(i,j)\notin\set{(1,3),(2,3)}}$. Since all the vectors in this set are eigenvalues 
    of $\on{ad}_\goz$ and only $e_{12}\wedge u_0$ has eigenvalue given by $\oml$ we deduce that if 
    $\on{ad}_t(v\wedge u_0) = \oml(t)(v\wedge u_0)$ for all $t\in \goz$, then $v\in \bR(e_{12}\wedge u_0)=\wedge^4\gol_0$
    which completes the proof. 
\end{proof}
\begin{remark}\label{rem:important}
    Lemma~\ref{lem:max weight} lies at the heart of the discussion.
    In Lemma~\ref{lem:max weight} the crucial difference between~\ref{case1} and~\ref{case2} manifests itself.
    Parts~\eqref{p:06111} and~\eqref{p:0611} say that certain vectors in $\wedge^4\gog$ have a component
    in $(\wedge^4\gog)[\oml]$ which is of `positive proportion'.
    This will allow us to use the positivity~\eqref{eq:posweight} and control to some extent the way two nearby
    points in $X$ drift away from each other.

    Part~\eqref{p:06101} is needed to ensure that our `limiting displacement' will be pointing in the right direction in
    the case that the multiplicity of $\oml$ is larger than 1.

    In~\S\ref{ssec:drift} we will know that the two nearby points lie in the same plane
    $p$. Hence, the displacement vector between them corresponds to a pure wedge of the form $v\wedge u$ for $v\in \gog_p$ and $u\in \wedge^3\gor_p$.
    This will allow us to use Lemma~\ref{lem:max weight} in both~\ref{case1} and~\ref{case2}.

    On the other hand, in \S\ref{sec:non atomicity} we will use the same positivity to prove Theorem~\ref{thm:full}\eqref{main2} which is the statement
    that the limit measures are non-atomic.
    There, we will also need to understand how two near-by points in $X$ drift apart but will need to do so for pairs of points which
    do not necessarily lie in the same plane. This means that the displacement vector between them is of the form $v\wedge u$ where
    $v\in \gog$ and $u\in\gor_p$ for some $p\in\Xbar$. Thus, we would be able to apply Lemma~\ref{lem:max weight} only for~\ref{case1}.

    One concludes that the small technical difference between parts~\eqref{p:06111} and~\eqref{p:0611} of Lemma~\ref{lem:max weight}
    is what stands behind the phenomenon appearing in Theorem~\ref{thm:cover}.
\end{remark}
\begin{remark}\label{rem:important2}
    By analysing the proof of Lemma~\ref{lem:max weight} one can see that in~\ref{case2} the subspace
    ${\set{v\wedge u:v\in\gog,u\in\wedge^3\gor_0}}$ does intersect $\ker \tau_{\oml}$ non-trivially.
    In fact, this intersection equals $\set{v\wedge u: v\in\spa_\bR({e_{21}+2e_{32}}),u\in\wedge^3\gor_0}$. This should be compared with
    the construction given in the proof of Theorem~\ref{thm:cover} since $\Lam_t=t(e_{21}+2e_{32})\spa_\bZ(\set{e_1,e_2})$.
\end{remark}
The following lemma will be used in~\S\ref{sec:Non-escape-of-mass/contracted} and~\S\ref{sec:non atomicity} where we will replace
$\mu$ by $\mu^{*n_0}$ for some $n_0\in\bN$ in order to know that the integrals on the left hand sides of equations~\eqref{eq:22061} and~\eqref{eq:22062}
are bounded away from zero uniformly.
\begin{lemma}\label{lem:replacement}
    There exist $\lam_0>0$ and $n_0>0$ such that for all $n\ge n_0$, the following hold:
    \begin{enumerate}[(1)]
        \item In both~\ref{case1} and~\ref{case2}, for all $v\in\bR^3\smallsetminus\set 0$ and $w\in \wedge^2\bR^3\smallsetminus\set 0$ one has
            \begin{align}
                \label{eq:22061}
                \int_G \log \pa[\bigg]{\frac{\norm{gv}}{\norm{v}}\Big/\frac{\norm{gw}^{1/2}}{\norm{w}^{1/2}}}\dv{\mu^{*n}}g  > n\lam_0.
            \end{align}
        \item In case~\ref{case1}, for all $p\in \Xbar$, $u\in \wedge^3\gor_p\smallsetminus\set 0$ and $v\in \gog\smallsetminus \gor_p$ one has
            \begin{align}
                \label{eq:22062}
                \int_G \log \pa[\bigg]{\frac{\norm{g(v\wedge u)}}{\norm{v\wedge u}}\Big/\frac{\norm{gu}}{\norm{u}}}\dv{\mu^{*n}}g > n\lam_0.
            \end{align}
    \end{enumerate}
\end{lemma}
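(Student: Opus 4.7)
\emph{Plan.} Both parts follow by combining the uniform $L^1$ estimate in Theorem~\ref{thm:poslyap}\eqref{cor:poslyap} with the positivity of a suitable weight difference evaluated at $\sig_\mu$. For Part (1), I would apply the theorem to the two irreducible $H$-representations $\bR^3$ and $\wedge^2\bR^3$ (both remain irreducible in each of~\ref{case1} and~\ref{case2}, since in~\ref{case2} the latter is isomorphic to the standard representation via the quadratic form $Q$). Using that $\int_G f(g)\dv{\mu^{*n}}g=\int_B f(b_n^1)\dv\be b$, the uniform $L^1$ convergence translates to
$$\frac{1}{n}\int_G\log\frac{\norm{gv}}{\norm{v}}\dv{\mu^{*n}}g\longrightarrow \omrt(\sig_\mu), \qquad \frac{1}{n}\int_G\log\frac{\norm{gw}}{\norm{w}}\dv{\mu^{*n}}g\longrightarrow \omrd(\sig_\mu)$$
uniformly in $v\in\bR^3\mz$ and $w\in\wedge^2\bR^3\mz$. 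Subtracting half of the second limit from the first and invoking the fundamental inequality~\eqref{eq:1128} yields a uniform positive asymptotic lower bound, so~\eqref{eq:22061} holds with any $\lam_0\in(0,\omrt(\sig_\mu)-\tfrac{1}{2}\omrd(\sig_\mu))$ once $n_0$ is large.

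\textbf{Part (2).} Set $V\defi\wedge^4\gog$. The key input is Lemma~\ref{lem:max weight}\eqref{p:06111}, which in~\ref{case1} supplies $\al>0$, independent of $p\in\Xbar$, $u\in\wedge^3\gor_p\mz$, and $v\in\gog\setminus\gor_p$, such that $\norm{\tau_{\oml}(v\wedge u)}\ge \al\norm{v\wedge u}$. Since the orthogonal projection $\tau_{\oml}$ onto the isotypic component $V[\oml]$ is $H$-equivariant and $1$-Lipschitz, one obtains
$$\frac{\norm{g(v\wedge u)}}{\norm{v\wedge u}}\ge\frac{\norm{g\,\tau_{\oml}(v\wedge u)}}{\norm{v\wedge u}}\ge \al\cdot\frac{\norm{gw'}}{\norm{w'}}, \quad\textrm{where}\quad w'\defi\tau_{\oml}(v\wedge u)\in V[\oml].$$
Taking logarithms, integrating against $\mu^{*n}$, and applying Theorem~\ref{thm:poslyap}\eqref{cor:poslyap} extended from irreducible representations to $V[\oml]$ as described below, the right-hand side gives $\int_G\log(\norm{g(v\wedge u)}/\norm{v\wedge u})\dv{\mu^{*n}}g\ge n(\oml(\sig_\mu)-\eps)+\log\al-C$ uniformly, for any $\eps>0$ and $n$ large. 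For the denominator, $u$ lies in the $H$-submodule $V'\subset\wedge^3\gog$ generated by the $P$-invariant line $\wedge^3\gor_0$; this $V'$ is irreducible of highest weight $\omr$, and Theorem~\ref{thm:poslyap}\eqref{cor:poslyap} applied to $V'$ yields $\int_G\log(\norm{gu}/\norm{u})\dv{\mu^{*n}}g\le n(\omr(\sig_\mu)+\eps)$ uniformly in $u\in V'\mz$. Subtracting and invoking the fundamental inequality~\eqref{eq:posweight} finishes~\eqref{eq:22062}, with $\lam_0$ any positive number below $\oml(\sig_\mu)-\omr(\sig_\mu)$ and $n_0$ chosen accordingly.

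\textbf{Main technical point.} The only step not immediately supplied by Theorem~\ref{thm:poslyap} is the promotion of its uniform $L^1$ convergence statement from an irreducible representation to the (possibly multiplicity $>1$) isotypic component $V[\oml]$. My plan is a decomposition argument: writing $V[\oml]=\bigoplus_{i=1}^m V_i$ as a direct sum of mutually orthogonal $H$-irreducibles each of highest weight $\oml$, decompose $w'=\sum_i w'_i$ so that $\norm{w'}^2=\sum_i\norm{w'_i}^2$. Choosing the index $j$ maximising $\norm{w'_i}$ yields $\norm{w'_j}\ge \norm{w'}/\sqrt{m}$ and $\norm{gw'}\ge\norm{gw'_j}$, whence
$$\log\frac{\norm{gw'}}{\norm{w'}}\ge \log\frac{\norm{gw'_j}}{\norm{w'_j}}-\tfrac{1}{2}\log m.$$
The uniform $L^1$ convergence applied to the irreducible $V_j$ (which has the same highest weight $\oml$ regardless of $j$) then transfers to a uniform-in-$w'$ lower bound on $\int_G\log(\norm{gw'}/\norm{w'})\dv{\mu^{*n}}g$, which is exactly what the preceding paragraph requires.
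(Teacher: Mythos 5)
Your proposal is correct and follows essentially the same route as the paper: part (1) is the uniform $L^1$-convergence of Theorem~\ref{thm:poslyap}\eqref{cor:poslyap} for $\bR^3$ and $\wedge^2\bR^3$ combined with~\eqref{eq:1128}, and part (2) uses Lemma~\ref{lem:max weight}\eqref{p:06111} to pass to $\tau_{\oml}(v\wedge u)$, the weight $\omr$ for the line $\wedge^3\gor_p$, and the positivity~\eqref{eq:posweight}, absorbing the constant $\log\al$ for large $n$ exactly as the paper does. Your ``main technical point'' about promoting the uniform convergence from irreducibles to the (multiplicity~$>1$) isotypic component $(\wedge^4\gog)[\oml]$ is a genuine issue the paper glosses over, and your decomposition argument handles it (note only that if the irreducible summands cannot be chosen mutually orthogonal one should replace the $\sqrt m$ bound by the operator norms of the $H$-equivariant projections onto the summands, which changes nothing).
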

\begin{proof}
    Let $$\lam_1 \defi\frac{1}{2} \min\set{\omrt(\sig_\mu) - \frac{1}{2}\omrd(\sig_\mu),\; \oml(\sig_\mu) - \omr(\sig_\mu)}$$
    which is positive by~\eqref{eq:1128} and~\eqref{eq:posweight}. The inequality~\eqref{eq:22061} with $\lam_0 = \lam_1$ and
    $n$ large enough (independent of the vectors) follows directly from
    the uniformity of the $L^1$-convergence in part~\eqref{cor:poslyap} of Theorem~\ref{thm:poslyap} applied to the irreducible
    representations of $H$ on $\bR^3$ and $\wedge^2\bR^3$.

    Next we prove~\eqref{eq:22062}. First we show that the line $\wedge^3\gor_p$ in $\wedge^3\gog$ is
    contained in the isotypic component $(\wedge^3\gog)[\omr]$. To see this, note that since
    $\gor_p = g\gor_0$ where $g\in H$ is such that $gp_0=p$, it is clear that it is enough to show that the line
    $\wedge^3 \gor_0$ is contained in the isotypic component $(\wedge^3\gog)[\omr]$. This holds since $\goz$ acts on
    $\wedge^3 \gor_0$ by the weight $\omr$ and this line is an eigenline of $P$.

    Next, note that by part~\eqref{p:06111} of Lemma~\ref{lem:max weight} there exists $\al>0$ such that
    for all $p\in\Xbar$,$u\in\wedge^3\gor_p\smallsetminus\set 0$ and $v\in\gog\smallsetminus \gor_p$ one has
    $v\wedge u\in \Vang{(\wedge^4\gog)}$. This implies that for any $g\in H$ one has
    $$\frac{\norm{g(v\wedge u)}}{\norm{v\wedge u}}\ge \al^{-1} \frac{\norm{g\tau_{\oml}(v\wedge u)}}{\norm{\tau_{\oml}(v\wedge u)}}.$$
    Together with another
    application of the uniform $L^1$-convergence in part~\eqref{cor:poslyap} of Theorem~\ref{thm:poslyap} (this time for the irreducible representations corresponding
    to the highest weights $\oml$ and $ \omr$) this shows that~\eqref{eq:22062} holds for all
    large enough $n$,
    with $n\lam_0$ on the right hand side replaced by $n\lam_1+\log \al$.
    Since $n\lam_1+\log \al>n \lam_1/2$ for all large enough $n$, the lemma is valid with $\lam_0 = \lam_1/2$.
\end{proof}

\section{The drift argument - Proof of Theorem~\ref{thm:full}\eqref{main1}}\label{sec:drift}
In this section we will prove Theorem~\ref{thm:full}\eqref{main1} by adapting the exponential drift argument of
Benoist and Quint from~\cite{BQJams}. Throughout $\nu\in\cP_\mu(X)$ is a $\mu$-ergodic stationary measure
and $(B^X,\be^X, T)$ denotes the backwards dynamical system as defined in~\eqref{eq:bds}.
\subsection{\label{ssec:horocyclic}The horocyclic flow}  
It will be convenient for us to work in an extension of the backwards dynamical system having an extra coordinate
which is used for book keeping purposes. 
Recall that $Z$ is the Cartan subgroup of $H$ defined in \S\ref{ssec:Iwasawa and lyapunov}. Let $\lambda$ be a Haar measure on $Z$ and let
\begin{equation}\label{eq:2233}
    B^{X,Z}\defi B^X\times Z\quad\textrm{and}\quad\be^{X,Z}\defi\int_{B\times Z}\del_b\otimes\nu_b\otimes \del_z\dv\be b\dv\lambda z.
\end{equation}
The extension of the backwards dynamical system that we consider is given by the 
map $\widehat{T}:B^{X,Z}\to B^{X,Z}$ which clearly preserves $\be^{X,Z}$ and is defined using the Iwasawa cocycle
by
$$\widehat{T}(b,x,z)\defi(Sb,b_1^{-1}x,E(b)^{-1}z).$$

The horocyclic flow is an $\bR$-action on $B^{X,Z}$ which interacts with $\widehat{T}$ in a manner reminiscent 
to the interaction of the standard horocyclic and geodesic flows on the unit tangent bundle of the upper half plane and
hence the terminology.
Recall the notation introduced in \S\ref{ssec:boundary maps} and in particular, the groups $L_0,R_0, U_0\defi L_0/R_0$ and 
the resulting equivariant families $L_\eta,R_\eta,U_\eta$ for $\eta\in H/P$ as well as the notation
$L_b,R_b,U_b$ defined for $\be$-almost every $b\in B$.
We  denote the Lie algebras of these groups by corresponding Gothic letters and note that naturally $\gou_0 
= \gol_0/\gor_0$ and similar identifications exist 
when the subscript $0$ is replaced by $\eta \in H/P$ or $b$ in the domain of definition of 
the boundary map $\xi$.
Observe that although for $\eta=gP\in H/P$ the map $\Ad_g$ maps $\gol_0$ to 
$\gol_\eta$ and $\gor_0$ to $\gor_\eta$ and 
therefore descends to a map $\Ad_g:\gou_0\to \gou_\eta$, this map is \tb{not} well defined in the sense that it depends
on the choice of representative $g$ for the coset $\eta$. This is remedied as follows.
Recall the section $s:H/P\to H/N$ that was chosen in~\S\ref{ssec:Iwasawa and lyapunov} where $N$ is the unipotent 
radical of the minimal parabolic $P$ of $H$.
Observe that although $N$ acts via the adjoint representation non-trivially on $\gol_0, \gor_0$ respectively, these actions 		    descend to the  
trivial action on the quotient $\gou_0$. Thus, given $\eta\in H/P$ with $s(\eta)=gN\in H/N$, we do have a well defined map 
$\gou_0\to\gou_\eta$ given by 
$$\ell+\gor_0\mapsto \Ad_g(\ell + \gor_0) = \Ad_g(\ell) + \gor_\eta\in \gol_\eta/\gor_\eta =\gou_\eta.$$
By abuse of notation we denote this map $$\Ad_{s(\eta)}:\gou_0\to\gou_\eta.$$
Precomposing with the boundary map $\xi$ we obtain the isomorphisms
$\Ad_{s(\xi(b))}:\gou_0\to \gou_b$ defined for $\be$-almost every $b\in B$.
Note also that $Z$ acts on $\gou_0$ via the adjoint representation and hence the isomorphism 
$\Ad_{s(\xi(b))z}=\Ad_{s(\xi(b))}\Ad_z:\gou_0\to\gou_b$ is well defined for all $z\in Z$ and $\be$-almost every $b\in B$.
Following~\cite{BQJams}, for $u\in\gou_0$ we define the \textit{horocyclic flow} $\Phi_u:B^{X,Z}\to B^{X,Z}$
by
\begin{equation}\label{eq:horoflow}
    \Phi_u(b,x,z) \defi (b,\exp(\Ad_{s(\xi(b))z}(u))x,z).
\end{equation}
Further clarification is needed in this definition: For $\eta\in H/P$ and 
$\bar{\ell}=\ell+\gor_\eta\in \gou_\eta$, $\exp(\bar{\ell})\defi \exp(\ell)R_\eta
\in U_\eta$ is well defined. Moreover, since the action of $R_\eta$ on the plane $p_\eta$ is trivial, the group $U_\eta$ acts 
on the fibre $\pi^{-1}(p_\eta)\subset X$. By Proposition~\ref{prop:firstprop}, for $\be$-almost every $b\in B,$ $\nu_b$ is supported on
$\pi^{-1}(p_b)$ and therefore we conclude from \eqref{eq:2233} 
that for $\be^{X,Z}$-almost every $(b,x,z)\in B^{X,Z}$ we have that $x\in\pi^{-1}(p_b)$ and 
$\exp(\Ad_{s(\xi(b))z}(u))\in U_b$ so 
equation \eqref{eq:horoflow} 
makes sense.

We will utilise the joint action of $\wh{T}$ and the flow $\Phi_{\gou_0}$ on $B^{X,Z}$. 
A key point is the following lemma. 
\begin{lemma}\label{lem:commutation}
    For $\be$-almost every $b\in B$, for any $\hs=(b,x,z)\in B^{X,Z}$ one has 
    $$\Phi_u\comp\widehat T(\hs)=\widehat T\comp\Phi_u(\hs)
    \quad \textrm{for all}\ u\in\gou_0 .$$
\end{lemma}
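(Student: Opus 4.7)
My plan is to unfold the definitions on both sides and reduce the identity to a Lie-algebraic statement on $\gou_0$. Writing $\hs=(b,x,z)$ one has
\begin{align*}
\Phi_u\circ\widehat T(\hs) &= \left(Sb,\;\exp(\Ad_{s(\xi(Sb))E(b)^{-1}z}(u))\cdot b_1^{-1}x,\;E(b)^{-1}z\right),\\
\widehat T\circ\Phi_u(\hs) &= \left(Sb,\;b_1^{-1}\exp(\Ad_{s(\xi(b))z}(u))\cdot x,\;E(b)^{-1}z\right).
\end{align*}
The first and third coordinates already agree, so the task is to match the middle coordinates. Pulling $b_1^{-1}$ past the exponential via the naturality identity $b_1^{-1}\exp(w)b_1=\exp(\Ad_{b_1^{-1}}(w))$ (applied in $U_b$, where it produces an element of $U_{Sb}$ since $b_1^{-1}L_bb_1=L_{Sb}$ and $b_1^{-1}R_bb_1=R_{Sb}$), it suffices to show
$$\Ad_{s(\xi(Sb))E(b)^{-1}z}(u) \;\equiv\; \Ad_{b_1^{-1}s(\xi(b))z}(u) \pmod{\gor_{Sb}},$$
since this guarantees that both sides exponentiate to the same element of $U_{Sb}$ acting on $b_1^{-1}x\in\pi^{-1}(p_{Sb})$.

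Next I would invoke the defining relation~\eqref{eq:section eqn} of the section $s$ with $h=b_1$ and $\eta=\xi(Sb)$, together with the equivariance $\xi(b)=b_1\xi(Sb)$ of the boundary map and the definition $E(b)=\alpha(b_1,\xi(Sb))$, to obtain
$$b_1\,s(\xi(Sb)) \;=\; s(\xi(b))\,E(b)\quad\text{in } H/N.$$
Multiplying on the right by $E(b)^{-1}z$ and using that $Z$ normalises $N$, this yields that $b_1 s(\xi(Sb))E(b)^{-1}z$ and $s(\xi(b))z$ coincide in $H/N$, i.e.\ when lifted to $H$ they differ by some $n\in N$ acting on the right.

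The only point that genuinely requires attention — and the reason the section was insisted to have image in $KN$ — is why this residual $N$-ambiguity is harmless for the $\Ad$ action on $\gou_0$. But this is exactly the content of the paragraph preceding the lemma: although $N$ acts non-trivially via the adjoint representation on $\gol_0$ and on $\gor_0$, it acts trivially on the quotient $\gou_0=\gol_0/\gor_0$. Consequently
$$\Ad_{s(\xi(b))z\cdot n}(u)=\Ad_{s(\xi(b))z}\Ad_n(u)=\Ad_{s(\xi(b))z}(u)$$
for every $u\in\gou_0$ and every $n\in N$, which gives the required congruence. I do not expect a substantial obstacle; the lemma is essentially the assertion that the cocycle twist by $E(b)^{-1}$ built into $\widehat T$ is precisely the correction needed to make $\widehat T$ and $\Phi_u$ commute after descending to the unipotent quotient $U_b$.
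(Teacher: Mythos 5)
Your proposal is correct and follows essentially the same route as the paper: both unfold the two compositions, derive $b_1\,s(\xi(Sb)) = s(\xi(b))\,E(b)$ in $H/N$ from the defining relation~\eqref{eq:section eqn} together with the equivariance of $\xi$ and the definition of $\on E$, and conclude that the middle coordinates agree. You merely make explicit the steps the paper leaves implicit (conjugating the exponential by $b_1^{-1}$ using $b_1^{-1}L_bb_1=L_{Sb}$, $b_1^{-1}R_bb_1=R_{Sb}$, and the harmlessness of the residual $N$-ambiguity because $N$ acts trivially on $\gou_0$), which is a faithful elaboration rather than a different argument.
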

\begin{proof}
    Recall that by~\eqref{eq:section eqn} and the definition of $\on E$ for $\be$-almost every $b\in B$ one has
\begin{equation}\label{eq:comm1}b_1^{-1}s(\xi(b))= s(\xi(Sb)) \on E(b)^{-1}.\end{equation}
    For arbitrary $u\in\gou_0$, using the definitions we have that
    $$\Phi_u\comp\widehat T(b,x,z)=(Sb,\exp(\Ad_{s(\xi(Sb))\on E(b)^{-1}z}u)b_1^{-1}x,\on E(b)^{-1}z)$$
    and
    $$\widehat T\comp\Phi_u(b,x,z)=(Sb,b_1^{-1}\exp(\Ad_{s(\xi(b))z}u)x,\on E(b)^{-1}z).$$
    Once $b$ satisfies \eqref{eq:comm1} these two expressions are equal
    and the lemma follows.
\end{proof} 
Later on it will be important for us to restrict attention to a `finite window' in the $Z$-coordinate. Let $U\subset Z$ be 
a bounded measurable set of finite positive $\lam$-measure
and define\footnote{Later on we will take $U$ to be the image under the exponential map of the unit ball in $\goz$.}
\begin{equation}\label{eq:2136a}
    \bxu\defi B^X\times U\quad\textrm{and}\quad\bexu\defi\bexz|_{\bxu}.
\end{equation}
Note that $B^{X,U}$ is $\Phi_{\gou_0}$-invariant but not $\wh{T}$-invariant. 
The following proposition (in which the role of $U$ is insignificant) 
shows why the horocyclic flow is natural from the point of view of Proposition~\ref{prop:unipinv}.
\begin{proposition}\label{prop:unipinv2}
    The measure $\bexu$ is $\Phi_u$-invariant for all $u\in\gou_0$ if and only if for $\be$-almost every $b\in B$ the measure $\nu_b$ is $U_b$-invariant.
\end{proposition}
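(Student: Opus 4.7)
The plan is to exploit that $\Phi_u(b,x,z)=(b,\exp(\Ad_{s(\xi(b))z}(u))x,z)$ fixes the outer coordinates $b$ and $z$ and acts on $x$ by an element of $U_b$, which by Proposition~\ref{prop:firstprop} preserves $\pi^{-1}(p_b)\supseteq\supp\nu_b$. Since $\gou_0$ is one-dimensional, it suffices to prove both implications for a single nonzero $u\in\gou_0$. The \emph{if} direction is then immediate from the disintegration
\[
\bexu=\int_{B\times U}\del_b\otimes\nu_b\otimes\del_z\,\dv\be b\,\dv{\lam|_U}z,
\]
since $U_b$-invariance of $\nu_b$ for $\be$-almost every $b$ preserves each fibre measure and the outer coordinates are untouched.

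For the converse, I would pair the $\Phi_u$-invariance with product test functions $\phi(b,x,z)=f(b)g(x)h(z)$ and integrate using the disintegration, obtaining
\[
\int_B f(b)\int_U h(z)\pa*{\int g(\exp(\Ad_{s(\xi(b))z}(u))x)\,\dv{\nu_b}x-\int g(x)\,\dv{\nu_b}x}\dv\lam z\,\dv\be b=0.
\]
Letting $f$, $g$, $h$ range over countable dense families in the relevant spaces of bounded continuous functions, and intersecting the resulting $\be$-conull exceptional sets, one produces a single $\be$-conull $B_0\subseteq B$ such that for every $b\in B_0$ the bracketed quantity vanishes for $\lam$-almost every $z\in U$. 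Equivalently, for each such $b$, the measure $\nu_b$ is invariant under $\exp(\Ad_{s(\xi(b))z}(u))$ on a $\lam$-full-measure subset of $U$.

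To upgrade this into full $U_b$-invariance, I would invoke that $Z$ acts on the one-dimensional space $\gou_0$ through the nontrivial character $\chi_{\gol_0/\gor_0}$ of~\eqref{eq:0632}, so $\Ad_{s(\xi(b))z}(u)=\chi_{\gol_0/\gor_0}(z)\Ad_{s(\xi(b))}(u)$ in $\gou_b$. With $U$ taken (as in the footnote before \eqref{eq:2136a}) to be the exponential image of a unit ball in $\goz$, the set $\chi_{\gol_0/\gor_0}(U)\subset\bR^\times$ has positive Lebesgue measure, so $\nu_b$ is stabilised by $\exp$ of a positive-measure set of elements in $\gou_b\cong\bR$. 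Since the stabiliser of $\nu_b$ in $U_b$ is a closed subgroup of $U_b\cong\bR$, and a closed subgroup of $\bR$ of positive Lebesgue measure must be all of $\bR$, we conclude that $\nu_b$ is $U_b$-invariant for every $b\in B_0$.

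The main obstacle is technical rather than conceptual: the careful countable-approximation argument that produces a \emph{single} $\be$-conull set $B_0$ on which the invariance holds simultaneously for all test pairs $(g,h)$, together with the verification that the chosen window $U$ genuinely has $\chi_{\gol_0/\gor_0}(U)$ of positive Lebesgue measure. The essential point of the whole argument is that as $z$ sweeps through $U$, the character $\chi_{\gol_0/\gor_0}(z)$ sweeps through a set large enough in the parametrisation of the one-parameter group $U_b\cong\bR$ to force full invariance from partial invariance.
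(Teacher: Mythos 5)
Your proof is correct, and its skeleton is the one the paper uses: $\Phi_u$ preserves each fibre of the projection $\bxu\to B\times U$, and invariance of $\bexu$ is equivalent to $\be\otimes\lam$-almost sure invariance of the fibre measures $\nu_b$ — the paper packages exactly this as Lemma~\ref{lem:absdisint} (uniqueness of disintegration), whereas you reprove it by hand with countable families of product test functions, which is the same content. Where you genuinely diverge is in the only-if direction's upgrade to full $U_b$-invariance. The paper's (implicit) route is to let $u$ range over $\gou_0$: for fixed $(b,z)$ the isomorphism $\Ad_{s(\xi(b))z}:\gou_0\to\gou_b$ is onto, so invariance under the fibre maps of all $\Phi_u$ already exhausts $U_b$ (after restricting to a countable dense set of $u$'s and using that the stabiliser of $\nu_b$ in $U_b$ is closed). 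You instead fix a single nonzero $u$ and exploit the $z$-variable: since $\Ad_z$ acts on the line $\gou_0$ by $\chi_{\gol_0/\gor_0}(z)$, the window $U$ smears the single element into a positive-measure set of times in $U_b\cong\bR$, and a closed subgroup of $\bR$ of positive (outer) measure is all of $\bR$. This buys a formally stronger conclusion — invariance under one $\Phi_u$, $u\ne0$, already forces almost sure $U_b$-invariance — at the price of the Fubini/Steinhaus-type step, which indeed works for any bounded $U$ of positive $\lam$-measure (in~\ref{case1} project the full-measure subset of $U$ along the level sets of $\om_{\gol_0/\gor_0}$), not only for the unit-ball choice of the footnote; it is worth saying this since the proposition is stated for a general window. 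Two cosmetic slips, neither a gap: the claim that a single $u$ suffices ``for both implications'' is not literally right for the if direction (invariance under the time-one map $\Phi_{u}$ does not a priori give invariance under $\Phi_{tu}$ for all $t$), but the argument you actually give there treats every $u$ at once, so nothing is lost; and one should record that $z\mapsto\exp(\Ad_{s(\xi(b))z}(u))_*\nu_b$ is weak-* measurable (indeed continuous) so that vanishing of the $h$-integrals for a countable family of $h$ forces the pointwise $\lam$-a.e. statement.
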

\begin{remark}\label{rk:unipinv2}
    In particular,
    by Proposition~\ref{prop:unipinv}, we can prove Theorem~\ref{thm:full}\eqref{main1} by
    establishing the $\Phi_{\gou_0}$-invariance of $\bexu$.
\end{remark}
Proposition~\ref{prop:unipinv2} is a straightforward corollary of the following lemma.
\begin{lemma}\label{lem:absdisint}
    Let $\rho:(Y,\eta) \to (Y',\eta')$ be a morphism of Borel probability spaces. Let $\eta = \int_{Y'}\eta_y \dv{\eta'} y$ be the
    disintegration of $\eta$ over $\eta'$ and $M:Y\to Y$ be a measurable map such that $\rho = \rho\comp M$.
    Then $\eta$ is $M$-invariant if and only if for $\eta'$-almost every $y\in Y'$, $\eta_y$ is $M$-invariant.
\end{lemma}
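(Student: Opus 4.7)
The plan is to exploit the uniqueness of disintegration together with the key observation that, because $\rho\comp M=\rho$, the map $M$ preserves each fibre $\rho^{-1}(y)$ in the sense that $M(\rho^{-1}(y))\subseteq\rho^{-1}(y)$. In particular, if $\eta_y$ is supported on $\rho^{-1}(y)$ then so is its pushforward $M_*\eta_y$. This turns the global invariance question into a fibrewise one.

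First I would verify that $M_*\eta$ projects to $\eta'$ under $\rho$: indeed $\rho_*(M_*\eta)=(\rho\comp M)_*\eta=\rho_*\eta=\eta'$. Hence both $\eta$ and $M_*\eta$ are probability measures on $Y$ that project to $\eta'$, and it is meaningful to compare their disintegrations over $\rho$.

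Next, I would identify the disintegration of $M_*\eta$ explicitly. For any measurable $A\subseteq Y$,
\begin{equation*}
(M_*\eta)(A)=\eta(M^{-1}A)=\int_{Y'}\eta_y(M^{-1}A)\dv{\eta'}y=\int_{Y'}(M_*\eta_y)(A)\dv{\eta'}y.
\end{equation*}
Since $M_*\eta_y$ is supported on $\rho^{-1}(y)$ by the observation above, the family $\{M_*\eta_y\}_{y\in Y'}$ satisfies the defining properties of the disintegration of $M_*\eta$ over $\eta'$. By the essential uniqueness of disintegration on standard Borel probability spaces, one concludes that $(M_*\eta)_y=M_*\eta_y$ for $\eta'$-almost every $y\in Y'$.

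Finally, invariance of $\eta$ under $M$ means $M_*\eta=\eta$, and two probability measures on $Y$ with the same projection to $Y'$ are equal if and only if their disintegrations agree $\eta'$-almost everywhere. Thus $\eta=M_*\eta$ is equivalent to $\eta_y=M_*\eta_y$ for $\eta'$-a.e.\ $y$, which is the desired fibrewise $M$-invariance. The only subtlety, and the main place to be careful, is invoking uniqueness of disintegration in the right generality; this is standard once $(Y,\eta)$ and $(Y',\eta')$ are assumed to be (or can be reduced to) standard Borel probability spaces, which is the setting of the paper.
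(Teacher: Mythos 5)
Your proof is correct and follows exactly the route the paper intends: the paper leaves this lemma to the reader, remarking only that it is a direct consequence of the uniqueness of disintegration, and your argument (identifying $\{M_*\eta_y\}$ as a disintegration of $M_*\eta$ via $\rho\comp M=\rho$ and then invoking uniqueness) is precisely that argument spelled out.
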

The proof of Lemma~\ref{lem:absdisint} is a direct consequence of the uniqueness of disintegration and is left to the reader.

\subsection{\label{ssec:lwms}Leafwise measures}
We begin with some general notation and measure theory.
Given a 
locally compact second countable Hausdorff space $Y$ we let $\cM(Y)$ denote the space of Radon measures on
$Y$. We equip $\cM(Y)$ with the coarsest topology so that $\theta\mapsto \theta(f)\defi \int_Yf\mathrm{d}\theta$ 
is continuous for 
any $f\in C_c(Y)$.
We let $\bP\cM(Y)$ denote
the space of equivalence classes of measures in $\cM(Y)$ under the equivalence relation of proportionality
and equip it with the quotient topology. 
For $\eta\in \cM(Y)$ we
denote by $\br{\eta}$ its equivalence class in $\bP\cM(Y)$.
For $\eta\in\cM(Y)$ and a set $V\subset Y$ of finite $\eta$-measure,
we let $\eta|_V\in\cP(V)$ be given by $(\eta|_V)(F)\defi \eta(F\cap V)/\eta(V)$ for all measurable $F\subseteq V$.
Given a countably generated
sub-$\sig$-algebra $\cA$ of the Borel $\sig$-algebra, the atom of $y$ with respect to $\cA$ is the smallest $\cA$-measurable set containing $y$ and
we denote it by $[y]_\cA$.
Given $\eta\in \cM(Y)$, the conditional measures of $\eta$ along
$\cA$ are a collection $\set{\eta_y^\cA}_{y\in E}$ of probability measures $\eta_y^\cA\in \cP(Y)$, 
where $E$ is a measurable subset
of $Y$ of full $\eta$-measure such that for any $\eta$-integrable function $f$ on $Y$,
the map $y\mapsto \int_Y f \mathrm{d}\eta_y^\cA$
is the conditional expectation $\bE(f|\cA)$. It then follows that $\eta$-almost surely $\eta_y^\cA\in\cP(\br{y}_\cA)$.
If $Y$ is a group, then for $y\in Y$ we denote by $l_y:Y\to Y$ the translation by $y$ on the left. This induces an action of $Y$ on $\cM(Y)$,
$(y,\eta)\mapsto (l_y)_*\eta$.
This action respects the equivalence relation of proportionality and hence descends to an action on $\bP\cM(Y)$ which we
denote $(y,\br{\eta})\mapsto (l_y)_*\br{\eta}$.

We will use the theory of leafwise measures as presented in~\cite[\S 6]{ELpisa}, \cite[\S4]{BQAnnals}. 
This is a measure theoretic toolbox developed by Katok-Spatzier, 
Lindenstrauss, Benoist-Quint and Einsiedler-Katok-Lindenstauss \cite{MR1619571,MR2195133,BQAnnals,MR2247967}
which captures the way a measure on a space disintegrates with respect to the action of a group.
We will follow the notation and terminology of~\cite{ELpisa}.

Let $(Y,\cY)$ be a standard Borel space and let $\Psi_t$ a measurable $\bR$-action\footnote{The theory concerns itself
with a locally compact second countable metrizable topological group but we will focus on flows.} (a flow) on $Y$. Assume 
that for any $y\in Y$ the stabiliser $\Stab_\bR(y)$ is a discrete subgroup of $\bR$ and let $\eta\in \cM(Y)$ be a finite measure.
The construction in 
\cite[\S4]{BQAnnals} (see also~\cite[\S 6]{ELpisa}) gives rise to 
a measurable map $y\mapsto (\eta)_y^\Psi$ from a measurable 
subset of full $\eta$-measure $E\subset Y$ to $\cM(Y)$ having the following properties:
\begin{enumerate}[\bfseries P1:]
    \item\label{pCh} \tb{Characterising property}. Given a measurable subset $E'\subset E$ 
        with $\eta(E')>0$ and a countably
        generated sub-$\sig$-algebras $\cA$ of the Borel $\sig$-algebra on $E'$
        whose atoms are of the form 
        $\br{y}_\cA=\set{\Psi_t(y):t\in o_y}$
        for some open and bounded $o_y\subset \bR$, for all $y\in E'$, then the 
        push-forward of $(\eta)_y^\Psi|_{o_y}$ via the orbit map $t\mapsto\Psi_t(y)$
        is the conditional measure $(\eta|_{E'})_{y}^\cA$.
    \item\label{pR} \tb{Rootedness}. For all $y\in E$, we have $0\in\supp{(\eta)_y^\Psi}$.
    \item\label{pN} \tb{Normalisation}. For any $y\in E$, ${(\eta)_y^\Psi}([-1,1]) = 1$
    \item\label{pC} \tb{Compatibility}. For all $y\in E$  and $t\in\bR$ such that $\Psi_t(y)\in E$ one has
        \begin{equation*}
            \br{ {(\eta)_y^\Psi}} = (l_t)_*\br{ {(\eta)_{\Psi_t(y)}^\Psi}}.
        \end{equation*}

\end{enumerate}
Property~\tb{P\ref{pCh}} is a characterising property in the sense that if $y\mapsto \sig(y)$ is 
a measurable map defined on a set of full $\eta$-measure  into $\cM(Y)$ such that \tb{P\ref{pCh}} is satisfied
then $\br{\sig(y)} = \br{(\eta)_y^\Psi}$ for $\eta$-almost every $y\in Y$. 
Property~\tb{P\ref{pN}} is a convenient way to choose in a measurable manner a well defined measure
in the equivalence class $\br{(\eta)_y^{\Psi}}$ which is well defined for $\eta$-almost every $y \in Y$ by \tb{P\ref{pCh}, P\ref{pR}}.

We call the map $y\mapsto {(\eta)_y^\Psi}$ satisfying properties \tb{P\ref{pCh}-P\ref{pC}} the \textit{leafwise measure-map} (LWM-map) 
of $\eta$ with respect to the flow $\Psi_\bR$ and the set $E$ is called a domain of the LWM-map.
The measure ${(\eta)_y^\Psi}$ is called the \textit{leafwise measure} (LWM) 
of $\eta$ at $y$ with respect to the flow $\Psi_\bR$.

We shall consider the LWM-map of the infinite Radon measure
$\be^{X,Z}\in \cM(B^{X,Z})$ with respect to the flow $\Phi_{\gou_0}$. The fact that 
this measure is infinite does not matter much as one can present $B^{X,Z}$ as a countable union of 
$\Phi_{\gou_0}$-invariant sets of the form $B\times X\times U_i$, where for example $U_i$ is the ball of radius $i$ in $Z$ centred at 
the identity and the restriction of $\be^{X,Z}$ to each such set has finite measure.  
In fact, due to the fact that the 
flow $\Phi_{\gou_0}$ respects the disintegration $\be^{X,Z} = \int_{B\times Z}\del_b\otimes \nu_b\otimes\del_z\dv\be{b}\dv\lam z$
we have the following.
\begin{lemma}\label{lem:a.s.identity1}
    For $\be\otimes \lam$-almost every $(b,z)\in B\times Z$ and $\del_b\otimes\nu_b\otimes \del_z$-almost every $\hs\in \hB$,
    $$(\del_b\otimes \nu_b\otimes \del_z)_{\hs}^\Phi = (\bexz)_{\hs}^{\Phi}.$$
\end{lemma}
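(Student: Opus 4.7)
The plan is to apply the characterising property \textbf{P\ref{pCh}} of leafwise measures, exploiting the fact that the horocyclic flow $\Phi_u$ fixes the $b$- and $z$-coordinates of a point $\hs=(b,x,z)\in B^{X,Z}$. Explicitly, the projection $\rho\colon B^{X,Z}\to B\times Z$, $(b,x,z)\mapsto (b,z)$, is $\Phi$-invariant, so every $\Phi$-orbit lies inside a single fibre $\rho^{-1}(b,z)$. Moreover, by~\eqref{eq:2233} the disintegration of $\be^{X,Z}$ over $\rho$ is precisely the family $(b,z)\mapsto\del_b\otimes\nu_b\otimes\del_z$.

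First I would define the candidate map $\hs\mapsto\sig(\hs)\defi(\del_b\otimes\nu_b\otimes\del_z)_{\hs}^\Phi$, which is well defined for $\be\otimes\lam$-almost every $(b,z)$ and $\del_b\otimes\nu_b\otimes\del_z$-almost every $\hs$, since $\del_b\otimes\nu_b\otimes\del_z$ is a probability measure and the induced $\Phi$-action on a fibre coincides with the action of the one-parameter unipotent group $U_b$ on $\pi^{-1}(p_b)\cong\SL_2(\bR)/\SL_2(\bZ)$, whose stabilisers are almost surely discrete. I would then verify that $\sig$ satisfies \textbf{P\ref{pCh}} relative to $\be^{X,Z}$. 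Fix a $\Phi$-flowable set $E'\subset B^{X,Z}$ of finite positive $\be^{X,Z}$-measure together with a countably generated sub-$\sig$-algebra $\cA$ whose atoms are $\Phi$-orbit segments $\set{\Phi_t(\hs):t\in o_{\hs}}$. Since each such atom sits inside a single $\rho$-fibre, $\cA$ is finer than the pullback under $\rho$ of the Borel $\sig$-algebra of $B\times Z$ restricted to $E'$. By the transitivity of disintegration (the tower property for conditional measures), for $\be^{X,Z}|_{E'}$-almost every $\hs=(b,x,z)$ we have
\[
    (\be^{X,Z}|_{E'})_{\hs}^{\cA}=\bigl((\del_b\otimes\nu_b\otimes\del_z)|_{E'\cap\rho^{-1}(b,z)}\bigr)_{\hs}^{\cA|_{E'\cap\rho^{-1}(b,z)}}.
\]
Applying \textbf{P\ref{pCh}} to the probability measure $\del_b\otimes\nu_b\otimes\del_z$ on the right, this equals the pushforward of $\sig(\hs)|_{o_{\hs}}$ via the orbit map, which is exactly the property demanded of the LWM-map of $\be^{X,Z}$.

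By the uniqueness part following \textbf{P\ref{pCh}}, the classes $\br{\sig(\hs)}$ and $\br{\lwmz{\hs}}$ agree $\be^{X,Z}$-almost everywhere, and the normalisation~\textbf{P\ref{pN}} singles out the same representative in each class, giving the stated equality of measures. Translating this $\be^{X,Z}$-a.e. statement to the fibrewise form of the lemma is a direct Fubini argument applied to the co-null set produced. The main bookkeeping point is that $\be^{X,Z}$ is infinite, so the argument must actually be run over a $\Phi$-invariant exhaustion of $B^{X,Z}$ by sets of finite $\be^{X,Z}$-measure, for instance the sets $B\times X\times U_i$ for balls $U_i\subset Z$ as in the discussion preceding the lemma; a countable intersection of the resulting co-null sets then completes the proof.
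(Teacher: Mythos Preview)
Your proof is correct and rests on the same core idea as the paper: the flow $\Phi$ preserves the fibres of $\rho\colon(b,x,z)\mapsto(b,z)$, so the characterising property \textbf{P\ref{pCh}} together with uniqueness forces the global and fibrewise leafwise measures to agree. The only difference is the direction in which you run the uniqueness argument. You take the fibrewise map $\hs\mapsto(\del_b\otimes\nu_b\otimes\del_z)_{\hs}^\Phi$ and verify, via the tower property for conditional measures, that it satisfies \textbf{P\ref{pCh}} for $\be^{X,Z}$. The paper does the reverse: it takes the already-constructed global map $\hs\mapsto(\be^{X,Z})_{\hs}^\Phi$, restricts to a fibre of full measure, and observes that it satisfies \textbf{P\ref{pCh}} for $\del_b\otimes\nu_b\otimes\del_z$. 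The paper's route is marginally more economical since the global LWM-map is measurable by construction, whereas your direction implicitly requires joint measurability of $(b,z,\hs)\mapsto(\del_b\otimes\nu_b\otimes\del_z)_{\hs}^\Phi$; this is standard but not literally free. Otherwise the two arguments are interchangeable.
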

\begin{proof}
    Let $E$ be a domain for the LWM-map of $\bexz$ with respect to the flow $\Phi_{\gou_0}$. 
    Let $(b,z)\in B\times Z$ be such that the slice $E_{(b,z)} = \set{\hs = (b,x,z)\in E}$ has full
    $\del_b\otimes\nu_b\otimes \del_z$-measure, which holds $\be\otimes \lam$-almost surely since 
    $E$ is of full $\bexz$-measure. It is straightforward to check that 
    for $\be$-almost every $b\in B$, the assignment $\hs\mapsto\lwm{\hs}$ satisfies the characterising property 
    of the LWM-map of $\del_b\otimes \nu_b\otimes \del_z$ and by the uniqueness of the LWM-map the statement of the lemma follows.
\end{proof}

The next lemma utilises the commutation relation 
in Lemma~\ref{lem:commutation} and shows that the LWM-map of $\be^{X,Z}$ is constant along $\wh{T}$-orbits.

\begin{lemma}\label{lem:measureablity}
    For $\be^{X,Z}$-almost every $\hs\in B^{X,Z}$ and all $n\in \bN$, 
    \begin{equation}\label{eq:1245a}
        \lwmz{\hs} = \lwmz{\wh{T}^n(\hs)}.
    \end{equation}
\end{lemma}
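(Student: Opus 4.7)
The plan is to reduce to the case $n=1$ and then propagate to arbitrary $n$ by intersecting countably many co-null sets. Once the $n=1$ statement is established on a set of full $\bexz$-measure $\Omega_1$, the $\wh T$-invariance of $\bexz$ means that $\wh T^{-k}(\Omega_1)$ is also co-null for every $k\in\bN$, and on the intersection $\bigcap_{k\ge 0}\wh T^{-k}(\Omega_1)$ the identity $\lwmz{\hs}=\lwmz{\wh T^{n}(\hs)}$ telescopes through $\hs,\wh T(\hs),\ldots,\wh T^{n-1}(\hs)$ for all $n\in\bN$ simultaneously.

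For the $n=1$ case the strategy is to show that the assignment $\hs\mapsto\lwmz{\wh T(\hs)}$ satisfies the characterising property \tb{P\ref{pCh}} for the LWM-map of $\bexz$ with respect to $\Phi_{\gou_0}$ at the basepoint $\hs$, and then invoke uniqueness. Concretely, given a measurable $E'\subset B^{X,Z}$ of positive finite $\bexz$-measure and a countably generated sub-$\sigma$-algebra $\cA$ on $E'$ whose atoms are of the form $\set{\Phi_u(y):u\in o_y}$ for bounded open $o_y\subset\gou_0$, I would pass to $F'\defi\wh T^{-1}(E')$ with the pulled-back algebra $\cB\defi\wh T^{-1}\cA$. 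By Lemma~\ref{lem:commutation}, $\wh T$ maps the $\Phi_{\gou_0}$-orbit of $\hs$ onto that of $\wh T(\hs)$ with the \emph{same} parametrisation by $\gou_0$, so the atoms of $\cB$ on $F'$ have precisely the same orbit-piece structure as those of $\cA$. The $\wh T$-invariance of $\bexz$ then identifies $(\bexz|_{F'})_{\hs}^\cB$ with the pullback under $\wh T$ of $(\bexz|_{E'})_{\wh T(\hs)}^\cA$. Applying \tb{P\ref{pCh}} at the basepoint $\wh T(\hs)$ and using the commutation $\Phi_u(\wh T(\hs))=\wh T(\Phi_u(\hs))$ to transport the identity back to the orbit of $\hs$ verifies \tb{P\ref{pCh}} at $\hs$ for the candidate $\lwmz{\wh T(\hs)}$. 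By the uniqueness remark following \tb{P\ref{pC}}, one concludes $\br{\lwmz{\hs}}=\br{\lwmz{\wh T(\hs)}}$ almost surely, and the proportionality constant is forced to be $1$ by the common normalisation \tb{P\ref{pN}} that both assign mass $1$ to $[-1,1]\subset\gou_0$.

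The main obstacles I expect are essentially bookkeeping rather than conceptual. First, because $\bexz$ is infinite, one must restrict attention to a $\Phi_{\gou_0}$-invariant exhaustion such as $B\times X\times U_i$ with $U_i$ the ball of radius $i$ in $Z$, so that the LWM theory in \cite{ELpisa, BQAnnals} applies on each piece, and check that the resulting LWMs are intrinsic; this is routine since the flow preserves each such slice. Second, Lemma~\ref{lem:commutation} only gives the commutation $\Phi_u\circ\wh T=\wh T\circ\Phi_u$ for $b$ in a co-null subset of $B$, so every step above must be performed on an appropriate co-null set and one must take care that the intersection of the various co-null sets arising from \tb{P\ref{pCh}}, the commutation relation, and the domains of the LWM-map remains of full $\bexz$-measure before telescoping to arbitrary $n$.
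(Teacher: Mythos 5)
Your reduction to $n=1$ and the subsequent propagation by intersecting countably many co-null sets match the paper, and invoking uniqueness of the LWM-map via the commutation relation of Lemma~\ref{lem:commutation} is indeed the right engine. But the way you verify the characterising property \tb{P\ref{pCh}} for the candidate $\hs\mapsto\lwmz{\wh{T}(\hs)}$ has a genuine gap: it implicitly treats $\wh{T}$ as invertible. Since $S$ is the \emph{one-sided} shift on $B=A^{\bN}$, the map $\wh{T}(b,x,z)=(Sb,b_1^{-1}x,\mbe(b)^{-1}z)$ is very far from injective, so the pulled-back algebra $\cB=\wh{T}^{-1}\cA$ is not an admissible test algebra: the atom of $\cB$ at $\hs$ is the \emph{full} preimage $\wh{T}^{-1}\big(\br{\wh{T}(\hs)}_{\cA}\big)$, i.e.\ a union of $\Phi_{\gou_0}$-orbit pieces through all points $(b',x',z')$ with $Sb'=Sb$, $(b_1')^{-1}x'=b_1^{-1}x$, $\mbe(b')^{-1}z'=\mbe(b)^{-1}z$, and not an orbit piece through $\hs$. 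Consequently the conditional measure $(\bexz|_{\wh{T}^{-1}E'})_{\hs}^{\cB}$ is not supported on the $\Phi_{\gou_0}$-orbit of $\hs$ and is not "the pullback" of $(\bexz|_{E'})_{\wh{T}(\hs)}^{\cA}$ (pullback of a measure under a non-invertible map is not even defined; what survives is only a pushforward identity, which does not determine how mass is spread over the branches). So \tb{P\ref{pCh}} for the candidate cannot be checked along these lines, and the pulled-back algebras in any case do not exhaust the admissible ones.

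The paper repairs exactly this point by conditioning enough to restore invertibility: it disintegrates $\bexz$ over $(b,z)$ and observes that on each fibre $\{b\}\times X\times\{z\}$ the map $\wh{T}$ restricts to the invertible map $x\mapsto b_1^{-1}x$, which carries $\nu_b$ to $\nu_{Sb}$ (equivariance of the limit measures) and commutes with $\Phi_{\gou_0}$ by Lemma~\ref{lem:commutation}; uniqueness of the LWM-map then applies fibre-wise to give $(\del_b\otimes\nu_b\otimes\del_z)_{\hs}^{\Phi}=(\del_{Sb}\otimes\nu_{Sb}\otimes\del_{\mbe(b)^{-1}z})_{\wh{T}(\hs)}^{\Phi}$, and Lemma~\ref{lem:a.s.identity1} transfers these fibre-wise LWMs back to those of $\bexz$. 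This fibre-wise reduction (or some equivalent extra conditioning on the $(b,z)$-coordinates) is the ingredient missing from your argument; the remaining points you raise (the exhaustion $B\times X\times U_i$ for the infinite measure, the co-null set on which the commutation holds, and the normalisation \tb{P\ref{pN}} upgrading proportionality to equality) are handled correctly.
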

\begin{proof}
    For $\be\otimes \lam$-almost every $(b,z)\in B\times Z$,  
    $$\wh{T}: (B^{X,Z},\del_b\otimes\nu_b\otimes \del_z)\longrightarrow 
    (B^{X,Z},\del_{Sb}\otimes\nu_{Sb}\otimes \del_{\mbe(b)^{-1}z})$$ 
    is an isomorphism of probability spaces which by Lemma~\ref{lem:commutation} commutes with 
    the flow $\Phi_{\gou_0}$. It thus follows from the uniqueness of the LWM-map that 
    for $\be\otimes \lam$-almost every $(b,z)\in B\times Z$ and $\del_b\otimes \nu_b\otimes \del_z$-almost every 
    $\hs\in B^{X,Z}$ one has the equality 
    $(\del_b\otimes \nu_b\otimes \del_z)_{\hs}^\Phi = (\del_{Sb}\otimes \nu_{Sb}\otimes \del_{\mbe{b}^{-1}z})_{\wh{T}(\hs)}^{\Phi}$.
    Taking into account Lemma~\ref{lem:a.s.identity1} we deduce that for $\bexz$-almost every $\hs\in\bxz$
    the equality $\lwmz{\hs} = \lwmz{\wh{T}(\hs)}$ holds. This propagates to the statement of the lemma by intersecting 
    countably many sets of full measure.
\end{proof}              
Preparing the grounds for the drift argument we restrict attention to
a finite window and consider the probability space $(B^{X,U},\bexu)$ as in~\eqref{eq:2136a}.
The relevance of the LWM's to our discussion is the following statement.
See \cite[Problem 6.28]{ELpisa} and \cite[Proposition 4.3]{BQAnnals}.                        
\begin{theorem}\label{thm:ashaar}
    The measure $\bexu$ is $\Phi_{\gou_0}$-invariant if and only if $\lwm {\bd s}$
    is equal to the Haar measure on $\gou_0$ for $\bexu$-almost every $\bd s\in \bxu$.
\end{theorem}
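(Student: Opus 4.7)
The strategy is the standard correspondence between invariance of a measure under a flow and its conditional measures along orbits being Haar. Since the flow $\Phi_{\gou_0}$ acts by the one-parameter abelian group $\gou_0\cong\bR$ and the leafwise measures $\lwm{\bd s}$ are characterized (up to proportionality) by property \textbf{P\ref{pCh}} as the pushforwards of conditional measures with respect to sub-$\sig$-algebras whose atoms are orbit segments, the problem reduces to showing that: (i) if $\bexu$ is $\Phi_{\gou_0}$-invariant then these conditionals are uniform on their supporting intervals, and (ii) conversely, uniformity of the conditionals implies the invariance of $\bexu$.

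For the ``only if'' direction, I would choose a countable exhausting family of measurable subsets $E'\subset\bxu$ together with countably generated sub-$\sig$-algebras $\cA$ on $E'$ whose atoms have the form $\set{\Phi_t(\bd s):t\in o_{\bd s}}$ with $o_{\bd s}\subset\bR$ open and bounded, and such that the sets $o_{\bd s}$ exhaust any prescribed neighbourhood of $0$ in $\gou_0$. For any fixed small $u\in\gou_0$, the assumed $\Phi_u$-invariance of $\bexu$ together with the fact that the atoms of $\cA$ are essentially $\Phi_u$-invariant (apart from a controlled boundary effect which becomes negligible as the atoms grow) forces the conditional measures $(\bexu|_{E'})_{\bd s}^{\cA}$ to be translation-invariant, and hence proportional to the pushforward of Lebesgue on $o_{\bd s}$. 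Property \textbf{P\ref{pCh}} then identifies $\lwm{\bd s}|_{o_{\bd s}}$ with Lebesgue up to a multiplicative constant; letting the atoms grow and invoking property \textbf{P\ref{pN}} to pin down the normalisation, we obtain that $\lwm{\bd s}$ is the Haar measure on $\gou_0$ for $\bexu$-almost every $\bd s$.

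For the ``if'' direction, I would fix $u\in\gou_0$ and a test function $f\in C_c(\bxu)$ and verify directly that $\int f\circ\Phi_u\,\dv{\bexu}{}=\int f\,\dv{\bexu}{}$. Using the same type of exhausting family of sub-$\sig$-algebras but now with atoms of diameter much larger than $|u|$, the disintegration of $\bexu$ against $\cA$ combined with property \textbf{P\ref{pCh}} and the hypothesis on $\lwm{\bd s}$ expresses each conditional measure as the pushforward of normalised Lebesgue on $o_{\bd s}$ via the orbit map. Translation by $u$ merely shifts $o_{\bd s}$ by $u$, and the discrepancies arising from orbit pieces which escape the atom after translation contribute a total mass vanishing as one refines the partition. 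This yields the desired invariance.

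The main technical obstacle is not conceptual but bookkeeping: $\bxu$ is not $\Phi_{\gou_0}$-invariant at large scale, and orbit segments can leave a given atom, so careful control of the boundary terms in the Rokhlin-type exhaustion is required, as is careful management of the normalisation constants so that \textbf{P\ref{pN}} and the proportionality class arising from \textbf{P\ref{pCh}} are reconciled. This is a classical statement in the theory of leafwise measures, and the argument outlined above is essentially the one given in~\cite[\S4]{BQAnnals} and~\cite[\S6]{ELpisa}, which I would adapt to the present setting.
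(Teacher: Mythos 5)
Your outline is correct and coincides with what the paper does: the paper offers no proof of this statement at all, citing \cite[Problem 6.28]{ELpisa} and \cite[Proposition 4.3]{BQAnnals}, and your sketch is precisely the standard leafwise-measure argument from those sources (conditionals on orbit-interval atoms are translation-invariant, hence Lebesgue, with \textbf{P1} and \textbf{P3} reconciling proportionality and normalisation). One small correction to your closing remark: $\bxu=B^X\times U$ \emph{is} $\Phi_{\gou_0}$-invariant, since the flow \eqref{eq:horoflow} changes only the $X$-coordinate (as noted right after \eqref{eq:2136a}), so the only genuine bookkeeping issue is orbit segments leaving the atoms of the chosen $\sig$-algebras, not any non-invariance of $\bxu$.
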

In particular, in order to prove that $\bexu$ is $\Phi_{\gou_0}$-invariant
we need
to show that $\lwm{\mb{s}}$
is Haar $\bexu$-almost surely.
Note that by Remark~\ref{rk:unipinv2}, this would complete
the proof of Theorem~\ref{thm:full}\eqref{main1}.
The Haar measure is characterised as the unique $\eta\in\cM(\gou_0)$ such that $\Stab_{\gou_0}(\eta) = \gou_0$.
Thus our goal is to establish the $\bexu$-almost sure equality
$$\Stab_{\gou_0}(\lwm{\mb{s}})= \gou_0.$$
Because of property {\bfseries P\ref{pC}} of the LWM's, they interact more naturally with the action of $\gou_0$ on $\bP\cM(\gou_0)$.
In turn, the drift argument in \S\ref{ssec:drift} will produce the almost sure equality $\Stab_{\gou_0}(\br{\lwm{\bd s}}) = \gou_0$.
Hence the importance of the following proposition to our discussion.
\begin{proposition}\label{prop:equalstab}
    For $\bexu$-almost every $\bd s\in \bxu$ one has
    $$\Stab_{\gou_0}(\lwm {\bd s}) = \Stab_{\gou_0}(\br{\lwm {\bd s}}).$$
\end{proposition}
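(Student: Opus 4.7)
The inclusion $\Stab_{\gou_0}(\lwm{\bd s}) \subseteq \Stab_{\gou_0}(\br{\lwm{\bd s}})$ is tautological, so all the content lies in the reverse inclusion. Given $u \in \Stab_{\gou_0}(\br{\lwm{\bd s}})$, by definition there exists a scalar $c(u,\bd s) > 0$ with $(l_u)_*\lwm{\bd s} = c(u,\bd s)\lwm{\bd s}$, and the goal is to show that $c(u,\bd s) = 1$ for $\bexu$-almost every $\bd s$. The normalisation property \textbf{P\ref{pN}} yields the explicit measurable formula
$$c(u,\bd s) = \lwm{\bd s}([-1,1] - u),$$
and since translations in $\gou_0 \cong \bR$ commute, $u \mapsto c(u,\bd s)$ is a continuous group homomorphism from the closed stabiliser subgroup of $\br{\lwm{\bd s}}$ into $\bR^{>0}$. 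Either this subgroup is trivial, in which case the claim is vacuous, or $c(u,\bd s) = \exp(a(\bd s) u)$ on it for some measurable function $a$, so the task reduces to showing that $a(\bd s) = 0$ almost surely.

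Next I would exploit the compatibility property \textbf{P\ref{pC}}, which reads $\br{\lwm{\bd s}} = (l_u)_* \br{\lwm{\Phi_u\bd s}}$ whenever $\bd s$ and $\Phi_u\bd s$ lie in the domain of the LWM-map. When $u$ stabilises $\br{\lwm{\bd s}}$ so does $-u$, and rearranging gives $\br{\lwm{\Phi_u\bd s}} = \br{\lwm{\bd s}}$; the normalisation then upgrades this equality of classes to equality of measures, $\lwm{\Phi_u\bd s} = \lwm{\bd s}$. Consequently $a$ is $\Phi_v$-invariant for every $v$ in the stabiliser. Combined with Lemma~\ref{lem:measureablity}, which gives $\wh{T}$-invariance of $\bd s \mapsto \lwm{\bd s}$, the function $a$ is simultaneously $\wh{T}$-invariant and invariant along the stabiliser direction of the horocyclic flow.

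The main obstacle will be to convert this rich invariance of $a$ into the equality $a \equiv 0$, and this is where the finiteness of $\bexu$, built in through the bounded window $U \subset Z$, must enter. The plan is a Maharam-type rigidity argument: if $a(\bd s) \ne 0$ on a positive-measure set, iterating the cocycle relation yields
$$\lwm{\bd s}([-1,1] - n u) = e^{n a(\bd s) u}$$
for every $n \in \bZ$, so the LWM accumulates mass geometrically on pairwise disjoint translates of $[-1,1]$ in one direction along $\gou_0$. Integrating this identity against the finite measure $\bexu$ and using both the $\wh{T}$-invariance of $a$ and the way the Iwasawa cocycle $\on{E}(b)$ twists the $z$-coordinate under $\wh{T}$, which in turn twists the parametrisation $\Ad_{s(\xi(b))z}$ of the horocyclic flow, should yield a uniform upper bound for the integral that is incompatible with geometric growth, forcing $a \equiv 0$ and hence $c \equiv 1$.
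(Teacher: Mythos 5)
Your reduction is the right one and matches the paper's starting point: the inclusion $\Stab_{\gou_0}(\lwm{\bd s})\subseteq\Stab_{\gou_0}(\br{\lwm{\bd s}})$ is trivial, the scalar $c(u,\bd s)=\lwm{\bd s}([-1,1]-u)$ is correctly identified via the normalisation, and everything hinges on showing $c=1$. The gap is in your endgame, and it is twofold. First, the mechanism you hope to exploit --- the Iwasawa cocycle twisting the $z$-coordinate under $\wh{T}$, which "in turn twists the parametrisation" of the horocyclic flow --- has been cancelled by construction: the whole purpose of the auxiliary $Z$-coordinate is that $\Phi_u$ and $\wh{T}$ commute exactly (Lemma~\ref{lem:commutation}), so the LWM-map is literally constant along $\wh{T}$-orbits (Lemma~\ref{lem:measureablity}); your exponent $a(\bd s)$ is then merely $\wh{T}$-invariant and no residual twist remains to force a contradiction. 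Moreover, since $\sig_\mu\ne 0$ the $Z$-component drifts to infinity along $\wh{T}$-orbits, so $\wh{T}$ on $(\bxz,\bexz)$ is not conservative, while the finite-measure window $\bxu$ is not $\wh{T}$-invariant; a recurrence/Maharam-type argument has no system to run in. Second, the decisive sentence "integrating this identity against the finite measure $\bexu$ \dots should yield a uniform upper bound" is unsupported: the normalisation only pins down $\lwm{\bd s}([-1,1])=1$, and for a fixed compact set $K$ the function $\bd s\mapsto\lwm{\bd s}(K)$ need not be $\bexu$-integrable, so integrating the identity $\lwm{\bd s}([-1,1]-nu)=c^n$ produces no finite bound without substantial further input.

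What is genuinely missing is a growth bound on the leafwise measures, and that is exactly how the paper closes the argument --- pointwise, with no dynamics at all. By \cite[Theorem 6.30]{ELpisa}, for $\bexu$-almost every $\bd s$ one has $\lim_{T\to\infty}T^{-4}\lwm{\bd s}([-T,T])=0$. If $c\ne 1$, then the translates $l_{iu}[-\av{u}/2,\av{u}/2]$ for $\av{i}<n$ are essentially disjoint and carry masses $c^{i}\,\lwm{\bd s}([-\av{u}/2,\av{u}/2])$, and rootedness (property \textbf{P2}) gives $\lwm{\bd s}([-\av{u}/2,\av{u}/2])>0$; summing forces $\lwm{\bd s}([-n\av{u},n\av{u}])$ to grow exponentially in $n$, contradicting the polynomial bound. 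If you want to keep your dynamical framing, the honest route is to prove or cite this almost-sure subexponential growth statement (itself obtained by a maximal-inequality argument), not the naive integration step in your plan.
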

\begin{proof}
    We will prove that
    $$\Stab_{\gou_0}(\lwm {\bd s}) \supseteq \Stab_{\gou_0}(\br{\lwm {\bd s}})$$
    for $\bexu$-almost every $\bd s\in\bxu$ since the reverse inclusion is obvious. By~\cite[Theorem 6.30]{ELpisa}
    for $\bexu$-almost every $\bd s\in\bxu$ we have
\begin{equation}\label{eq:polygrowth}\lim_{T\to\infty}\frac{1}{T^4}\lwm {\bd s}([-T,T])=0.\end{equation}
    Let $\bd s\in\bxu$ be such a point. If $u\in \Stab_{\gou_0}(\br{\lwm {\bd s}})\smallsetminus\set 0$ then
    there exists $c\ne 0$ such that $(l_u)_*\lwm {\bd s} = c \lwm {\bd s}$, which implies that for all $i\in \bN$,
    $(l_{iu})_*\lwm {\bd s} = c^i\lwm {\bd s}$.
    We would like to show that $c=1$. For all $n\ge 1$ we have
    \begin{align*}
        \lwm {\bd s}([-n\av u,n\av u])&\ge
        \sum_{i =1-n}^{n-1} \lwm {\bd s}(l_{iu}[-\av u/2,\av u/2])  \\
        &=  \sum_{i =1-n}^{n-1} ((l_{iu})_*\lwm {\bd s})([-\av u/2,\av u/2]) \\
        &= \lwm {\bd s}([-\av u/2,\av u/2])\sum_{i=1-n}^{n-1} c^i.
    \end{align*}
    By property {\bfseries P\ref{pR}} of the LWM's, $\lwm {\bd s}([-\av u/2,\av u/2])>0$ and hence we see that
    unless $c=1$ the volume $\lwm {\bd s}([-n\av u,n\av u])$ is growing exponentially in $n$ which
    contradicts~\eqref{eq:polygrowth} as desired.
\end{proof}
\subsection{Zooming in on the atoms}\label{ssec:loa}

Let $\bcxz$ be the Borel $\sig$-algebra of $\bxz$ and define
$$\cQ_n\defi \widehat T^{-n}(\bcxz)\quad\textrm{and}\quad\cQ_\infty\defi\cap_{i=0}^\infty\cQ_i.$$
For $\bd s\in \bxz$ the atom of $\hs$ with respect to $\cQ_n$ is given by
$$\br{\bd s}_{\cQ_n}=\set{\bd s':\wT^n(\bd s)=\wT^n(\bd s')}$$
and the atom of $\hs$ with respect to $\cQ_\infty$ is given by
$$\br{\bd s}_{\cQ_\infty}=\set{\bd s':\textrm{there exists }n\in\bN\textrm{ with }\wT^n(\bd s)=\wT^n(\bd s')}.$$
Recall that $A\defi\supp\mu$.
For $a\in A^n$ we let
\begin{equation}\label{eq:0855}
    \bd s(a) \defi (aS^nb, a_1^n(b_1^n)^{-1}x,\on E_n(aS^nb)\mbe_n(b)^{-1}z).
\end{equation}
We then have an identification $A^n\cong\br{\bd s}_{\cQ_n}$ via the map $a\mapsto \bd s(a)$.
It is easy to see that via this identification the probability measure $\mu^{\otimes n}$ on $A^n$ corresponds 
to the conditional measure $(\be^{X,Z})_{\hs}^{\cQ_n}$ (cf.\ \cite[Lemma 3.3]{BQJams}). 

We will need to consider $\sig$-algebras whose atoms are
tiny parts of the above atoms. This is done as follows.
From now on we fix 
\begin{equation}\label{eq:1436a}
    U\defi\exp(\set{z\in\goz:\norm z <1})\subset Z
\end{equation} 
and recall the notation and definition in~\eqref{eq:2136a}.
We define
$\cQ_n^U$ (resp. $\cQ_\infty^U$) to be the restriction of $\cQ_n$ (resp.\ $\cQ_\infty$) to $\bxu$.
For $\bd{s}\in \hBU$ the atom of $\hs$ with respect to $\cQ_n^U$ is given by
$$\br{\bd s}_{\cQ_n^U} = \set{\bd s(a) : a\in A^n\ \textrm{and}\ \mbe_n(aS^nb) \mbe_n(b)^{-1}z\in U}.$$
We therefore let
\begin{equation}\label{eq:0841}
    A_{\bd s,U}^n \defi A_{(b,z),U}^n \defi \set{a\in A^n : \mbe_n(aS^nb)\mbe_n(b)^{-1}z\in U}
\end{equation}
be the subset of $A^n$ corresponding to the subset $\br{s}_{\cQ_n^U}$ of $\br{s}_{\cQ_n}$.

If $\mu^{\otimes n}(A_{\bd s,U}^n) > 0$, then we denote by $\mu^{\otimes n}_{\bd s,U}$
the normalised restriction of $\mu^{\otimes n}$ to $A_{\bd s,U}$. That
is
\begin{equation}\label{eq:0700}
    \hmu\defi\mu^{\otimes n}|_{A_{{\bd s},U}}.
\end{equation}
Note that $\hmu$ only depends on the $B$ and $Z$ co-ordinates of $\bd s$.
By~\cite[Lemma 3.6 + Equation (3.5)]{BQJams}, under the identification $a\mapsto \hs(a)$ of 
$A^n_{\hs,U}$ and $\br{\hs}_{\cQ_n^U}$ we have
that for $\bexu$-almost every ${\bd s}\in B^{X,U}$,
\begin{equation}\label{eq:1039}
    \hmu = (\bexu)_{\bd s}^{\cQ^U_n}.
\end{equation}


We are now ready to cite the essential technical results from~\cite{BQJams} that will allow us to analyse in detail the growth and directions of
sequences of vectors corresponding to displacements between points in $X$. 
These results are stated in terms of the conditional measures $\mu^{\otimes n}_{\hs, U}$.
For $\delta>0$ we use the notation
$x\asymp_\delta y$ to mean there exists a constant $c_\delta\geq 1$ depending on $\delta$ such that that $c_\del^{-1}x<y<c_\del x$ for all $x,y\in\bR$.

The following lemma is used in order to control the growth of displacements.

\begin{lemma}\label{law of norms 0}
    Let $V$ be a finite dimensional representation of $H$.
    For $\bexu$-almost every $\bd s\in\bxu$
    and all $\del>0$
    there exists  $n_0>0$ such that for all $n>n_0$,
    $\om\in \cH_{\goz}(V)$ and $v\in V[\om]\smallsetminus \set 0$ one has
    \begin{align}
        \label{eq:19140} \hmu(\set{a\in A^n:\norm{a_1^n v}\asymp_{\del} \norm{a_1^n}\norm{v}}) >1-\del.
    \end{align}
\end{lemma}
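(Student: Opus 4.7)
The upper bound $\norm{a_1^n v} \leq \norm{a_1^n} \norm{v}$ is immediate, so the substance of the lemma lies in the matching lower bound. I would approach this via the Cartan decomposition $a_1^n = k_a \exp(L_a^+) k_a'$, with $L_a^+ \in \goz^+$ and $k_a, k_a' \in K$. Since $\om$ is the highest weight of the isotypic component $V[\om]$ and $K$ acts by isometries, one has $\norm{a_1^n|_{V[\om]}} = \chi^\om(\exp L_a^+)$. Decomposing $k_a' v = \sum_{\om'} v_{\om'}$ into weight vectors (which are orthogonal under the $K$-invariant inner product on $V$) yields
\begin{equation*}
    \norm{a_1^n v}^2 = \sum_{\om' \in \cW_\goz(V[\om])} \chi^{\om'}(\exp L_a^+)^2 \norm{v_{\om'}}^2 \;\geq\; \norm{a_1^n}^2 \cdot \norm{\pi^\om(k_a' v)}^2,
\end{equation*}
where $\pi^\om : V[\om] \to (V[\om])^\om$ is the orthogonal projection onto the highest weight space. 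Hence the lemma reduces to showing that for $\bexu$-almost every $\hs$ and every $\del > 0$, there exist $n_0$ and $c_\del > 0$ with $\hmu(\set{a \in A^n : \norm{\pi^\om(k_a' v)} \geq c_\del \norm{v}}) > 1 - \del$ for all $n > n_0$ and $v \in V[\om] \setminus \set 0$.

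The projection $\norm{\pi^\om(k_a' v)}$ is small precisely when $v$ lies projectively close to the random subspace $(k_a')^{-1}(\ker \pi^\om) \subset V[\om]$. To analyse its distribution, I would use that $(V[\om])^\om$ is $P$-invariant, which induces an $H$-equivariant map $H/P \to \Gr(V[\om])$. Under $\mu^{*n}$, the direction $(k_a')^{-1} \cdot P$ converges in law to the Furstenberg measure of the reversed walk $\check\mu$, and by the strong irreducibility of $\Ga$ on $\bR^3$ (argue as in Remark~\ref{rem:non-atomic}) this limiting measure --- together with its pushforward to $\Gr(V[\om])$ --- is atom-free. By uniform non-atomicity on the compact space $\bP(V[\om])$, for every $\del > 0$ there exists $\eps > 0$ such that every projective $\eps$-ball carries mass at most $\del/2$ under the limit, giving the required estimate, uniformly in $v$, under the unconditional law $\mu^{*n}$.

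The final step is to transfer this estimate from $\mu^{*n}$ to the conditional measure $\hmu = \mu^{\otimes n}|_{A^n_{\hs,U}}$. The conditioning constrains the Iwasawa cocycle $\mbe_n(aS^n b)\mbe_n(b)^{-1}z$ to lie in the bounded window $U$; by the almost sure convergence $\frac{1}{n}\on L_n(b) \to \sig_\mu$ from Theorem~\ref{thm:poslyap}\eqref{asconvergence} and a local-limit-theorem-type analysis, $\mu^{\otimes n}(A^n_{\hs,U})$ is bounded below polynomially in $n$ for $\bexu$-almost every $\hs$. Since the bound on the bad set can be arranged to decay super-polynomially in $n$, dividing by $\mu^{\otimes n}(A^n_{\hs,U})$ does not destroy it. A cleaner alternative is to observe that the Iwasawa cocycle conditioning and the angular direction $(k_a')^{-1}\cdot P$ almost decouple --- both Iwasawa cocycle and Cartan projection agree to within $O(\sqrt{n})$ fluctuations --- so that the non-atomicity argument of the previous paragraph applies essentially verbatim with $\hmu$ replacing $\mu^{*n}$.

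The main obstacle will be precisely this decoupling: one must verify that the pushforward of $\hmu$ onto the Grassmannian of $(\ker\pi^\om)$-translates does not concentrate in a non-generic way once one has conditioned on the "radial" Iwasawa information. The natural approach is through a quantitative coupling, or equivalently a Fourier-analytic/renewal argument on the Iwasawa cocycle in the spirit of \cite{BQJams}, guaranteeing that the angular data distributes as in the unconditional walk up to a factor tending to $1$. The uniformity in $v \in V[\om] \setminus \set 0$ is a secondary subtlety, resolved by compactness of $\bP(V[\om])$ and uniform non-atomicity of the stationary measure thereon.
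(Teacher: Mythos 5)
Your Cartan-decomposition reduction and the resulting unconditional estimate under $\mu^{*n}$ are fine (and standard), but they are not where the content of the lemma lies. The paper does not reprove this statement at all: its proof is a one-line citation of the first part of \cite[Proposition 4.21]{BQJams}, after identifying $\hmu$ with the Benoist--Quint conditional measures $\be^U_{n,c}$ via \cite[Lemma 3.6 + Equation (3.5)]{BQJams}. The entire point of that proposition is that the law of norms survives conditioning on the Iwasawa window, and this is exactly where your argument has a genuine gap.

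Concretely, your first transfer route is quantitatively wrong. For a fixed $\del$ the conclusion requires a fixed constant $c_\del$, and the unconditional bad set $\set{a\in A^n:\norm{a_1^nv}<c_\del^{-1}\norm{a_1^n}\norm v}$ has $\mu^{\otimes n}$-probability which does \emph{not} decay super-polynomially in $n$: as $n\to\infty$ it is governed by the mass that the boundary measure of the reversed walk gives to a fixed ($c_\del$-dependent) neighbourhood of a proper subspace, a quantity that can be made small by enlarging $c_\del$ but does not tend to $0$ with $n$. Since $\mu^{\otimes n}(A_{\hs,U}^n)$ is only polynomially small, dividing by it does destroy the estimate, contrary to what you assert. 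Your ``cleaner alternative'' --- that conditioning on $\mbe_n(aS^nb)\mbe_n(b)^{-1}z\in U$ leaves the angular part distributed essentially as in the free walk --- is precisely the nontrivial assertion of \cite[Proposition 4.21]{BQJams}, which rests on their local limit theorem for the Iwasawa cocycle; saying it should follow from ``a quantitative coupling or a Fourier-analytic/renewal argument in the spirit of \cite{BQJams}'' names the missing proof rather than supplying it. A secondary imprecision: what the unconditional step needs from the limiting measure is not atom-freeness but that it gives zero mass to the proper subvariety of directions lying in the translated orthogonal complement of the highest-weight space $(V[\om])^\om$ (so that its $\eps$-neighbourhoods have uniformly small mass); for a general representation $V$ of $H$ this is a statement about the boundary measure on $H/P$ pushed to the relevant Grassmannian, and it does not follow from non-atomicity on $\bP (V[\om])$ alone. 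Either carry out the conditional-equidistribution argument in detail or, as the paper does, quote \cite[Proposition 4.21]{BQJams} directly.
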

\begin{proof}
    This is the first part of~\cite[Proposition 4.21]{BQJams} where the conditional measures $\be^U_{n,c}$ (in the notation
    of Benoist and Quint) equal $\hmu$~\cite[Lemma 3.6 + Equation (3.5)]{BQJams}.
\end{proof}
We will use Lemma~\ref{law of norms 0} in the following form.
\begin{corollary}\label{law of norms}
    Let $V$ be a finite dimensional representation of $H$.
    Then for $\bexu$-almost every $\bd s=(b,x,z)\in\bxu$
    and all $\del>0$
    there exists  $n_0>0$ such that for all $n>n_0$,
    $\om\in \cH_{\goz}(V)$ and $v\in V[\om]\smallsetminus\set 0$ one has
    \begin{align}
        \label{eq:1914} \hmu (\set{a\in A^n:\norm{a_1^n v}\asymp_{\del} \chi^\om(\mbe_n(b))\norm{v}}) >1-\del.
    \end{align}
\end{corollary}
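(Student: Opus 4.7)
The corollary is a direct consequence of Lemma~\ref{law of norms 0}, obtained by replacing the operator norm $\norm{a_1^n}$ on each isotypic component with the character value $\chi^\om(\mbe_n(b))$. The plan has three short steps.

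First, I will use the very definition of $A^n_{\hs,U}$ in \eqref{eq:0841}: for every $a\in A^n_{\hs,U}$ one has
\begin{equation*}
    \mbe_n(aS^nb)\mbe_n(b)^{-1} \in z^{-1}U \subset U^{-1}U,
\end{equation*}
which is a fixed bounded subset of $Z$. Since $\cH_\goz(V)$ is finite and each character $\chi^\om$ is continuous, this yields $\chi^\om(\mbe_n(aS^nb))\asymp \chi^\om(\mbe_n(b))$ uniformly in $a\in A^n_{\hs,U}$ and $\om\in\cH_\goz(V)$, with implied constants depending only on $U$ (and in particular not on $\del$).

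Second, for each $\om\in\cH_\goz(V)$ I will fix a unit vector $v_\om\in V_{S^nb}[\om]$. Lemma~\ref{lem:expansion of leafs-1} applied along $aS^nb$ gives $\norm{a_1^n v_\om}=\chi^\om(\mbe_n(aS^nb))$, which, combined with the previous step, yields $\norm{a_1^n v_\om}\asymp \chi^\om(\mbe_n(b))$ for every $a\in A^n_{\hs,U}$. (This application is valid for $\hmu$-a.e.\ $a$ by Fubini, since for $\be$-a.e.\ $b$ and every $n$ the concatenation $aS^nb$ lies in the full-measure set of Lemma~\ref{lem:expansion of leafs-1} for $\mu^{\otimes n}$-a.e.\ $a$.) Applying Lemma~\ref{law of norms 0} with parameter $\del/(2|\cH_\goz(V)|)$ to each $v_\om$ and intersecting the finitely many resulting good sets produces a subset $H\subset A^n_{\hs,U}$ of $\hmu$-measure at least $1-\del/2$ on which $\norm{a_1^n}\asymp_\del \chi^\om(\mbe_n(b))$ holds simultaneously for every $\om\in\cH_\goz(V)$.

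Finally, for any $\om\in\cH_\goz(V)$ and any $v\in V[\om]\smallsetminus\set 0$, one more application of Lemma~\ref{law of norms 0}, with parameter $\del/2$ and the vector $v$, gives a set $G(v)$ of $\hmu$-measure at least $1-\del/2$ on which $\norm{a_1^n v}\asymp_\del \norm{a_1^n}\norm v$. On the intersection $H\cap G(v)$, of $\hmu$-measure greater than $1-\del$, the two bounds combine to give $\norm{a_1^n v}\asymp_\del \chi^\om(\mbe_n(b))\norm v$, as required. The only obstacle is the bookkeeping: one must choose $n_0$ large enough to accommodate the $|\cH_\goz(V)|+1$ applications of Lemma~\ref{law of norms 0} and absorb the fixed $U$-dependent constants from the first step into the final $\asymp_\del$. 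Both are routine since $\cH_\goz(V)$ is finite and $U$ is fixed.
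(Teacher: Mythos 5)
Your proposal is correct and follows essentially the same route as the paper's proof: apply Lemma~\ref{lem:expansion of leafs-1} to a unit vector in $V_{S^nb}[\om]$ to relate $\norm{a_1^n}$ to $\chi^\om(\mbe_n(aS^nb))$ via Lemma~\ref{law of norms 0}, use the defining condition of $A^n_{\hs,U}$ (together with $z\in U$) to replace $\mbe_n(aS^nb)$ by $\mbe_n(b)$ at the cost of a bounded constant, and then apply Lemma~\ref{law of norms 0} once more to an arbitrary $v\in V[\om]\smallsetminus\set 0$. The only cosmetic difference is your extra union bound over all $\om\in\cH_\goz(V)$, which the paper avoids since the good set is allowed to depend on $\om$ and $v$; this does not affect correctness.
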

\begin{proof}
    Let $\bd s=(b,x,z)\in \bxu$ be such that the conclusion of Lemma~\ref{law of norms 0}
    holds for $\bd s$.
    Given $\del>0$
    we get the existence of $n_0$ such that~\eqref{eq:19140} holds for all $n>n_0$, $\om\in \cH_{\goz}(V)$
    and all $v\in V[\om]\mz$.

    Let $\om\in \cH_{\goz}(V)$ and $v_0\in V_{S^n b}[\om]$ a unit vector.
    By Lemma~\ref{lem:expansion of leafs-1}, if $\mb{s}$ is outside a $\bexu$-null set,
    for $\mu^{\otimes n}$-almost every $a\in A^n$ one has
    $$\norm{a_1^nv_0}/\norm{v_0} = \chi^\om(\mbe_n(aS^nb)).$$
    Applying equation~\eqref{eq:19140} to $v_0$ we get that for all $n>n_0$ and $\om\in \cH_{\goz}(V)$,
    \begin{equation}\label{eq:1146}
        \hmu(\set{a\in A^n:\norm{a_1^n}\asymp_{\del} \chi^\om(\mbe_n(aS^nb))}) >1-\del.
    \end{equation}
    Taking into account that we are conditioning on the fact that $$\mbe_n(aS^nb)\mbe_n(b)^{-1}z\in U$$
    and that  $z\in U$, we may replace $\mbe_n(aS^nb)$ with $\mbe_n(b)$ in~\eqref{eq:1146} by
    modifying the implied constant if necessary.
    This gives us that for all $n>n_0$ and $\om\in \cH_{\goz}(V)$,
    $$\hmu(\set{a\in A^n:\norm{a_1^n}\asymp_{\del} \chi^\om(\mbe_n(b))}) >1-\del.$$
    Applying again~\eqref{eq:19140} we get that for all $n>n_0$, $\om\in \cH_{\goz}(V)$ and $v\in V[\om]\mz$ one has
    $$\hmu(\set{a\in A^n:\norm{a_1^n v}\asymp_{\del} \chi^\om(\mbe_n(b))\norm{v}}) >1-2\del,$$
    which finishes the proof up to replacing $\del$ by $\del/2$.
\end{proof}
The following lemma will allow us to control the direction of displacements.
For any vector space $V$ we use the distance $\on d_{\bP V}$ on $\bP V$ defined so
that for all $v\in V\mz$ and $W\subseteq V$ one has $$\on{d}_{\bP V}(\bR v, W) \defi \min_{w\in W}\frac{\norm{v\wedge w}}{\norm{v}\norm{w}}.$$
Note that $\on d_{\bP V}(\bR v,W)=0$ if and only if $\bR v\subseteq W$.
\begin{lemma}\label{law of angles 2}
    Let $V$ be a finite dimensional representation of $H$.
    Then for $\bexu$-almost every $\bd s\in \bxu$ and
    for all $\rho>0$ and $\del>0$
    there exists  $n_0>0$ such that for all $n>n_0$,
    $\om\in \cH_{\goz}(V)$, $v\in V[\om]\mz$ and
    $\eta\in H/P$ one has
    \begin{align}
        \label{eq:1915}\hmu(\set{a\in A^n:\on{d}_{\bP V}(a_1^n\bR v, a_1^n V_\eta[\om])<\rho}) >1-\del.
    \end{align}
\end{lemma}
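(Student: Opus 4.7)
Plan:

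The argument parallels the proof of Proposition 4.21 in Benoist--Quint \cite{BQJams} and reduces, via the expression of projective distance in terms of wedge norms, to three separate applications of Corollary \ref{law of norms}. First reduce to the case $V = V[\om]$: since both $\bR v$ and $V_\eta[\om]$ lie inside the isotypic component $V[\om]$ and isotypic components are orthogonal with respect to our chosen norm, the ambient distance $\on{d}_{\bP V}$ restricted to $V[\om]$ agrees with the distance computed internally. Write $\eta = hP$ and $k \defi \dim (V[\om])^\om$, so $V_\eta[\om] = h(V[\om])^\om$ has dimension $k$. The case $k = \dim V[\om]$ is trivial (distance zero), so assume $k < \dim V[\om]$ and pick any basis $w_1,\dots,w_k$ of $V_\eta[\om]$. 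A standard linear-algebra identity for the sine of the angle to a subspace gives
\begin{equation*}
\on{d}_{\bP V}(a_1^n \bR v,\; a_1^n V_\eta[\om]) = \frac{\|a_1^n(v \wedge w_1 \wedge \cdots \wedge w_k)\|}{\|a_1^n v\|\cdot \|a_1^n(w_1 \wedge \cdots \wedge w_k)\|}.
\end{equation*}
If $v \in V_\eta[\om]$ the numerator vanishes and the conclusion is immediate, so suppose $v \wedge w_1 \wedge \cdots \wedge w_k$ is a nonzero vector in $\wedge^{k+1} V[\om]$.

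Next estimate the three factors using Corollary \ref{law of norms}. The denominators are easy: $v \in V[\om]$ and $w_1 \wedge \cdots \wedge w_k$ lies in the highest-weight isotypic component of $\wedge^k V[\om]$ (it is $h$ applied to the one-dimensional top weight space $\wedge^k (V[\om])^\om$ of weight $k\om$), so with $\hmu$-probability close to one,
\begin{equation*}
\|a_1^n v\| \asymp_\del \chi^\om(\mbe_n(b))\|v\|, \qquad \|a_1^n(w_1 \wedge \cdots \wedge w_k)\| \asymp_\del \chi^{k\om}(\mbe_n(b))\|w_1 \wedge \cdots \wedge w_k\|.
\end{equation*}
The crucial observation for the numerator is that the $(k+1)\om$-weight space in $\wedge^{k+1} V[\om]$ vanishes: since $\dim (V[\om])^\om = k$, any wedge of $k+1$ vectors of weight $\om$ is zero. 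Consequently, every isotypic component of $\wedge^{k+1} V[\om]$ has highest weight at most $\om^{\star} \defi k\om + \om^{(2)}$, where $\om^{(2)}$ denotes the second-highest weight appearing in $V[\om]$. Decomposing $v \wedge w_1 \wedge \cdots \wedge w_k$ along isotypic components of $\wedge^{k+1} V[\om]$, applying Corollary \ref{law of norms} to each, and summing using orthogonality yields a constant $C_\del$ such that, with $\hmu$-probability close to one,
\begin{equation*}
\|a_1^n(v \wedge w_1 \wedge \cdots \wedge w_k)\| \le C_\del \cdot \chi^{\om^{\star}}(\mbe_n(b))\cdot \|v \wedge w_1 \wedge \cdots \wedge w_k\|.
\end{equation*}

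Combining the three estimates and using Hadamard's inequality $\|v \wedge w_1 \wedge \cdots \wedge w_k\| \le \|v\|\cdot \|w_1 \wedge \cdots \wedge w_k\|$ gives a bound $\on{d}_{\bP V}(a_1^n \bR v, a_1^n V_\eta[\om]) \le C'_\del \cdot \chi^{\om^{(2)} - \om}(\mbe_n(b))$ that is uniform in $v$ and $\eta$ and holds with $\hmu$-probability at least $1 - \del$. Since $\om - \om^{(2)}$ is a nonzero sum of positive simple roots and $\on L_n(b)/n \to \sig_\mu \in \goz^{++}$ $\be$-almost surely by Theorem \ref{thm:poslyap}(\ref{asconvergence}), the factor $\chi^{\om^{(2)} - \om}(\mbe_n(b))$ decays exponentially, so the bound drops below $\rho$ for all $n \ge n_0$. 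Uniformity over the finite set $\cH_\goz(V)$ is automatic, and uniformity over $\eta \in H/P$ follows from compactness of $H/P$ and the $K$-invariance of our norms. The main technical subtlety is that $v \wedge w_1 \wedge \cdots \wedge w_k$ does not lie in a single isotypic component; this is the obstacle overcome by the orthogonality of isotypic components that is built into our choice of inner product, which allows the isotypic-by-isotypic bound to combine into a clean global estimate.
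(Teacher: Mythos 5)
Your argument is correct in substance, but it takes a different route from the paper: the paper disposes of this lemma in one line by citing the second part of \cite[Proposition 4.21]{BQJams} (together with the identification of the conditional measures $\be^U_{n,c}$ there with $\hmu$ via \cite[Lemma 3.4 + Equation (3.5)]{BQJams}), whereas you reconstruct the Benoist--Quint argument internally from Corollary~\ref{law of norms}. Your reduction is the right one: the identity $\on{d}_{\bP V}(a_1^n\bR v, a_1^n V_\eta[\om])=\norm{a_1^n(v\wedge w_1\wedge\cdots\wedge w_k)}/(\norm{a_1^n v}\,\norm{a_1^n(w_1\wedge\cdots\wedge w_k)})$ is valid for the inner-product-induced norms, $w_1\wedge\cdots\wedge w_k$ does lie in the isotypic component of the maximal weight $k\om$ of $\wedge^k V[\om]$ (the $k\om$-weight space is exactly $\wedge^k(V[\om])^\om$, hence one-dimensional and a highest-weight line), and the vanishing of the $(k+1)\om$-weight space in $\wedge^{k+1}V[\om]$ is precisely what forces the numerator to grow at a strictly slower exponential rate than $\chi^{(k+1)\om}(\mbe_n(b))$, since $\sig_\mu\in\goz^{++}$. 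Two small points deserve care. First, ``the second-highest weight $\om^{(2)}$'' is not well defined when the weights of $V[\om]$ below $\om$ are incomparable (relevant in~\ref{case1}); the clean statement is that every weight $\lam$ of $\wedge^{k+1}V[\om]$ is a sum of $k+1$ weights of $V[\om]$ of which at most $k$ equal $\om$, so $((k+1)\om-\lam)(\sig_\mu)>0$, and taking the maximum of $\lam(\sig_\mu)$ over the finitely many such $\lam$ gives the uniform exponential gap you need. Second, your isotypic-by-isotypic bound on the numerator implicitly uses the paper's standing convention that the norm on each auxiliary representation (here $\wedge^k V[\om]$ and $\wedge^{k+1}V[\om]$) is chosen with orthogonal isotypic components; this only affects constants, which are harmless since the final bound is $C'_\del$ times an exponentially decaying factor. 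With these cosmetic repairs your proof is a complete, self-contained substitute for the citation, at the cost of redoing work the paper outsources; the paper's citation buys brevity and avoids re-verifying the weight combinatorics, while your version makes transparent exactly which inputs (Corollary~\ref{law of norms}, $\sig_\mu\in\goz^{++}$, and the dimension count $\dim(V[\om])^\om=k$) drive the alignment estimate.
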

\begin{proof}
    This is exactly the second part of~\cite[Proposition 4.21]{BQJams} where the conditional measures $\be^U_{n,c}$ (in the notation
    of Benoist and Quint) equal $\hmu$ by~\cite[Lemma 3.4 + Equation (3.5)]{BQJams}.
\end{proof}
In our application of Lemma~\ref{law of angles 2} we will not know that the vector $v$ belongs to a single isotypic component.
The following lemma will allow us to obtain similar statements for vectors which do not lie in a single isotypic component.
\begin{lemma}\label{max weight pos angle}
    Let $V$ be a representation of $H$ and assume that $\cH_{\goz}(V)$ contains a maximal weight $\om_\mb{m}$.
    Let $\al>0$ and $V_{\sphericalangle \al}[\om_{\mb{m}}]$ be as in~\eqref{eq:2300}.
    Then for $\bexu$-almost every $\bd s\in \bxu$ and
    for all $\rho>0$ and $\del>0$ there exists $n_0>0$ such that for all $n>n_0$
    and $v\in V_{\sphericalangle \al}[\om_\mb{m}]\mz$ one has
    \begin{equation}\label{eq:1025}
        \hmu(\set{a\in A^n: \on{d}_{\bP V}(a_1^n \bR v, a_1^n \bR \tau_{\om_\mb{m}}(v)) \le \rho})>1-\del.
    \end{equation}
\end{lemma}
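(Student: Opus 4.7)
The plan is to decompose $v$ according to the weight space decomposition, use Corollary~\ref{law of norms} to control the growth of each component under $a_1^n$, and then exploit the maximality of $\om_\mb{m}$ to show that the non-maximal components become negligible, hence the direction of $a_1^nv$ approaches that of $a_1^n\tau_{\om_\mb{m}}(v)$.

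More precisely, for $v\in V_{\sphericalangle\al}[\om_\mb{m}]\mz$, write $v_\mb{m}\defi\tau_{\om_\mb{m}}(v)$ and $v_\om\defi\tau_\om(v)$ for $\om\in\cH_\goz(V)$, so that $v=\sum_{\om}v_\om$ and $v'\defi v-v_\mb{m}=\sum_{\om\neq\om_\mb{m}}v_\om$. The assumption $v\in V_{\sphericalangle\al}[\om_\mb{m}]$ gives $\norm{v_\mb{m}}\ge\al\norm{v}$, and since the isotypic components are orthogonal, $\norm{v_\om}\le\norm{v}\le\al^{-1}\norm{v_\mb{m}}$ uniformly in $v$ and $\om$. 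The elementary bound
\[
    \on d_{\bP V}(a_1^n\bR v,\,a_1^n\bR v_\mb{m})
    =\frac{\norm{a_1^n(v\wedge v_\mb{m})}}{\norm{a_1^nv}\norm{a_1^nv_\mb{m}}}
    \le \frac{\norm{a_1^nv'}}{\norm{a_1^nv}}
\]
reduces the task to showing that $\norm{a_1^nv'}/\norm{a_1^nv_\mb{m}}$ is small with high $\hmu$-probability (and then $\norm{a_1^nv}\ge\tfrac12\norm{a_1^nv_\mb{m}}$ follows automatically).

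The key step is to apply Corollary~\ref{law of norms} simultaneously for each weight $\om\in\cH_\goz(V)$: since $\cH_\goz(V)$ is finite, a union bound shows that for $\bexu$-almost every $\bd s=(b,x,z)\in\bxu$ and all $\del'>0$, there is $n_0$ such that for $n>n_0$,
\[
    \hmu\!\Bigl(\bigl\{a\in A^n: \norm{a_1^nv_\om}\asymp_{\del'}\chi^\om(\mbe_n(b))\norm{v_\om} \text{ for all } \om\in\cH_\goz(V)\bigr\}\bigr)>1-\del,
\]
where $\del'$ is chosen so $|\cH_\goz(V)|\del'<\del$. On this event, combining with $\norm{v_\om}\le\al^{-1}\norm{v_\mb{m}}$,
\[
    \frac{\norm{a_1^nv'}}{\norm{a_1^nv_\mb{m}}}
    \le \sum_{\om\neq\om_\mb{m}} \frac{\norm{a_1^nv_\om}}{\norm{a_1^nv_\mb{m}}}
    \lesssim_{\del'} \;\al^{-1}\sum_{\om\neq\om_\mb{m}}\frac{\chi^\om(\mbe_n(b))}{\chi^{\om_\mb{m}}(\mbe_n(b))}.
\]

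Finally, the maximality of $\om_\mb{m}$ means $\om_\mb{m}-\om$ is a positive weight for every $\om\in\cH_\goz(V)\smallsetminus\{\om_\mb{m}\}$, so by part~\eqref{simplelyap} of Theorem~\ref{thm:poslyap}, $(\om_\mb{m}-\om)(\sig_\mu)>0$. Part~\eqref{asconvergence} of Theorem~\ref{thm:poslyap} then gives $\tfrac1n\on L_n(b)\to\sig_\mu$ $\be$-almost surely, so $\chi^{\om-\om_\mb{m}}(\mbe_n(b))=\exp\bigl((\om-\om_\mb{m})(\on L_n(b))\bigr)$ decays exponentially $\be$-almost surely. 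Picking $b$ in the full-measure set where this holds for every $\om\neq\om_\mb{m}$, we can choose $n_0$ large enough that the right-hand side above is less than $\rho/2$ (say), which, combined with the preceding bound on the distance, yields the conclusion. The only genuinely technical ingredient is the uniformity in $v$, which is ensured by the uniform lower bound $\norm{v_\mb{m}}\ge\al\norm{v}$ built into the definition of $V_{\sphericalangle\al}[\om_\mb{m}]$.
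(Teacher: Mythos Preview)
Your proof is correct and follows essentially the same approach as the paper's: bound the projective distance by the ratio $\norm{a_1^nv'}/\norm{a_1^nv_\mb{m}}$, then use Corollary~\ref{law of norms} to approximate each $\norm{a_1^nv_\om}$ by $\chi^\om(\mbe_n(b))\norm{v_\om}$, and finally invoke the maximality of $\om_\mb{m}$ together with parts~\eqref{asconvergence} and~\eqref{simplelyap} of Theorem~\ref{thm:poslyap} to kill the non-maximal ratios. The only cosmetic differences are that the paper uses the orthogonality of the isotypic components to get an equality in the wedge computation (whereas you use the submultiplicativity $\norm{u\wedge w}\le\norm{u}\norm{w}$) and replaces your sum over $\om\neq\om_\mb{m}$ by a maximum; neither affects the argument.
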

\begin{proof}
    Let $v\in V_{\sphericalangle \al}[\om_\mb{m}]\mz$.
    Then for any $g\in H$ one has
    \begin{align}
        \nonumber \on{d}_{\bP V}(g \bR v, g\bR \tau_{\om_{\mb{m}}}(v)) &= \frac{\norm{(\sum_{\om\in \cH_{\goz}(V)} g\tau_\om(v))\wedge g\tau_{\om_{\mb{m}}}(v)}}{\norm{gv} \norm{g\tau_{\om_{\mb{m}}}(v)}}\\
        \label{eq:1803} & = \frac{\norm{\sum_{\om\in\cH_{\goz}(V)\smallsetminus\set {\om_{\mb{m}}}} g\tau_\om(v)}\norm{g\tau_{\om_{\mb{m}}}(v)}}
        {\norm{gv} \norm{g\tau_{\om_{\mb{m}}}(v)}}\\
        \nonumber& \ll \frac{\max_{\om\in\cH_{\goz}(V) \smallsetminus\set {\om_{\mb{m}}}} \norm{g\tau_\om(v)}}{\norm{g\tau_{\om_{\mb{m}}}(v)}}.
    \end{align}
    Now given $\rho>0$, by parts~\eqref{asconvergence} and~\eqref{simplelyap} of Theorem~\ref{thm:poslyap}, for $\be$-almost every $b\in B$
    there exists $n_0>0$ so that for all $n>n_0$ and
    $\om \in \cH_{\goz}(V)\smallsetminus\set {\om_{\mb{m}}}$ one has
    \begin{equation}\label{eq:1821}
        \exp({(\om-\om_{\mb{m}})(\on{L}_{n}(b))}) = \frac{\chi^\om(\mbe_n(b))}{\chi^{\om_{\mb{m}}}(\mbe_n(b))}\le \rho.
    \end{equation}
    By enlarging $n_0$ if necessary and using Corollary~\ref{law of norms} we get that for $\bexu$-almost every $\bd s\in\bxu$ and for
    all $\del>0$ and $n>n_0$ there is $F\subset A^n$ with
    $\hmu(F)>1-\del$ such that for all $a\in F$, $\om\in \cH_{\goz}(V)$ and $v\in V$ such that
    $\tau_{\om}(v)\ne 0$ we have $\norm{a_1^n\tau_\om(v)}\asymp_\del \chi^\om(\mbe_n(b))\norm{\tau_\om(v)}$.
    Thus, using~\eqref{eq:1803},~\eqref{eq:1821} and the assumption that $v\in V_{\sphericalangle \al}[\om_\mb{m}]\mz$ we get
    $$\on{d}_{\bP V}(a_1^n \bR v, a_1^n\bR \tau_{\om_{\mb{m}}}(v))\ll_{\del}\rho/\al$$
    for all $a\in F$ which, up to adjusting $\rho$, is the claim of the lemma.
\end{proof}




The following lemma will allow us to upgrade measurability to continuity on certain compact sets of arbitrarily large measure.
During the course of the proof and in \S\ref{ssec:drift} we will use a few standard results from measure 
theory and analysis such as Lusin's theorem and the martingale
convergence theorem. A suitable reference for all of these results is~\cite{MR2267655}.
\begin{lemma}\label{lem:good U}
    Let $E\subset \bxu$ be a measurable subset such that $\bexu(E)=1$. Then, for any $0<\del<1$
    there exist compact subsets $K'\subset K\subset E$
    such that:
    \begin{enumerate}[(1)]
        \item\label{req:3} The map $\bd s\mapsto \lwm{\bd s}$ is defined and continuous on $K$.
        \item\label{req:4} The map $\bd s=(b,x,z)\mapsto \xi(b)$ is defined and continuous on $K$ (see \S\ref{ssec:boundary maps}).
        \item\label{req:2}
            The volume $\bexu(K')>1-2\del$.
        \item\label{req:1}There exists $n_0>0$ such that for all $\bd s\in K'$ and $n>n_0$ one has
            \begin{equation}\label{eq:2230}
                \hmu(\set{a\in A^n : \bd s(a)\in K})>1-\del.
            \end{equation}
    \end{enumerate}
\end{lemma}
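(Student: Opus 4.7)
The plan is to combine Lusin's theorem with Doob's reverse martingale convergence theorem applied to the decreasing sequence of $\sig$-algebras $\cQ_n^U$. Both the leafwise measure map $\bd s\mapsto \lwm{\bd s}$, valued in the Polish space $\cM(\gou_0)$ with its vague topology, and the map $\bd s=(b,x,z)\mapsto \xi(b)$ valued in $H/P$, are measurable and defined on sets of full $\bexu$-measure. A standard application of Lusin's theorem, together with inner regularity of $\bexu$, therefore produces, for any prescribed $\veps>0$, a compact subset $K\subset E$ on which both maps are continuous and such that $\bexu(K)>1-\veps$. I will take $\veps$ much smaller than $\del$, for concreteness $\veps=\del^3$; this secures conditions~\eqref{req:3} and~\eqref{req:4}.

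For~\eqref{req:2} and~\eqref{req:1}, set $f_n(\bd s)\defi\hmu(\set{a\in A^n:\bd s(a)\in K})$. The identification~\eqref{eq:1039} exhibits $f_n$ as a conditional expectation,
$$f_n(\bd s)=\bE(\mb 1_K\mid \cQ_n^U)(\bd s).$$
Since $\wh T^{-(n+1)}(\bcxz)=\wh T^{-1}(\wh T^{-n}(\bcxz))\subseteq \wh T^{-n}(\bcxz)$, the $\sig$-algebras $\cQ_n^U$ form a decreasing sequence, and Doob's reverse martingale convergence theorem yields $f_n\to f_\infty\defi \bE(\mb 1_K\mid\cQ_\infty^U)$ both $\bexu$-almost surely and in $L^1$.

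To conclude, observe that $0\le f_\infty\le 1$ and $\int(1-f_\infty)\,\mathrm{d}\bexu=\bexu(\bxu)-\bexu(K)<\veps$, so Markov's inequality gives $\bexu(\set{f_\infty\le 1-\del/2})<2\veps/\del\ll\del$. Egorov's theorem together with inner regularity then produces a compact $K'\subset K\cap\set{f_\infty>1-\del/2}$ with $\bexu(K')>1-2\del$ on which the convergence $f_n\to f_\infty$ is uniform. Uniform convergence on $K'$ supplies $n_0$ such that for all $n>n_0$ and all $\bd s\in K'$,
$$f_n(\bd s)>f_\infty(\bd s)-\del/2>1-\del,$$
which is exactly~\eqref{eq:2230}.

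There is no serious obstacle here; the argument is an assembly of standard measure-theoretic tools. The only substantive inputs are the identification~\eqref{eq:1039} between $\hmu$ and the conditional measures of $\bexu$ along $\cQ_n^U$, and the fact that $\cQ_n^U$ is decreasing so that reverse martingale convergence is available. Both have already been arranged in the preceding subsection.
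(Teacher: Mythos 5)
Your argument is correct and is essentially the paper's own proof: Lusin's theorem plus inner regularity for the continuity statements, the identification~\eqref{eq:1039} of $\hmu$ with the conditional measures along the decreasing filtration $\cQ_n^U$, reverse martingale convergence, a Chebyshev/Markov bound on $\bE(\mb 1_K\mid\cQ_\infty^U)$, and Egorov to get the uniform $n_0$. The only quibble is bookkeeping: with $\veps=\del^3$ the bound $\bexu(K')>1-2\del$ can fail for $\del$ close to $1$ (the paper uses $\del^2$ and the threshold $1-\del$), but since the statement for small $\del$ implies it for all $\del$, this is harmless.
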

\begin{proof}
    Let $E\subset \bxu$ be a set of full $\bxu$-measure and $0<\del<1$ be given.
    We may assume that $E$ is contained in the domain of the LWM-map and the projection to $B$ of $E$ is contained in the
    full measure set on which $\xi$ is defined and measurable.
    Hence by Lusin's theorem we may pick a compact
    set $K\subset E$ such that requirements~\eqref{req:3} and~\eqref{req:4} hold and such that
    $\bexu(K)>1-\del^2$.
    Since $0\le \bE(\mb{1}_{K }| \cQ^U_\infty)\le 1$ and $$\int_{\bxu} \bE(\mb{1}_{K }| \cQ^U_\infty) \mathrm{d}\be^{X,U}>1-\del^2,$$  by Chebyshev's inequality
    there exists a compact $L'\subset B^{X,U}$ such that $\vphi|_{L'}>1-\del$ and $\be^{X,U}(L')>1-\del$.
    Let $L = L'\cap K$ so that
    $\be^{X,U}(L)>1-\del-\del^2>1-2\del$.

    Since the conditional expectations
    $ \bE(\mb{1}_{K} | \cQ^U_n)$
    are a reversed martingale, by the martingale convergence theorem we have
    $$\lim_{n\to\infty}\bE(\mb{1}_{K} | \cQ^U_n)=\bE(\mb{1}_{K} | \cQ^U_{\infty})\quad \bexu\textrm{-almost surely}.$$
    Using Egoroff's theorem we can assume that on $L$ the convergence is uniform.
    In particular, there
    exists $n_0>0$ such that for all $n>n_0$ one has $\vphi_n|_{L}>1-\del$.
    From~\eqref{eq:1039} we see that
    $$\bE(\mb{1}_{K} | \cQ^U_n)(\bd s)= \int_{A^n}\mb{1}_{K }(\bd s(a)) \dv\hmu a.$$
    Hence, for all $\bd s\in L$ and $n>n_0$ one has
    \begin{equation}\label{eq:08061}
        \hmu(\set{a\in A^n: \bd s(a)\in {K}}) > 1-\del.
    \end{equation}
    Hence the requirements of the lemma are satisfied with the sets $L\subset K\subset E$.
\end{proof}
\subsection{Constructing the displacements}\label{ssec:displacements}
We set up some notational conventions which will be used in the drift argument in the next subsection.
For $\eta\in H/P$ we consider the quotient $\gog_\eta/\gor_\eta$ of the Lie algebra 
$\gog_\eta$ of $G_\eta$ by the Lie algebra $\gor_\eta$ of the 
solvable radical $R_\eta<G_\eta$. The exponential map $\exp:\gog_\eta/\gor_\eta\to G_\eta/R_\eta$ is well defined and since
$R_\eta$ acts trivially on the plane $p_\eta$, the quotient $G_\eta/R_\eta$ acts (transitively) on the fibre $\pi^{-1}(p_\eta)$.
In particular it makes sense to write for $v\in \gog_\eta/\gor_\eta$ and $x\in \pi^{-1}(p_\eta)$, $\exp(v)x$ and in fact,  on 
letting $v$ vary in a basis of neighbourhoods of $0$ in $\gog_\eta/\gor_\eta$ one obtains a basis of neighbourhoods of 
$x$ in $\pi^{-1}(p_\eta)$. If $y=\exp(v)x$ we refer to $v$ as a \textit{displacement} between $y$ and $x$.

For $g\in G$, the adjoint action of $g$ on $\gog$ induces an isomorphism from $\gog_\eta/\gor_\eta$
to $\gog_{g\eta}/\gor_{g\eta}$. Thus, for $v\in \gog_\eta/\gor_\eta$ we let $gv$ denote the corresponding image
in $\gor_{g\eta}/\gor_{g\eta}$. If $x,y\in \pi^{-1}(p_\eta)$ and $v\in\gog_\eta/\gor_\eta$ is a displacement
between $x$ and $y$, then for any $g\in G$ we have that $gv\in \gog_{g\eta}/\gor_{g\eta}$ is a displacement 
between $gx,gy\in \pi^{-1}(p_{g\eta})$. In particular, for $\be$-almost every $b\in B$ (where $\xi$ is defined 
and equivariant) and for all $x,y\in\pi^{-1}(p_b)$, $v\in\gog_b/\gor_b$ and $n\in\bN$ one has that: 
\begin{equation}\label{eq:0930a}
    \textrm{if}\ \exp(v)x=y\ 
    \textrm{then}\  
    \exp((b_1^n)^{-1}v) (b_1^n)^{-1}x = (b_1^n)^{-1}y.
\end{equation}
\begin{remark}
    Note that as $\gor_\eta$ is not an ideal in $\gog$ these notions cannot be extended to define displacements
    in $\gog/\gor_\eta$ between nearby points $x,y\in X$ without the assumption that they both lie in the same 
    plane. In~\S\ref{sec:non atomicity} we will need this more general notion of displacement and develop the necessary notation and terminology.
\end{remark}
We equip $\gog_\eta/\gor_\eta$ with the quotient norm which is induced by our pre-fixed inner product 
on $\gog$. 
We choose a metric $\dx$ on $X$ in such a way that if $v\in \gog_\eta/\gor_\eta$ and $\norm v \le \eps$,
then for any $x\in \pi^{-1}(p_\eta)$ one has that $\dx(x,\exp(v)x)\le \eps$. See~\S\ref{ssec:metric} for details regarding an explicit choice 
of such a metric.

We use the assumption that the $\nu_b$'s are non-atomic $\be$-almost surely to build a sequence of displacements that will become input for the drift argument as reflected in the following lemma.
\begin{lemma}\label{lem:tnunbdd}
    Let $F\subset \bxz$ be a set of positive $\bexz$-measure and suppose that for $\be$-almost every $b\in B$ the measures $\nu_b$ are non-atomic. Then,
    for $\bexz$-almost every $(b,x,z)\in F$ there exists a sequence $\set{v_i}_{i\in\bN}\subset \gog_b/\gor_b\mz$ tending to 
    $0$ such that that for all
    $i\in\bN$ one has $(b,\exp(v_i)x,z)\in F$ and $\limsup_{n\to\infty}t_n(b,v_i)=\infty$ where for $v\in \gog_b/\gor_b$
    \begin{equation}
        t_n(b,v)\defi \chi_{\gol_0/\gor_0}(\mbe_n(b)) \norm{(b_1^n)^{-1} v}.
    \end{equation}
\end{lemma}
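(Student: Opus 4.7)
The plan is to combine Poincar\'e recurrence for $\widehat{T}$ on the $\sigma$-finite space $(\bxz,\bexz)$ with the non-atomicity of the fibre measures $\nu_b$. First I would use the fact that $\widehat{T}$ preserves $\bexz$ and that $F$ has positive measure to obtain, for $\bexz$-a.e.\ $(b,x,z)\in F$, a sequence of return times $n_k\to\infty$ with $\widehat{T}^{n_k}(b,x,z)\in F$. By passing to the positive-measure intersections $F_k\defi F\cap \widehat{T}^{-n_k}(F)$ and using Fubini together with the Lebesgue density theorem, I would further arrange that $x$ is a $\nu_b$-density point of the fibrewise slice $(F_k)_b^z$ for each $k$.

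Next, by the non-atomicity of $\nu_b$ at these density points, I would choose points $x_k\in (F_k)_b^z\smallsetminus\{x\}$ with $x_k\to x$, and write $x_k=\exp(v_k)x$ with $v_k\in\gog_b/\gor_b\mz$. Then $v_k\to 0$ and, by construction, both $(b,\exp(v_k)x,z)\in F$ and $\widehat{T}^{n_k}(b,\exp(v_k)x,z)\in F$; the latter identifies $(b_1^{n_k})^{-1}v_k$ with the displacement at the iterated fibre, providing direct control on the quantity appearing in $t_{n_k}(b,v_k)$.

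To verify $\limsup_n t_n(b,v_k)=\infty$, I would decompose $v_k$ along the $Z$-weight spaces of $\gog_b/\gor_b$, namely $V_b[-\om_{\gol_0/\gor_0}]\oplus V_b[0]\oplus \gou_b$, and invoke Lemma~\ref{lem:expansion of leafs-1} together with Theorem~\ref{thm:poslyap} and the positivity~\eqref{eq:posweight} to deduce that $t_n(b,v_k)\to\infty$ whenever $v_k$ has a non-trivial component outside $\gou_b$. The hard part will be ruling out the degenerate scenario in which every nearby point of $(F_k)_b^z$ lies along the $U_b$-orbit of $x$---in which case every candidate $v_k$ would sit in $\gou_b$ and $t_n(b,v_k)$ would be the constant $\|v_k\|$. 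I would circumvent this by pulling back via $b_1^{n_k}$ a unit-length displacement $w_k\in\gog_{S^{n_k}b}/\gor_{S^{n_k}b}$ chosen in a non-$\gou_{S^{n_k}b}$-direction (available from non-atomicity of $\nu_{S^{n_k}b}$), and setting $v_k=b_1^{n_k}w_k$; the exponential contraction of $b_1^{n_k}$ in the $-\om_{\gol_0/\gor_0}$-direction then yields $\|v_k\|\to 0$, while $\|(b_1^{n_k})^{-1}v_k\|=\|w_k\|$ remains bounded away from $0$, so~\eqref{eq:posweight} forces $t_{n_k}(b,v_k)\to\infty$.
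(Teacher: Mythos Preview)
Your proposal has a genuine gap in the degenerate case. You correctly identify that if $v_k\in\gou_b$ then $t_n(b,v_k)=\|v_k\|$ is constant and hence bounded, so such $v_k$ are useless. Your proposed fix---pulling back a unit-length displacement $w_k$ at level $S^{n_k}b$ chosen in a non-$\gou_{S^{n_k}b}$ direction---does not work, for two reasons. First, non-atomicity of $\nu_{S^{n_k}b}$ does not guarantee the existence of nearby support points off the $U$-orbit: $\nu_{S^{n_k}b}$ could, a priori, be a non-atomic measure supported on a single $U_{S^{n_k}b}$-orbit, in which case every local displacement within its support lies in $\gou_{S^{n_k}b}$. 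Second, and more structurally, the equivariance $\nu_{S^{n_k}b}=(b_1^{n_k})^{-1}_*\nu_b$ means the degenerate picture at level $b$ is transported verbatim to level $S^{n_k}b$: if the $F$-slice near $x$ lies in $U_b x$, then the relevant $F$-slice near $(b_1^{n_k})^{-1}x$ lies in $U_{S^{n_k}b}(b_1^{n_k})^{-1}x$. You cannot escape the degeneracy by iteration.

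The paper's proof bypasses this case analysis entirely. It observes that if $\limsup_n t_n(b,v)<\infty$ then, since $\chi_{\gol_0/\gor_0}(\mbe_n(b))\to\infty$ almost surely by~\eqref{eq:posweight} and Theorem~\ref{thm:poslyap}, necessarily $\|(b_1^n)^{-1}v\|\to 0$, so $\exp(v)x$ lies in the stable set $W_b(x)=\{y:\dx((b_1^n)^{-1}y,(b_1^n)^{-1}x)\to 0\}$. The key ingredient your proposal is missing is then invoked: \cite[Proposition~6.18]{BQJams} gives $\nu_b(W_b(x)\smallsetminus\{x\})=0$ for $\be^X$-almost every $(b,x)$, and non-atomicity upgrades this to $\nu_b(W_b(x))=0$. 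Thus the set of ``bad'' displacements has $\nu_b$-null image, and a straightforward density-point argument produces the required sequence. No weight-space decomposition, no Poincar\'e recurrence, and no pull-back construction is needed.
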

\begin{proof}
    We fix a measurable set $F\subset\bxz$ such that $\bexz(F)>0$.
    Since the statement we are trying to prove is an almost sure statement, it is safe to neglect $\bexz$-null sets.
    It follows from Proposition~\ref{prop:firstprop} that we may assume 
    $\supp\nu_b\subseteq\pi^{-1}(p_b)$ and $x\in\pi^{-1}(p_b)$ for all $(b,x,z)\in F$.
    Furthermore, using the definition of $\bexz$, we may assume that for all $(b,x,z)\in F$, $x$ belongs to the support
    of $\nu_b$. In other words, if
    for $i\in\bN$ we let $\cN_i^b$ denote a basis of neighbourhoods of 0 in $\gog_b /\gor_b$ then
    for all $(b,x,z)\in F$ and $i\in \bN$,
\begin{equation}\label{eq:disp1}\nu_b(\set{\exp(v) x:v\in\cN_i^b}) >0.\end{equation}
    For $b\in B$ let $$\gos_b \defi \set{v\in \gog_b/\gor_b : \limsup\nolimits_{n\to\infty}t_n(b,v)<\infty }.$$
    In light of~\eqref{eq:disp1} and the definition of the measure $\bexz$ in order to prove the lemma,
    it is enough to establish
    \begin{equation}\label{eq:disp2}
        \nu_b(\exp(\gos_b) x) = 0\ \textrm{for}\ \be^X\textrm{-almost every}\ (b,x)\in B^X.
    \end{equation}

    Let $\dx$ denote a distance function on $X$ as discussed before the lemma. For
    $(b,x)\in B^X$ let $$W_b(x) \defi \set{y\in X : \lim\nolimits_{n\to\infty}\dx((b_1^n)^{-1} y, (b_1^n)^{-1} x)\to 0}.$$
    It is shown in~\cite[Proposition 6.18]{BQJams} that $\be^X$-almost surely $\nu_b(W_b(x)\smallsetminus\set{x}) = 0$. Due
    to our non-atomicity assumption we deduce that $\be^X$-almost surely $\nu_b(W_b(x))=0$. Hence we can verify~\eqref{eq:disp2}
    by showing that
    \begin{equation}\label{eq:1050} 
        \exp(\gos_b)x\subset W_b(x) \textrm{ for $\be^X$-almost every $(b,x)\in B^X$}.
    \end{equation}
    To this end, let $(b,x)\in B^X$ and
    $v\in \gos_b$ so that $t_n(b,v)$ is bounded and let $y=\exp(v) x$. We will finish
    by showing that if $(b,x)$ is outside a $\be^X$-null set, then $y\in W_b(x)$.
    By part~\eqref{asconvergence} of Theorem~\ref{thm:poslyap} and~\eqref{eq:posweight} one has
    $$\lim_{n\to\infty}\om_{\gol_0/\gor_0}(\on{L}_{n}(b)/n)=\om_{\gol_0/\gor_0}(\sig_\mu)>0\quad \be\textrm{-almost surely}$$
    and hence
    \begin{equation}\label{eq:1427}
        \limsup_{n\to\infty}\chi_{\gol_0/\gor_0}(\mbe_n(b))=\infty\quad \be\textrm{-almost surely}.
    \end{equation}
    Therefore, once $(b,x)$ is such that~\eqref{eq:1427} holds then taking into account the definition
    of $t_n(b,v)$ and its boundedness we conclude that $\lim_{n\to\infty}\norm{(b_1^n)^{-1}v}= 0$. In particular,
    on denoting $x_n = (b_1^n)^{-1}x$ we get that  
    \begin{align*}
        \dx((b_1^n)^{-1}x, (b_1^n)^{-1}y) &= \dx(x_n, (b_1^n)^{-1}\exp(v)x) \\
                                          &= 
        \dx(x_n, \exp((b_1^n)^{-1}v)x_n).
    \end{align*}
    It follows that 
    $$\lim_{n\to\infty}\dx((b_1^n)^{-1}x, (b_1^n)^{-1}y)=0$$
    where we use~\eqref{eq:0930a} which holds $\be$-almost surely.
    This shows that $y\in W_b(x)$ and finishes the proof of the lemma.
\end{proof}

\subsection{The exponential drift - Proof of Theorem~\ref{thm:full}\eqref{main1}}\label{ssec:drift}
We now prove Theorem~\ref{thm:full}\eqref{main1} which we restate for convenience.
\begin{theorem}
    \label{thm:the drift-3}
    Let $\mu\in\cP(G)$ be a compactly supported measure and suppose we are either in \ref{case1} or \ref{case2}.
    Let $\nu\in \cP_\mu(X)$ be an ergodic $\mu$-stationary measure on $X$ and assume that for $\be$-almost every $b\in B$ the limit measures
    $\nu_b$ are non-atomic, then $\nu$ is the natural lift of the Furstenberg measure of $\mu$ on $\Xbar$. 
\end{theorem}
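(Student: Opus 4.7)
The plan is to establish the $\Phi_{\gou_0}$-invariance of $\bexu$; by Remark~\ref{rk:unipinv2} together with Proposition~\ref{prop:unipinv}, this yields the conclusion. By Theorem~\ref{thm:ashaar} and Proposition~\ref{prop:equalstab} it suffices to show that for $\bexu$-almost every $\bd s$ one has $\Stab_{\gou_0}(\br{\lwm{\bd s}})=\gou_0$. The stabilizer map is $\wh T$-invariant by Lemma~\ref{lem:measureablity}, so by ergodicity of $\wh T$ it is $\bexu$-almost surely a fixed closed subgroup $\Sig_0$ of the one-dimensional group $\gou_0\cong \bR$; the task is to produce, on a set of positive measure, non-trivial and arbitrarily small elements of this stabilizer, forcing $\Sig_0=\gou_0$.

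Fix compact sets $K'\subset K$ as in Lemma~\ref{lem:good U}, on which the LWM-map and the boundary map are continuous and such that for $\bd s\in K'$ and large $n$, $\hmu$-most $a\in A^n$ satisfy $\bd s(a)\in K$. For $\bd s=(b,x,z)\in K'$ outside a null set, Lemma~\ref{lem:tnunbdd} produces displacements $v_i\in\gog_b/\gor_b\smallsetminus\set{0}$ with $v_i\to 0$, with $\bd s_i\defi (b,\exp(v_i)x,z)\in K'$, and with $t_n(b,v_i)=\chi_{\gol_0/\gor_0}(\mbe_n(b))\norm{(b_1^n)^{-1}v_i}$ unbounded in $n$. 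Choose $n_i$ such that $t_{n_i}(b,v_i)$ lies in a prescribed compact window $[C_1,C_2]\subset (0,\infty)$.

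The core drift step: note that $\bd s_i(a)$ differs from $\bd s(a)$ only in the $X$-coordinate, by a displacement $a_1^{n_i}(b_1^{n_i})^{-1}v_i\in\gog_{aS^{n_i}b}/\gor_{aS^{n_i}b}$. Choose a generator $u_b$ of $\wedge^3\gor_b$ and lift $v_i$ to $\wedge^4\gog$ by wedging with $u_b$. By part~\eqref{p:06111} (in~\ref{case1}) or~\eqref{p:0611} (in~\ref{case2}) of Lemma~\ref{lem:max weight}, this lift lies in $\Vang{(\wedge^4\gog)}$, and its top-weight projection lands in $\wedge^4\gol_b$ by part~\eqref{p:06101}. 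Corollary~\ref{law of norms}, applied to $\bR^3$ and $\wedge^2\bR^3$, controls the norm of the transported displacement to be $\asymp_\del t_{n_i}(b,v_i)\in[C_1,C_2]$, while Lemma~\ref{law of angles 2} combined with Lemma~\ref{max weight pos angle} for $\wedge^4\gog$ with maximal weight $\oml$ forces its projective direction to be arbitrarily close to $\gou_{aS^{n_i}b}$. Together with~\eqref{req:1} of Lemma~\ref{lem:good U}, the set of $a\in A^{n_i}$ for which $\bd s(a),\bd s_i(a)\in K$, the norm of the displacement lies in the window, and its direction is near $\gou_{aS^{n_i}b}$, has $\hmu$-measure tending to $1$.

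Now extract a subsequence along which $\bd s(a)$ converges in $K$ to some $\bd s_\infty$; after a correction of size $o(1)$ projecting the displacement onto $\gou_{aS^{n_i}b}$ and parallel-transporting back to $\gou_0$ via $\Ad^{-1}_{s(\xi(aS^{n_i}b))\mbe_{n_i}(aS^{n_i}b)\mbe_{n_i}(b)^{-1}z}$ (well-defined and bounded since $Z$-coordinates stay in $U$), the displacements converge to a non-zero $u_\infty\in\gou_0$. By continuity~\eqref{req:3} of the LWM-map on $K$ and Lemma~\ref{lem:commutation}, $\br{\lwm{\bd s(a)}}$ and $\br{\lwm{\bd s_i(a)}}$ both converge to $\br{\lwm{\bd s_\infty}}$ and are related by translation by the transported displacement, giving $u_\infty\in\Stab_{\gou_0}(\br{\lwm{\bd s_\infty}})$. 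Shrinking the window $[C_1,C_2]$ toward $0$ and iterating the construction yields stabilizer elements of arbitrarily small norm, so $\Sig_0$ is non-discrete and hence equals $\gou_0$. The main obstacle is the joint bookkeeping: all the almost-sure statements (norms, directions, the landing-in-$K$ condition) must hold for a common set of $a$'s with $\hmu$-measure tending to $1$, and the displacements living in varying fibres $\gog_\eta/\gor_\eta$ must be transported back to $\gou_0$ in a controlled manner. The decisive algebraic input throughout is Lemma~\ref{lem:max weight}, whose parts~\eqref{p:06111}/\eqref{p:0611} are precisely what force the drift direction into $\gou$.
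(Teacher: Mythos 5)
Your route is the same as the paper's: reduce via Propositions~\ref{prop:unipinv} and~\ref{prop:unipinv2}, Theorem~\ref{thm:ashaar} and Proposition~\ref{prop:equalstab} to the almost sure equality $\Stab_{\gou_0}(\br{\lwm{\bd s}})=\gou_0$, feed the displacements of Lemma~\ref{lem:tnunbdd} through a stopping time putting $t_{n_i}(b,v_i)$ in a fixed window, control norms and directions under the conditional measures $\hmu$, and pass to limits inside the compact sets of Lemma~\ref{lem:good U}. However, two steps do not work as written. First, you invoke ergodicity of $\wh T$ on $(B^{X,Z},\be^{X,Z})$ to make the stabiliser an almost surely constant subgroup $\Sig_0$, so that producing small stabiliser elements on a set of positive measure would suffice. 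This ergodicity is never established and is in fact false: since $\sig_\mu\neq 0$ (Theorem~\ref{thm:poslyap}\eqref{simplelyap}), the $Z$-coordinate of $\wh T$ drifts to infinity, so $\wh T$ is dissipative with respect to the infinite measure $\be^{X,Z}$ and is not ergodic; nor does $\wh T$ preserve $\bexu$, so one cannot restrict to the window. The paper needs no constancy: for $\bexu$-almost every $\bd s\in K'$ and every $\eps>0$ it produces $w\in\Stab_{\gou_0}(\br{\lwm{\bd s}})$ with $\norm{w}\asymp\eps$, and since this stabiliser is a closed subgroup of $\gou_0\simeq\bR$ and $\bexu(K')\ge 1-2\del$ with $\del$ arbitrary, the equality holds almost everywhere. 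Your ``shrinking the window'' step already does this work pointwise, so you should drop the ergodicity claim and argue at almost every point.

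Second, the norm control is attributed to Corollary~\ref{law of norms} applied to $\bR^3$ and $\wedge^2\bR^3$; those representations carry the weights $\omrt,\omrd$ and are irrelevant to $t_{n_i}(b,v_i)$, which involves $\chi_{\gol_0/\gor_0}$. The correct comparison is $\norm{a_1^{n_i}(b_1^{n_i})^{-1}v_i}=\norm{a_1^{n_i}\mb{v}_i}/\norm{a_1^{n_i}\mb{v}_i'}$ with $\mb{v}_i=(b_1^{n_i})^{-1}(\tilde{v}_i\wedge u_b)\in\wedge^4\gog$ and $\mb{v}_i'=(b_1^{n_i})^{-1}u_b\in\wedge^3\gog$, handled by Corollary~\ref{law of norms} for $\wedge^4\gog$ together with parts~\eqref{p:06111}/\eqref{p:0611} of Lemma~\ref{lem:max weight} (to pass between $\mb{v}_i$ and $\tau_{\oml}(\mb{v}_i)$) and by Lemma~\ref{lem:expansion of leafs-1} for the equivariant line $\wedge^3\gor$ inside $\wedge^3\gog$. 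Relatedly, Lemmas~\ref{law of angles 2} and~\ref{max weight pos angle} only force the direction towards the isotypic component $(\wedge^4\gog)_{aS^{n_i}b}[\oml]$, which in~\ref{case1} may be strictly larger than $\wedge^4\gol$; it is part~\eqref{p:06101} of Lemma~\ref{lem:max weight}, applied to the limiting pure wedge, that places the limiting displacement in $\gou$ --- not parts~\eqref{p:06111}/\eqref{p:0611}, as you assert at the end. With these corrections your sketch coincides with the paper's proof.
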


\begin{proof}
    Let $U$ be as in~\eqref{eq:1436a}.
    By Proposition~\ref{prop:unipinv} and Proposition~\ref{prop:unipinv2} it is enough to establish that $\bexu$ is invariant under the horocyclic flow $\Phi_{\gou_0}$. 
    By Theorem~\ref{thm:ashaar} we are reduced to establishing that for $\bexu$-almost every $\mb{s}\in B^{X,U}$ the LWM, $(\bexu)_{\mb{s}}^\Phi$ is equal to the Haar measure on $\gou_0$. 
    Said differently, we are reduced to establishing the equality $\Stab_{\gou_0}(\lwm{\mb{s}}) = \gou_0,$ $\bexu$-almost surely. By Proposition~\ref{prop:equalstab} it is enough to establish
    the following claim: 
    \begin{claim}\label{the claim}
        The equality $\Stab_{\gou_0}\pa{\br{\lwm{\mb{s}}}} = \gou_0$ holds $\bexu$-almost surely. 
    \end{claim}
    The rest of the proof is devoted to proving this claim.
    There exists a measurable $S$-invariant set of 
    full measure $B_0\subset B$ such that for all $b\in B_0$, the boundary map $\xi$ is defined and equivariant at $b$ and 
    Lemma~\ref{lem:expansion of leafs-1} is applicable to $b$ with respect to the exterior powers 
    of the adjoint representation of $H$ on $\gog$.

    Let $E\subseteq B_0\times X\times U$ be a measurable subset of full $\bexu$-measure such that
    the LWM-map is defined on $E$, Lemma~\ref{lem:measureablity} is applicable for any point in $E$ 
    in the sense that for all $\hs\in E$ and $n\in \bN$, 
    \begin{equation}\label{eq:1455a}
        \lwm{\hs} = \lwm{\wh{T}^n(\hs)}.
    \end{equation}
    Additionally, using Proposition~\ref{prop:firstprop}, we assume that 
    for all $(b,x,z)\in E$ one has
    $\nu_b(\pi^{-1}(p_b))=1$ and $x\in\pi^{-1}(p_b)$. 
    For $\mb{s}=(b,x,z)\in E$ and $v\in \gog_b/\gor_b$
    we denote 
\begin{equation}\label{eq:weirdnotation}\exp(v)\mb{s} \defi (b,\exp(v)x,z).\end{equation}

    Let $0<\del<1/10$ be arbitrarily small and let $K'\subset K\subset E$ be compact subsets 
    be as guaranteed 
    by Lemma~\ref{lem:good U}. 
    \begin{definition*}
        Given a point $\mb{s} = (b,x,z)\in K'$  we say that a sequence $\set{ v_i}_{i\in\bN}\subset \gog_b/\gor_b$ of non-zero vectors
        converging to $0$ is \textit{unstable} for $\hp$ if  
        \begin{equation}\label{eq:0742}
            \hp_i \defi \exp(v_i) \hp\in K'\qfa i\in\bN
        \end{equation}
        and for any fixed $i\in\bN$ the sequence  
        \begin{equation}\label{eq:incseq}
            t_n(b,v_i)\defi \chi_{\gol_0/\gor_0}(\mbe_n(b))\norm{(b_1^n)^{-1}v_i}
        \end{equation}
        in the variable $n$ is unbounded. 
        Although we do not record in
        this terminology the set $K'$, it should cause no confusion because $K'$ will remain fixed until the last step of the proof.
    \end{definition*}
    By Lemma~\ref{lem:tnunbdd}, $\bexu$-almost every $\hp\in K'$ has an unstable sequence. We note that 
    this is the part of the proof where the non-atomicity of the $\nu_b$'s is being used.

    Let $\hp\in K'$ and $\set{v_i}_{i\in\bN}$ be an unstable sequence for $\hp$. For all $i$, $n$ and $a\in A^n$ 
    such that $aS^nb\in B_0$
    the relation~\eqref{eq:0742} between $\hp$ and $\hp_i$ propagates to a similar relation between 
    $\hp(a)$ and $\hp_i(a)$. Namely, using the notations from~\eqref{eq:0855} and~\eqref{eq:weirdnotation}
    \begin{equation}\label{eq:2133}
        \hp_i(a) = \exp(a_1^n(b_1^n)^{-1} v_i) \hp(a).
    \end{equation}
    The assumption that $aS^nb\in B_0$ is used in order for~\eqref{eq:0930a} to apply.

    The proof of Claim~\ref{the claim} 
    relies on showing that for arbitrarily large $i\in\bN$ one can choose carefully $n_i\in\bN$ and  $a_i\in A^{n_i}$
    in such a way that equation~\eqref{eq:2133} limits to an equation giving rise to the fact that $\br{\lwm{\mb{s}}}$ is invariant under 
    an arbitrarily small element of $\gou_0$. It is quite long and so we try to break it into steps and introduce 
    auxiliary notation and terminology to ease the complications.
    \begin{definition*}
        We say that a point $\hp\in K'$ satisfies \textit{hypothesis $\on{ED}$} if there exists 
        an unstable sequence $\set{v_i}_{i\in\bN}$ for $\hp$ such that for all $\eps>0$,
        for all $i\in\bN$ 
        there exists choices $n_i\in\bN$ and $a_i \in A^{n_i}_{\hs,U}$
        such that $n_i\to\infty$ as $i\to\infty$ and the following hold:
        \begin{enumerate}[\bfseries{ED}1:]
            \item\label{claim:p3} For all $i\in\bN$ one has that the points $\hp(a_i), \hp_i(a_i)\in K$.
            \item\label{claim:p1} $\norm{ (a_i)_1^{n_i}(b_1^{n_i})^{-1} v_i}\asymp \eps$.
            \item\label{claim:p2} $\lim_{i\to\infty}\on{d}_{\bP\wedge^4\gog}((a_i)_1^{n_i}(b_1^{n_i})^{-1}( \bR \tilde{v}_i\wedge(\wedge^3 \gor_b)), 
                (\wedge^4\gog)_{a_iS^{n_i}b}[\om_{\gol_0}]
                )
                =0$, where $\tilde{v}_i\in\gog_b$ is a representative of $v_i$.
        \end{enumerate}
    \end{definition*}
    We note that property {\bfseries ED\ref{claim:p3}} implies 
    that the $B$-coordinate of the points
    $\hp(a_i)$ and  $\hp_i(a_i)$, which is $a_iS^{n_i}b$, belongs to $B_0$ by our assumption on $E$. 
    As explained above, this implies that~\eqref{eq:2133} holds and moreover,
    from the definition
    of $B_0$ we have the equality 
    \begin{equation}\label{eq:2135}
        (a_i)_1^{n_i}(b_1^{n_i})^{-1}\xi_b = \xi_{a_iS^{n_i}b}.
    \end{equation}
    We complete the proof of Claim \ref{the claim} in two steps by proving: 
    \begin{enumerate}[(Step 1)]
        \item\label{step2} $\bexu$-almost every $\hp\in K'$ satisfies hypothesis ED. 
        \item\label{step1} If $\hp\in K'$ satisfies hypothesis ED then $\Stab_{\gou_0}\br{\lwm{\mb{s}}}=\gou_0.$
    \end{enumerate}
    Indeed, by part~\eqref{req:2} of Lemma~\ref{lem:good U}, $\bexu(K')\ge 1-2\del,$ and since $\del$ is arbitrary the claim follows.

    \vspace{3mm}
    \noindent  \tb{Proof of Step~\ref{step2}}.
    As mentioned before, Lemma~\ref{lem:tnunbdd} implies that for $\bexu$-almost every $\hp\in K'$ there exists an
    unstable sequence $\set{v_i}_{i\in\bN}$. Therefore, there is no problem fixing $\hp\in\ K'$ and $\set{v_i}_{i\in\bN}$ a corresponding 
    unstable sequence. Let $\eps>0$.

    Fix $i\in\bN$ and consider the sequence $t_n=t_n(b,v_i)$ from~\eqref{eq:incseq}.
    Note that since the support of $\mu$ is compact, the ratios $t_{n+1}/t_n$ are bounded by a constant
    depending on $\mu$. By the definition of the instability of $\set{v_i}_{i\in\bN}$ for $\hp$, the sequence
    $t_n$ is unbounded and since $t_1$ is arbitrarily small for all large $i$, 
    we conclude that for all large $i$ the number $n_i\defi\min\set{n: t_n>\eps}$ is well defined and in that case 
    \begin{equation}\label{eq:0040}
        t_{n_i}\asymp \eps.
    \end{equation} 
    Note  that since $v_i\to 0$, we must have that $n_i\to \infty$ as $i\to \infty$. 
    The existence of $a_i\in A^{n_i}$ 
    for which properties {\bfseries ED\ref{claim:p3}}-{\bfseries ED\ref{claim:p2}} will hold
    will be established by probabilistic means using the conditional probability measure
    $\mu^{\otimes n_i}_{\hs, U}$ discussed in~\S\ref{ssec:loa}.

    First we demonstrate that property {\bfseries ED\ref{claim:p3}} holds for a set of large $\mu^{\otimes n_i}_{\hs, U}$-measure.  
    Since $n_i\to \infty$ 
    and both of $\hs$ and $\hs_i$ are elements of $K'$, 
    by Lemma~\ref{lem:good U}, which we used to obtain $K'$ and $K$, we have that: 
    For $\bexu$-almost every $\hs\in K'$,
    \begin{equation}\label{eq:2101}\tag{$\star$}
        \mu^{\otimes n_i}_{\hs,U}(\set{a\in A^{n_i}:\hp(a), \hp_i(a)\in K})>1-2\del\qfa i\gg 1.
    \end{equation}

    We turn to property {\bfseries ED\ref{claim:p1}}. 
    Observe that for $c\in B_0$ and $v\in\gog_c/\gor_c$, 
    \begin{equation}\label{eq:1045a}
        \norm{v} = \frac{\norm{\tilde{v}\wedge u_c}}{\norm{u_c}}
    \end{equation} 
    where we will use $\tilde{v}\in\gog_c$ to denote a choice of a representative for $v$ and $u_c$ 
    to denote a non-zero element of $\wedge^3\gor_c$ (note that the quantity in~\eqref{eq:1045a} does not depend
    on our choices). This will allow us to obtain {\bfseries ED\ref{claim:p1}} 
    by considering the representations of $H$ on $\wedge^4\gog$ and $\wedge^3\gog$.

    For $i\in\bN$ we use the notation
    \begin{equation}
        \label{eq:vwedge} \mb{v}_i \defi (b_1^{n_i})^{-1}( \tilde{v}_i\wedge u_b)\quad\textrm{and}\quad
        \mb{v}_i' \defi (b_1^{n_i})^{-1}u_b. 
    \end{equation}
    For $a\in A^{n_i}$ our goal is to understand the norm in {\bfseries ED\ref{claim:p1}}. 
    We will compare the quantities
    \begin{equation}\label{eq:incseq1}
        \norm{a_1^{n_i}(b_1^{n_i})^{-1}v_i}=\frac{\norm{a_1^{n_i}\mb{v}_i}}{\norm{a_1^{n_i}\mb{v}_i'}}\quad \textrm{ and }\quad 
        t_{n_i}(b,v_i) =  \frac{\chi_{\gol_0}(\mbe_n(b))\norm{\mb{v}_i}}
        {\chi_{\gor_0}(\mbe_n(b))\norm{\mb{v}_i'}}
    \end{equation}
    and show that they are of the same  order of magnitude.
    We start by relating the numerators of the ratios in~\eqref{eq:incseq1} 
    and then consider the corresponding denominators.

    We apply Corollary~\ref{law of norms} to the representation of $H$ on $V=\wedge^4 \gog$ and for 
    the weight $\om_{\gol_0}\in \cH_{\goz}(V)$ and the vector $\tau_{\om_{\gol_0}}(\mb{v}_i)\in V[\om_{\gol_0}]$ and conclude 
    that for $\bexu$-almost every $\hs\in K'$,
    \begin{equation}\label{eq:0653}
        \mu^{\otimes n_i}_{\hs,U}(\set{a\in A^{n_i}: \norm{a_1^{n_i}\tau_{\om_{\gol_0}}(\mb{v}_i)}\asymp 
        \chi_{\gol_0}(\mbe_{n_i}(b)) \norm{\tau_{\om_{\gol_0}}(\mb{v}_i)}})>1-\del\qfa i\gg 1.
    \end{equation}
    We wish to replace in \eqref{eq:0653} the term 
    $\norm{\tau_{\om_{\gol_0}}(\mb{v}_i)}$ by
    $\norm{\mb{v}_i}$ and  $\norm{a_1^{n_i}\tau_{\om_{\gol_0}}(\mb{v}_i)} = \norm{\tau_{\om_{\gol_0}}(a_1^{n_i}\mb{v}_i)} $
    by $\norm{a_1^{n_i}\mb{v}_i}$. For this we use parts~\eqref{p:06111} and~\eqref{p:0611} of  
    Lemma~\ref{lem:max weight}. In order for Lemma~\ref{lem:max weight} to be applicable
    we need that $\mb{v}_i \in \gog_{S^{n_i}b}\wedge(\wedge^3\gor_{S^{n_i}b})$ and 
    $a_1^{n_i}\mb{v}_i\in \gog_{aS^{n_i}b}\wedge (\wedge^3\gor_{aS^{n_i}b})$.
    The first containment holds since $b\in B_0$ and the relevant spaces vary equivariantly. 
    For the second containment, if we require $a$ to be also
    in the set measured in~\eqref{eq:2101} then $aS^{n_i}b\in B_0$ as well and the relevant 
    equivariance applies. This leads us to conclude from \eqref{eq:2101} and \eqref{eq:0653} 
    that for $\bexu$-almost every $\hs\in K'$,
    \begin{equation}\label{eq:0654}
        \mu^{\otimes n_i}_{\hs,U}(\set{a\in A^{n_i}: \norm{a_1^{n_i} \mb{v}_i}\asymp 
        \chi_{\gol_0}(\mbe_{n_i}(b))  \norm{\mb{v}_i}})>1-3\del\qfa \gg 1.
    \end{equation}

    Regarding the denominators in~\eqref{eq:incseq1}, we claim that for $a$'s which are measured in~\eqref{eq:2101},
    \begin{equation}\label{eq:0717}
        \norm{a_1^{n_i}\mb{v}_i'} = \chi_{\gor_0}(\mbe_{n_i}(aS^{n_i}b))\norm{\mb{v}_i'}\asymp 
        \chi_{\gor_0}(\mbe_{n_i}(b))\norm{\mb{v}_i'}\qfa i \gg 1.
    \end{equation} 
    The first equality follows from an application of Lemma~\ref{lem:expansion of leafs-1} to the vector $\mb{v}_i'= (b_1^{n_i})^{-1}u_b\in \gor_{S^{n_i}b}$ together with the observation that $aS^{n_i}b\in B_0$ which uses our assumption that $a$ belongs 
    to the set measured in~\eqref{eq:2101}.
    The approximation part in~\eqref{eq:0717} comes from the fact that 
    $a\in \supp \mu^{\otimes n_i}_{\hs,U} = A^{n_i}_{\hs,U}$ satisfies 
    $\mbe_n(aS^{n_i}b)\mbe_{n_i}^{-1}(b)\in U$.

    We thus conclude from~\eqref{eq:incseq1},~\eqref{eq:0654} and~\eqref{eq:0717} that for $\bexu$-almost every $\hs\in K'$,
    $$\mu^{\otimes n_i}_{\hs, U}(\set{a\in A^{n_i}: t_{n_i}(b,v_i)\asymp \norm{ a_1^{n_i}(b_1^{n_i})^{-1} v_i}})\ge 1-3\del\qfa i\gg 1.$$
    Taking into account \eqref{eq:0040} we see that for $\bexu$-almost every $\hs\in K'$,
    \begin{equation}\label{eq:0655}\tag{$\star\star$}
        \mu^{\otimes n_i}_{\hs,U}
        \mset{a\in A^{n_i}:\norm{a_1^{n_i}(b_1^{n_i})^{-1} v_i)}\asymp\eps }>1-3\del\qfa i\gg 1.
    \end{equation}
    Equation~\eqref{eq:0655} will take care of {\bfseries ED\ref{claim:p1}}.

    We now turn to {\bfseries ED\ref{claim:p2}}. 
    Fix $i\gg 1$ and let $k\in\bN$. We apply Lemma~\ref{law of angles 2} 
    to the representation $V = \wedge^4\gog$ with $\rho = 1/k$, 
    the weight $\om_{\gol_0}\in \cH_{\goz}(V)$,
    the vector $\tau_{\om_{\gol_0}}(\mb{v}_i)$ where $\mb{v}_i$ is as in~\eqref{eq:vwedge} 
    and the flag $\eta = \xi(S^{n_i}b)\in H/P$. 
    The statement of Lemma~\ref{law of angles 2} in this case and in particular equation~\eqref{eq:1915}, implies that: 
    For $\bexu$-almost  any $\hs\in K'$,
    \begin{align}
        \label{eq:22281} 
        {
            \mu^{\otimes n_i}_{\hs,U}\mset{a\in A^{n_i}: 
            \on{d}_{\bP V}(a_1^{n_i}\bR\tau_{\om_{\gol_0}}(\mb{v}_i), a_1^{n_i}V_{S^{n_i}b}[\om_{\gol_0}])<1/k}>1-\del\qfa i\gg 1.
        }
    \end{align}

    If we also know that if $aS^{n_i}b\in B_0$, which happens whenever $a$ is in the set measured in \eqref{eq:2101}, then 
    we have the equality $a_1^{n_i}V_{S^{n_i}b}[\om_{\gol_0}] = V_{aS^{n_i}b}[\om_{\gol_0}]$ and thus we conclude from \eqref{eq:22281} that
    for $\bexu$-almost every $\hs\in K'$,
    \begin{align}
        \label{eq:2228} 
        {
            \mu^{\otimes n_i}_{\hs,U}\mset{a\in A^{n_i}: 
            \on{d}_{\bP V}(a_1^{n_i}\bR\tau_{\om_{\gol_0}}(\mb{v}_i), V_{aS^{n_i}b}[\om_{\gol_0}])<1/k}>1-3\del\qfa i\gg 1.
        }
    \end{align}
    Next we replace in 
    \eqref{eq:2228} the vector  $\tau_{\om_{\gol_0}}(\mb{v}_i)$ by $\mb{v}_i$. 
    To justify this passage we apply 
    Lemma \ref{max weight pos angle} to the representation $V$ with $\rho = 1/k$, $\om_{\mb{m}}=\om_{\gol_0}$ and the vector 
    $\mb{v}_i$. Note that Lemma~\ref{max weight pos angle} is applicable in light of Lemma~\ref{lem:max weight}.
    The statement of Lemma~\ref{max weight pos angle}, in particular equation~\eqref{eq:1025}, implies that: 
    For $\bexu$-almost every $\hs\in K'$,
    \begin{equation}
        \label{eq:1032} 
        {
            \mu^{\otimes n_i}_{\hs,U}  \mset{a\in A^{n_i}:
            \on{d}_{\bP V}(a_1^{n_i}\bR\mb{v}_i, a_1^{n_i}\bR\tau_{\om_{\gol_0}}(\mb{v}_i))<1/k}
            >1-\del\qfa i \gg 1.
        }
    \end{equation}
    Equations~\eqref{eq:2228} and~\eqref{eq:1032} and the triangle inequality imply that: 
    For $\bexu$-almost every $\hs\in K'$, 
    for any positive integer $k$,
    \begin{equation}\label{eq:1037}\tag{$\star\star\star$}
        {
            \mu^{\otimes n_i}_{\hs,U}\mset{a\in A^{n_i}: 
            \on{d}_{\bP V}(a_1^{n_i}\bR \mb{v}_i, V_{aS^{n_i}b}[\om_{\gol_0}])<2/k}>1-4\del\qfa i \gg 1.
        }
    \end{equation}
    We then choose $\set{ i_k}_{k\in\bN}$ with $i_k\to\infty$, such that~\eqref{eq:1037} holds.

    To tie things up and finish this part of the proof we note that for $k\gg 1$, 
    equations~\eqref{eq:2101}, \eqref{eq:0655} and \eqref{eq:1037} hold for $i=i_k$. 
    Since $\del<1/10$, we  
    deduce that for $\bexu$-almost every $\hs\in K'$ 
    there must 
    exist $n_i\in\bN$ and $a_i\in A^{n_i}_{\hs,U}$ such that properties {\bfseries ED\ref{claim:p3}}-{\bfseries ED\ref{claim:p2}}
    are satisfied and so $\hs$ satisfies hypothesis ED. This concludes the proof of Step \ref{step2}. 

    \vspace{3mm}
    \noindent \tb{Proof of Step~\ref{step1}}. 
    Let $\hp\in K'$ satisfy hypothesis ED with respect to the unstable sequence $\set{v_i}_{i\in\bN}$ 
    and let $\eps>0$ be arbitrarily small. 
    Let $\set{n_i}_{i\in\bN}$ and $a_i\in A^{n_i}_{\hs,U}$ be such that $n_i\to\infty$ as $i\to\infty$ and properties 
    {\bfseries ED\ref{claim:p3}}-{\bfseries ED\ref{claim:p2}} hold.

    By taking a subsequence if necessary and using {\bfseries ED\ref{claim:p3}} we may assume that
    $$\lim_{i\to\infty}\bd{s}(a_i)\eqqcolon \bd{r}_1\in K\quad\textrm{and}\quad\lim_{i\to\infty}\bd{s}_i(a_i)\eqqcolon \bd{r}_2\in K.$$
    Note that the $U$-coordinate of $\bd{r}_1$ and $\bd{r}_2$ are the same.
    We claim that the relation~\eqref{eq:2133} between $\hs(a_i)$ and $\hs_i(a_i)$ limits to the fact that
    \begin{equation}\label{eq:1536a}
        \textrm{there exists}\ w\in\gou_0\ \textrm{such that}\ \norm{w}\asymp\eps\ \textrm{and}\ \Phi_w(\mb{r}_1) = \mb{r}_2.
    \end{equation}

    We prove~\eqref{eq:1536a}: 
    The $B$-coordinate of $\hs(a_i)$ and $\hs_i(a_i)$ equals to $a_iS^{n_i}b$ and converges to the 
    $B$-coordinate of $\mb{r}_1$ and $\mb{r}_2$ which we denote by $a\in B_0$ (note that this time $a$ is an infinite sequence).
    Let us denote for $c\in B_0$ by $\gom_c$ the orthogonal complement of 
    $\gor_c$ in $\gog$ and by $\Pi_c:\gog_c\to \gom_c$ the orthogonal projection. 
    Recall that $\tilde{v}_i\in \gog_b$ denotes a representative of $v_i$. With this notation
    the relation~\eqref{eq:2133} may be rewritten as
    \begin{equation}\label{eq:2133b}
        \hs_i(a_i) = \exp(\Pi_{a_iS^{n_i}b}((a_i)_1^{n_i}(b_1^{n_i})^{-1}\tilde{v}_i)) \hs(a_i).
    \end{equation}
    Property {\bfseries ED\ref{claim:p1}} says that 
    $$\norm{ \Pi_{a_iS^{n_i}b}((a_i)_1^{n_i}(b_1^{n_i})^{-1}\tilde{v}_i)} = \norm{(a_i)_1^{n_i}(b_1^{n_i})^{-1}v_i} \asymp \eps.$$
    Note that $\Pi_{a_iS^nb}( (a_i)_1^{n_i}(b_1^{n_i})^{-1} \tilde{v}_i)\in \gom_{a_iS^{n_i}b}$ 
    and that projectively $\gom_{a_iS^{n_i}b}\to \gom_a$ because 
    of the
    continuity of $\hp'\mapsto \gom_{b'}$ on $K$, which follows from the continuity of $\hp'\mapsto \xi_{b'}$ on $K$ guaranteed 
    by Lemma~\ref{lem:good U}.
    Thus, after taking a subsequence if necessary we get  
    $$\lim_{i\to\infty}\Pi_{a_iS^{n_i}b}( (a_i)_1^{n_i}(b_1^{n_i})^{-1} \tilde{v}_i)= \tilde{v}\in \gom_a\quad \textrm{where}\ \norm{\tilde{v}}\asymp\eps.$$ 
    Equation~\eqref{eq:2133b} thus limits to the fact that 
    \begin{equation}\label{eq:1740a}
        \mb{r}_2 = \exp(\tilde{v}) \mb{r}_1.
    \end{equation}
    In fact, due to {\bfseries ED\ref{claim:p2}}, the aforementioned continuity and \eqref{eq:2135}
    we have that 
    \begin{align*}
        \bR \tilde{v}\wedge \gor_a &=\lim_{i\to\infty} \bR \Pi_{a_iS^{n_i}b}((a_i)_1^{n_i}(b_1^{n_i})^{-1}\tilde{v}_i) \wedge \gor_{a_iS^{n_i}b}\\
                                   &= \lim_{i\to\infty} \bR(a_i)_1^{n_i}(b_1^{n_i})^{-1}\tilde{v}_i \wedge \gor_{a_iS^nb}\\
                                   &=\lim_{i\to\infty} (a_i)_1^{n_i}(b_1^{n_i})^{-1}(\tilde{v}_i\wedge \gor_b) \in  \lim_{i\to\infty} (\wedge^4\gog)_{a_iS^nb}[\oml] = (\wedge^4\gog)_a[\oml].
    \end{align*} 
    By part~\eqref{p:06101} of Lemma~\ref{lem:max weight} we deduce that  
    $\tilde{v}\in \gol_a$. Since the map $\gom_a\cap \gol_a\to \gou_a = \gol_a/\gor_a$ is an isometry and since the image of $s$ in $H/N$ is compact 
    the map $\Ad_{s(\xi(a))}:\gou_0\to \gou_a$ is an isomorphism of bounded norm. 
    It follows that if we denote by $w\in\gou_0$ the image of $\tilde{v}$ then $\norm{w}\asymp \eps$ and 
    by the definition of the horocyclic flow given in~\eqref{eq:horoflow}, equation~\eqref{eq:1740a} transforms into
    \eqref{eq:1536a}.

    After establishing the alignment \eqref{eq:1536a} we arrive at the endgame. 
    By~\eqref{eq:1455a} for all $i\in\bN$,
    $$\lwm{\bd{s}(a_i)}=\lwm{\bd{s}}\quad\textrm{and}\quad \lwm{\bd{s}_i(a_i)}=\lwm{\bd{s}_i}$$
    because $\wh{T}^{n_i}(\hs) = \wh{T}^{n_i}(\hs(a_i))$ and $\wh{T}^{n_i}(\hs_i) = \wh{T}^{n_i}(\hs_i(a_i))$.
    Since the LWM-map is continuous on $K$ as it is the output of Lemma~\ref{lem:good U}, 
    we can take limits in the above and get that
    \begin{equation}\label{eq:limits}
        \lwm{\bd{s}}=\lwm{\bd{r}_1}=\lwm{\bd{r}_2}.
    \end{equation}
    Property \tb{P\ref{pC}} of the LWM-map and equations~\eqref{eq:limits} and~\eqref{eq:1536a} imply 
    $w\in\Stab_{\gou_0}(\br{\lwm{\hs}})$. Since the latter is a closed subgroup of $\gou_0\simeq \bR$ and $\eps$ is arbitrarily small
    we deduce that $\Stab_{\gou_0}(\br{\lwm{\hs}}) = \gou_0$ which concludes the proof of Step \ref{step1} and by that the proof of Claim~\ref{the claim}.        
\end{proof}

\section{Proof of Theorem~\ref{thm:full}\eqref{main3}}\label{sec:cover case}
\begin{proof}[Proof of Theorem~\ref{thm:full}\eqref{main3}]

    Assume that we are in~\ref{case2} and that $\nu\in\spm\mu X$ is $\mu$-ergodic and
    not the natural lift.
    Then, according to Theorem~\ref{thm:full}\eqref{main1}
    $$\be (\set{b\in B : \nu_b \textrm{ has atoms}})>0.$$
    The equivariance of the $\nu_b$'s  and the ergodicity of the shift map imply that the above set has measure 1.
    Similarly, if $w(b)$ denotes the maximal weight of an atom of $\nu_b$ then the equivariance implies that $w= w(b)$ is constant
    $\be$-almost surely.
    The same equivariance implies that $\set{(b,x)\in B^X: \nu_b(\set{x}) = w}$ is $T$-invariant and since it is of positive
    $\be^X$-measure, it must be of measure 1 by ergodicity of $T$.
    That is to say, for $\be$-almost every $b\in B$ the limit measure
    $\nu_b$ is purely atomic and gives the same mass $w$ to each of its atoms.
    Since $\nu_b$ is a probability measure we deduce that
    there exists $k\in\bN$ such that $w= 1/k$ and $\nu_b$ has exactly $k$ atoms.
    By Proposition~\ref{prop:firstprop} we also know that $\be$-almost surely
    $\nu_b\in\cP(\pi^{-1}(p_b))$.

    Under the assumption that $\Ga$ is discrete and Zariski dense in $\SO(Q)(\bR)$ we have
    by~\cite[Theorem 2.21]{FurmanHandbook} (see also~\cite{LedrappierIMJ, KaimAnnals, KaimDokl})
    that the Furstenberg measure $\bar{\nu}_{\Xbar}$ on $\Xbar$ is the Poisson boundary of $(\Ga,\mu)$.
    Moreover, if $\mu$ is absolutely continuous with respect to the Haar measure on $\SO(Q)(\bR)$ and contains the identity in the interior of 
    its support then the same conclusion follows from~\cite[Theorem 2.17]{FurmanHandbook} (see also ~\cite[Theorem 5.3]{MR0146298}).
    By combining~\cite[Proposition 2.25, Theorem 2.31 parts (a) and (b)]{FurmanHandbook} this implies that any
    extension of the Furstenberg measure is a measure preserving extension.
    We disintegrate $\nu$
    into a collection of measures $\set{\nu_p}_{p\in\Xbar}$ with respect to the map $\pi$ as in Definition~\ref{def:naturallift}.
    Since we have established that $\nu$ is a measure preserving extension of $\bar{\nu}_{\Xbar}$ the composition map $b\mapsto p_b\mapsto \nu_{p_b}$ is equivariant.
    Using the fact that $(b\mapsto p_b)_*\beta=\bar{\nu}_{\Xbar}$ we have
\begin{equation}\label{eq:disinitgration}\nu=\int_{\Xbar} \nu_p \dv{\bar{\nu}_{\Xbar}}p = \int_B \nu_{p_b}\dv\be b,\end{equation}
    Since the collection $\set{\nu_{p_b}}_{b\in B}$ is equivariant and the measures $\set{\nu_b}_{b\in B}$ are the
    unique equivariant collection satisfying $\nu = \int_B\nu_b\dv\be b$ we deduce
    that $\nu_b = \nu_{p_b}$ for $\be$-almost every $b\in B$.
    Thus we have shown that $\nu$ is a measure preserving  $k$-extension of
    $\bar{\nu}_{\Xbar}$.
\end{proof}
As mentioned in Remark~\ref{rem:amplification} the statement of Theorem~\ref{thm:case2} is amplified 
in the case $\mu$ satisfies assumption~\eqref{case:ac} to the fact that the natural lift is the unique $\mu$-stationary measure. 
To see this, note that when a $\Ga_\mu=\SO(Q)(\bR)$-orbit intersects a fibre of $\pi$ above a plane in 
the circle of isotropic planes $\cC=\supp \bar{\nu}_{\Xbar}$
it intersects it in infinitely many points. Thus the possibility of the existence of an ergodic finite extension is excluded.
\section{\label{sec:Non-escape-of-mass/contracted}Non-escape of mass}
In this section we construct a proper function on $X$ which can be thought of like a height function.
We will show that this function is contracted by the averaging operator induced by $\mu$, where
$\mu$ is as in~\ref{case1} or~\ref{case2}.
The existence of such a function is important
in two ways.
First, it implies that almost surely the random walks of $\mu$ on $X$ are recurrent in a strong sense.
This recurrence will imply that the limiting distribution of almost every random walk is a probability measure or in other words that mass does not escape.
In turn this will allow us to conclude Theorem~\ref{thm:full}\eqref{main4} at the end of this section.
Second,
this function will also play an important role in the proof of Theorem~\ref{thm:full}\eqref{main2}, given in
\S\ref{sec:non atomicity}.
\subsection{Replacing $\mu$ by $\mu^{*n_0}$}\label{ssec:replacing}
Before starting the construction of the contracted function we note that
the statements in Theorem~\ref{thm:full} are not affected by replacing $\mu$ by $\mu^{*n_0}$.
Using Lemma~\ref{lem:replacement}
we choose $n_0>0$ and make the replacement $$\mu\defi\mu^{*n_0}$$ so that for some $L_0>0$ the following holds:
\begin{enumerate}[(1)]
    \item In both~\ref{case1} and~\ref{case2}, for all $v\in\bR^3\mz$ and $w\in \wedge^2\bR^3\mz$ one has
        \begin{align}
            \label{eq:22063}
            \int_G \log \pa[\bigg]{\frac{\norm{gv}}{\norm{v}}\Big/\frac{\norm{g w}^{1/2}}{\norm{w}^{1/2}}}\dv{\mu} g > L_0.
        \end{align}
    \item In case~\ref{case1}, for all $p\in\Gr_2(\bR^3)$, $u\in\wedge^3\gor_p\mz$
        and $v\in \gog\smallsetminus \gor_p$ one has
        \begin{align}
            \label{eq:22064}
            \int_G \log \pa[\bigg]{\frac{\norm{g(v\wedge u)}}{\norm{v\wedge u}}\Big/\frac{\norm{gu}}{\norm{u}}}\dv{\mu} g > L_0.
        \end{align}
\end{enumerate}
\subsection{The contraction hypothesis}\label{subsec:Construction-of-the}

Suppose that $G$ acts continuously on a locally compact metric space
$Y$ and $\eta\in\cP(G)$, then for a measurable  $f:Y\to[0,\infty)$ define
\[
    \on A_\eta f\left(x\right)\defi\int_{G}f\left(gx\right)\dv\eta g.
\]
Recall that a function $f:Y\to[0,\infty)$ is said to be
\emph{proper} if $f^{-1}\left(C\right)$ is pre-compact for all compact
subsets $C\subset [0,\infty)$. It is said to be \emph{lower semi-continuous} if the sublevel sets
$f^{-1}([0,M])$ are closed for any $M\ge0$.
\begin{definition}\label{def:CH}
    A function $f:Y\to[0,\infty)$ satisfies the \emph{contraction hypothesis} with respect to $\eta$ on $Y$
    if 
    there exist constants $c<1$ and $b>1$ such that
    \[
        \on A_{\eta} f\left(y\right)\leq cf\left(y\right)+b\quad\textrm{for all }y\in Y
    \]
    We use the notation $\on{CH}_\eta(Y)$ for the set of all such functions.
\end{definition}

Our next goal is to construct a function $f\in\mathrm{CH}_{\mu}\left(X\right)$.
The idea constructing contracted functions in
order to establish some kind of recurrence can be traced back to the
paper~\cite{MR2087794} of A. Eskin and G. Margulis. These ideas were
later taken up and used by Benoist and Quint in~\cite{BQInventiones},
~\cite{BQAnnals} and~\cite{BQJams}. The following lemma is an
extension of~\cite[Lemma 6.12]{BQJams} which in turn is an extension
of~\cite[Lemma 4.2]{MR2087794}. But first we introduce a definition.
\begin{definition}\label{def:uniform mu exp}
    Let $\mathcal{F}$
    be a family of positive functions on $G$ and $\eta\in \cP(G)$ such that:
    \begin{enumerate}[(1)]
        \item \label{enu:1st deriv cond 1}%
            There exist $\delta_{0}>0$ and $0\leq I_0<\infty$ such that
            \[
                \int_{G}\sup_{f\in\mathcal{F}}f\left(g\right)^{\del_0}\dv\eta g\leq I_{0}.
            \]
        \item \label{enu:1st deriv cond 2}There exists $L_{0}>0$ such that
            \[
                \inf_{f\in\mathcal{F}}\int_{G}\log f\left(g\right)\dv\eta g \geq L_{0}.
            \]
    \end{enumerate}
    Then, we say that $\cF$ is \emph{uniformly $(\del_0,I_0,L_0)$-expanded} by $\eta$.
\end{definition}
Using this definition we prove a very mild generalisation of~\cite[Lemma 6.12]{BQJams}.
The proof is identical to the one given there.
\begin{lemma}\label{lem:bounded compact sets/first derivative positive}
    Let $\eta\in\cP(G)$ and
    $(\del_0,I_0,L_0)$ be positive parameters.
    Let $\mathcal{F}$ be a family of positive functions on $G$ uniformly $(\del_0,I_0,L_0)$-expanded by $\eta$.
    Then there exists $\delta_{1} = \del_1(\del_0,I_0,L_0)>0$ such that for all $0<\delta\leq\delta_{1}$
    there exists $0<c = c(\del,L_0)<1$ such that for all
    $f\in\mathcal{F}$ one has
    \[
        \int_{G}f\left(g\right)^{-\delta}\dv\eta g\leq c.
    \]
\end{lemma}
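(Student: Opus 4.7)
The plan is to apply a second-order Taylor expansion of $e^{-\delta\log f}$ at $\delta=0$, extract the strictly negative first-order contribution supplied by hypothesis~(2) of Definition~\ref{def:uniform mu exp}, and show that hypothesis~(1) forces the second-order remainder to remain uniformly bounded in $f\in\cF$ for all sufficiently small $\delta$.

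First I would start from the elementary pointwise inequality $e^{-y}\leq 1-y+\tfrac{1}{2}y^{2}e^{|y|}$ valid for all $y\in\bR$; setting $y\defi\delta\log f(g)$ this gives
$$f(g)^{-\delta}\leq 1-\delta\log f(g)+\tfrac{\delta^{2}}{2}(\log f(g))^{2}e^{\delta|\log f(g)|}.$$
Integrating against $\eta$ and invoking condition~(2) of Definition~\ref{def:uniform mu exp} one obtains
$$\int_{G}f(g)^{-\delta}\dv\eta g\leq 1-\delta L_{0}+\tfrac{\delta^{2}}{2}E(\delta,f),$$
where $E(\delta,f)\defi\int_{G}(\log f(g))^{2}e^{\delta|\log f(g)|}\dv\eta g$.

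The second step is to establish a bound $E(\delta,f)\leq M$ valid for all $f\in\cF$ and $\delta\leq\delta_{0}/2$, with $M=M(\delta_{0},I_{0},L_{0})$. One splits the integrand according to the sign of $\log f$: on $\{f\geq 1\}$, the elementary bound $(\log x)^{2}x^{\delta}\leq C_{\delta}x^{\delta_{0}}$ for $x\geq 1$ and $\delta\leq\delta_{0}/2$, combined with the pointwise domination $f(g)^{\delta_{0}}\leq\sup_{f\in\cF}f(g)^{\delta_{0}}$ and hypothesis~(1), yields a uniform contribution $\leq C_{\delta}I_{0}$. On $\{f<1\}$ one rewrites $(\log f)^{2}e^{\delta|\log f|}=(\log f)^{2}f^{-\delta}$ and uses that this quantity is bounded on $[e^{-A},1)$ for any $A$, while the tail $\{f<e^{-A}\}$ has $\eta$-measure tending to $0$ as $A\to\infty$ (uniformly over $\cF$) by a Markov-type argument applied to $\log^{-}f$, whose first moment is controlled through the identity $\int\log^{-}f\dv\eta g=\int\log^{+}f\dv\eta g-\int\log f\dv\eta g$ together with (1) and (2). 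Putting the two bounds together, for $0<\delta\leq\delta_{1}\defi\min(\delta_{0}/2,L_{0}/M)$ one obtains
$$\int_{G}f(g)^{-\delta}\dv\eta g\leq 1-\delta L_{0}+\tfrac{\delta^{2}M}{2}\leq 1-\tfrac{\delta L_{0}}{2}\eqqcolon c(\delta,L_{0})<1,$$
uniformly in $f\in\cF$, which is the claim.

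The main obstacle is the uniform control of the contribution to $E(\delta,f)$ from $\{f<1\}$: hypothesis~(1) delivers all moments of $\log^{+}f$, but only the first moment of $\log^{-}f$ is pinned down directly by (1) and (2), so one must ensure that this weaker $L^{1}$ control is enough to absorb the factor $f^{-\delta}$, which itself becomes large precisely where $f$ is very small. This book-keeping is the technical heart of~\cite[Lemma~6.12]{BQJams}, which the excerpt invokes as a model for the argument.
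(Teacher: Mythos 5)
Your first-order step coincides with the paper's proof: expand $e^{-\delta\log f}$ via $e^{x}\le 1+x+\tfrac{x^{2}}{2}e^{|x|}$ and use condition~(2) of Definition~\ref{def:uniform mu exp} to produce the term $-\delta L_{0}$. The genuine gap is in your bound for $E(\delta,f)$ on $\{f<1\}$. Knowing that $\eta(\{f<e^{-A}\})$ is small (Markov plus the $L^{1}$ bound on $\log^{-}f$) does not bound $\int_{\{f<e^{-A}\}}(\log f)^{2}f^{-\delta}\,\mathrm{d}\eta(g)$, because the integrand is unbounded precisely on that set; smallness of the measure of the tail controls nothing about the integral over the tail. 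Worse, no uniform bound $E(\delta,f)\le M$ can be extracted from conditions~(1) and~(2) alone: if, say, $\log f=-n$ on a set of $\eta$-measure proportional to $n^{-3}$ for $n\ge 1$ and $\log f$ equals a suitably large constant on the rest of $G$, then (1) holds (only positive powers of $f$ are integrated) and (2) holds, yet $\int_{G}(\log f)^{2}f^{-\delta}\,\mathrm{d}\eta(g)=\infty$, and indeed $\int_{G}f^{-\delta}\,\mathrm{d}\eta(g)=\infty$, for every $\delta>0$. An $L^{1}$ bound on $\log^{-}f$ is qualitatively too weak to dominate the exponential weight $f^{-\delta}=e^{\delta\log^{-}f}$, so the step ``putting the two bounds together'' cannot be completed as written.

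For comparison, the paper's proof is the same Taylor argument but with no tail decomposition at all: it bounds the quadratic term by $\tfrac{2\delta^{2}}{\delta_{0}^{2}}I_{0}$ using $x^{2}\le e^{|x|}$ together with condition~(1), and takes $\delta_{1}=\min\{\delta_{0}/2,\,L_{0}\delta_{0}^{2}/(4I_{0})\}$ and $c=1-\tfrac{\delta}{2}L_{0}$. Read literally, the substitution there of $f^{\delta}$ for $e^{\delta|\log f|}$ and of $f^{\delta_{0}/2}$ for $e^{(\delta_{0}/2)|\log f|}$ also presumes $f\ge 1$; what makes the quadratic term honestly finite in every application of the lemma in this paper (Propositions~\ref{prop:main prop - families satisfy conditions} and~\ref{prop:unimuexp}) is that $\sup_{f\in\cF}\max(f,f^{-1})$ is dominated there by a submultiplicative norm of $g$ having finite exponential moments, $\mu$ being compactly supported. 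That is also the correct repair of your argument: strengthen (1) to $\int_{G}\sup_{f\in\cF}\max\big(f(g),f(g)^{-1}\big)^{\delta_{0}}\,\mathrm{d}\eta(g)\le I_{0}$. Then for $\delta\le\delta_{0}/2$ one has pointwise $(\log f)^{2}e^{\delta|\log f|}\le\tfrac{4}{\delta_{0}^{2}}\max(f,f^{-1})^{\delta_{0}}$, hence $E(\delta,f)\le\tfrac{4}{\delta_{0}^{2}}I_{0}$ uniformly in $f\in\cF$, and your first display yields the conclusion with the same $\delta_{1}$ and $c$ as in the paper, with no tail analysis needed.
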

\begin{proof}
    Set $\delta_{1}\defi\min\left\{ \frac{\delta_{0}}{2},\frac{L_0\del_0^2}{4I_0}\right\} $
    and let $\eta\in\cP(G)$ be such that $\cF$ is uniformly $(\del_0,I_0,L_0)$-expanded according to Definition~\ref{def:uniform mu exp}.
    We will use the facts that
    \[
        \exp\left(x\right)\leq1+x+\frac{x^{2}}{2}\exp\left(\left|x\right|\right)\quad\textrm{and}\quad x^{2}\leq\exp\left(\left|x\right|\right)
    \]
    for all $x\in\mathbb{R}$. Then for any $f\in\mathcal{F}$ and $\del\in\bR$
    we have
    \begin{alignat*}{1}
        \int_{G}f\left(g\right)^{-\delta}\dv\eta g & =\int_{G}\exp\left(-\delta\log f \left(g\right)\right)\dv\eta g\\
                                                   & \leq1-\delta\int_{G}\log f\left(g\right)\dv\eta g+\frac{\delta^{2}}{2}\int_{G}\left(\log f\left(g\right)\right)^{2}f\left(g\right)^{\delta}\dv\eta g
    \end{alignat*}
    and
    $$\left(\log f(g)\right)^2\leq \frac{4}{\del_0^2}f(g)^{\del_0/2}.$$
    Using these inequalities together with conditions
    (\ref{enu:1st deriv cond 1}) and (\ref{enu:1st deriv cond 2}) of Definition~\ref{def:uniform mu exp} and our choice of $\del_1$,
    we see that for
    all $f\in\mathcal{F}$ and $0<\del<\delta_{1}$ one has
    \[
        \int_{G}f\left(g\right)^{-\delta}\dv\eta g \leq1-\delta L_{0}+2\frac{\del^2}{\delta_{0}^{2}}I_{0}\leq1-\frac{\delta}{2}L_{0}<1,
    \]
    so the statement holds with $c =1-\frac{\delta}{2}L_{0}$ as required.
\end{proof}
\begin{remark}
    It will be important for us
    that given $\eta\in\cP(G)$ and a family of positive functions $\cF$ uniformly $(\del_0,I_0,L_0)$-expand by $\eta$ the constants $\del_1$ and $c$ whose existence is assured by
    Lemma~\ref{lem:bounded compact sets/first derivative positive} are uniform
    over all measures in the set 
    $$\set{\eta\in\cP(G): \textrm{$\cF$ is uniformly $(\del_0,I_0,L_0)$-expanded by $\eta$}}.$$
\end{remark}

Let $\Lam$
denote a 2-lattice in $\bR^3$ and let $\br{\Lam}\in X$ denote
the corresponding homothety class. We denote by $\av{\Lam}$ the co-volume of
$\Lam$ in the plane it spans.  For any
$v\in \Lam$ we define the \textit{normalised length} of $v$ with respect to $\Lam$ to be
$$N_\Lam(v) \defi \frac{\norm{v}}{\av{\Lam}^{1/2}}.$$
This quantity already appeared implicitly in the proof of Proposition~\ref{prop:unipinv}. We let,
\[
    f_{\Lam,v}\left(g\right)\defi \frac{N_{g\Lam}(gv)}{N_\Lam(v)}
\]
That is, $f_{\Lam,v}(g)$ is the cocycle that measures by which factor $v$ is stretched under the action of $g$ taking into account the normalisation
factors which make $\Lam$ and $g\Lam$ of co-volume 1 in their respective planes.

Let $$\mathcal{F} \defi \left\{ f_{\Lam,v}\right\} _{\br{\Lam}\in X,v\in \Lam}.$$
The main step towards constructing a function in $\on{CH}_\mu(X)$ is the following.
\begin{proposition}
    \label{prop:main prop - families satisfy conditions}
    Let $\mu$ be as in~\ref{case1} or~\ref{case2} and suppose that~\eqref{eq:22063} 
    hold.
    Then $\mathcal{F}$ is uniformly $(\del_0,I_0,L_0)$-expanded by $\mu$ for some  $L_0$ as in~\eqref{eq:22063}
    and some positive $\del_0$ and $I_0$.
\end{proposition}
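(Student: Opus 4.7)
The plan is to translate the cocycle $f_{\Lam,v}$ into an expression in terms of vector norms on $\bR^3$ and $\wedge^2 \bR^3$, at which point the two conditions of Definition~\ref{def:uniform mu exp} become, respectively, a direct application of~\eqref{eq:22063} and a trivial consequence of the compact support of $\mu$.

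First I would make the following identification. Given a 2-lattice $\Lam\subset\bR^3$, choose any $\bZ$-basis $u_1,u_2$ and set $w\defi u_1\wedge u_2\in\wedge^2\bR^3\smallsetminus\set 0$. Then $\av{\Lam}=\norm{w}$ and $\av{g\Lam}=\norm{g u_1\wedge g u_2}=\norm{g w}$ for every $g\in G$, so
\begin{equation*}
    f_{\Lam,v}(g)=\frac{\norm{gv}/\norm{v}}{\bigl(\norm{gw}/\norm{w}\bigr)^{1/2}}.
\end{equation*}
Both the numerator and denominator are unchanged if $(v,w)$ is rescaled, so this expression is well defined on the homothety class $\br\Lam$ and depends only on $v\in\bR^3\smallsetminus\set0$ and $w\in\wedge^2\bR^3\smallsetminus\set0$. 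In particular, as $(\br\Lam,v)$ ranges over the index set of $\cF$, the pair $(v,w)$ ranges over an arbitrary subset of $(\bR^3\smallsetminus\set0)\times(\wedge^2\bR^3\smallsetminus\set0)$.

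Next I would verify condition~\eqref{enu:1st deriv cond 2} of Definition~\ref{def:uniform mu exp}. Taking logarithms in the expression above yields
\begin{equation*}
    \int_G\log f_{\Lam,v}(g)\dv\mu g=\int_G\log\pa[\bigg]{\frac{\norm{gv}}{\norm{v}}\Big/\frac{\norm{gw}^{1/2}}{\norm{w}^{1/2}}}\dv\mu g,
\end{equation*}
which, by~\eqref{eq:22063} of \S\ref{ssec:replacing}, is bounded below by $L_0>0$ uniformly over the family. Thus condition~\eqref{enu:1st deriv cond 2} holds with the same $L_0$.

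Finally I would verify condition~\eqref{enu:1st deriv cond 1}. Since $\supp\mu$ is compact, there exists $M>1$ with $\norm g,\norm{g^{-1}}\le M$ for all $g\in\supp\mu$. For every $v,w$ and $g\in\supp\mu$ we then have $\norm{gv}/\norm{v}\le M$ and $\norm{gw}/\norm{w}\ge\norm{g^{-1}|_{\wedge^2\bR^3}}^{-1}\ge M^{-2}$, so $f_{\Lam,v}(g)\le M^2$ uniformly in $(\br\Lam,v)$ and $g\in\supp\mu$. Consequently, for any $\del_0>0$,
\begin{equation*}
    \int_G\sup_{f\in\cF}f(g)^{\del_0}\dv\mu g\le M^{2\del_0}\eqqcolon I_0<\infty.
\end{equation*}
There is no serious obstacle here: the entire content of the proposition has already been packaged into Lemma~\ref{lem:replacement} and the replacement of $\mu$ by $\mu^{*n_0}$ in \S\ref{ssec:replacing}; what remains is only the bookkeeping needed to rewrite $f_{\Lam,v}$ as a ratio of vector-norm cocycles and to observe that compact support of $\mu$ trivially handles the uniform $L^{\del_0}$ bound.
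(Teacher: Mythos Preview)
Your proof is correct and follows essentially the same approach as the paper: rewrite $f_{\Lam,v}$ as a ratio of norm cocycles on $\bR^3$ and $\wedge^2\bR^3$, then invoke~\eqref{eq:22063} for condition~\eqref{enu:1st deriv cond 2} and compact support of $\mu$ for condition~\eqref{enu:1st deriv cond 1}. The paper is a bit terser (it simply takes $\del_0=1$ without writing out the explicit bound), but the argument is the same.
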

\begin{proof}
    We verify conditions~\eqref{enu:1st deriv cond 1},~\eqref{enu:1st deriv cond 2} of Definition~\ref{def:uniform mu exp}. The validity
    of condition~\eqref{enu:1st deriv cond 1} is immediate with say $\del_0=1$ and some $I_0<\infty$
    from the assumption that $\mu$ is compactly supported.  For condition~\eqref{enu:1st deriv cond 2} we note that
    if $\Lam=\spa_\bZ\set{u,w}$, then
    $$f_{\Lam,v}(g) = \frac{\norm{gv}}{\norm{g(u\wedge w)}^{1/2}}\Big/ \frac{\norm{v}}{\norm{u\wedge w}^{1/2}}$$
    for all $v\in\Lam$. It follows that condition~\eqref{enu:1st deriv cond 2} is implied by
    equation~\eqref{eq:22063} which holds for $\mu$ as indicated in \S\ref{ssec:replacing}.
\end{proof}
For $\br{\Lam}\in X$ we set
\[
    u_X(\br{\Lam})\defi\pa[\Big]{\min\nolimits_{v\in \Lam\mz}N_\Lam(v)}^{-1}.
\]
It is clear from the definition of $N_\Lam$ that  $u_X$ is well defined in the sense that its value 
does not depend on the choice of $\Lam$ from $[\Lam]$.
Moreover, by Mahler's compactness criterion that $u_X:X\to [0,\infty)$ is a continuous proper function.
The following proposition establishes the existence of a function which satisfies the contraction hypothesis of Definition~\ref{def:CH}.
\begin{proposition}
    \label{prop:contracted func on X}
    Let $\mu$ be as in~\ref{case1} or~\ref{case2} and suppose that~\eqref{eq:22063} and~\eqref{eq:22064} hold.
    Then, for all $\delta$ sufficiently small $u_{X}^{\delta}\in\mathrm{CH}_{\mu}\left(X\right)$.
\end{proposition}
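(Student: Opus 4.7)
The plan is to follow the Eskin--Margulis strategy from~\cite{MR2087794} adapted to the setting of $2$-lattices. The key observation is that high values of $u_X$ are witnessed by a unique shortest primitive vector $v_1$, which reduces the supremum defining $u_X$ near the cusp to a single cocycle of the form $f_{\Lam,v_1}$ already controlled by Lemma~\ref{lem:bounded compact sets/first derivative positive} and Proposition~\ref{prop:main prop - families satisfy conditions}.

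First I would combine Proposition~\ref{prop:main prop - families satisfy conditions} with Lemma~\ref{lem:bounded compact sets/first derivative positive} applied to the family $\cF=\set{f_{\Lam,v}}_{\br\Lam\in X,\,v\in\Lam\mz}$ to produce $\del_1>0$ and, for every $\del\in(0,\del_1]$, a constant $c=c(\del)\in(0,1)$ with $\int_G f_{\Lam,v}(g)^{-\del}\dv\mu g\le c$ uniformly in $\br\Lam$ and $v$. I will work with $\del\in(0,\min(\del_1,1)]$, so that the subadditivity $(x+y)^\del\le x^\del+y^\del$ is available. Next I would record the rank-$2$ Minkowski bound: for any $\bR$-independent $v,w\in\Lam$ one has $\norm{v\wedge w}\ge\av\Lam$, and together with $\norm{v\wedge w}\le\norm v\norm w$ this yields $N_\Lam(v)N_\Lam(w)\ge 1$. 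Fixing a small threshold $\eps_0\in(0,1)$, this shows that whenever the shortest primitive vector $v_1\in\Lam$ satisfies $N_\Lam(v_1)<\eps_0$, it is unique up to sign and every primitive $v\in\Lam$ independent of $v_1$ satisfies $N_\Lam(v)>1/\eps_0$. Finally, compactness of $\supp\mu$ furnishes $M\ge 1$ with $M^{-1}\le f_{\Lam,v}(g)\le M$ for all $g\in\supp\mu$, all $\br\Lam\in X$, and all $v\in\Lam\mz$.

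The argument then splits into two regimes. In the bounded regime $u_X(\br\Lam)\le 1/\eps_0$, one immediately has $u_X(g\br\Lam)\le M/\eps_0$ for every $g\in\supp\mu$ and hence $\on A_\mu u_X^\del(\br\Lam)\le(M/\eps_0)^\del$. In the cusp regime $u_X(\br\Lam)>1/\eps_0$, let $v_1$ be the shortest primitive vector of $\Lam$. Since $g\in G$ maps primitive vectors of $\Lam$ bijectively onto primitive vectors of $g\Lam$, the shortest primitive vector in $g\Lam$ is either $\pm gv_1$ --- giving $u_X(g\br\Lam)=f_{\Lam,v_1}(g)^{-1}u_X(\br\Lam)$ --- or some $\pm gv$ with $v$ independent of $v_1$, in which case $N_{g\Lam}(gv)\ge M^{-1}N_\Lam(v)>M^{-1}/\eps_0$ and therefore $u_X(g\br\Lam)\le M\eps_0$. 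In either event $u_X(g\br\Lam)\le f_{\Lam,v_1}(g)^{-1}u_X(\br\Lam)+M\eps_0$; raising to the $\del$-th power, using subadditivity, and integrating against $\mu$ gives $\on A_\mu u_X^\del(\br\Lam)\le c\,u_X^\del(\br\Lam)+(M\eps_0)^\del$. Combining both cases and setting $b\defi(M/\eps_0)^\del$, which exceeds both $(M\eps_0)^\del$ and $1$ once $\eps_0<1$, I obtain $\on A_\mu u_X^\del\le c\,u_X^\del+b$ globally, proving $u_X^\del\in\on{CH}_\mu(X)$.

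The only delicate step is the reduction of the sup-defining $u_X$ to the single cocycle $f_{\Lam,v_1}$ in the cusp regime; this is exactly where the rank-$2$ Minkowski bound is essential, and where the analogous argument in higher rank would require a subtler height function (for instance, a suitable Eskin--Margulis sum over $k$-wedges). All remaining manipulations are routine bookkeeping with the compactness of $\supp\mu$ and the uniform contraction from Lemma~\ref{lem:bounded compact sets/first derivative positive}.
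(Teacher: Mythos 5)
Your proposal is correct and takes essentially the same route as the paper's proof: both split $X$ into a compact part and a deep-cusp part, use the rank-2 Minkowski bound to reduce the cusp regime to the single cocycle $f_{\Lam,v_{\mathrm{min}}(\Lam)}$, and then invoke Proposition~\ref{prop:main prop - families satisfy conditions} together with Lemma~\ref{lem:bounded compact sets/first derivative positive} to get the contraction. The only (harmless) difference is that the paper chooses the cusp threshold so that the minimal vector is exactly transported by every $g\in\supp\mu$, giving a purely multiplicative bound there, whereas you allow the minimizer to jump and absorb that case into the additive constant via subadditivity of $t\mapsto t^{\delta}$.
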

\begin{proof}
    Given $M>0$ we split $X$ into $X^{\le M} = u_X^{-1}([0,M])$ and $X^{>M} = u_X^{-1}((M,\infty)).$
    We claim that there exists $M>0$ such that if $[\Lam]\in X^{>M}$ then there exists a unique (up to sign) vector $v_\mathrm{min}(\Lam)\in\Lam$ such that
    $u_X([\Lam])=N_\Lam(v_\mathrm{min}(\Lam))$ and $u_X(g[\Lam])=N_{g\Lam}(gv_\mathrm{min}(\Lam))$ for all $g\in\supp\mu$.
    First we note that because we are dealing with 2-lattices for any $M\geq 1$ and $[\Lam]\in X^{>M}$ the vector $v_\mathrm{min}(\Lam)$ is well defined up to sign.
    Now set $M\defi\sup_{g\in\supp\mu}\norm g^2$ and suppose there exists $v'\in g\Lam$ with $v'\neq gv_\mathrm{min}(\Lam)$ and
    such that $u_X([\Lam])=N_{g\Lam}(v')$. Then note that
    $$\frac{\norm {g^{-1}v'}}{\norm g \av {g\Lam}^{1/2}}\leq N_{g\Lam}(v')\leq N_{g\Lam}(gv_\mathrm{min}(\Lam))\leq \frac{\norm{g}\norm{v_\mathrm{min}(\Lam)}}{\av{g\Lam}^{1/2}}.$$
    Since $g^{-1}v'$ and $v_\mathrm{min}$ cannot be colinear we see that this is a contradiction if $\norm{v_\mathrm{min}(\Lam)}<1/M$ since in this case we would get that $\Lam$ contains two non colinear vectors with norm less than 1.

    Next suppose that $[\Lam]\in X^{>M}$ and write
    $\Lam = \spa_{\bZ}\set{u,w}$ so that for $\del>0$ one has
    $$\on A_\mu u_X^\del(\br{\Lam}) = \int_G u_X^\del(g\br{\Lam})\dv\mu g =
    \int_G \pa[\bigg]{\frac{\norm{g(u\wedge w)}^{1/2}}{\norm{gv_{\mathrm{min}}(\Lam)}}}^\del \dv\mu g.
    $$
    By Proposition~\ref{prop:main prop - families satisfy conditions} and Lemma~\ref{lem:bounded compact sets/first derivative positive}, if $\del$ is small enough, there exists $0<c<1$ such that
    $$\int_G \pa[\bigg]{\frac{\norm{g(u\wedge w)}^{1/2}}{\norm{gv_{\mathrm{min}}(\Lam)}}}^\del \dv\mu g
    \le c \pa[\bigg]{\frac{\norm{(u\wedge w)}^{1/2}}{\norm{v_{\mathrm{min}}(\Lam)}}}^\del  = u_X^\del(\br{\Lam}).$$
    Fix $\del$ and let $0<c<1$ be such a number.
    If $\br{\Lam}\in X^{\le M}$ then from the compactness of $\supp \mu$ and the properness
    $u_X^\del$ we conclude that $\on A_\mu u_X^\del(\br{\Lam})\le b(M,\del,\mu)= b $. In any case we have
    $$\on A_\mu u_X^\del(\br{\Lam})\le c u_X^\del(\br{\Lam}) + b,$$
    that is, $u_X^\del\in \on{CH}_\mu(X)$ as desired.
\end{proof}
Using the existence of a proper function in $\on{CH}_\mu(X)$ we can give a proof of Theorem~\ref{thm:full}\eqref{main4} by
citing Benoist and Quint.
\begin{proof}[Proof of Theorem~\ref{thm:full}\eqref{main4}]
    Let $x\in X$ be given. By~\cite[Corollary 2.2]{BQInventiones}, any weak-* accumulation point of the sequence
    $\frac{1}{n}\sum_{k=1}^n \mu^{*k}*\del_x$ is a probability measure on $X$. It is also evidently $\mu$-stationary.

    Moreover, it follows from~\cite[Corollary 3.3]{BQAnnals2} that for all $x\in X$, for $\be$-almost every $b\in B$,
    any weak-* accumulation point of the sequence $\frac{1}{n}\sum_{k=1}^n\del_{b_k^1x}$ is $\mu$-stationary. Thus, we are only
    left to establish that $\be$-almost surely, such an accumulation point is a probability measure. This is again a consequence
    of the existence of a function in $\on{CH}_\mu(X)$. Indeed,~\cite[Example 3.1, Proposition 3.9]{BQAnnals2} implies this exact
    statement since the contracted function $u_X$ is proper.
\end{proof}
\section{The limit measures are non-atomic}\label{sec:non atomicity}
In this section we assume $\mu$ is as in~\ref{case1} and also assume the validity of~\eqref{eq:22063},~\eqref{eq:22064} as in \S\ref{ssec:replacing} which is ensured by replacing $\mu$ by $\mu^{*n_0}$ if necessary.
The main goal of this section is to prove Theorem~\ref{thm:full}\eqref{main2}.
\subsection{Metric considerations}\label{ssec:metric}
We will need to have some understanding of a convenient metric on $X$.
In order to do this we study the local structure of $X$.
For $p\in\Xbar$ let
$$\Pi_p:\gog\to(\gor_p)^\perp\defi\gom_p,$$ be the orthogonal
projection 
where the inner product in the above definition is supposed to be $K$-invariant.
It is important to note that $\Pi_p$ is not equivariant.
We use the convention that for any representation $V$ of $H$ the notation
$\norm{g}_V$ stands for the operator norm of $g$ on $V$.

Let $\dx$ denote a metric on $X$ induced by a Riemannian metric obtained in the following manner: For
a point $x\in X$ the derivative at the identity $d_e \al_x$ of the orbit map $\al_x:G\to X$, $g\mapsto gx$ satisfies
$\ker d_e\al_x = \gor_p$ where $p$ is the plane of $x$. Since $d_e \al_x$ is of full rank, it restricts to a linear isomorphism
$d_e\al_x:\gom_p =\gor_p^\perp\to T_xX$. We use this isomorphism to transport the inner product structure that
$\gom_p$ inherits from $\gog$ to $T_xX$ thus inducing a Riemannian metric on $X$.

If we denote by $c_g$ conjugation by $g$ then for any $x\in X$
we have the following commutative diagram and its derivative:
$$\xymatrix{
    G\ar[d]_{c_g}\ar[r]^{\al_x} &X\ar[d]^g \\
    G\ar[r]_{\al_{gx}} & X
}
\quad\quad
\xymatrix{
    \gog\ar[d]_{\Ad_g}\ar[r]^{d_e\al_x} & T_xX\ar[d]^{d_xg}\\
    \gog\ar[r]_{d_e\al_{gx}} &T_xX
}$$
The fact that the horizontal maps on the right diagram are linear surjections of norm at most 1 implies that the norm
of $d_xg:T_xX\to T_{gx}X$ is bounded by $\norm{g}_\gog$. 
In particular, since the metric $\dx$ is defined in terms of length of paths, it satisfies the important
inequality
\begin{equation}\label{eq:1525}
    \dx(gx,gy)\le \norm{g}_\gog \dx(x,y)\quad\textrm{for all}\ g\in G\ \textrm{and}\ x,y\in X.
\end{equation}
Consider $X\times \gog$ as a Riemannian manifold and consider $X^*\defi \set{(x,v):v\in \gom_{\pi(x)}}$ as a submanifold. The map
$\psi: X^*\to X\times X$ defined by $\psi(x,v) = (x,\exp(v) x)$ is smooth and has the property that on the submanifold
$X^*_0 = \set{(x,0)\in X^*}$ the derivative
$$d_{(x,0)}\psi:T_xX\oplus \gom_{\pi(x)} \to T_xX\oplus T_xX$$
is an isometry. In fact, it equals the identity after identifying
$T_xX$ with $\gom_{\pi(x)}$ as described earlier. Since $X^*$ and $X\times X$ are of the same dimension we conclude that
there is an open neighbourhood $X^*_0\subset \cV_0\subset X^*$ that is mapped by $\psi$ diffeomorphically onto an
open neighbourhood $\Del_X = \psi(X_0^*)\subset \cU_0\subset X\times X$.
Given $(x,y)\in \cU_0$ we define the \textit{orthogonal displacement vector}  $\oxy$ between $x$ and $y$ to
be the unique vector $v\in \gom_{\pi(x)}$ such that $(x,v)\in \cV_0$ and $\psi(x,v) = (x,y)$, or in other words $y=\exp(v)x$.
We prove the following.
\begin{lemma}\label{lem:nbd}
    For any compact set $E\subset G$ and all $0<c<1$ there exists a neighbourhood of the diagonal $\cU\subset X\times X$
    such that for all $(x,y)\in \cU$ and $g\in E\cup E^{-1}\cup\set{e}$ one has:
    \begin{enumerate}[(1)]
        \item\label{p:0931} The orthogonal displacement $\ogy$ is well defined.
        \item\label{p:0932} The following inequality holds $c \norm{\ogy}\le \dx(gx,gy)\le c^{-1} \norm{\ogy}$.
        \item\label{p:0933} For all $u\in \wedge^3 \gor_{\pi(x)}\mz$ one has
            $c \norm{\ogy }\le \norm{g(\oxy\wedge u)}/ \norm{ gu}\le c^{-1} \norm{\ogy}$.
    \end{enumerate}
\end{lemma}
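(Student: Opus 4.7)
First I would establish (1) using compactness. The domain $\cU_0\subset X\times X$ of the orthogonal displacement map $\psi^{-1}$ is an open neighborhood of $\Del_X$. Since $G$ acts by homeomorphisms on $X\times X$ and $E\cup E^{-1}\cup\set{e}$ is compact, $\cU_1\defi \bigcap_{g\in E\cup E^{-1}\cup\set{e}}g^{-1}\cU_0$ is an open neighborhood of $\Del_X$. On $\cU_1$ the orthogonal displacement $o_{gx,gy}$ is well defined for every $g$ in the given compact family.

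For (2) and (3), the heart of the matter is the exact identity
$$\frac{\norm{g(v\wedge u)}}{\norm{gu}} = \norm{\Pi_{g\pi(x)}(\Ad_g v)}$$
valid for any $v\in\gom_{\pi(x)}$ and any $u\in\wedge^3\gor_{\pi(x)}\mz$. This holds because $\Ad_g u$ spans the line $\wedge^3\gor_{g\pi(x)}$, and decomposing $\Ad_g v = r+m$ according to $\gog = \gor_{g\pi(x)}\oplus\gom_{g\pi(x)}$, one has $r\wedge\Ad_g u=0$ while $\norm{m\wedge\Ad_g u}=\norm{m}\norm{\Ad_g u}$. Taking $v=o_{xy}$ so that $gy=\exp(\Ad_g o_{xy})gx$, I would then apply the inverse function theorem to the local diffeomorphism $(a,b)\mapsto \exp(a)\exp(b)$ from $\gom_{g\pi(x)}\times\gor_{g\pi(x)}$ to $G$ near the origin; together with the fact that $\exp(\gor_{g\pi(x)})$ fixes $gx$, this yields
$$o_{gx,gy}=\Pi_{g\pi(x)}(\Ad_g o_{xy})+O(\norm{o_{xy}}^2),$$
with the implicit constant uniform over $g$ in a compact set. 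Similarly, the Riemannian definition of $\dx$ together with the identification $T_{gx}X\cong \gom_{g\pi(x)}$ gives $\dx(gx,gy)=\norm{o_{gx,gy}}+O(\norm{o_{gx,gy}}^2)$ uniformly. Combining the two asymptotic expansions with the exact identity above shows that the pairwise ratios between $\dx(gx,gy)$, $\norm{o_{gx,gy}}$ and $\norm{g(o_{xy}\wedge u)}/\norm{gu}$ all tend to $1$ as $(x,y)\to\Del_X$, uniformly over $g\in E\cup E^{-1}\cup\set{e}$. Hence by shrinking $\cU_1$ to a neighborhood $\cU$ on which $\norm{o_{xy}}$ is sufficiently small, all these ratios lie in $[c,c^{-1}]$, establishing (2) and (3).

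The main obstacle will be verifying that the $O(\norm{o_{xy}}^2)$ error term in the expansion of $o_{gx,gy}$ is uniform over $g\in E\cup E^{-1}\cup\set{e}$; this follows from joint smoothness of $(g,x,v)\mapsto\exp(\Ad_g v)gx$ and its local inversion, combined with compactness, but requires some care because the orthogonal projections $\Pi_{g\pi(x)}$ depend on the plane $g\pi(x)$ and one must track this dependence through the compact family of $g$'s.
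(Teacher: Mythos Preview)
Your proposal is correct and follows essentially the same route as the paper: the same identity $\norm{g(o_{xy}\wedge u)}/\norm{gu} = \norm{\Pi_{g\pi(x)}(\Ad_g o_{xy})}$, the same second-order comparison of $o_{gx,gy}$ with $\Pi_{g\pi(x)}(\Ad_g o_{xy})$ (the paper obtains it via Baker--Campbell--Hausdorff applied to $\exp(-\Ad_g o_{xy})\exp(o_{gx,gy})\in\Stab_G(gx)$, where you invoke the inverse function theorem), and the same use of $d\psi$ being an isometry along $X_0^*$ to compare $\dx$ with $\norm{o}$. The only cosmetic difference is that the paper establishes (2) by integrating the bound $\norm{d\psi^{\pm 1}}\sim_c 1$ along paths and then bootstraps (2) into (3), whereas you treat all three quantities symmetrically via their common asymptotic to~$1$.
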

\begin{proof}
    Throughout the proof we may assume that $E=E\cup E^{-1}\cup \set{e}$ by enlarging it if necessary.
    First we prove~\eqref{p:0931}.
    Let $\cU_0$ be the neighbourhood of $\Del_X$ on which the orthogonal displacement is defined.
    We first show that $\cap_{g\in E}g\cU_0$ contains a neighbourhood of
    $\Del_X$. This is done
    by showing that for any compact $K\subset X$ we have
\begin{equation}\label{eq:nbhdofdiag}\on{d}_{X\times X}(K\times K\smallsetminus \cap_{g\in E}g\cU_0,\Del_X)>0.\end{equation}
    To this end, let $K\subset X$ be a compact set.
    By~\eqref{eq:1525} and the compactness of $E$,
    we deduce from the fact that
    $$\on{d}_{X\times X}(({K}\times {K}\smallsetminus \cU_0),\Del_X)>0,$$ that
    $$\on{d}_{X\times X}(\cup_{g\in E}g({K}\times {K}\smallsetminus \cU_0),\Del_X)>0.$$
    However, since $K\times K\smallsetminus \cap_{g\in E}g\cU_0\subset \cup_{g\in E} g({K}\times {K}\smallsetminus \cU_0)$
    the previous equation implies~\eqref{eq:nbhdofdiag} as claimed.

    We conclude that $\cap_{g\in E}g\cU_0$ contains a neighbourhood $\cU_1$ of $\Del_X$
    and deduce that for any $g\in E$ and $(x,y)\in \cU_1$, $(gx,gy)\in \cU_0$ so that the orthogonal displacement
    $\ogy$ is well defined.

    Now we will prove~\eqref{p:0932}.
    Let $0<c<1$ be given and for $A,B\in\bR$ write $A\sim_c B$ to denote $cA< B< c^{-1}A$.
    Consider the map $\psi:X^*\to X\times X$ and the neighbourhood $\cV_0$ as defined before the statement of the lemma.
    Let $\cV_1\defi \psi^{-1}(\cU_1)\subset \cV_0$.
    Since the differential
    $d\psi$ is an isometry on the submanifold $X^*_0$ and $E$ is compact there is a neighbourhood 
    $\cV_2\subset \cV_1$ of $X^*_0$
    such that for all $(x,v)\in \cV_2$ and $g\in E$ one has $\norm{d_{(gx,gv)}\psi^{\pm1}}\sim_c1$.
    The image $\cU_2\defi\psi(\cV_2)\subset \cU_1$ is then a
    neighbourhood of $\Del_X$.
    Next we replace $\cV_2$ and $\cU_2$ by even smaller neighbourhoods $\cV_3$ and $\cU_3\defi\psi(\cV_3)$ 
    of $X_0^*$ and $\Del_X$ respectively,
    so that for all $(x,v)\in \cV_3$ the whole interval $\set{(x,tv):t\in[0,1]}$ is contained in $\cV_2$.
    Similarly,
    for any $(x,y)\in \cU_3$, the geodesic path between $(x,x)$ and $(x,y)$ is contained in  $\cU_2$.

    Given $(x,y)\in \cU_3$ 
    let $\oxy$ denote the corresponding orthogonal displacement so that $\psi(x,\oxy) = (x,y)$. 
    Since the path $\zeta(t) = (x,t\oxy)$
    is the geodesic in $X^*$ from $(x,0)$ to $(x,\oxy)$ and is of length $\norm{\oxy}$ and since it is contained in $\cV_2$ 
    on which
    $\norm{d\psi}\sim_c 1$, we conclude that the image path $\psi(\zeta(t))$ connecting $(x,x)$ to $(x,y)$ 
    has length $\sim_c \norm{\oxy}$.
    But, the distance in $X\times X$ from $(x,x)$ to $(x,y)$ is exactly $\dx(x,y)$ and so we obtain the inequality
    $\dx(x,y)< c^{-1}\norm{\oxy}$ for all $(x,y)\in \cU_3$ and $g\in E$.

    For the other inequality, let $(x,y)\in \cU_3$, $g\in E$ and let $\zeta(t)$ denote the geodesic path between $(x,x)$ 
    to $(x,y)$ which is
    of length $\dx(x,y)$ as mentioned earlier.
    By the choice of $\cU_3$, $\zeta(t)\in \cU_2$ for all $t$ and therefore, on applying
    $\psi|_{\cU_2}^{-1}$ we obtain a path connecting $(x,0)$ and $(x,\oxy)$ whose length is $<c^{-1}\dx(x,y)$.
    Since the distance between $(x,0)$ and $(x,\oxy)$ is $\norm{\oxy}$ we obtain 
    the inequality $\norm{\oxy}<c^{-1}\dx(x,y)$.
    In total we showed that for all $(x,y)\in \cU_3$ and $g\in E$ one has $\dx(x,y)\sim_c\norm{\oxy}$.
    To finish, we replace $\cU_3$ by $\cU_4$ a neighbourhood of $\Del_X$ contained in $\cap_{g\in E} g\cU_3$
    (in a similar fashion to the proof of part~\eqref{p:0931}) and conclude that for all $(x,y)\in \cU_4$ and all $g\in E$
    we have that $(gx,gy)\in \cU_3$ and therefore $\dx(gx,gy)\sim_c\norm{\ogy}$ as desired 

    Finally we prove~\eqref{p:0933}. Let $\cU=\cU_4$ be as in the proof of part~\eqref{p:0932} and let $(x,y)\in \cU$.
    Note that $\norm{g(\oxy\wedge u)}/ \norm{ gu} = \norm{\Pi_{gx}(g\oxy)}$ for all $u\in\wedge^3\gor_{\pi(x)}\mz$
    and that both of $\exp(\ogy)$ and  $\exp(g\oxy)$ take $gx$ to $gy$ so
    $$\exp(-g\oxy)\exp(\ogy)\in\Stab_G(gx).$$
    There is a neighbourhood of the identity $\cL_0$ in $G$ such that $\log:\cL_0\to\gog$ is well defined. By shrinking $\cU$ if
    necessary we may suppose that the above product is in $\cL_0$ for $g\in E$ and $(x,y)\in \cU$.
    Therefore, we may apply the logarithm and see that the result lies in $\ker \Pi_{gx} = \gor_{\pi(gx)}$ (which is 
    the Lie algebra of $\Stab_G(gx)$).
    On the other hand~\cite[\S2]{MR3237440} we have
    $$\log (\exp(-g\oxy)\exp(\ogy)) = \ogy -g\oxy + O(\norm{\ogy} \norm{\oxy}\norm{g}_\gog).$$
    Applying the projection $\Pi_{gx}$ we see that
    \begin{equation}\label{eq:1147a}
        \ogy = \Pi_{gx}(g\oxy) + O(\norm{\ogy} \norm{\oxy}\norm{g}_\gog).
    \end{equation}
    Equation~\eqref{eq:1147a} together with part~\eqref{p:0932} imply that on shrinking $\cU$ if necessary,
    the ratio 
    $\norm{\Pi_{gx}(g\oxy)}/\norm{\ogy}$ is bounded away from zero for $(x,y)\in \cU$ and $g\in E$.
    In equation~\eqref{eq:1147a} we take norms, use the triangle inequality and divide by $\norm{\Pi_{gx}(g\oxy) }$ 
    to arrive at 
    $$\norm{\ogy}\Big/ \frac{\norm{g(\oxy\wedge u)}}{\norm{ gu}}= 
    \frac{\norm{\ogy}}{\norm{\Pi_{gx}(g\oxy)}}  = 1 + O(\norm{\oxy}\norm{g}_\gog),$$
    where the cancellation in the big-$O$ is justified by the aforementioned boundedness away from zero of
    $\norm{\Pi_{gx}(g\oxy)}/\norm{\ogy}$.
    Now it is clear that since $g\in E$ and $E$ is compact, if $\cU$ is chosen small enough then the big-$O$ in the
    above equality is as small as we wish yielding part~\eqref{p:0933} of the proposition.
\end{proof}
\subsection{\label{subsec:Criterion-for-positive}A criterion for non-atomicity of the limit measures.}
In this section we leave for a moment the space $X$ and work in an abstract setting. We follow closely~\cite[\S6]{BQJams}.
We assume throughout that we are
working in the following setting:
\begin{enumerate}[{\bfseries S1:}]
    \item\label{setting1} $Y$ is a locally compact metric space on which $G$ acts.
    \item\label{setting2} There exists a proper lower semi-continuous contracted function $u_{Y}\in\mathrm{CH}_{\mu}\left(Y\right)$.
\end{enumerate}
\begin{remark}
    In \S\ref{ssec:proof of non atomicity} we will apply the results of this section to $Y=X\times X$ with the function
    $u_{X\times X}(x,y) \defi u_X^{\del}(x) + u_X^\del(y) \in \on{CH}_\mu(X\times X)$,
    where $\del>0$ is small enough so that  $u_X^\del\in \on{CH}_\mu(X)$ by Proposition~\ref{prop:contracted func on X}.
\end{remark}
For $M>0$ we consider the compact set
\[
    Y_{M}\defi\left\{ y\in Y:u_{Y}\left(y\right)\leq M\right\}.
\]
For $y\in Y_M$ and
$b\in B$ we define the stopping time
\[
    \rho_{M,y}\left(b\right)\defi\inf\left\{ n\ge 1:b_n^1y\in Y_{M}\right\}
\]
and refer to it as the \emph{first return time} to $Y_{M}$. It is a stopping time
in the sense that $\set{\rho_{M,y}(b)\le n}$ is independent from $b_{j}$ for any $j>n$.
The sets $Y_M$ have remarkable recurrence properties as reflected by the following proposition.
\begin{proposition}\label{prop:exponential moment}
    Suppose that {\bfseries S\ref{setting1}} and {\bfseries S\ref{setting2}} hold.
    Then for any $M$ large enough there exists $c>1$ such that
    $$\sup_{y\in Y_M} \int_B c^{\rho_{M,y}(b)} \dv\be b<\infty.$$
    In particular the first return time is integrable and $\be$-almost surely finite.
\end{proposition}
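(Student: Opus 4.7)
The plan is to turn the contraction hypothesis into an exponential tail bound for $\rho_{M,y}$ by means of a stopped supermartingale, and then to integrate. Throughout I write $\cF_n$ for the sub-$\sig$-algebra of $B$ generated by the first $n$ coordinates; lower semi-continuity of $u_Y$ makes each $Y_M$ closed, and continuity of the $G$-action on $Y$ then makes $\rho_{M,y}$ an $(\cF_n)$-stopping time. First I fix $c_0<1$ and $b>1$ witnessing $u_Y\in\on{CH}_\mu(Y)$, pick an auxiliary $c_0'\in(c_0,1)$, and impose $M>b/(c_0'-c_0)$. For any $y\notin Y_M$ we then have $u_Y(y)>M$ and hence the additive bound $\on A_\mu u_Y(y)\le c_0 u_Y(y)+b$ upgrades to the strictly multiplicative bound
$$\on A_\mu u_Y(y)\le c_0u_Y(y)+b<c_0'u_Y(y).$$

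The core of the argument is then to study, for a fixed starting point $y\in Y_M$, the process
$$M_n\defi (c_0')^{-(n\wedge\rho_{M,y}(b))}\,u_Y\!\left(b_{n\wedge\rho_{M,y}(b)}^{1}y\right),\qquad n\ge 0.$$
On $\set{\rho_{M,y}>n}$ the point $b_n^1 y$ lies outside $Y_M$, so the multiplicative bound above gives
$$\bE[M_{n+1}\,|\,\cF_n]=(c_0')^{-(n+1)}\on A_\mu u_Y(b_n^1 y)\le (c_0')^{-n}u_Y(b_n^1 y)=M_n,$$
while on $\set{\rho_{M,y}\le n}$ the process is frozen. Hence $(M_n)$ is a nonnegative supermartingale with $\bE[M_n]\le M_0=u_Y(y)\le M$. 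Since $M_n>M(c_0')^{-n}$ on $\set{\rho_{M,y}>n}$, Chebyshev immediately yields the uniform exponential tail bound
$$\be\pa*{\set{b\in B:\rho_{M,y}(b)>n}}\le(c_0')^n\qfa y\in Y_M.$$

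Finally, a routine Abel summation (or layer-cake) identity gives, for any $c\in(1,1/c_0')$,
$$\int_B c^{\rho_{M,y}(b)}\dv\be b = c+(c-1)\sum_{n\ge 1}c^n\be(\rho_{M,y}>n) \le c+(c-1)\sum_{n\ge 1}(cc_0')^n < \infty,$$
with a bound independent of $y\in Y_M$; integrability and $\be$-almost-sure finiteness of $\rho_{M,y}$ are immediate consequences. I do not expect a substantial obstacle: the only steps requiring care are the measurability setup, handled by lower semi-continuity of $u_Y$ and continuity of the action, and the arithmetic choice of $M$ large enough to absorb the additive error $b$ into a strictly contractive multiplicative bound; the supermartingale estimate itself is then elementary.
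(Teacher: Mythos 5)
Your argument is essentially correct, but it is worth noting that the paper itself gives no argument here: it simply cites \cite{BQJams} (Definition 6.1 and Proposition 6.3), whose proof is the standard Eskin--Margulis/Benoist--Quint drift-function argument. What you have written is a self-contained reconstruction of exactly that argument, so in substance you are following the same route as the cited source rather than a genuinely different one; the benefit of your version is that it makes the exponential tail bound and the role of the threshold $M>b/(c_0'-c_0)$ explicit.

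One small inaccuracy should be repaired. Your claim that on $\set{\rho_{M,y}>n}$ the point $b_n^1y$ lies outside $Y_M$ is false for $n=0$: the walk starts at $y\in Y_M$, where only the additive bound $\on A_\mu u_Y(y)\le c_0u_Y(y)+b\le c_0M+b$ is available, so the supermartingale inequality fails at the first step and the estimate $\bE[M_n]\le M_0=u_Y(y)$ is not justified. The fix is routine: start the supermartingale at $n=1$, noting $\bE[M_1]\le (c_0')^{-1}(c_0M+b)$, so that for $n\ge 1$ one gets $\bE[M_n]\le (c_0')^{-1}(c_0M+b)$ and hence the tail bound
$\be\pa*{\set{b\in B:\rho_{M,y}(b)>n}}\le (c_0')^{n}\,\tfrac{c_0M+b}{c_0'M}$,
uniformly over $y\in Y_M$. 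The extra constant factor does not affect the geometric-series computation, so any $c\in(1,1/c_0')$ still yields a uniformly bounded exponential moment, and integrability and almost-sure finiteness of $\rho_{M,y}$ follow as you say. (You should also record, for the conditional-expectation manipulations, that each $M_n$ is integrable; this follows inductively from the contraction hypothesis since $u_Y$ is finite-valued.)
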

\begin{proof}
    This is ~\cite[Definition 6.1 and Proposition 6.3]{BQJams}.
\end{proof}
The first return time
naturally defines a map $\widehat{\rho}_{M}:B\times Y\to G$ called
the \emph{first return cocycle} where
\[
    \widehat{\rho}_{M,y}\left(b\right)\defi b_{\rho_{M,y}\left(b\right)}^{1}.
\]
In turn, the first return cocycle induces a collection of  transition probability
measures $\mu_{M,y}\in\mathcal{P}\left(G\right)$
which are the images of $\beta$ by the first return cocycle. In other
words for $y\in Y_M$ and $f\ge 0$ a measurable function on $G$,
\[
    \int_G f(g) \dv{\mu_{M,y}}g \defi\int_{B}f(\wh{\rho}_{M,y}(b))\dv\be b = \int_B f(b_{\rho_{M,y}(b)}^1) \dv\be b.
\]
Finally the transition probability measures $\mu_{M,y}$ induce  a Markov operator called
\emph{the first return Markov operator} which is denoted by $\on A_{M,\mu}$  and is defined
as follows. For any $f\ge 0$ measurable function on $Y_M$
\[
\on A_{M,\mu} f(y)\defi\int_{G}f\left(gy\right)\mathrm{d}\mu_{M,y}\left(g\right). \]
Extending Definition~\ref{def:CH} we say that $f\in\mathrm{CH}_{\on A_{M,\mu}}\left(Y_{M}\right)$
if $f:Y_M\to[0,\infty]$ is
such that there exists constants $c<1,$ $b>0$ satisfying
\[
    \on A_{M,\mu} f(y)\leq cf\left(y\right)+b\quad\textrm{for all }y\in Y_{M}.
\]
We will prove Theorem~\ref{thm:full}\eqref{main2} by an application of the following criterion.
It shows that one can deduce the non-atomicity of the limit measures of 
$\nu\in \spm\mu Y$, for $Y$ as above, if one can build functions satisfying the contraction hypothesis
on bounded parts of $Y\times Y\smallsetminus \Del_Y$, where $\Del_Y$ denotes the diagonal copy
of $Y$ in $Y\times Y$.
\begin{proposition}
    \label{prop:Criterion}
    Suppose that {\bfseries S\ref{setting1}} and {\bfseries S\ref{setting2}} hold.
    Consider the product space $Y\times Y$ on which $G$ acts diagonally and consider the function
    $u_{Y\times Y}(y_1,y_2) \defi u_Y(y_1)+u_Y(y_2)$ so that $u_{Y\times Y}\in \on{CH}_\mu(Y\times Y)$. Let
    $(Y\times Y)_M$ and $\mu_{M,(y_1,y_2)}$ and $\on A_{M,\mu}$ be the sublevel sets, transition probability measures and Markov operator
    associated
    to the action of $G$ on $Y\times Y$ with respect to $u_{Y\times Y}$.

    If for every large enough $M$
    there exists a
    proper continuous function
    $v_{M}:(Y\times Y)_{M}\smallsetminus \Del_Y\to[0,\infty)$
    such that
    $v_{M}\in\mathrm{CH}_{\on A_{M,\mu}}( (Y\times Y)_{M}\smallsetminus \Del_Y),$
    then for any atom-free $\nu\in \spm\mu Y$, the limit measures are $\be$-almost surely non-atomic
\end{proposition}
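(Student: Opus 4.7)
The plan is to adapt the strategy of Benoist--Quint~\cite{BQJams}. The proof proceeds in two steps. First, show that the diagonal $\Del_Y\subset Y\times Y$ is \textit{positively $\mu$-unstable}, meaning that for every $\veps>0$ and every compact $Z\subset (Y\times Y)\smallsetminus \Del_Y$, there exists a closed set $F\subset (Y\times Y)\smallsetminus \Del_Y$ with
\[
    \frac{1}{n}\sum_{k=0}^{n-1}\mu^{*k}*\del_{y}(F)\geq 1-\veps\qfa y\in Z, \; n\geq 1.
\]
Second, deduce from this unstability that $\nu_b$ is atom-free $\be$-a.s.

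For the first step, given $\veps>0$ and compact $Z\subset (Y\times Y)\smallsetminus \Del_Y$, I would choose $M$ so large that $Z\subset (Y\times Y)_M$ and the hypothesis on $v_M$ applies. Two recurrence ingredients combine. On the outer scale, the function $u_{Y\times Y}\in\on{CH}_\mu(Y\times Y)$ together with Proposition~\ref{prop:exponential moment} provides uniform exponential moment control on the first return time $\rho_M$, so Cesàro averages of the walk place at most $\veps/2$ of their mass outside $(Y\times Y)_M$. On the inner scale, the contraction $v_M\in\on{CH}_{\on{A}_{M,\mu}}((Y\times Y)_M\smallsetminus \Del_Y)$ together with a standard Foster--Lyapunov / Chebyshev argument gives $M'$ such that the Cesàro averages of the first return chain from any starting point in $Z$ place mass at least $1-\veps/2$ on the sublevel set $\set{v_M\leq M'}$, which by properness and lower semicontinuity is a compact subset of $(Y\times Y)_M\smallsetminus \Del_Y$ and hence closed in $Y\times Y$ and bounded away from $\Del_Y$. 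Interleaving these two estimates using the exponential moment of $\rho_M$ produces the required $F$.

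For the second step, I would follow~\cite[Proposition 6.17]{BQJams}. Since $\nu$ is atom-free, $(\nu\otimes\nu)(\Del_Y)=0$; moreover, for any $f\in C_c(Y\times Y)$,
\[
    \int f \dv{\mu^{*n}*(\nu\otimes\nu)}{} = \int\int f \dv{((b_1^n\nu)\otimes(b_1^n\nu))}{} \dv\be b \xrightarrow[n\to\infty]{} \int\int f\dv{(\nu_b\otimes\nu_b)}{}\dv\be b,
\]
by bounded convergence and the weak convergence $b_1^n\nu\to \nu_b$ for $\be$-a.e.\ $b$. Hence the Cesàro averages of $\mu^{*n}*(\nu\otimes\nu)$ converge weakly to the $\mu$-stationary measure $\int\nu_b\otimes\nu_b\dv\be b$. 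Approximating $\nu\otimes\nu$ from below by a compact $Z\subset (Y\times Y)\smallsetminus \Del_Y$ of mass at least $1-\veps$ and applying positive $\mu$-unstability gives closed $F\subset (Y\times Y)\smallsetminus\Del_Y$ with Cesàro mass $\geq (1-\veps)^2$. Taking the limit and letting $\veps\to 0$ shows $(\int\nu_b\otimes\nu_b\dv\be b)(\Del_Y)=0$, so $(\nu_b\otimes\nu_b)(\Del_Y)=0$ for $\be$-a.e.\ $b$, which is equivalent to $\nu_b$ being atom-free.

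The main obstacle will be the interleaving in the first step: the two contractions live on different time scales, with $v_M$ controlled only along the first-return chain, while between returns to $(Y\times Y)_M$ the walk can drift arbitrarily near $\Del_Y$. The exponential moment bound on $\rho_M$ is precisely what allows these excursions to be discarded at the cost of an arbitrarily small fraction of the Cesàro mass, and to convert first-return statistics back into a statement about the original walk.
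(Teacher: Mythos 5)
Your proposal is correct and takes essentially the same route as the paper: the paper's proof consists of citing Propositions 6.16 and 6.17 of Benoist--Quint \cite{BQJams}, and your Step 1 (positive $\mu$-instability of $\Del_Y$ from $u_{Y\times Y}\in\on{CH}_\mu(Y\times Y)$, the exponential moment of the return times, and $v_M\in\on{CH}_{\on A_{M,\mu}}$) and Step 2 (instability plus the limit joining $\int_B\nu_b\otimes\nu_b\,\mathrm{d}\beta(b)$ and a portmanteau argument imply the $\nu_b$ are atom-free) are precisely the content of those two cited results.
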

\begin{proof}
    This follows from~\cite[Proposition 6.16 and Proposition 6.17]{BQJams}.
\end{proof}
We continue to collect results from~\cite{BQJams} that will allow us to construct the
functions $v_M$ in Proposition~\ref{prop:Criterion}.
\begin{proposition}
    \label{lem:prop 6.7}
    Suppose that {\bfseries S\ref{setting1}} and {\bfseries S\ref{setting2}} hold.
    Then, if $N:G\to\left[0,\infty\right)$ is a continuous submultiplicative
    function,
    for any $M$ large enough there exists $\delta>0$ such that
    \[
        \sup_{y\in Y_{M}}\int_{G}N\left(g\right)^{\delta}\mathrm{d}\mu_{M,y}\left(g\right)<\infty.
    \]
\end{proposition}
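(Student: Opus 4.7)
The plan is to combine submultiplicativity of $N$ with the exponential moment bound on the return time $\rho_{M,y}$ furnished by Proposition~\ref{prop:exponential moment}. By unpacking the definition of $\mu_{M,y}$, for any $\delta>0$ one has
\[
    \int_{G} N(g)^{\delta}\,\mathrm{d}\mu_{M,y}(g) = \int_{B} N\bigl(b_{\rho_{M,y}(b)}^{1}\bigr)^{\delta}\,\mathrm{d}\be(b),
\]
so the task reduces to controlling the $\be$-integral on the right, uniformly in $y\in Y_{M}$.

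First I would invoke the submultiplicativity hypothesis $N(gh)\le N(g)N(h)$ iteratively, writing $b_{\rho}^{1}=b_{\rho}\cdots b_{1}$, to obtain
\[
    N\bigl(b_{\rho_{M,y}(b)}^{1}\bigr)^{\delta} \le \prod_{i=1}^{\rho_{M,y}(b)} N(b_{i})^{\delta}.
\]
Next, since $\mu$ is compactly supported and $N$ is continuous, there is a constant $C\ge 1$ such that $N(g)\le C$ on $A=\supp\mu$. Therefore on a $\be$-conull set of $b\in B$ every factor satisfies $N(b_{i})\le C$, and the displayed product is bounded above by $C^{\delta\rho_{M,y}(b)}$.

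Now I would apply Proposition~\ref{prop:exponential moment}: for $M$ large enough there exist $c>1$ and a constant $K=K(M,c)<\infty$ such that
\[
    \sup_{y\in Y_{M}}\int_{B} c^{\rho_{M,y}(b)}\,\mathrm{d}\be(b)\le K.
\]
Choosing $\delta>0$ so small that $C^{\delta}\le c$ (explicitly, $\delta\le (\log c)/(\log C)$ when $C>1$, and any $\delta>0$ otherwise) yields
\[
    \sup_{y\in Y_{M}}\int_{G} N(g)^{\delta}\,\mathrm{d}\mu_{M,y}(g) \le \sup_{y\in Y_{M}}\int_{B} c^{\rho_{M,y}(b)}\,\mathrm{d}\be(b) \le K,
\]
as required. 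There is no real obstacle here: the submultiplicative structure of $N$ converts the return-time bound into an exponential-moment bound on $N$, and the compact support of $\mu$ ensures the per-step constant $C$ is finite so $\delta$ can be chosen uniformly in $y$. The only delicate point is to keep the exponential-moment parameter $c$ and the sup-bound $C$ decoupled, which is done by the simple choice $C^{\delta}\le c$.
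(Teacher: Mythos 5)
Your argument is correct. Note that the paper does not prove this proposition by a direct computation: its proof is a citation to Benoist--Quint (their Definition 6.1, Propositions 6.3 and 6.7, together with the remark that compact support of $\mu$ gives finite exponential moments for $N$), so what you have written is a self-contained substitute for that citation. The mechanism you use --- pushing the integral back to $B$ via the first return cocycle, bounding $N\bigl(b_{\rho_{M,y}(b)}^{1}\bigr)^{\delta}\le C^{\delta\rho_{M,y}(b)}$ by submultiplicativity and the bound $N\le C$ on $\supp\mu$, and then absorbing $C^{\delta}$ into the exponential moment parameter $c$ of the return time supplied by Proposition~\ref{prop:exponential moment} --- is precisely the mechanism behind the cited result, and the required uniformity in $y\in Y_{M}$ is in place because $C$ (depending only on $N$ and $\supp\mu$) and $c$ (from Proposition~\ref{prop:exponential moment}) are independent of $y$, so $\delta$ can be chosen once for all $y$. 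The only point worth making explicit is that $\rho_{M,y}$ is $\be$-almost surely finite, which is needed for $\mu_{M,y}$ to be a well-defined probability measure; this too is part of Proposition~\ref{prop:exponential moment}, so nothing is missing.
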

\begin{proof}
    This is~\cite[Definition 6.1, Proposition 6.3 and Proposition 6.7]{BQJams}. Notice that
    we are assuming $\mu$ is compactly supported so it has finite exponential moments with respect
    to $N$ in the terminology of~\cite[Definition 6.6]{BQJams}.
\end{proof}
For the following proposition we give a full proof. This is a slight upgrade of~\cite[Lemma 6.10]{BQJams}.
\begin{proposition}\label{prop:martingales}
    Suppose that {\bfseries S\ref{setting1}} and {\bfseries S\ref{setting2}} hold.
    Let $M$ be large enough so that Proposition~\ref{prop:exponential moment} is applicable and in particular, 
    the stopping times
    $\set{\rho_{M,y}}_{y\in Y_M}$ are integrable.

    Let $G$ act on a space
    $W$ and assume that $f:G\times W\to \bR$ is an additive cocycle in the sense
    that $f(gh,w)=f(g,hw)+f(h,w)$ for all $g,h\in G$ and $w\in W$.
    Assume that:
    \begin{enumerate}[(1)]
        \item\label{ass:101}
            There exists $J_0>0$ such that $\sup_{w\in W} \norm{f(\ph,w)}_{L^\infty(G,\mu)} < J_0$.
        \item\label{ass:102} There exists $L_0>0$ such that $\inf_{w\in W} \int_G f(g,w) \dv\mu g > L_0$.
    \end{enumerate}
    Then, for all $w\in W$ and $y\in Y_M$ we have that
    $f(\ph,w)\in L^1(G,\mu_{M,y})$ and moreover
    \begin{equation}\label{eq:1053}
        \inf_{w\in W, y\in Y_M} \int_G f(g,w)\dv{\mu_{M,y}} g \ge L_0.
    \end{equation}
\end{proposition}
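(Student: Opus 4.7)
The plan is to use the additive cocycle structure of $f$ to telescope the value $f(b_\tau^1,w)$ (with $\tau=\rho_{M,y}$) into a sum of bounded random variables and then apply a standard optional stopping argument, using the finite expectation of $\tau$ furnished by Proposition~\ref{prop:exponential moment}.

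First, I would unpack $\mu_{M,y}$: by its definition, for any nonnegative measurable $F$ on $G$,
$$\int_G F(g)\dv{\mu_{M,y}}g=\int_B F(b_{\tau(b)}^1)\dv\be b,\quad \tau(b)\defi\rho_{M,y}(b).$$
Iterating the cocycle relation $f(gh,w)=f(g,hw)+f(h,w)$ gives, for each $n\in\bN$,
$$f(b_n^1,w)=\sum_{k=1}^n f(b_k,b_{k-1}^1w),$$
with the convention $b_0^1=e$. Assumption~\eqref{ass:101} implies $|f(b_k,b_{k-1}^1w)|\le J_0$, so $|f(b_\tau^1,w)|\le J_0\tau$. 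Since Proposition~\ref{prop:exponential moment} guarantees $\bE_\be[\tau]<\infty$ (for $M$ large), this already gives $f(\ph,w)\in L^1(G,\mu_{M,y})$ uniformly in $y,w$.

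Next I would set up the martingale argument. Let $\cF_n=\sig(b_1,\dots,b_n)$, so that $b_{k-1}^1w$ is $\cF_{k-1}$-measurable and $b_k$ is independent of $\cF_{k-1}$. Write
$$S_n\defi\sum_{k=1}^n f(b_k,b_{k-1}^1w),\qquad C_n\defi\sum_{k=1}^n\int_G f(g,b_{k-1}^1w)\dv\mu g,$$
so that $M_n\defi S_n-C_n$ is an $(\cF_n)$-martingale starting at zero, with bounded increments $|M_k-M_{k-1}|\le 2J_0$ by assumption~\eqref{ass:101}. Moreover, assumption~\eqref{ass:102} yields $C_n\ge nL_0$ pointwise. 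Since $\tau$ is an $(\cF_n)$-stopping time with $\bE_\be[\tau]<\infty$, the bound $|M_{\tau\wedge n}|\le 2J_0\tau$ is integrable, so dominated convergence and the optional stopping theorem applied to the bounded times $\tau\wedge n$ yield $\bE_\be[M_\tau]=0$.

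Combining these facts,
$$\int_G f(g,w)\dv{\mu_{M,y}}g=\bE_\be[S_\tau]=\bE_\be[C_\tau]\ge L_0\bE_\be[\tau]\ge L_0,$$
where the last inequality uses $\tau\ge 1$ a.s. This gives~\eqref{eq:1053} uniformly in $w\in W$ and $y\in Y_M$. The only genuinely delicate step is verifying the hypotheses of the optional stopping theorem; everything else is bookkeeping. The uniform $L^\infty$ bound in assumption~\eqref{ass:101} is doing double duty: it both ensures that the martingale has bounded increments (so that $\bE_\be[\tau]<\infty$ suffices to apply optional stopping) and that $f(\ph,w)\in L^1(G,\mu_{M,y})$. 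If one only had a uniform $L^1(\mu)$ bound on $f$, this argument would be harder and would require a more delicate uniform integrability statement together with control on $\bE[\tau\mathbf{1}_{\tau>n}]$; fortunately we are in the bounded setting.
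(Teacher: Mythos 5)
Your proof is correct and follows essentially the same route as the paper: telescope the cocycle along the stopped product, use assumption (1) to bound the increments by $J_0$, and run an optional-stopping plus dominated-convergence argument based on the integrability of $\rho_{M,y}$ from Proposition~\ref{prop:exponential moment}, with $\tau\ge 1$ giving the final bound $L_0$. The only cosmetic difference is that you compensate $S_n$ by the exact conditional means to obtain a true martingale $M_n=S_n-C_n$, whereas the paper subtracts $L_0$ per step and works with the stopped submartingale $Z_{\min\{n,\rho\}}$; both arguments yield $\int_G f(g,w)\dv{\mu_{M,y}}g\ge L_0\int_B\rho_{M,y}\,\mathrm{d}\beta\ge L_0$.
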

\begin{proof}
    Let $\cB$ denote the Borel $\sig$-algebra of $B$ and
    $\rho:B\to \bN$ be an integrable stopping time. That is, $\int_B \rho \mathrm{d}\be <\infty$ and
    for all $n\in\bN$ one has $\set{b: \rho(b)\le n}$ is measurable with respect to
    the sub-$\sig$-algebra $\cB_n$ of $\cB$ generated by the cylinder sets obtained by specifying the first $n$ co-ordinates.
    Let $\mu_\rho\defi (b\mapsto b_{\rho(b)}^1)_*\be\in \cP(G)$ be the push-forward of $\be$ under the almost surely defined product map $b\mapsto b_{\rho(b)}^1$.
    Then, we will prove that
    under the assumptions~\eqref{ass:101} and~\eqref{ass:102}, for all $w\in W$ one has
    \begin{equation}\label{eq:1130}
        \int_G \av{f(g,w)}\dv{\mu_\rho} g \le J_0\int_B \rho \mathrm d\be\quad\textrm{and}\quad
        \int_G f(g,w)\dv{\mu_\rho} g \ge L_0\int_B \rho \mathrm d\be.
    \end{equation}
    If  $M$ is as in the statement, the stopping times
    $\set{\rho_{M,y}}_{y\in Y_M}$ are integrable so the left
    inequality of equation~\eqref{eq:1130} applied with $\rho=\rho_{M,y}$ proves that
    $f(\ph,w)\in L^1(G,\mu_{M,y})$ for all $w\in W$ and $y\in Y_M$. Moreover, the right inequality of~\eqref{eq:1130} applied with the same choice of $\rho$
    implies equation~\eqref{eq:1053} because the integral of a stopping time is at least 1.

    Fix $w\in W$ and for $i\in \bN$ let $X_i(b) \defi f(b_i,b_{i-1}^1w)$,  where $b_0^1$ denotes
    the empty product, so that by the cocycle equation
    $$f(b_{\rho(b)}^1 ,w) = \sum_{i=1}^{\rho(b)} X_i(b).$$
    With this notation, we can rewrite equation~\eqref{eq:1130} as follows
    \begin{equation}\label{eq:10052}
        \int_B \av[\bigg]{\sum_{i=1}^{\rho(b)} X_i(b)}\dv\be b\le J_0\int_B {\rho} \mathrm d\be \quad \textrm{and}\quad \int_G \sum_{i=1}^{\rho(b)} X_i(b) \dv\be b\ge L_0 \int_B \rho \mathrm d\be.
    \end{equation}
    Note that
    \begin{equation*}
        \av[\bigg]{\sum_{i=1}^{\rho(b)}X_i(b)}\le \sum_{i=1}^\infty \mb{1}_{\set{b\in B:\rho(b)\ge i}}(b)\av{X_i(b)}\quad\textrm{for all }\ b\in B.
    \end{equation*}
    Hence, using~\eqref{ass:101} which implies that   $\av{X_i}\le J_0$ and the monotone
    convergence theorem, we obtain
    \begin{align*}
        \int_B \av[\bigg]{\sum_{i=1}^{\rho(b)} X_i(b)}\dv\be b&\leq
        \sum_{i=1}^\infty \int_{\set{b\in B:\rho(b) \ge i}}  \av{X_i} \mathrm d\be\\
        \nonumber
        &\le \sum_{i=1}^\infty \be(\set{b\in B:\rho(b) \ge i}) J_0 = J_0\int \rho \mathrm d\be.
    \end{align*}
    This is the left inequality of~\eqref{eq:10052}.
    We now turn to the proof of the right inequality of~\eqref{eq:10052}.
    Consider the sequence of random variables
    $Z_n \defi \sum_{i=1}^n(X_i - L_0)$.
    Since $X_i$ is $\cB_i$-measurable for all $i\in\bN$, one has $\bE(Z_n|\cB_{n-1}) = Z_{n-1}+\bE(X_n|\cB_{n-1})-L_0$.
    Hence, provided that
\begin{equation}\label{eq:submgle}\bE(X_n|\cB_{n-1})\ge L_0\quad \be\textrm{-almost surely,}\end{equation}
    $Z_n$ is a submartingale with respect to the filtration $\cB_n$.
    Recall that the definition of conditional expectation
    is given by integration with respect to the conditional measures.
    For all $b\in B$ the conditional measure
    $\be^{\cB_{n-1}}_b$ of $\be$ with respect to $\cB_{n-1}$ at $b$ is the measure on $B$ given by
    $$\be^{\cB_{n-1}}_b = \del_{b_1}\otimes\dots\otimes  \del_{b_{n-1}}\otimes \mu^{\otimes \bN}$$
    It follows that for $\be$-almost every $b\in B$ we have
    $$\bE(X_n|\cB_{n-1})(b) = \int_B f(c_n, c_{n-1}^1w)\dv{\be^{\cB_{n-1}}_b} c=\int_G f(c_n, b_{n-1}^1 w)\dv\mu {c_n}>L_0,$$
    where the last inequality follows from assumption~\eqref{ass:102}. Hence~\eqref{eq:submgle} holds and $Z_n$ is a
    submartingale as claimed.

    It is a classical fact (see~\cite[section 10.9]{Williams}) that the process $Z_{\min\set{n,\rho}}$
    is also a submartingale with respect to $\cB_n$.  Hence, it satisfies the inequality
\begin{equation}\label{eq:limsubmtgle}\int_B Z_1 \mathrm d\be \le \liminf_{n\to\infty}\int_B Z_{\min\set{n,\rho(b)}}(b)\dv\be b.\end{equation}
    Since $\rho$ is almost surely finite $\lim_{n\to\infty}Z_{\min\set{n,\rho}}=Z_\rho$ almost surely.
    Next we claim that $Z_{\min\set{n,\rho}}$ is bounded by an integrable function. Note that
    $$\av{Z_{\min{\set{n,\rho}}}}\le \sum_{i=1}^\infty\mb{1}_{\set{b\in B: \rho(b)\ge i}} \av{X_i-L_0} $$ and hence using~\eqref{ass:101}
    and the monotone convergence theorem
    \begin{align*}
        \av{Z_{\min{\set{n,\rho}}}}
        & \leq \sum_{i=1}^\infty \int_{\set{b\in B:\rho(b)\ge i}}\av{X_i-L_0}\mathrm d\be\\
        &\le
        \sum_{i=1}^\infty (J_0+L_0)\be\mset{b\in B:\rho(b)\ge i} = (J_0+L_0)\int_B \rho \mathrm d\be.
    \end{align*}
    Thus, using assumption~\eqref{ass:101},~\eqref{eq:limsubmtgle} and the dominated convergence theorem we obtain
    \begin{equation*}
        0< \int_Bf(b_1,w)\dv\be b -L_0  = \int_B Z_1\mathrm d\be  \le  \lim_{n\to\infty} \int_B Z_{\min\set{n,\rho(b)}}(b) \dv\be b = \int_B Z_\rho \mathrm d\be.
    \end{equation*}
    But from the definition of $Z_\rho$ we have
    \begin{equation*}
        \int_B Z_\rho \mathrm d\be =  \int_B \sum_{i=1}^{\rho(b)} (X_i(b)-L_0) \dv\be b =\int_B\sum_{i=1}^{\rho(b)} X_i(b) \dv\be b - L_0\int_B \rho \mathrm d\be.
    \end{equation*}
    Putting the last two inequalities together yields the right inequality in~\eqref{eq:10052} which finishes the proof.
    %
    %
\end{proof}

Similarly to the scheme leading to the construction of the contracted function $u_X^\ka \in \on{CH}_\mu(X)$  in
Proposition~\ref{prop:contracted func on X},
a key point in building the functions $v_M$ which will participate in an application of Proposition~\ref{prop:Criterion} is showing
that a certain family of functions $\cF$ is uniformly $(\del_0,I_0,L_0)$-expanded by $\eta$ according to Definition~\ref{def:uniform mu exp} for a certain choice of $\eta$.
Let
\begin{equation}\label{eq:1510}
    \mathcal{D}\defi\set{ (p,v):p\in\Xbar, v\in \gog \smallsetminus \gor_p}.
\end{equation}
For $(p,v)\in\cD$ we choose $u_p\in\wedge^3 \gor_p\mz$
and define the multiplicative cocycle
\begin{equation}\label{eq:15101}
    f_{p,v}(g) \defi \frac{\norm{g(v\wedge u_p)}}{\norm{v\wedge u_p}}\Big/ \frac{\norm{g u_p}}{\norm{u_p}}.
\end{equation}
Note that the definition of $f_{p,v}$ is independent of the choice of $u_p$. We set $$\cF' \defi \set{f_{p,v}}_{(p,v)\in \cD}.$$
\begin{proposition}\label{prop:unimuexp}
    Suppose that {\bfseries S\ref{setting1}} and {\bfseries S\ref{setting2}} hold.
    Then,
    for all large enough $M$ and  $y\in Y_M$, the family $\cF'$ is uniformly $(\del_0,I_0,L_0)$-expanded by $\mu_{M,y}$. The parameters
    $(\del_0,I_0,L_0)$ may depend on $M$ but not on $y$.
\end{proposition}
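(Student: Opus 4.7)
The plan is to verify conditions~\eqref{enu:1st deriv cond 1} and~\eqref{enu:1st deriv cond 2} of Definition~\ref{def:uniform mu exp} for $\cF'$ with respect to $\mu_{M,y}$ by combining the two tools developed just above: Proposition~\ref{lem:prop 6.7} handles the exponential moment, and Proposition~\ref{prop:martingales} transfers the lower bound on $\int_G\log f\,\on d\mu$ from $\mu$ to the first return measures $\mu_{M,y}$.

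For condition~\eqref{enu:1st deriv cond 1} the idea is to dominate $\sup_{f\in\cF'}f$ by a single continuous submultiplicative function. From the very definition~\eqref{eq:15101}, for any $(p,v)\in\cD$ and $g\in G$ one has
$$f_{p,v}(g)\le \norm{g}_{\wedge^4\gog}\cdot\norm{g^{-1}}_{\wedge^3\gog}\eqqcolon N(g),$$
independently of the choice of $u_p\in\wedge^3\gor_p\mz$. Because each factor is the operator norm in a representation, $N$ is continuous and submultiplicative on $G$. Proposition~\ref{lem:prop 6.7} then supplies, for all $M$ large enough, some $\del_0>0$ and $I_0<\infty$ so that $\sup_{y\in Y_M}\int_G N(g)^{\del_0}\dv{\mu_{M,y}}g\le I_0$, and hence the same bound holds for $\int_G\sup_{f\in\cF'}f(g)^{\del_0}\dv{\mu_{M,y}}g$.

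For condition~\eqref{enu:1st deriv cond 2} the key observation is that $F(g,(p,v))\defi\log f_{p,v}(g)$ is an additive cocycle in the sense required by Proposition~\ref{prop:martingales}: on setting $W=\cD$ with the $G$-action $g\cdot(p,v)\defi(gp,\Ad_g v)$, a short computation using $u_{gp}=\Ad_g u_p$ (up to a nonzero scalar, which does not affect $f_{p,v}$) yields $f_{p,v}(gh)=f_{h\cdot(p,v)}(g)\cdot f_{p,v}(h)$, so $F$ is an additive cocycle on $G\times W$. The two hypotheses of Proposition~\ref{prop:martingales} are immediate: the $L^\infty$-bound of hypothesis~\eqref{ass:101} follows from $\sup_{f\in\cF'}|\log f(g)|\le \log\max\set{N(g),N(g^{-1})}$ together with the compactness of $\supp\mu$, while hypothesis~\eqref{ass:102} with the constant $L_0$ from~\eqref{eq:22064} is precisely the content of that inequality (which is where the assumption that we are in~\ref{case1} enters, through part~\eqref{p:06111} of Lemma~\ref{lem:max weight}). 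Proposition~\ref{prop:martingales} then delivers
$$\inf_{(p,v)\in\cD,\,y\in Y_M}\int_G\log f_{p,v}(g)\dv{\mu_{M,y}}g\ge L_0,$$
which is condition~\eqref{enu:1st deriv cond 2}.

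The only real point to verify carefully is the cocycle identity and the fact that the resulting constants depend on $M$ but not on $y\in Y_M$; both are inherited directly from the uniform statements already proved in Propositions~\ref{lem:prop 6.7} and~\ref{prop:martingales}, so no genuine new estimate is needed beyond the packaging.
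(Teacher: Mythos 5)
Your proposal is correct and follows essentially the same route as the paper: condition (1) of Definition~\ref{def:uniform mu exp} via domination by the submultiplicative function $N(g)=\norm{g}_{\wedge^4\gog}\norm{g^{-1}}_{\wedge^3\gog}$ and Proposition~\ref{lem:prop 6.7}, and condition (2) by feeding the additive cocycle $\log f_{p,v}$, with hypothesis~\eqref{ass:101} from compactness of $\supp\mu$ and hypothesis~\eqref{ass:102} from~\eqref{eq:22064}, into Proposition~\ref{prop:martingales}. Your explicit check of the cocycle identity (using $u_{gp}=\Ad_g u_p$ up to a scalar) is a detail the paper leaves implicit, but it is the same argument.
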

\begin{proof}
    Take $M$ large enough so that Proposition~\ref{lem:prop 6.7} holds for the submultiplicative function
    $N(g) \defi \norm{g}_{\wedge^4\gog}\norm{g^{-1}}_{\wedge^3\gog}$.
    Since $\sup_{f\in\cF'}f(g)\leq N(g)$ it follows that there exists $I_0>0$ and $\del_0>0$ such that
    $$\int_G \sup_{f\in \cF'} f^{\del_0}(g) \dv{\mu_{M,y}} g\le I_0$$ for all $y\in Y_M$. This verifies condition~\eqref{enu:1st deriv cond 1} of
    Definition~\ref{def:uniform mu exp}.

    To verify condition~\eqref{enu:1st deriv cond 2} of Definition~\ref{def:uniform mu exp} we argue as follows. Assume
    $M$ is large enough
    so that Proposition~\ref{prop:martingales} is applicable and consider the additive cocycle $G\times \cD\to \bR$ given by
    $(g,p,v)\mapsto \log f_{p,v}(g)$. Condition~\eqref{ass:101} of Proposition~\ref{prop:martingales} is satisfied with some $J_0$
    because $\supp \mu$ is compact and condition~\eqref{ass:102} of Proposition~\ref{prop:martingales} is satisfied with $L_0$ as in~\eqref{eq:22064}
    by the discussion in \S\ref{ssec:replacing}.
    As an outcome we deduce equation~\eqref{eq:1053} which reads as
    $$\inf_{y\in Y_M}\inf_{(p,v)\in \cD}\int \log f_{p,v}(g)\dv{\mu_{M,y}}g\ge L_0.$$
    This is exactly condition~\eqref{enu:1st deriv cond 2} of Definition~\ref{def:uniform mu exp}.
\end{proof}
As an immediate corollary of Proposition~\ref{prop:unimuexp} and Lemma~\ref{lem:bounded compact sets/first derivative positive} we have the following.
\begin{corollary}\label{cor:expansion2}
    Suppose that {\bfseries S\ref{setting1}} and {\bfseries S\ref{setting2}} hold.
    Then, for all large enough $M$ there exists $\del_1 >0$
    such that for all $0<\del<\del_1$ there exists $0<c=c(\del)<1$ such that for all $(p,v)\in \cD$, $y\in Y_M$
    and $u_p\in \wedge^3 \gor_p\mz$ one has
    \begin{equation}\label{eq:1008}
        \int_G \pa[\bigg]{\frac{\norm{g(v\wedge u_p)}}{\norm{gu_p}}}^{-\del}   \dv{\mu_{M,y}} g \le c
        \pa[\bigg]{\frac{\norm{(v\wedge u_p)}}{\norm{u_p}}}^{-\del}.
    \end{equation}
\end{corollary}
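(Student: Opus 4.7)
The plan is to unwind the definitions so that the integrand on the left-hand side of~\eqref{eq:1008} becomes a power of the cocycle $f_{p,v}$ from~\eqref{eq:15101}, and then apply the uniform expansion statement of Proposition~\ref{prop:unimuexp} combined with Lemma~\ref{lem:bounded compact sets/first derivative positive}. There is essentially no hidden difficulty: the work has already been done in establishing uniform expansion of $\cF'$ by the measures $\mu_{M,y}$ with parameters $(\del_0, I_0, L_0)$ that are independent of $y \in Y_M$.

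First, I would observe that for any $(p,v) \in \cD$, any $u_p \in \wedge^3\gor_p \mz$, and any $g \in G$, the definition of $f_{p,v}$ gives
\begin{equation*}
\frac{\norm{g(v\wedge u_p)}}{\norm{gu_p}} \;=\; f_{p,v}(g)\cdot\frac{\norm{v\wedge u_p}}{\norm{u_p}}.
\end{equation*}
Consequently, the left-hand side of~\eqref{eq:1008} factors as
\begin{equation*}
\pa[\bigg]{\frac{\norm{v\wedge u_p}}{\norm{u_p}}}^{-\del}\int_{G} f_{p,v}(g)^{-\del}\,\dv{\mu_{M,y}}g,
\end{equation*}
so the inequality to be proved reduces to finding $\del_1>0$ such that for all $0<\del<\del_1$ there exists $c=c(\del)<1$ with
\begin{equation*}
\sup_{y\in Y_M}\,\sup_{(p,v)\in\cD}\int_{G} f_{p,v}(g)^{-\del}\,\dv{\mu_{M,y}}g \;\leq\; c.
\end{equation*}

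This is precisely the output of Lemma~\ref{lem:bounded compact sets/first derivative positive} applied to the family $\cF'$ and the measure $\eta = \mu_{M,y}$. The hypotheses of that lemma are verified by Proposition~\ref{prop:unimuexp}, which says that for every sufficiently large $M$ the family $\cF'$ is uniformly $(\del_0, I_0, L_0)$-expanded by $\mu_{M,y}$ with constants independent of $y\in Y_M$. By the remark immediately following Lemma~\ref{lem:bounded compact sets/first derivative positive}, the threshold $\del_1$ and the contraction constant $c$ produced by that lemma depend only on $(\del_0, I_0, L_0)$, hence are uniform in both $y\in Y_M$ and $(p,v)\in\cD$. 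Combining these two statements yields the desired bound, and since no new analytic step is needed beyond quoting the two preceding results, the only thing to be careful about is tracking that the uniformity in $y$ and in $(p,v)$ is preserved throughout; this is automatic from the way the parameters were extracted above.
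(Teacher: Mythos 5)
Your proposal is correct and is exactly the argument the paper intends: the corollary is stated as an immediate consequence of Proposition~\ref{prop:unimuexp} and Lemma~\ref{lem:bounded compact sets/first derivative positive}, with the factorisation via $f_{p,v}$ and the uniformity in $y$ (from the remark after the lemma) being the only points to check, just as you do.
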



\subsection{Proof of non-atomicity of the limit measures}\label{ssec:proof of non atomicity}
We now apply the results of \S\ref{subsec:Criterion-for-positive} to the space $Y = X\times X$.
We fix once and for all $\ka>0$ such that $u_X^\ka\in \on{CH}_\mu(X)$ as in Proposition~\ref{prop:contracted func on X}.
We then take 
$$u_{X\times X}(x,y) \defi u_X^\ka(x) + u_X^\ka(y),$$ so that $u_{X\times X}\in \on{CH}_\mu(X\times X)$.
We use the notions and notation of \S\ref{subsec:Criterion-for-positive} for the $G$-action on $X\times X$ and the contracted
function $u_{X\times X}$. In particular, we have the sublevel sets $(X\times X)_M$,
the first return times $\rho_{M,(x,y)}$, the transition probability measures $\mu_{M,(x,y)}$ and the Markov operator $\on A_{M,\mu}$.


Equipped with the above theory we are finally in a position to prove the following proposition which shows that
one can apply Proposition~\ref{prop:Criterion} in our setting.
\begin{proposition}\label{prop:cf2}
    For all large enough $M>0$
    there exists a continuous proper function $v_{M}:\left(X\times X\right)_{M}\smallsetminus \Del_X\to [0,\infty)$
    such that 
    $$v_{M}\in\mathrm{CH}_{\on A_{M,\mu}}\pa{\left(X\times X\right)_{M}\smallsetminus \Del_X}.$$
\end{proposition}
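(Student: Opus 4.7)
The plan is to take $M$ large enough so that Propositions~\ref{prop:exponential moment}, \ref{lem:prop 6.7}, \ref{prop:martingales} and Corollary~\ref{cor:expansion2} all apply, and then simply try
\[
v_M(x,y)\defi\dx(x,y)^{-\delta}
\]
on $(X\times X)_M\smallsetminus \Del_X$ for a small $\delta>0$ to be pinned down later. Continuity is automatic, and properness follows from the compactness of $(X\times X)_M$: sublevel sets of $v_M$ are closed in it, and $v_M\to\infty$ exactly as $(x,y)\to\Del_X$.

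The core of the argument will be the near-diagonal contraction, combining Corollary~\ref{cor:expansion2} with Lemma~\ref{lem:nbd}. For $(x,y)$ near $\Del_X$, the orthogonal displacement $o_{x,y}\in\gom_{\pi(x)}$ is well defined, $(\pi(x),o_{x,y})\in\cD$, and $\norm{o_{x,y}\wedge u_{\pi(x)}}/\norm{u_{\pi(x)}}=\norm{o_{x,y}}\asymp \dx(x,y)$. Plugging $(p,v)=(\pi(x),o_{x,y})$ into Corollary~\ref{cor:expansion2} will give
\[
\int_G\pa[\bigg]{\frac{\norm{g(o_{x,y}\wedge u_{\pi(x)})}}{\norm{gu_{\pi(x)}}}}^{-\delta}\dv{\mu_{M,(x,y)}}g\;\le\; c_{\ast}\, v_M(x,y)
\]
with $c_{\ast}<1$, provided $\delta$ is small. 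For $g$ in a prescribed compact $E_R\subset G$, Lemma~\ref{lem:nbd}(3) -- with its constant chosen arbitrarily close to $1$ by shrinking the underlying neighborhood of $\Del_X$ -- identifies the integrand above with $\dx(gx,gy)^{-\delta}=v_M(gx,gy)$ up to a factor as close to $1$ as we wish, producing $\int_{E_R}v_M(gx,gy)\dv{\mu_{M,(x,y)}}g\le c_1\, v_M(x,y)$ with $c_1<1$.

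For the complementary range $g\notin E_R$ the plan is to use the coarse bound $\dx(gx,gy)\ge \dx(x,y)/\norm{g^{-1}}_\gog$ furnished by~\eqref{eq:1525}, which gives
\[
\int_{G\smallsetminus E_R}v_M(gx,gy)\dv{\mu_{M,(x,y)}}g\le v_M(x,y)\int_{G\smallsetminus E_R}\norm{g^{-1}}_\gog^{\delta}\dv{\mu_{M,(x,y)}}g,
\]
and then to control the last factor via the exponential moment of $N(g)=\norm{g^{-1}}_\gog$ supplied by Proposition~\ref{lem:prop 6.7}: H\"older and Chebyshev will render it uniformly small in $y\in Y_M$ as $R\to\infty$, once $\delta$ is smaller than the exponential-moment exponent. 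Fixing $R$ large, these two contributions combine to give strict contraction where $\dx(x,y)$ is small; outside that region $v_M$ is bounded, and the same coarse bound yields a uniform bound on $\on A_{M,\mu}v_M$ to be absorbed into the additive constant $b$.

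The main obstacle is precisely this passage from Corollary~\ref{cor:expansion2} -- a global integral identity about an algebraic cocycle on $G$ -- to a statement about the geometric quantity $v_M(gx,gy)$: Lemma~\ref{lem:nbd} guarantees the required geometric comparison only for $g$ in a fixed compact set, whereas $\mu_{M,(x,y)}$ is not compactly supported. Bridging this gap via the quantitative exponential moments from Proposition~\ref{lem:prop 6.7} is the technical heart of the construction and dictates the choice of $\delta$.
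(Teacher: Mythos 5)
Your proposal is correct and follows essentially the same route as the paper: the same candidate $v_M(x,y)=\dx(x,y)^{-\delta}$, the near-diagonal contraction obtained by combining Lemma~\ref{lem:nbd} with Corollary~\ref{cor:expansion2}, the tail of $\mu_{M,(x,y)}$ controlled through the exponential moments of a submultiplicative function via Proposition~\ref{lem:prop 6.7} together with~\eqref{eq:1525}, and the off-diagonal region absorbed into the additive constant. The only cosmetic difference is that the paper splits $G$ by sublevel sets of an explicit submultiplicative $R(g)$ and extracts a $T^{-\delta}$ factor directly, where you invoke H\"older--Chebyshev, which amounts to the same estimate.
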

\begin{proof}
    Consider the submultiplicative function
    $$R(g) \defi \max\set{\norm{g^{\pm 1}}_{\gog}, \norm{g^{\pm 1}}_{\wedge^4\gog} \norm{g^{\mp1}}_{\wedge^3\gog}}.$$
    We choose $M$ large enough and $\del$ small enough
    so that Corollary~\ref{cor:expansion2} holds and
    Proposition~\ref{lem:prop 6.7} yields
    \begin{equation}\label{eq:1048}
        \sup_{(x,y)\in (X\times X)_M}\int_G R(g)^{2\del} \dv{\mu_{M,(x,y)}} g \eqqcolon  J<\infty.
    \end{equation}
    We let $c<1$ be
    the constant satisfying~\eqref{eq:1008}.
    Define
    $$v_M(x,y) \defi \dx(x,y)^{-\del}$$
    and view it as a function on $(X\times X)_M\smallsetminus \Del_X$. Here $\dx$ is the metric discussed in \S\ref{ssec:metric}.
    It is then clear that it is continuous and proper. We are
    left to prove the existence of constants $c'<1$ and $b'$ such that for all $(x,y)\in (X\times X)_M$,
    $$\on A_{M,\mu}v_M(x,y) = \int_G v_M(gx,gy)\dv{\mu_{M,(x,y)}} g\le c'v_M(x,y) + b'.$$
    We will estimate the integral by splitting the domain of integration $G$ into a compact piece and its complement and
    treat each piece separately. This splitting is done using the submultiplicative function $R$ introduced above.

    For any $T>0$ let $G^{\leq T}\defi\set{g\in G:R(g)\leq T}$ and $G^{>T}\defi G\smallsetminus G^{\leq T}$. Then we have
    \begin{equation}\label{eq:0924}
        \on A_{M,\mu}v_M(x,y)= I_1 + I_2,
    \end{equation}
    where,
    \begin{equation*}
        I_1 \defi  \int_{G^{\leq T}} v_M(gx,gy)\dv{\mu_{M,(x,y)}}g\quad\textrm{and}\quad        I_2 \defi  \int_{G^{>T}}v_M(gx,gy)\dv{\mu_{M,(x,y)}}g.
    \end{equation*}
    We first estimate $I_1$.
    Fix a large $T$ and a constant $c_1<1$ and apply Lemma~\ref{lem:nbd} to the compact set $G^{\leq T}$
    and the constant $c_1$ to
    obtain a neighbourhood of the diagonal $\cU = \cU_{T,c_1}$ satisfying the conclusion of Lemma~\ref{lem:nbd}.
    The integral $I_1$ is bounded separately according to whether $(x,y)\in \cU$ or not:

    \noindent \tb{Case 1}. Assume $(x,y) \in (X\times X)_M\smallsetminus \cU$.
    By compactness
    $$\theta\defi\inf\set{\dx(gx',gy'): (x',y')\in (X\times X)_M\smallsetminus \cU,\; g \in G^{\leq T}} >0.$$
    Hence in this case we have
    $$I_1 = \int_{G^{\leq T}} \dx(gx,gy)^{-\del} \dv{\mu_{M,(x,y)}} g \le \theta^{-\del}.$$
    \tb{Case 2}. Assume $(x,y)\in (X\times X)_M\cap \cU$.  Let
    $u\in \wedge^3\gor_{\pi(x)}\mz$. Then, by applying parts~\eqref{p:0932} and~\eqref{p:0933} of Lemma~\ref{lem:nbd}
    and using Corollary~\ref{cor:expansion2} we get
    \begin{align*}
        I_1 &= \int_{G^{\leq T}} \dx(gx,gy)^{-\del} \dv{\mu_{M,(x,y)}} g\\
            &\le  c_1^{-2\del}\int_{G^{\leq T}}\pa[\bigg]{ \frac{\norm{g(o_{x,y}\wedge u)}}{\norm{gu}}}^{-\del} \dv{\mu_{M,(x,y)}} g\\
            & \le c_1^{-2\del} c \pa[\bigg]{ \frac{\norm{(o_{x,y}\wedge u)}}{\norm{u}}}^{-\del}\le c_1^{-4\del} c \dx(x,y)^{-\del}.
    \end{align*}
    In total, combining the two cases we arrive at the bound $$I_1\le c_1^{-4\del} c v_M(x,y) + \theta^{-\del}.$$

    We now bound $I_2$.
    Notice that $R(g)=R(g^{-1})$ and by ~\eqref{eq:1525}, for all
    $x,y\in X$ and $g\in G$ one has $\dx(gx,gy)\le R(g)\dx(x,y)$.
    It follows that $\dx(gx,gy)^{-\del}\le R(g)^\del \dx(x,y)^{-\del}$.
    Therefore by~\eqref{eq:1048} we have that
    \begin{align*}
        I_2& = \int_{G^{>T}} v_M(gx,gy)\dv{\mu_{M,(x,y)}} g\\
           &\le v_M(x,y)\int_{G^{>T}} R(g)^\del \dv{\mu_{M,(x,y)}} g\\
           &\le v_M(x,y)T^{-\del} \int_{G^{>T}} R(g)^{2\del} \dv{\mu_{M,(x,y)}} g\le T^{-\del} J v_M(x,y).
    \end{align*}
    Combining the estimates for $I_1$ and $I_2$ we conclude that for all $(x,y)\in (X\times X)_M$ one has
    $$\on A_{M,\mu}v_M(x,y)\le (c_1^{-4\del}c + T^{-\del}J) v_M(x,y)  + \theta^{-\del}.$$
    If we choose $c_1$ close enough to 1 and $T$ large enough then the constant
    $c' \defi c_1^{-4\del}c + T^{-\del}J $ is strictly less than 1, which completes the proof. Note that $\theta$ depends on
    $T$ and $c_1$ (which determine $\cU$) but that does not matter.
\end{proof}

Finally we conclude the proof
of Theorem~\ref{thm:full}\eqref{main2}.
\begin{proof}[Proof of Theorem~\ref{thm:full}\eqref{main2}]
    Notice that since $\pi_*\nu = \bar{\nu}_{\Xbar}$ is the Furstenberg measure of $\mu$ on $\Xbar$ which 
    is non-atomic (see Remark~\ref{rem:non-atomic}), $\nu$ is non-atomic as well.
    By Proposition~\ref{prop:cf2} we may apply the criterion for the non-atomicity of the limit measures given
    in Proposition~\ref{prop:Criterion} which concludes the proof.
\end{proof}


\appendix
\section{Diophantine approximation and the geometry of numbers}\label{appendix}
In the past decades Furstenberg has been promoting an approach to attack a famous
open problem in Diophantine approximation: \textit{Are cubic numbers well approximable?} Recall that a number $\al\in \bR$
is well approximable if the coefficients $a_i$ in its continued fraction expansion $\al = \br{a_0;a_1,a_2,\dots}$ form an unbounded
sequence of integers. Lagrange's theorem asserts that $\al$ is a quadratic irrational number if and only if its continued fraction
expansion is eventually periodic and hence clearly not well approximable.
This could be proved by translating the problem into a dynamical problem about the action of
the diagonal group $a_t = \diag{e^t,e^{-t}}$ acting on the space of lattices in the plane $\PGL_2(\bR)/\PGL_2(\bZ)$. Furstenberg's
approach says that the dynamical system $a_t\curvearrowright  \PGL_2(\bR)/\PGL_2(\bZ)$ which is
tailored to detect quadratics, should be replaced with a dynamical system which is tailored to detect cubic irrationals. He then suggests the following characterisation of well approximability in terms of the dynamics on the space of 2-lattices $\X$ discussed
in this paper.
\begin{theorem}[Furstenberg - unpublished]\label{Furstenberg's criterion}
    Let $A$
    denote the
    group of diagonal matrices in $G$ and let $x = \br{\Lam}\in X$ be the homothety class of the 2-lattice $\Lam = \spa_\bZ\set{v,w}$ spanned by $v,w\in\bR^3\mz$. Assume that $\Lam\cap p = \set{0}$ for $p = \spa_\bR\set{e_i,e_j}$ any one of the three planes fixed by $A$. Then the orbit $Ax$ is unbounded in $X$ if and only if one of the ratios
    $v_i/w_i$ is well approximable, for $i=1,2,3$.
\end{theorem}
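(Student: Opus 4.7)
The plan is to characterise unboundedness of $A\br{\Lam}$ by an explicit computation of the minimum of the normalised length on the orbit, and then translate the resulting infimum into a Diophantine condition on the ratios $v_i/w_i$. First, by the Mahler-type criterion afforded by the continuous proper function $u_X$ constructed in \S\ref{subsec:Construction-of-the}, the orbit $A\br{\Lam}$ is unbounded in $X$ if and only if
\[
    \inf_{a \in A,\, u \in \Lam\smallsetminus\set{0}}\frac{\norm{au}}{\av{a\Lam}^{1/2}} = 0.
\]
Parametrising $A\ni a=\diag{e^{t_1},e^{t_2},e^{t_3}}$ with $t_1+t_2+t_3=0$, writing a non-zero $u=qv-pw$ with $(p,q)\in\bZ^2$ coprime, and introducing $c_i\defi qv_i-pw_i$ together with the $2\times 2$ minors $A_1,A_2,A_3$ of $(v\mid w)$, one has $\norm{au}^2=\sum_{i} e^{2t_i}c_i^2$ and $\av{a\Lam}^2=\sum_{i} e^{-2t_i}A_i^2$. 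The hypothesis that $\Lam$ meets none of the three $A$-fixed coordinate planes is equivalent to the irrationality of every ratio $v_i/w_i$, and in particular guarantees $c_i\ne 0$ for every admissible pair $(p,q)$.

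The heart of the argument is the estimate
\[
    \min_{a\in A}\frac{\norm{au}^2}{\av{a\Lam}}\;\asymp\;\frac{\av{c_1c_2c_3}}{\max_{i}\av{c_iA_i}},
\]
with implicit constants depending only on $\Lam$. The upper bound is obtained by evaluating at the AM-GM-critical point of $F(t)\defi\sum_{i} e^{2t_i}c_i^2$ on the slice $t_1+t_2+t_3=0$, namely $e^{2t_i}=\av{c_1c_2c_3}^{2/3}/c_i^2$, which gives $F=3\av{c_1c_2c_3}^{2/3}$ and $\av{a\Lam}^2=\sum_{i}c_i^2A_i^2/\av{c_1c_2c_3}^{2/3}$, leading directly to the desired estimate. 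The matching lower bound is verified by a one-parameter scaling argument in $t$: one reduces to the generic regime where exactly one $\av{c_i}$ is much smaller than the other two and checks directly via the Lagrange-multiplier equations for the ratio $F/G^{1/2}$ that the global minimum is comparable to the AM-GM value up to a bounded factor.

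With this formula in hand, the dichotomy follows by a case analysis over a sequence of coprime pairs $(p_n,q_n)$. After passing to a subsequence one may assume $p_n/q_n\to r\in\bR\cup\set{\infty}$. If either $\av{q_n}$ stays bounded, or $r\neq v_i/w_i$ for every $i$, then each $\av{c_i^{(n)}}$ grows like $\max(\av{p_n},\av{q_n})$, so the right-hand side of the formula is bounded below by a positive constant multiple of $\max(\av{p_n},\av{q_n})^2$, which precludes unboundedness. The only remaining possibility is that $q_n\to\infty$ and $r=v_{i_0}/w_{i_0}$ for some index $i_0$; in this regime $\av{c_{i_0}^{(n)}}=o(q_n)$ while $\av{c_j^{(n)}}\asymp q_n$ for $j\neq i_0$, so the formula collapses to $\min_{a}\norm{au_n}^2/\av{a\Lam}\asymp q_n\av{q_nv_{i_0}-p_nw_{i_0}}$. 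Consequently $A\br{\Lam}$ is unbounded precisely when $\liminf_{n}q_n\av{q_nv_{i_0}-p_nw_{i_0}}=0$ for some $i_0$, and by the classical theory of continued fractions this occurs exactly when the partial quotients of $v_{i_0}/w_{i_0}$ are unbounded, i.e.\ when $v_{i_0}/w_{i_0}$ is well approximable.

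The main obstacle is to secure the matching lower bound in the minimisation formula with implicit constants that are uniform in $(p,q)$. The AM-GM critical point of $F$ alone is not a stationary point of the ratio $F/G^{1/2}$, so one must rule out that a different choice of $a\in A$ could make the ratio substantially smaller. A careful Lagrange-multiplier analysis in the generic regime, supplemented by separate treatments of the degenerate configurations (some $A_i=0$, or two of the ratios $v_i/w_i$ coincide), will be required; in each degenerate case the dichotomy is preserved because any sequence of simultaneous good rational approximations forces the well-approximability of the coincident ratio.
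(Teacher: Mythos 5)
The paper itself offers no proof of this statement: it is quoted as an unpublished result of Furstenberg and used only as input for Corollary~\ref{cor:ap}, so there is no in-paper argument to compare yours against; I can only judge the proposal on its own terms. On those terms your strategy is correct: properness of $u_X$ reduces unboundedness of $A\br{\Lam}$ to $\inf_{a,u}\norm{au}/\av{a\Lam}^{1/2}=0$, your identity $\norm{au}^2=\sum_i e^{2t_i}c_i^2$, $\av{a\Lam}^2=\sum_i e^{-2t_i}A_i^2$ is right, and the continued-fraction dichotomy at the end is sound, including the coincident-ratio case (which is exactly the case where the corresponding minor $A_k$ vanishes, and at most one minor can vanish, so the quantity still collapses to $q\av{q\al-p}$ for the coincident ratio $\al$). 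The only genuine issue is that the step you flag as the main obstacle — the lower bound in $\min_a\norm{au}^2/\av{a\Lam}\asymp \av{c_1c_2c_3}/\max_i\av{c_iA_i}$ — is left unproved and is attacked with unnecessarily heavy machinery; no Lagrange multipliers or regime analysis is needed. Writing $x_i=e^{2t_i}$ with $x_1x_2x_3=1$, let $k$ be an index maximising $A_k^2/x_k$ (then $A_k\neq0$, since not all minors vanish) and let $i,j$ be the remaining indices; then
\begin{equation*}
\sum_m x_m c_m^2\;\ge\;x_ic_i^2+x_jc_j^2\;\ge\;2\av{c_ic_j}\sqrt{x_ix_j}\;=\;\frac{2\av{c_ic_j}}{\sqrt{x_k}},
\qquad
\Big(\sum_m \frac{A_m^2}{x_m}\Big)^{1/2}\;\le\;\frac{\sqrt{3}\,\av{A_k}}{\sqrt{x_k}},
\end{equation*}
so the ratio is at least $\tfrac{2}{\sqrt{3}}\,\av{c_1c_2c_3}/\av{c_kA_k}\ge\tfrac{2}{\sqrt{3}}\,\av{c_1c_2c_3}/\max_m\av{c_mA_m}$, while your AM--GM point gives the matching upper bound $3\av{c_1c_2c_3}/\big(\sum_m A_m^2c_m^2\big)^{1/2}\le 3\av{c_1c_2c_3}/\max_m\av{c_mA_m}$; the constants are absolute, hence uniform in $(p,q)$ and valid in every degenerate configuration. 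With this two-line argument in place of the proposed Lagrange-multiplier analysis, your proof is complete.
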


Before we proceed to show that the dynamical system $A\curvearrowright \X$ can detect cubic numbers in a certain sense, we introduce a notion in \textit{geometry of numbers} that will be important for our discussion.
Consider a lattice $L\subset\bR^3$ and an $L$-rational line $W$ (where $W$ is $L$-rational if $L\cap W \ne\set{0}$).
We define the
\textit{directional $2$-lattice} $$L_W \defi \pi^{W}(L)$$
where $\pi^W$ denotes the orthogonal projection onto $W^\perp$. The term comes from visualising $L_W$ as
representing what $L$ looks like when one is looking in the direction of $W$.
We set
$$\goD(L) = \set{\br{L_W}:W\in\bP\bR^3\textrm{ is $L$-rational}}\subset X.$$
We now wish to describe subcollections of $\goD(L)$ which are obtained by conditioning on $W$. Note that an $L$-rational line
$W\in\bP\bR^3$ is characterised by the generator $v_W$ (well defined up to sign) of $L\cap W$. Given a subset
$S\subset \bR^3$ we define the set of conditioned directional lattices defined by $L$ and $S$ to be
$$\goD_S(L) = \set{\br{L_W}:W\in\bP\bR^3\textrm{ is $L$-rational and }v_W\in S}\subset X.$$
Sometimes, instead of writing $L_W$ we write $L_v$, where $v=v_W$.

Consider a lattice $L\subset \bR^3$ which is obtained in the following manner.
Let $\bK$ be a totally real number field of degree 3 over $\bQ$ and for $i=1,2,3$ let $\sig_i$ be its distinct embeddings into the reals. Let $\vphi:\bK\to \bR^3$
be the so-called geometric embedding given by $\vphi(\al) \defi (\sig_i(\al))_1^3$.
It is well known that if $\cO_\bK$ denotes the ring of integers in $\bK$ then $L\defi \vphi(\cO_\bK)$ is a lattice in $\bR^3$.
Let $\on{N}:\bR^3\to \bR$ denote the cubic form given by $\on{N}(v)=v_1 v_2 v_3$, so that for $\al\in\bK$ one has $N_{\bK/\bQ}(\al) = \on{N}(\vphi(\al))$.

The lattice $L$ has a very special
relationship with  the surface 
$$S\defi\set{v\in\bR^3 :\on{N}(v)=\pm1}.$$ Namely,
$$S\cap L = S\cap L_{\on{prim}} = \set{\vphi(\al): \al\in \cO_{\bK}^\times}
\footnote{$L_{\on{prim}}$ denotes the collection
of primitive vectors in $L$.}.$$
In particular, the group
\begin{equation}\label{eq:1712}
    A_L\defi \set{\diag{\sig_1(\al),\sig_2(\al),\sig_3(\al)}:\al\in \cO_\bK^\times},
\end{equation}
which has a finite index subgroup which is a lattice 
in $A\simeq \bR^2$ by Dirichlet's unit theorem, acts transitively and simply on $S\cap L$.
This furnishes the link between Furstenberg's criterion for well approximability and cubic numbers.
\begin{corollary}\label{cor:ap}
    Let $\bK$, $L=\vphi(\cO_\bK)$ and $S=\set{v\in \bR^3: \on{N}(v) =\pm 1}$ be as above. Then the collection of conditioned directional lattices
    $\goD_S(L)$ is unbounded in $X$ if and only if for some $1\le i\le 3$, $\sig_i(\bK)\smallsetminus \bQ$ is composed of well approximable numbers.
\end{corollary}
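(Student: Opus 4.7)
The plan is to identify $\goD_S(L)$ with an $A$-orbit in $X$ and then apply Furstenberg's criterion (Theorem~\ref{Furstenberg's criterion}).

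First I would record the structure coming from the cubic setup. Since $\bK$ is totally real cubic, Dirichlet's unit theorem yields that $\cO_\bK^\times$ has rank two, so the group $A_L$ defined in~\eqref{eq:1712} is a cocompact lattice in $A$. The group $A_L$ stabilises both $L$ (by definition) and the dual $L^* = \vphi(\mathfrak{d}^{-1})$, since its elements are symmetric and $uL=L$ implies $u^{-1}L^* = L^*$. Moreover, it acts simply transitively on $S \cap L = \vphi(\cO_\bK^\times)$ via $u \cdot \vphi(\beta) = \vphi(\al\beta)$ when $u = \diag{\sig_1(\al),\sig_2(\al),\sig_3(\al)}$.

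Next I would set up the in-plane duality connecting directional lattices with intersections. For any primitive $v\in L$, inside the plane $v^\perp$ the directional lattice $L_v=\pi^v(L)$ is dual to $L^*\cap v^\perp$, i.e.\ $(L_v)^*= L^*\cap v^\perp$, as a direct check from the definitions of the dual and of the orthogonal projection shows. Passing to homothety classes gives an involution $D:X\to X$ which is $\pi$-preserving, continuous and proper, and sends $\br{L_v}$ to $\br{L^*\cap v^\perp}$. For $u\in A_L$, the symmetry identity $uv^\perp = (u^{-1}v)^\perp$ yields
\begin{equation*}
    u\cdot(L^*\cap v_0^\perp) \;=\; uL^*\cap uv_0^\perp \;=\; L^*\cap (u^{-1}v_0)^\perp.
\end{equation*}
Fixing $v_0 \defi \vphi(1) = (1,1,1)$ and $w_0 \defi L^*\cap v_0^\perp$, this together with the transitivity of $A_L$ on $S\cap L$ identifies $\set{\br{L^*\cap v^\perp}:v\in S\cap L_{\on{prim}}}$ with the orbit $A_L\cdot\br{w_0}$. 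Hence $\goD_S(L)=D(A_L\cdot\br{w_0})$, and by properness of $D$ and cocompactness of $A_L$ in $A$, $\goD_S(L)$ is unbounded in $X$ iff $A\cdot\br{w_0}$ is unbounded.

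I would then apply Furstenberg's criterion to $\br{w_0}$. Setting $\bK_0\defi\set{\gamma\in\bK:\on{Tr}(\gamma)=0}$, one has $w_0=\vphi(\mathfrak{d}^{-1}\cap\bK_0)$, and any nonzero $\vphi(\gamma)\in w_0$ has all coordinates $\sig_i(\gamma)$ nonzero since each $\sig_i$ is a field embedding, so the hypothesis of the criterion holds. For any $\bZ$-basis $\set{\vphi(\gamma_1),\vphi(\gamma_2)}$ of $w_0$, the ratio $\sig_i(\gamma_1)/\sig_i(\gamma_2)=\sig_i(\gamma_1/\gamma_2)$ lies in $\sig_i(\bK)\setminus\bQ$, with $\gamma_1/\gamma_2\notin\bQ$ following from the $\bR$-linear independence of $\vphi(\gamma_1),\vphi(\gamma_2)$. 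Furstenberg's criterion then reads: $A\cdot\br{w_0}$ is unbounded if and only if $\sig_i(\gamma_1/\gamma_2)$ is well-approximable for some $i\in\set{1,2,3}$.

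The main obstacle is promoting this single-element statement to the universal statement in the corollary. The plan is to exploit the freedom in the construction: as $v$ varies over $S\cap L$ and bases of $L^*\cap v^\perp$ are varied through $\SL_2(\bZ)$, the corresponding ratios $\gamma_1/\gamma_2$ realise every element of $\bK\setminus\bQ$; indeed, for any $\al\in\bK\setminus\bQ$ and $\beta\in\cO_\bK^\times$ the two $\bQ$-hyperplanes $\beta^{-1}\bK_0$ and $(\al\beta)^{-1}\bK_0$ of $\bK$ intersect in a $\bQ$-line, from which one reads off $\al=\gamma_1/\gamma_2$ with both $\gamma_j\in\mathfrak{d}^{-1}\cap\beta^{-1}\bK_0$ after scaling. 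Since Furstenberg's criterion applied at every point of the common $A$-orbit returns the same boundedness Boolean, and since well-approximability is $\PGL_2(\bZ)$-invariant by Serret's theorem, combining these over the orbit together with a pigeon-hole on $i\in\set{1,2,3}$ should single out an $i$ for which every $\sig_i(\al)$ with $\al\in\bK\setminus\bQ$ is well-approximable. Making this coordinate-pinning argument rigorous is the crux of the proof.
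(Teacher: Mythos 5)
Your reduction to Furstenberg's criterion is correct, and it takes a somewhat different route from the paper: rather than computing $A_L\br{\pi^{\mb{1}}(L)}$ directly through an equivariance identity for the orthogonal projections $\pi^v$, you pass to the dual picture, identify $\set{\br{L^*\cap v^\perp}:v\in S\cap L_{\on{prim}}}$ with the orbit $A_L\br{w_0}$ (using $uL^*=L^*$ and $u(v^\perp)=(u^{-1}v)^\perp$ for the symmetric matrices $u\in A_L$), and transport back through the fibrewise duality involution $D$, which is a continuous involution of $X$ and hence preserves (un)boundedness. This cleanly yields ``$\goD_S(L)$ unbounded $\iff A\br{w_0}$ unbounded'' while avoiding the bookkeeping of how orthogonal projections transform under the linear action, and your verification of the hypothesis of Theorem~\ref{Furstenberg's criterion} for $w_0$ (all coordinates of a nonzero $\vphi(\gamma)$ are nonzero) is a point the paper leaves implicit.

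The genuine gap is in the final step, and you flag it yourself. Furstenberg's criterion gives only that unboundedness is equivalent to well approximability of \emph{some} conjugate $\sig_i(\gamma_1/\gamma_2)$ of the single element $\gamma_1/\gamma_2\in\bK\smallsetminus\bQ$, and you never bridge from this to ``for some $i$, every element of $\sig_i(\bK)\smallsetminus\bQ$ is well approximable''. The route you sketch does not close the gap: even granting that every $\al\in\bK\smallsetminus\bQ$ is realised as a basis ratio somewhere in the orbit, each application of the criterion only yields ``for every $\al$ there exists $i=i(\al)$ such that $\sig_{i(\al)}(\al)$ is well approximable'', and no pigeonhole converts ``for all $\al$ there is an $i$'' into ``there is an $i$ working for all $\al$''; moreover $\PGL_2(\bZ)$-invariance (Serret) is too weak, since it only makes each set $T_i=\set{\al\in\bK\smallsetminus\bQ:\sig_i(\al)\ \textrm{well approximable}}$ a union of Serret classes, and nothing then forces a single $T_i$ to exhaust $\bK\smallsetminus\bQ$. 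The missing ingredient is the paper's one-line observation: for a fixed real cubic field $\bF$, the set $\bF\smallsetminus\bQ$ is a \emph{single} orbit under M\"obius transformations by $\GL_2(\bQ)$, and well approximability is invariant under rational (not merely integral) M\"obius maps; hence well approximability is an all-or-nothing property of $\bF\smallsetminus\bQ$. With this, one well approximable conjugate $\sig_i(\gamma_1/\gamma_2)$ already forces all of $\sig_i(\bK)\smallsetminus\bQ$ to be well approximable, the quantifier problem disappears, and the entire ``realise every element as a ratio'' apparatus becomes unnecessary.
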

\begin{proof}
    For $v\in\bR^3$ we denote by $\pi^v:\bR^3\to v^\perp$ the orthogonal projection with kernel $\bR v$.
    Let $\mb{1} = (1,1,1)\in L$ and denote
    $\Lam = \pi^\mb{1}(L)$. Since the group in~\eqref{eq:1712} contains a finite index subgroup which is 
    cocompact in $A$,
    the unboundedness of $A\br{\Lam}$ is equivalent to the unboundedness of $A_L\br{\Lam}$. But
    \begin{align*}
        A_L\br{\Lam} &= \set{\diag{\sig_1(\al),\sig_2(\al),\sig_3(\al)}([\pi^{\mb{1}}(L)]) :\al\in \cO_\bK^\times}\\
                     & = \set{[\pi^{\vphi(\al)}(L)]:\al\in\cO_\bK^\times} =\set{\br{\pi^v(L)}: v\in L\cap S} = \goD_S(L),
    \end{align*}
    where we have used the fact that for $g\in G$ and $v\in L$, $g\pi^v(L) = \pi^{gv}(g^{-t}L)$ and the fact that for $a\in A_L$, we have that $a^{-t}L=L$.

    Let $\al,\be\in\cO_\bK$ be such that $\set{1,\al,\be}$ forms a basis of $\cO_\bK$ over $\bQ$.
    Denote $\al' = \al - \frac{1}{3}\on{Tr}(\al)$ and similarly denote $\be' = \be- \frac{1}{3}\on{Tr}(\be)$.
    It follows that
    $\Lam$ is spanned by $\vphi(\al')$ and  $\vphi(\be')$.
    Hence, by Furstenberg's criterion (Theorem~\ref{Furstenberg's criterion}) we deduce that $\goD_S(L)$ is unbounded
    if and only if there exists $1\le i\le 3$ such that the ratio $\sig_i(\al'/\be')$ is well approximable.
    Now it is not hard to see that for a given cubic real field $\bF$,
    since $\bF\smallsetminus \bQ$ is a single $\GL_2(\bQ)$-orbit 
    (under the action by M\"obius transformations),
    and since this action preserves well approximability,
    then either all elements of $\bF\smallsetminus \bQ$ are well approximable or non of them are.
    This concludes the proof of the Corollary.
\end{proof}
Let $$\Del\defi\set{p\in \Xbar: p \textrm{ contains one of the axis $\bR e_i$, $i=1,2,3$}}$$
be the projective triangle defined by the axis.
Recasting in terms of conditioned directional lattices the common belief that real cubic numbers should be well approximable and even generic for the Gauss map we propose the following.
\begin{conjecture}\label{conj:2}
    Let $L$ and $S$ be as in Corollary~\ref{cor:ap}. Then the closure in $X$ of $\goD_S(L)$ contains $\pi^{-1}(\Del)$.
\end{conjecture}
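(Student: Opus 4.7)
The plan is to translate Conjecture~\ref{conj:2} into a dynamical question about the full diagonal group $A\subset G$ acting on $Y\defi\SL_3(\bR)/\SL_3(\bZ)$, and then to transport the information back to $X$. From the proof of Corollary~\ref{cor:ap}, $\goD_S(L)=A_L\br\Lam$ where $\Lam=\pi^{\mb{1}}(L)$ and $A_L$ is the cocompact lattice of~\eqref{eq:1712} in $A$. Lifting $L$ to a point $y_L\in Y$, Dirichlet's unit theorem makes the $A$-orbit $\cO_L\defi Ay_L$ compact in $Y$, carrying a unique $A$-invariant probability measure $m_L$. After fixing a non-coordinate plane $p\in\Del$ and a target $\br{\Lam'}\in\pi^{-1}(p)$, the goal becomes to produce $a_n\in A_L$ with $a_n\br\Lam\to\br{\Lam'}$ in $X$.

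First I would analyse the fibration $\pi:X\to\Xbar$ near $\Del$. For a plane $p\ni e_3$, the stabiliser of $p$ in $A$ is the one-parameter subgroup $A_p=\set{\diag{a,a,a^{-2}}:a>0}$, and a direct computation shows that $A_p$ acts on the fibre $\pi^{-1}(p)\simeq\SL_2(\bR)/\SL_2(\bZ)$ as the geodesic flow. A further computation shows that $(a\mb{1})^\perp\to p$ precisely when the third entry of $a\in A$ tends to $\infty$ along a specific ray in $A\simeq\bR^2$ dictated by $p$. Since $A_L$ is a rank-$2$ cocompact lattice in $A$, every such ray is witnessed by a sequence $a_n\in A_L$, yielding walks with $\pi(a_n\br\Lam)\to p$; this gives the density of the planes.

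The second step is to promote this density of planes to density inside the fibre. Writing $a_n=c_n b_n$ with $b_n\in A_p$ free and $c_n$ chosen in a transverse section of $A/A_p$ converging to $p$, the transverse component $c_n\br\Lam$ fixes a base point in $\pi^{-1}(p)$, while the $A_p$-factor $b_n$ flows this base point along the geodesic orbit. One must therefore show that, as both the direction of approach $c_n$ and the flow time $b_n$ are varied subject to $a_n\in A_L$, the resulting points accumulate on all of $\pi^{-1}(p)$. The height function $u_X$ built in \S\ref{subsec:Construction-of-the} would be used here to control the rate of escape towards $\pi^{-1}(\Del)$ and to bound the height of reappearing points in each fibre, much as in the proof of Proposition~\ref{prop:contracted func on X}.

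The hard part will be this second step: it amounts to an equidistribution/measure-rigidity statement for the compact $A$-orbit $\cO_L$ restricted to tubes transverse to $\pi^{-1}(\Del)$, a higher-rank problem closely related to the Littlewood conjecture and to the work of Einsiedler--Katok--Lindenstrauss and Margulis on the diagonal action on $\SL_3(\bR)/\SL_3(\bZ)$. The random-walk methods of the present paper do not apply directly, since $A_L$ is very far from being Zariski dense in $G$; a realistic intermediate target would be to combine the exponential recurrence supplied by the contraction hypothesis with a non-concentration estimate for the diagonal flow, in order to at least establish that $\goD_S(L)$ is unbounded in $X$, thus confirming via Corollary~\ref{cor:ap} the long-standing belief that some embedding of a totally real cubic field should yield a well approximable number.
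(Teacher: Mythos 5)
There is nothing to compare your proposal against: the statement you were asked about is stated in the paper as a \emph{conjecture}, not a theorem, and the paper offers no proof of it. It is presented precisely as a reformulation, via Furstenberg's criterion (Theorem~\ref{Furstenberg's criterion}) and Corollary~\ref{cor:ap}, of the famous open problem of whether real cubic irrationals are well approximable (indeed generic for the Gauss map). So the question is whether your sketch actually closes that problem, and it does not.

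Your first step (density of the planes $\pi(a_n\br{\Lam})$ in $\Del$, using that $A_L$ is, up to finite index, a cocompact rank-two lattice in $A$ and that no nontrivial unit of a totally real cubic field has two archimedean absolute values equal) is the easy and essentially known part. The genuine gap is your second step: promoting density of the base planes to density inside the fibres $\pi^{-1}(p)$, $p\in\Del$, is not a technical refinement that the height function $u_X$ of \S\ref{subsec:Construction-of-the} can supply. The contraction hypothesis there is a statement about averaging over a measure $\mu$ whose support generates a group that is Zariski dense in $G$ or in $\SO(Q)(\bR)$; for the abelian group $A_L$ no such expansion/contraction mechanism exists, and the behaviour of the compact $A$-orbit $Ay_L$ in $Y=\SL_3(\bR)/\SL_3(\bZ)$ near the relevant singular set is exactly the higher-rank rigidity problem (of Littlewood/Einsiedler--Katok--Lindenstrauss type) that remains open. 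Moreover, even your proposed ``realistic intermediate target'' --- unboundedness of $\goD_S(L)$ in $X$ --- is, by Corollary~\ref{cor:ap} itself, \emph{equivalent} to the well approximability of the numbers in some embedding $\sig_i(\bK)\smallsetminus\bQ$, i.e.\ to the open problem the conjecture encodes; it is not a softer statement one could hope to extract from exponential recurrence plus a non-concentration estimate without new ideas. In short, your outline reproduces the reduction already carried out in the appendix and then labels the remaining open problem as ``the hard part''; it does not constitute a proof, nor does it isolate a genuinely new attack on that part.
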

In the introduction we stated Conjecture~\ref{conj:1} which follows from the following conjecture. We view it as an analogue
to Conjecture~\ref{conj:2}.
We use the notation $V_Q^1 =  \set{v:Q(v)=1}$ for $Q(v)\defi2v_1v_3-v_2^2$ from the introduction
and also $\cC\subset \Xbar$ for the circle of isotropic subspaces
defined before Theorem~\ref{thm:cover}.
\begin{conjecture}
    The closure of $\goD_{V_Q^1}(\bZ^3)$ contains $\pi^{-1}(\cC)$.
\end{conjecture}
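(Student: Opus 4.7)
The plan is to reduce the final conjecture to the stationary-measure classification of Theorem~\ref{thm:case2} applied to the action of $\SO(Q)(\bZ)$ on $X$.

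\textbf{Reduction to an orbit closure.} First, observe that for $g\in \SO(Q)(\bZ)$ one has the identity $g\cdot[\pi^{v}(\bZ^3)] = [\pi^{g^{-t}v}(\bZ^3)]$ in $X$: since the matrix $M_Q$ representing $Q$ in the standard basis has integer entries and satisfies $M_Q^2 = I$, the transpose--inverse $g^{-t} = M_Q\, g\, M_Q$ lies in $\SO(Q)\cap \SL_3(\bZ)$ and therefore preserves both $\bZ^3$ and $V_Q^1(\bZ)$. Combined with the standard arithmetic fact that $\SO(Q)(\bZ)$ has finitely many orbits on the primitive points of $V_Q^1(\bZ)$, this shows that $\goD_{V_Q^1}(\bZ^3)$ is a finite union of $\SO(Q)(\bZ)$-orbits in $X$. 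It thus suffices to prove that, for a single primitive $v_1\in V_Q^1(\bZ)$ (for instance $v_1=(1,1,1)$), the orbit closure of $x_0\defi[\pi^{v_1}(\bZ^3)]$ under $\SO(Q)(\bZ)$ contains $\pi^{-1}(\cC)$.

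\textbf{Invoking the classification.} Next, I would fix a compactly supported $\mu\in\cP(\SO(Q)(\bR))$ whose support generates a finite-index subgroup $\Ga_\mu\le \SO(Q)(\bZ)$, so that the hypothesis of Theorem~\ref{thm:case2}\eqref{case:zd} applies. By Theorem~\ref{thm:full}\eqref{main4}, any weak-$*$ accumulation point $\nu$ of $\frac{1}{n}\sum_{k=1}^{n}\mu^{*k}*\delta_{x_0}$ is a $\mu$-stationary probability measure on $X$ supported on $\overline{\Ga_\mu\cdot x_0}$. Decomposing $\nu$ into $\mu$-ergodic components and applying Theorem~\ref{thm:case2}, each component is either the natural lift of $\bar{\nu}_{\Xbar}$, whose support is precisely $\pi^{-1}(\cC)$, or a measure-preserving finite extension of $\bar{\nu}_{\Xbar}$. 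If at least one ergodic component is the natural lift, then $\pi^{-1}(\cC)\subseteq\overline{\Ga_\mu\cdot x_0}\subseteq\overline{\SO(Q)(\bZ)\cdot x_0}$, and the conjecture follows.

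\textbf{The main obstacle.} The genuine difficulty lies in excluding the alternative scenario, in which every ergodic component of $\nu$ is a $\mu$-stationary finite extension of $\bar{\nu}_{\Xbar}$. This is precisely the open content of Problem~\ref{prob:unique lifts}. A complete argument would require (i)~a classification of all $\Ga_0$-equivariant continuous sections $\cC\to X$ and their finite-sheeted analogues, extending the construction of $\wt\cC$ in Theorem~\ref{thm:cover}, and (ii)~a verification that $x_0$ does not lie on any such cover. The explicit circle $\wt\cC$ passes through $[\Lam_0]=[\pi^{e_3}(\bZ^3)]$ with $Q(e_3)=0$, and a direct computation (the plane $\pi(x_0)$ has Euclidean normal $(1,1,1)$, while the planes appearing in $\wt\cC$ have normals of the form $(2t^2,-2t,1)$, which never align with $(1,1,1)$) confirms that $x_0\notin\wt\cC$; but ruling out hypothetical finite extensions not yet discovered is the real sticking point. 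I expect the algebraic framework alluded to in Remark~\ref{rem:Uri Bader's insight} to be where meaningful progress on this obstacle can be made.
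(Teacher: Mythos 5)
This statement is a conjecture, not a theorem: the paper offers no proof of it, and indeed the authors explain at length (in \S\ref{sec:introduction} and \S\ref{ssec:pboundary}) that they once believed the route you describe would work, until the finite-extension phenomenon of Theorem~\ref{thm:cover} was discovered. Your proposal is therefore not a proof but a (correct) rehearsal of the authors' own intended strategy, and the step you flag as ``the main obstacle'' is exactly the open content of Problem~\ref{prob:unique lifts}: Theorem~\ref{thm:case2} only gives the dichotomy ``natural lift or measure preserving finite extension'', and Theorem~\ref{thm:cover} shows the second alternative is not vacuous for Zariski dense lattices in $\SO(Q)(\bR)$, so nothing in the paper lets you discard it. Moreover, even the program (i)+(ii) you sketch would not close the gap: knowing that $x_0$ lies on no finite cover does not prevent the Ces\`aro averages $\frac1n\sum_k\mu^{*k}*\del_{x_0}$ from accumulating mass on such a cover, so in addition to a classification of covers you would need an ``instability/non-attraction'' statement for them (this is what items (2)--(5) of Problem~\ref{prob:unique lifts} ask for, and is the analogue of the work Benoist--Quint do to rule out attraction to finite orbits). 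Your computation that $x_0\notin\wt\cC$ is fine but, for this reason, does not by itself exclude even the one known cover.

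A secondary issue is the equivariance used in your reduction. The linear action of $g\in\SO(Q)(\bZ)$ on $X$ does not send $[\pi^{v}(\bZ^3)]$ to another directional lattice of $\bZ^3$: $g\pi^{v}(\bZ^3)$ is the projection of $\bZ^3$ onto the plane $g(v^\perp)$ along the line $\bR gv$, whereas $\pi^{g^{-t}v}(\bZ^3)$ is the orthogonal projection onto the same plane, and these 2-lattices are in general not homothetic. What is genuinely permuted by $\SO(Q)(\bZ)$ is the dual family $\set{[\bZ^3\cap v^\perp]:v\in V_Q^1(\bZ)}$ (this is the duality invoked in the introduction when reducing Conjecture~\ref{conj:1} to an orbit density statement), so the reduction to a single orbit closure should be routed through the intersection lattices, or through a correctly twisted equivariance, rather than through the identity you state. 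This is repairable, but the essential gap remains the first one: excluding stationary finite extensions (and their attracting behaviour) for the arithmetic group $\SO(Q)(\bZ)$, which is open.
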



\bibliographystyle{amsalpha}
\bibliography{ref}
\end{document}